\newtheorem{theorem}{Theorem}     
\numberwithin{theorem}{chapter}
\newtheorem{lemma}[theorem]{Lemma}  
\newtheorem{notation}[theorem]{Notation}        
\newtheorem{corollary}[theorem]{Corollary}     
\newtheorem{proposition}[theorem]{Proposition}     
\theoremstyle{definition}  
\newtheorem{definition}[theorem]{Definition}     
\newtheorem{example}[theorem]{Example}
\newcommand{\CC}{{\mathbb C}}
\newcommand{\NN}{{\mathbb N}}
\newcommand{\RR}{{\mathbb R}}
\newcommand{\TT}{{\mathbb T}}
\newcommand{\ZZ}{{\mathbb Z}}
\newcommand{\PP}{{\mathbb P}}
\newcommand{\calA}{{\mathcal A}}
\newcommand{\calB}{{\mathcal B}}
\newcommand{\calF}{{\mathcal F}}
\newcommand{\calG}{{\mathcal G}}
\newcommand{\calM}{{\mathcal M}}
\newcommand{\calN}{{\mathcal N}}
\newcommand{\calO}{{\mathcal O}}
\newcommand{\calP}{{\mathcal P}}
\newcommand{\calS}{{\mathcal S}}
\newcommand{\calT}{{\mathcal T}}
\newcommand{\bbI}{{\mathbbm 1}}
\DeclareMathOperator{\conv}{conv}
\DeclareMathOperator{\cone}{cone}
\DeclareMathOperator{\Spec}{Spec}
\DeclareMathOperator{\Hom}{Hom}
\DeclareMathOperator{\Log}{Log}
\DeclareMathOperator{\mon}{mon}
\DeclareMathOperator{\im}{Im}
\DeclareMathOperator{\supp}{supp}
\DeclareMathOperator{\Relint}{Relint}
\newcommand\restr[2]{\ensuremath{\left.#1\right|_{#2}}}
\newcommand{\simplex}{\includegraphics[scale=0.8]{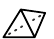}}
\definecolor{navyBlue}{cmyk}{1,1,0,0.2}
\newcommand{\defcolor}[1]{{\color{blue} #1}}
\newcommand{\demph}[1]{{\it\defcolor{#1}}}
\begin{document}
\renewcommand{\tamumanuscripttitle}{Irrational Toric Varieties}
\renewcommand{\tamupapertype}{Dissertation}
\renewcommand{\tamufullname}{Ata Firat Pir}
\renewcommand{\tamudegree}{DOCTOR OF PHILOSOPHY}
\renewcommand{\tamuchairone}{Frank Sottile}
\renewcommand{\tamumemberone}{J. Maurice Rojas}
\newcommand{\tamumembertwo}{Laura Matusevich}
\newcommand{\tamumemberthree}{John Keyser}
\renewcommand{\tamudepthead}{Emil Straube}
\renewcommand{\tamugradmonth}{December}
\renewcommand{\tamugradyear}{2018}
\renewcommand{\tamudepartment}{Mathematics}

\providecommand{\tabularnewline}{\\}

\begin{titlepage}
	\begin{center}
		\MakeUppercase{\tamumanuscripttitle}
		\vspace{4em}
		
		A \tamupapertype
		
		by
		
		\MakeUppercase{\tamufullname}
		
		\vspace{4em}
		
		\begin{singlespace}
			
			Submitted to the Office of Graduate and Professional Studies of \\
			Texas A\&M University \\
			
			in partial fulfillment of the requirements for the degree of \\
		\end{singlespace}
		
		\MakeUppercase{\tamudegree}
		\par\end{center}
	\vspace{2em}
	\begin{singlespace}
		\begin{tabular}{ll}
			& \tabularnewline
			& \cr
			Chair of Committee, & \tamuchairone\tabularnewline
			Committee Members, & \tamumemberone\tabularnewline
			& \tamumembertwo\tabularnewline
			& \tamumemberthree\tabularnewline
			Head of Department, & \tamudepthead\tabularnewline
			
		\end{tabular}
	\end{singlespace}
	\vspace{3em}
	
	\begin{center}
		December \hspace{2pt} \tamugradyear
		
		\vspace{3em}
		
		Major Subject: \tamudepartment \par
		\vspace{3em}
		Copyright \tamugradyear \hspace{.5em}\tamufullname 
		\par\end{center}
\end{titlepage}
\pagebreak{}

\chapter*{ABSTRACT}
\addcontentsline{toc}{chapter}{ABSTRACT} 

\pagestyle{plain} 
\pagenumbering{roman} 
\setcounter{page}{2}

\indent Classical toric varieties are among the simplest objects in algebraic geometry. They arise in an elementary fashion as varieties parametrized by monomials whose exponents are a finite subset $\calA$ of $\ZZ^n$. They may also be constructed from a rational fan $\Sigma$ in $\RR^n$. The combinatorics of the set $\calA$ or fan $\Sigma$ control the geometry of the associated toric variety. These toric varieties have an action of an algebraic torus with a dense orbit. Applications of algebraic geometry in geometric modeling and algebraic statistics have long studied the nonnegative real part of a toric variety as the main object, where the set $\calA$ may be an arbitrary set in $\RR^n$. These are called irrational affine toric varieties. This theory has been limited by the lack of a construction of an irrational toric variety from an arbitrary fan in $\RR^n$. 

We construct a theory of irrational toric varieties associated to arbitrary fans. These are $(\RR_>)^n$-equivariant cell complexes dual to the fan. Such an irrational toric variety is projective (may be embedded in a simplex) if and only if its fan is the normal fan of a polytope, and in that case, the toric variety is homeomorphic to that polytope. We use irrational toric varieties to show that the space of Hausdorff limits of translates an irrational toric variety associated to a finite subset $\calA$ of $\RR^n$ is homeomorphic to the secondary polytope of $\calA$.

\pagebreak{}

\chapter*{ACKNOWLEDGMENTS}
\addcontentsline{toc}{chapter}{ACKNOWLEDGMENTS}  

\indent 
I want to express my sincere gratitude to my advisor Frank Sottile for the continuous support of research, for his patience, motivation, and immense knowledge. His guidance helped me in all the time of research and writing of this thesis. I have been extremely lucky to have an advisor who cared so much about my work, and who responded to my questions and queries so promptly. The example you have set will help me throughout my life.

I would like to thank my family for all their love and support in every stage of my life. Words can not express how grateful I am to my father, mother, sister and brother for all of the sacrifices that they have made on my behalf. Their prayers for me was what sustained me thus far.

\pagebreak{}

\chapter*{CONTRIBUTORS AND FUNDING SOURCES}
\addcontentsline{toc}{chapter}{CONTRIBUTORS AND FUNDING SOURCES}  

\subsection*{Contributors}
This work was supported by a dissertation committee consisting of Professors Frank Sottile, Laura Matusevich and Maurice Rojas of the Department of Mathematics and Professor John Keyser of the Department of Computer Science and Engineering.

All other work conducted for the dissertation was completed by the student independently.
\subsection*{Funding Sources}
Graduate study was supported by a fellowship from Texas A\&M University and grants DMS-1001615 and DMS-1501370  from the National Science Foundation.
\pagebreak{}

\phantomsection
\addcontentsline{toc}{chapter}{TABLE OF CONTENTS}  

\begin{singlespace}
	\renewcommand\contentsname{\normalfont} {\centerline{TABLE OF CONTENTS}}
	
	\setcounter{tocdepth}{4} 

	\setlength{\cftaftertoctitleskip}{1em}
	\renewcommand{\cftaftertoctitle}{%
		\hfill{\normalfont {Page}\par}}

	\tableofcontents
	
	\addtocontents{toc}{\protect\afterpage{~\hfill\normalfont{Page}\par\medskip \medskip \medskip}}
\end{singlespace}

\pagebreak{}


\phantomsection
\addcontentsline{toc}{chapter}{LIST OF FIGURES}  

\renewcommand{\cftloftitlefont}{\center\normalfont\MakeUppercase}

\setlength{\cftbeforeloftitleskip}{-12pt} 
\renewcommand{\cftafterloftitleskip}{12pt}

\renewcommand{\cftafterloftitle}{%
	\\[4em]\mbox{}\hspace{2pt}FIGURE\hfill{\normalfont Page}\vskip\baselineskip}

\begingroup

\begin{center}
	\begin{singlespace}
		\setlength{\cftbeforechapskip}{0.2cm}
		\setlength{\cftbeforesecskip}{0.15cm}
		\setlength{\cftbeforesubsecskip}{0.15cm}
		\setlength{\cftbeforefigskip}{0.2cm}
		\setlength{\cftbeforetabskip}{0.2cm}
		
		
		
		\listoffigures
		
	\end{singlespace}
\end{center}

\pagebreak{}

\pagestyle{plain} 
\pagenumbering{arabic} 
\setcounter{page}{1}

\chapter{\uppercase {Introduction}}

Toric varieties form an important class of algebraic varieties that are among the simplest objects in algebraic geometry, which provide ``a remarkably fertile testing ground for general theories'' \cite{Fulton}.
The rich interaction between algebraic geometry and combinatorics has been important in both areas \cite{Demazure,KKMS,Stanley,Brion}. 

Demazure defined toric varieties in 1970 \cite{Demazure}. At that time a toric variety was referred as ``the scheme defined by the fan $\Sigma$'' \cite{Demazure}, ``torus embeddings'' \cite{KKMS}, and ``almost homogeneous algebraic variety under torus action'' \cite{MiyakeOda}. A toric variety is commonly defined to be a normal variety $Y$ containing a dense torus $\TT$ with the action of the torus $\TT$ on itself extending naturally to an action of $\TT$ on $Y$ \cite{Fulton}. These normal toric varieties enjoy a functorial construction using rational fans $\Sigma$. A (not necessarily normal) affine toric variety $Y_\calA$ is a variety parametrized by monomials having exponents in $\calA \subset \ZZ^n$ \cite{GKZ,BerndConvexPolytopes}. 

Toric varieties have found applications in many other fields. In \cite{CoxKatz}, the connections between the toric geometry and mirror symmetry are explored. The role of toric surfaces as generalizations of B\'ezier patches in geometric modeling is described in \cite{Krasauskas}. The nonnegative real part of a toric variety is relevant for geometric modeling \cite{vilnius} and toric B\'ezier patches are defined naturally when $\calA$ is any finite subset of $\RR^n$. 

Toric varieties in disguise were studied in algebraic statistics long before they appeared in algebraic geometry. Log-linear models~\cite{Goodman} are the nonnegative real part of a toric variety $Y_\calA$. These are more flexible than toric varieties in algebraic geometry, as the set $\calA$ can be any finite subset of $\RR^n$. In 1963, Birch \cite{Birch} showed that the nonnegative part of a toric variety $Y_\calA$ is homeomorphic to the cone generated by $\calA$. 

These applications of toric varieties in geometric modeling and algebraic statistics led to the study of the nonnegative real part of a toric variety $Y_\calA$ for a finite set $\calA  \subset \ZZ^n$ and then to relax the condition on $\calA$ so that it may be any finite subset of $\RR^n$. These resulting objects are called \demph{irrational toric varieties}. In \cite{PSV}, Hausdorff limits of translates of irrational toric varieties were studied. When $\calA \subset \ZZ^n$, the space of Hausdorff limits is homeomorphic to the secondary polytope of $\calA$ \cite{GSZ}. When the exponents $\calA \subset \RR^n$ are not integral, all Hausdorff limits were identified in \cite{PSV} as toric degenerations and thus were related to the secondary fan of $\calA$, but the authors were unable to identify the set of Hausdorff limits with the secondary polytope. This obstruction was due to deficiencies in the theory of irrational toric varieties, in particular a construction using arbitrary fans and a relation to polytopes. 

We develop a theory of irrational toric varieties constructed from arbitrary fans in $\RR^n$. Such an irrational toric variety is an $(\RR_>)^n$-equivariant cell complex with a dense orbit whose cells are dual to the cones of the associated fan. Their construction is functorial; maps of fans correspond to equivariant maps of irrational toric varieties and the fan may be recovered from the toric variety. We also complete the work in \cite{PSV}, identifying the space of Hausdorff limits with the secondary polytope. 

This dissertation is organized as follows. In Section \ref{CH:Background}, we provide basic terminology and results on geometric combinatorics. In Section \ref{CH:ClassicalTV}, we recall various constructions of classical toric varieties. We study some properties of classical toric varieties in Section \ref{CH:PropertiesClassicalTV}. We construct affine irrational toric varieties and abstract irrational toric varieties, and establish some of their main properties in Section \ref{CH:ITV}. We study some global properties of irrational toric varieties in Section \ref{CH:PropertiesITV}. In Section \ref{CH:Hausdorff}, we identify the space of Hausdorff limits with the secondary polytope. We give a summary of the thesis and discuss future directions in Section \ref{CH:Conclusion}.

\pagebreak{}

\chapter{SOME GEOMETRIC COMBINATORICS}\label{CH:Background}

This section will develop a background on geometric combinatorics which we will be using to construct toric varieties. We refer the reader to \cite{Fulton, CLS, Oda, Ziegler, Ewald} for a more complete background.

\section{Polyhedral Cones}\label{S:Cones}

Let $N_\ZZ$ be a free abelian group of rank $n$ and let $M_\ZZ:= \Hom (N_\ZZ, \ZZ)$ be its dual group. We have a canonical $\ZZ$-bilinear pairing 
\[
\langle \ ,\ \rangle \colon M_\ZZ \times N_\ZZ \rightarrow \ZZ,
\]
where $\langle x,y\rangle := x(y)$.

By scalar extension to the field $\RR$ we have $n$-dimensional real vector spaces $N:= N_\ZZ \otimes \RR$ and $M:=M_\ZZ \otimes \RR$ with a canonical $\RR$-bilinear pairing 
\[
\langle \ ,\ \rangle \colon M \times N\rightarrow \RR.
\]
When $M=\RR^n$ and $N = \RR^n$, the pairing $\langle\ , \ \rangle$ is the usual inner product in $\RR^n$. 

Let $\RR_>$ be the positive real numbers, $\RR_\geq$ be the nonnegative real numbers and $\RR_\leq$ be the nonpositive real numbers.

\begin{definition}
	A subset $\sigma$ of $N$ is called a \demph{polyhedral cone} if there exists a finite set $S= \{v_1, \ldots, v_r\}$ in $N$ such that 
	\begin{equation*}
	\sigma = \RR_\geq v_1 + \cdots + \RR_\geq v_r.
	\end{equation*}
	
	The cone $\sigma$ is said to be generated by $S$ and denoted by $\sigma = \cone (S)$. The \demph{dimension} $\dim \sigma$ of $\sigma$ is the dimension of the linear subspace $\langle \sigma \rangle: = \sigma + (-\sigma)$. A polyhedral cone $\sigma = \cone (S)$ is called \demph{rational} if $S \subseteq N_\ZZ$. 
	
\end{definition}

Note that a polyhedral cone $\sigma$ is convex. That is, when $x, y \in \sigma$ then $\lambda x + (1- \lambda) y  \in \sigma$ for all $ \lambda \in \left[ 0,1\right]$. It is also a cone, i.e, if $x \in \sigma$ then $\lambda x \in \sigma$ for all $\lambda \in \RR_\geq$.

\begin{example}\label{ConesInR2}
	Let $N=\RR^2$ with canonical basis $\{e_1, e_2\}$. Consider the three cones $\sigma_1 = \cone\{e_1\}, \ \sigma_2 = \cone\{e_1,e_2\}, \ \text{and} \ \sigma_3 = \cone\{2e_1-e_2, e_2\}$ in $\RR^2$. These are shown in Figure \ref{F:ConesR2}.
	
	\begin{figure}[!ht]
		\centering
		\begin{tikzpicture}[line cap=round,line join=round,>=latex,scale=.90]
		\filldraw[very thick] (-1,0) circle (.04cm) node [below] {$0$};
		\filldraw[very thick] (.5,0) circle (.04cm) node [below] {$e_1$};
		\draw[->] (-1,0) -- (1.25,0); 
		\node at (0,10pt)  {$\sigma_1$};

		\fill[line width=0.pt,color=green,fill=green,fill opacity=0.4] (4,0) -- (6,0) -- (6,2) -- (4,2) -- cycle;
		\filldraw[very thick] (4,0) circle (.04cm)node [below] {$0$};
		\filldraw[very thick] (5.5,0) circle (.04cm)node [below] {$e_1$};
		\filldraw[very thick] (4,1.5) circle (.04cm)node [left] {$e_2$};
		
		\draw[->] (4,0) -- (6.25,0); 
		\draw[->] (4,0) -- (4,2.25); 
		\node at (5,1)  {$\sigma_2$};
		
		draw[->] (0,0) -- (2,0); 
		draw[->] (0,0) -- (2,0); 
		draw[->] (0,0) -- (2,0); 
		
		\fill[line width=0.pt,color=green,fill=yellow,fill opacity=0.4] (9,0) -- (13,-2)  -- (9,2) -- cycle;
		[line cap=round,line join=round,>=latex,scale=1]
		\filldraw[very thick] (9,0) circle (.04cm)node [below] {$0$};
		\filldraw[very thick] (12,-1.5) circle (.04cm) node [below] {$2e_1-e_2 \quad \quad $};
		\filldraw[very thick] (9,1.5) circle (.04cm)node [left] {$e_2$};
		\draw[->] (9,0) -- (13.1,-2.05); 
		\draw[->] (9,0) -- (9,2.25); 
		\node at (10,0.3)  {$\sigma_3$};
		
		\end{tikzpicture}
		
		\caption{Examples of cones in $\RR^2$.}
		\label{F:ConesR2}
	\end{figure}
	\noindent Note that $\sigma_1$ has dimension 1, whereas both $\sigma_2$ and $\sigma_3$ have dimension 2.  \hfill$\square$
\end{example}

\begin{definition}
	Given a cone $\sigma \subset N$, \demph{dual} of $\sigma$ in $M$ is
	\[
	\sigma^\vee := \{ u \in M \mid \langle u,v \rangle \geq 0 \ \text{for all } v \in \sigma \}.
	\]
\end{definition}

\begin{example} \label{dualsInR2}
	Let $\{e_1^*,e_2^*\}$ denote the dual basis of $(\RR^2)^*$. The duals of the cones given in Example \ref{ConesInR2} are given by $\sigma_1^\vee = \cone\{e_1^*, e_2^*, -e_2^*\}$, $\sigma_2^\vee = \cone\{e_1^*, e_2^*\}$ and $\sigma_3^\vee=\cone\{e_2^*, e_1^*+2e_2^*\}$. These are depicted in Figure \ref{F:DualConesR2}. In particular the duals are all polyhedral cones in $M$. We will prove in Corollary \ref{DualIsaCone} that this is true for all polyhedral cones.  \hfill$\square$
	
	\begin{figure}[!ht]
		\centering
		\begin{tikzpicture}[line cap=round,line join=round,>=latex,scale=.75]
		\fill[line width=0.pt,color=blue,fill=blue,fill opacity=0.3] (-1,2) -- (1,2)  -- (1,-2)--(-1,-2) -- cycle;
		\filldraw[very thick] (-1,0) circle (.04cm)node [left] {$0$};
		\draw[<->] (-1,2.25) -- (-1,-2.25); 
		\draw[->] (-1,0) -- (1.25,0); 
		\filldraw[very thick] (-1,1.5) circle (.04cm)node [left] {$e_2^*$};
		\filldraw[very thick] (-1,-1.5) circle (.04cm)node [left] {$-e_2^*$};
		\filldraw[very thick] (.5,0) circle (.04cm)node [below] {$e_1^*$};
		\node at (0,1)  {$\sigma_1^\vee$};

		\fill[line width=0.pt,color=green,fill=green,fill opacity=0.4] (4,0) -- (6,0) -- (6,2) -- (4,2) -- cycle;
		\filldraw[very thick] (4,0) circle (.04cm);
		
		\draw[->] (4,0) -- (6.25,0); 
		\draw[->] (4,0) -- (4,2.25); 
		\filldraw[very thick] (4,1.5) circle (.04cm)node [left] {$e_2^*$};
		\filldraw[very thick] (4,0) circle (.04cm)node [left] {$0$};
		\filldraw[very thick] (5.5,0) circle (.04cm)node [below] {$e_1^*$};
		\node at (5,1)  {$\sigma_2^\vee$};
		
		draw[->] (0,0) -- (2,0); 
		draw[->] (0,0) -- (2,0); 
		draw[->] (0,0) -- (2,0); 
		
		\fill[line width=0.pt,color=green,fill=yellow,fill opacity=0.4] (9,0) -- (11,0)  -- (11,4) -- cycle;
		[line cap=round,line join=round,>=latex,scale=1]
		\filldraw[very thick] (9,0) circle (.04cm) node [left]{$0$};
		\filldraw[very thick] (10.5,3) circle (.04cm)node [left] {$e_1^*+2e_2^*$};
		\filldraw[very thick] (10.5,0) circle (.04cm)node [below] {$e_1^*$};
		
		\draw[->] (9,0) -- (11.25,0); 
		\draw[->] (9,0) -- (11.15,4.3); 
		\node at (10.5,1)  {$\sigma_3^\vee$};
		\end{tikzpicture}
		
		\caption{Examples of dual cones.}
		\label{F:DualConesR2}
	\end{figure}
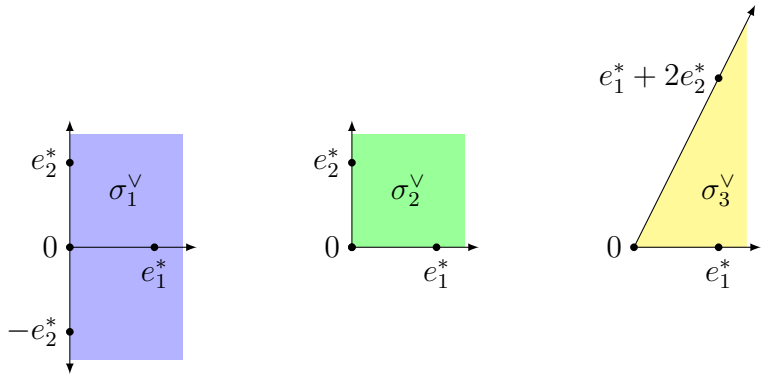
	
\end{example}

The following theorem from the theory of convex polytopes is useful to prove properties of polyhedral cones. Its proof can be found in \cite[Section 2.2, Theorem 1]{Grunbaum}.

\begin{theorem}\label{Seperation}
	Let $C$ be a nonempty closed convex set in $N$, and let $x \in N \setminus C$ be a point. Then there exists a hyperplane $H$ of $N$ which strictly separates $x$ and $C$, i.e., $x$ is in one of the open halfspaces bounded by $H$, and $C$ is in the other open halfspace bounded by $H$. 
\end{theorem}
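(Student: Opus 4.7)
The plan is to prove this by the classical nearest-point argument: locate the (unique) closest point of $C$ to $x$, and take the hyperplane perpendicular to the segment joining $x$ to that closest point, passing through its midpoint.

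First I would show that a minimizer of the distance function exists. Pick any $c_0 \in C$ and set $r := \|x - c_0\|$. The intersection $C \cap \overline{B}(x,r)$ is nonempty (it contains $c_0$), closed (as an intersection of closed sets), and bounded, hence compact. The continuous function $c \mapsto \|x-c\|$ attains its infimum on this compact set at some $c^* \in C$, and by construction $c^*$ minimizes the distance to $x$ over all of $C$. Since $x \notin C$ and $C$ is closed, the vector $v := x - c^*$ is nonzero.

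Next I would exploit convexity to produce a linear inequality. For any $c \in C$ and $t \in (0,1]$, convexity gives $c^* + t(c - c^*) = (1-t)c^* + tc \in C$, so
\[
\|x - c^* - t(c - c^*)\|^2 \;\geq\; \|x - c^*\|^2.
\]
Expanding the left side and cancelling $\|x-c^*\|^2$ yields
\[
-2t\,\langle x - c^*,\, c - c^*\rangle + t^2\|c - c^*\|^2 \;\geq\; 0.
\]
Dividing by $t$ and letting $t \to 0^+$ gives $\langle v,\, c - c^*\rangle \leq 0$, i.e.\ $\langle v, c\rangle \leq \langle v, c^*\rangle$ for every $c \in C$.

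Finally I would define the separating hyperplane. Set $\alpha := \langle v, c^*\rangle + \tfrac{1}{2}\|v\|^2$ and let $H := \{\,y \in N : \langle v, y\rangle = \alpha\,\}$. For any $c \in C$, the previous step gives $\langle v, c\rangle \leq \langle v, c^*\rangle < \alpha$, while $\langle v, x\rangle = \langle v, c^*\rangle + \|v\|^2 > \alpha$ since $v \neq 0$. Thus $C$ lies in the open halfspace $\{\langle v,\cdot\rangle < \alpha\}$ and $x$ lies in $\{\langle v,\cdot\rangle > \alpha\}$, giving strict separation.

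The one delicate step is the existence of the nearest point, which requires the closedness of $C$ together with the compactness argument above; without closedness, the infimum need not be attained and the construction collapses. Everything after that is a routine expansion of an inner-product inequality.
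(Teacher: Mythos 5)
Your proof is correct. The paper does not include its own argument for this theorem — it defers to Gr\"unbaum \cite[Section 2.2, Theorem 1]{Grunbaum} — so there is no in-paper proof to compare against; the nearest-point (metric projection) argument you give is the standard one and is, to my knowledge, essentially what appears in that reference. All three steps (existence of the minimizer via compactness, the first-order inequality $\langle v, c - c^*\rangle \le 0$ from convexity, and the choice of the midpoint hyperplane) are carried out correctly, and you rightly flag closedness of $C$ as the load-bearing hypothesis. One small cosmetic point: $N$ in the paper is an abstract finite-dimensional real vector space with no preferred inner product, so to invoke $\|\cdot\|$ and orthogonality you should say explicitly that you fix an auxiliary Euclidean inner product on $N$; this is harmless since the resulting $H$ is a genuine affine hyperplane regardless of the choice, but a careful write-up would note it.
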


This theorem has a useful corollary which we will use to prove that the dual of the dual cone is itself.

\begin{corollary}\label{C:Seperation}
	Let $\sigma \subset N$ be a polyhedral cone and $x \in N \setminus \sigma$. Then there exist an element $u \in \sigma^\vee$ such that $\langle u, x \rangle < 0$.
\end{corollary}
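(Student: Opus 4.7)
The plan is to apply Theorem \ref{Seperation} directly to $\sigma$ and $x$, and then use the cone structure of $\sigma$ to upgrade the resulting separating linear functional to an element of $\sigma^\vee$. Since Theorem \ref{Seperation} requires the convex set to be closed, the first step is to note that a polyhedral cone $\sigma = \RR_\geq v_1 + \cdots + \RR_\geq v_r$ is a closed subset of $N$ (a standard fact, following from the observation that $\sigma$ is the image of $\RR_\geq^r$ under a linear map and is closed under limits of Cauchy sequences; alternatively this may be invoked as background). Clearly $\sigma$ is nonempty and convex, so Theorem \ref{Seperation} applies.

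Next, I would apply the theorem to obtain a hyperplane $H \subset N$ strictly separating $x$ from $\sigma$. Using that any $\RR$-linear functional on $N$ is represented by an element of $M$ via the pairing, I write $H = \{y \in N : \langle u, y\rangle = c\}$ for some $u \in M$ and $c \in \RR$. Replacing $u$ by $-u$ and $c$ by $-c$ if necessary, I may assume $\langle u, v\rangle > c$ for every $v \in \sigma$ and $\langle u, x\rangle < c$. Since $0 \in \sigma$, evaluating at $v = 0$ gives $0 > c$, so in particular $\langle u, x\rangle < c < 0$.

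The remaining step is to show $u \in \sigma^\vee$. Fix $v \in \sigma$. Because $\sigma$ is a cone, $\lambda v \in \sigma$ for every $\lambda \in \RR_\geq$, so $\lambda \langle u, v\rangle > c$ for all $\lambda \geq 0$. If $\langle u, v\rangle$ were strictly negative, letting $\lambda \to \infty$ would force the left-hand side to $-\infty$, contradicting the fixed lower bound $c$. Hence $\langle u, v\rangle \geq 0$ for all $v \in \sigma$, which is precisely the condition $u \in \sigma^\vee$. Combined with $\langle u, x\rangle < 0$ from the previous step, this produces the desired $u$.

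The main obstacle is the implicit closedness of $\sigma$ needed to invoke Theorem \ref{Seperation}; everything else is a straightforward exploitation of the conic structure to pass from strict separation by an affine halfspace (threshold $c$) to the homogeneous condition defining $\sigma^\vee$ (threshold $0$). The scaling argument simultaneously upgrades the functional to lie in $\sigma^\vee$ and, via $0 \in \sigma$, forces the negative-sign conclusion at $x$.
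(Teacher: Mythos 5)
Your proof is correct. The paper in fact states Corollary~\ref{C:Seperation} without proof, placing it immediately after Theorem~\ref{Seperation} with the remark that it is a ``useful corollary'' of that separation theorem, so your argument is precisely the one the author intends for the reader to supply. The mechanism you use — apply strict separation, exploit $0 \in \sigma$ to pin down the sign of the threshold $c$, and then use positive scaling within the cone to push the affine inequality $\langle u, v\rangle > c$ down to the homogeneous inequality $\langle u, v\rangle \geq 0$ — is the standard route, and every step checks out. One small caveat: the parenthetical justification that $\sigma$ is closed because it is ``the image of $\RR_\geq^r$ under a linear map'' is not self-evident (linear images of closed sets are not closed in general; closedness of linear images of the nonnegative orthant is a genuine theorem), and the follow-up phrase ``closed under limits of Cauchy sequences'' is really just restating what needs to be shown. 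The cleanest route to closedness within this paper is Theorem~\ref{FiniteIntofHalfspaces}, which represents $\sigma$ as a finite intersection of closed halfspaces; since you flag that closedness may be invoked as background, this is a cosmetic rather than substantive gap.
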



\begin{corollary}
	Let $\sigma\subset N$ be a cone. Then $(\sigma^\vee)^\vee = \sigma$.
\end{corollary}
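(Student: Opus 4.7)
The proof is a standard double-inclusion argument, and the key input is the separation corollary (Corollary \ref{C:Seperation}) that immediately precedes the statement.

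The plan is to show $\sigma \subseteq (\sigma^\vee)^\vee$ and $(\sigma^\vee)^\vee \subseteq \sigma$ separately. The first inclusion is the easy, purely formal direction. Fix $v \in \sigma$. For every $u \in \sigma^\vee$, the definition of $\sigma^\vee$ gives $\langle u, v \rangle \geq 0$. That inequality, read from the other side of the pairing, is precisely the condition that $v$ lies in the dual of $\sigma^\vee$, i.e.\ $v \in (\sigma^\vee)^\vee$. No convexity or finiteness hypothesis on $\sigma$ is used here; it is a tautology from the symmetry of the pairing.

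The second inclusion is where the content lies, and I would prove it by contrapositive. Suppose $x \in N \setminus \sigma$. Since $\sigma$ is a polyhedral cone, Corollary \ref{C:Seperation} supplies an element $u \in \sigma^\vee$ with $\langle u, x \rangle < 0$. The existence of such a $u$ is exactly the failure of the defining condition for membership in $(\sigma^\vee)^\vee$, so $x \notin (\sigma^\vee)^\vee$. Taking the contrapositive yields $(\sigma^\vee)^\vee \subseteq \sigma$, and combining the two inclusions finishes the proof.

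The main obstacle is really hidden upstream in Corollary \ref{C:Seperation}, which in turn rests on the hyperplane separation theorem (Theorem \ref{Seperation}); that is where the closedness and convexity of $\sigma$ are essential. Given that corollary, the statement here is a two-line argument. I would also flag (but not elaborate) that one should verify that a polyhedral cone is indeed closed so that Theorem \ref{Seperation} applies to it, but this is either implicit in Corollary \ref{C:Seperation} or easily checked since $\sigma = \cone(S)$ for a finite set $S$.
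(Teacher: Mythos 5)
Your proof is correct and matches the paper's argument: both directions are the same, with the easy inclusion being a tautology from the pairing and the reverse inclusion following from Corollary \ref{C:Seperation}. The only cosmetic difference is that you phrase the second inclusion as a contrapositive while the paper phrases it as a proof by contradiction, which are logically identical.
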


\begin{proof}
	By definition, for every element $v \in \sigma$ and $u \in \sigma^\vee$, we have $\langle u,v \rangle \geq 0$. Hence $\sigma \subset (\sigma^\vee)^\vee$. For the reverse inclusion we will use Corollary \ref{C:Seperation}. Assume that $x \in (\sigma^\vee)^\vee \setminus \sigma$. Then there exists an element $u\in \sigma^\vee $ such that $\langle u, v \rangle < 0$, therefore  $u \notin (\sigma^\vee)^\vee$,  which is a contradiction. Hence we get the reverse inclusion. 
\end{proof}

Given $m \in M$ define
\begin{align*}
H_m &:= \{ v \in N \mid \langle m,v \rangle = 0\}=m^\perp,\\
H_m^+ &:= \{ v \in N \mid \langle m,v \rangle \geq 0\}.
\end{align*}

Note that when $m \neq 0$, $H_m$ is a hyperplane in $N$, and $H_m^+$ is a closed halfspace in $N$. The hyperplane $H_m$ is a supporting hyperplane of a cone $\sigma$ if $\sigma \subset H_m^+$, and when this happens $H_m^+$ is a supporting halfspace.

The following theorem gives an alternate description of polyhedral cones in terms of supporting hyperplanes. For a detailed proof we refer to \cite[Theorem 1.3]{Ziegler}.

\begin{theorem} \label{FiniteIntofHalfspaces}
	A cone $\sigma$ is polyhedral if and only if it can be written as an intersection of finitely many closed halfspaces.
\end{theorem}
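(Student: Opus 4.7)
The plan is to reduce both directions to a single key lemma: if $\tau$ is a polyhedral cone, then its dual $\tau^\vee$ is also polyhedral (i.e., finitely generated). The identity $(\sigma^\vee)^\vee = \sigma$ established in the previous corollary will then translate between generator and halfspace descriptions.

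Granting the key lemma, the backward direction is essentially a tautology. If $\sigma = \bigcap_{i=1}^k H_{m_i}^+$, then unwinding definitions gives $\sigma = \{v \in N : \langle m_i, v \rangle \geq 0 \text{ for all } i\} = \tau^\vee$, where $\tau := \cone(m_1,\ldots,m_k) \subset M$ is polyhedral; the lemma then yields that $\sigma = \tau^\vee$ is polyhedral. For the forward direction, suppose $\sigma = \cone(v_1,\ldots,v_r)$. The lemma gives generators $u_1,\ldots,u_s \in M$ of $\sigma^\vee$, and applying $(\sigma^\vee)^\vee = \sigma$ yields
\[
\sigma = \{v \in N : \langle u_j, v \rangle \geq 0 \text{ for all } j\} = \bigcap_{j=1}^{s} H_{u_j}^+,
\]
exhibiting $\sigma$ as a finite intersection of closed halfspaces.

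The substance of the proof is therefore the key lemma, which I would establish by Fourier--Motzkin elimination. The inductive setup is on the number $r$ of generators of $\sigma$. The base case $\sigma = \{0\}$ gives $\sigma^\vee = M = \cone(\pm e_1^*, \ldots, \pm e_n^*)$, which is finitely generated. For the inductive step it suffices to show that if $\sigma^\vee$ is finitely generated and $v \in N$, then $(\sigma + \RR_\geq v)^\vee$ is finitely generated. A direct unwinding gives $(\sigma + \RR_\geq v)^\vee = \sigma^\vee \cap L_v^+$, where $L_v^+ := \{u \in M : \langle u, v \rangle \geq 0\}$. Writing $\sigma^\vee = \cone(u_1,\ldots,u_k)$ and $a_i := \langle u_i, v \rangle$, I would claim that
\[
\sigma^\vee \cap L_v^+ = \cone\bigl(\{u_i : a_i \geq 0\} \cup \{a_i u_j - a_j u_i : a_i > 0,\ a_j < 0\}\bigr),
\]
which is manifestly finitely generated.

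The main obstacle is verifying this Fourier--Motzkin identity. One inclusion is routine: each listed generator clearly lies in $\sigma^\vee \cap L_v^+$. The nontrivial inclusion requires taking an arbitrary $u = \sum \lambda_i u_i \in \sigma^\vee$ with $\sum \lambda_i a_i \geq 0$ and rewriting it using only the enlarged list. I would do this by pairing off each term $\lambda_j u_j$ with $a_j < 0$ against terms $\lambda_i u_i$ with $a_i > 0$, absorbing them into the combinations $a_i u_j - a_j u_i$ until no negative contributions remain; the hypothesis $\sum \lambda_i a_i \geq 0$ guarantees that enough positive mass is available to execute this elimination with nonnegative coefficients. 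Carrying out this bookkeeping carefully is the one technical step in the argument and is where I would spend the most effort.
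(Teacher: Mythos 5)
Your proposal is correct, and it fills a gap that the paper itself leaves open: the paper does not prove this theorem but defers to \cite[Theorem~1.3]{Ziegler}, and then uses it (together with the observation that $\sigma^\vee=\bigcap_i H_{v_i}^+$ for a generating set $\{v_i\}$ of $\sigma$) to deduce Corollary~\ref{DualIsaCone}, that the dual of a polyhedral cone is polyhedral. Your key lemma is exactly that corollary, but you prove it directly by Fourier--Motzkin elimination and then derive both directions of the theorem from it and biduality $(\sigma^\vee)^\vee=\sigma$; the logical dependence is thus reversed relative to the paper, and there is no circularity because you re-establish the key lemma independently rather than invoking Corollary~\ref{DualIsaCone}. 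The inductive step $(\sigma+\RR_\geq v)^\vee=\sigma^\vee\cap L_v^+$ is correct, and so is the elimination identity: the ``pairing off'' you sketch can indeed be carried out, since one only needs nonnegative $\nu_{ij}$, indexed by $i$ with $a_i>0$ and $j$ with $a_j<0$, whose column sums equal $\lambda_j|a_j|$ and whose row sums do not exceed $\lambda_i a_i$, and such a transportation-type allocation exists precisely when $\sum_j\lambda_j|a_j|\leq\sum_i\lambda_ia_i$, which is your hypothesis $\sum\lambda_i a_i\geq 0$. This is essentially the same Fourier--Motzkin argument that underlies the Ziegler reference, so the technique is not new, but what you gain is self-containment within the paper's own framework.
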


Next, we show that the dual of a polyhedral cone $\sigma$ is again a polyhedral cone.  First we need a lemma.

\begin{lemma}
	Let $S = \{v_1, \ldots, v_r\} \subset N$ and $\sigma = \cone (S)$ be a polyhedral cone in $N$. Then, $\sigma^\vee = \{ u \in M \mid \langle u,v \rangle \geq 0 \text{ for all } v \in S \}$.
\end{lemma}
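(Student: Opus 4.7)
The plan is a direct double-inclusion argument, exploiting the bilinearity of $\langle\ ,\ \rangle$ and the fact that every element of $\sigma$ is by definition a nonnegative combination of the generators in $S$.

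First I would prove the inclusion $\sigma^\vee \subseteq \{u \in M \mid \langle u,v_i\rangle \geq 0 \text{ for all } i\}$. This is immediate from the definitions: since each $v_i$ lies in $\sigma = \cone(S)$ (take the coefficient of $v_i$ to be $1$ and the others to be $0$), any $u \in \sigma^\vee$ must satisfy $\langle u, v_i\rangle \geq 0$ for every $i$.

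For the reverse inclusion, I would take an arbitrary $u \in M$ with $\langle u, v_i\rangle \geq 0$ for $i=1,\ldots,r$, and verify that $\langle u, v\rangle \geq 0$ for every $v \in \sigma$. By the definition of $\cone(S)$, any such $v$ has the form $v = \lambda_1 v_1 + \cdots + \lambda_r v_r$ with $\lambda_i \in \RR_{\geq}$. By $\RR$-bilinearity of the pairing,
\[
\langle u, v\rangle \;=\; \sum_{i=1}^r \lambda_i \langle u, v_i\rangle,
\]
which is a sum of products of nonnegative real numbers, hence nonnegative. Therefore $u \in \sigma^\vee$.

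There is essentially no obstacle here; the statement is just the observation that a linear functional is nonnegative on a finitely generated cone if and only if it is nonnegative on the generators. The only subtlety to watch is that the argument uses only the $\RR$-bilinearity of $\langle\ ,\ \rangle$ and the closure of $\RR_{\geq}$ under addition and multiplication, both of which are already in place.
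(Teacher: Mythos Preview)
Your proof is correct and follows essentially the same double-inclusion argument as the paper: both directions use that $S\subset\sigma$ for one inclusion and that every $v\in\sigma$ is a nonnegative combination of the $v_i$, together with bilinearity of the pairing, for the other.
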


\begin{proof} 
	Let $C = \{ u \in M \mid \langle u,v \rangle \geq 0 \text{ for all } v \in S \}$. Every element in $S$ is also in $\sigma$. Hence by definition, for any element $m \in \sigma^\vee$, we must have $\langle m,v \rangle \geq 0$ for all $v \in S$.  Hence $\sigma^\vee \subset C$. 
	
	Next assume $u \in C$. For any element $v \in \sigma$, we can write  it as $v = \sum_{i=1}^r \lambda_i v_i$ for some $\lambda_1, \ldots, \lambda_r \in \RR_\geq$. Since the pairing is linear, we have
	$\langle u,v \rangle = \sum_{i=1}^r \lambda_i \langle u, v_i \rangle$, which is nonnegative as each term in the summand is nonnegative. Hence $u\in \sigma^\vee$.  This completes the proof. 
\end{proof}

In the previous lemma, since each $v_i \in S$ gives a closed halfspace $H_{v_i}^+$ in $M$, we can deduce that $\sigma^\vee = \bigcap_{v_i\in S} H_{v_i}^+$. Combining this with Theorem \ref{FiniteIntofHalfspaces} gives us the following corollary.

\begin{corollary}\label{DualIsaCone}
	Let $\sigma$ be a polyhedral cone in $N$. Then $\sigma^\vee$ is a polyhedral cone in $M$. 
\end{corollary}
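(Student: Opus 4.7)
The plan is to combine the preceding lemma with Theorem \ref{FiniteIntofHalfspaces}; once those two ingredients are in place the corollary follows essentially immediately. So this is a short, two-step argument rather than a deep one.

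First, since $\sigma$ is polyhedral, I would write $\sigma = \cone(S)$ for a finite set $S = \{v_1, \ldots, v_r\} \subset N$. The preceding lemma then gives the equality
\[
\sigma^\vee \;=\; \{ u \in M \mid \langle u, v_i \rangle \geq 0 \text{ for all } i = 1, \ldots, r\},
\]
which reduces the defining condition of $\sigma^\vee$ from infinitely many inequalities (one for each $v \in \sigma$) to just $r$ inequalities.

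Next, I would note that each generator $v_i \in N$ can be viewed as a linear functional on $M$ via the pairing, so the condition $\langle u, v_i \rangle \geq 0$ carves out a closed halfspace $H_{v_i}^+ \subset M$ (using the paragraph of notation introduced after the separation corollary, with the roles of $M$ and $N$ swapped, which is legitimate because the pairing is symmetric in its formal role). Thus
\[
\sigma^\vee \;=\; \bigcap_{i=1}^{r} H_{v_i}^+,
\]
i.e., $\sigma^\vee$ is an intersection of finitely many closed halfspaces in $M$. Finally, observing that $\sigma^\vee$ is a cone (it is closed under scaling by $\RR_\geq$ directly from the definition), I can apply the ``if'' direction of Theorem \ref{FiniteIntofHalfspaces} to conclude that $\sigma^\vee$ is polyhedral.

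There is no real obstacle here, since the hard work has already been done in the lemma and in Theorem \ref{FiniteIntofHalfspaces}. The only minor care needed is the role swap between $M$ and $N$ when invoking the halfspace notation (which was introduced for halfspaces in $N$ defined by elements of $M$); this is purely a matter of bookkeeping and does not affect the argument.
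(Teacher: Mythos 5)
Your proposal is correct and matches the paper's argument exactly: the paper also rewrites $\sigma^\vee$ as a finite intersection of halfspaces $\bigcap_{v_i \in S} H_{v_i}^+$ using the preceding lemma, and then invokes Theorem \ref{FiniteIntofHalfspaces}. Your extra remark about the $M$/$N$ role swap in the halfspace notation is a fair piece of bookkeeping that the paper leaves implicit.
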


\section{Faces}
We can use supporting hyperplanes and supporting halfspaces to define faces of a cone. These are again polyhedral cones and play an important role when constructing abstract toric varieties associated to fans. 

\begin{definition} 
	Given a polyhedral cone $\sigma \subset N$, a \demph{face} of $\sigma$ is a set of the form $\tau = H_m \cap \sigma$ for some $m \in \sigma^\vee$. A face $\tau \neq \sigma$ of $\sigma$ is called a \demph{proper} face. A face $\tau$ of $\sigma$ is denoted by $\tau \preceq \sigma$, whereas we write $\tau \prec \sigma$ when $\tau$ is a proper face. 
	
	A facet $\tau$ of $\sigma$ is a face of $\sigma$ with codimension $1$, i.e., $\dim (\tau)= \dim(\sigma)-1$. 
\end{definition}

\begin{example}
	Consider the cone $\sigma_3$ in Example \ref{ConesInR2}. We illustrate its proper faces  and hyperplanes that give these faces in Figure \ref{F:FacesofCone}. First consider $k= e_1^*+e_2^* \in \sigma_3^\vee$.  Then the hyperplane corresponding to $k$ is given by 
	\[
	H_k = \{v \in N \mid \langle k, v  \rangle \geq 0 \} = \{(a,b) \in N \mid a+b = 0 \}. \] Intersecting $H_k$ with $\sigma_3$ will give the origin $\{0\}$ as a face of $\sigma_3$. 
	
	Next consider $l = e_1^*$. Then the hyperplane corresponding to $l$ is given by $H_l = \{(a,b) \in N \mid a= 0 \}$. Intersecting $H_l$ with $\sigma_3$ yields the face $\tau_1 = \cone\{e_2\}$. 
	
	Taking $m= 2e_1^* + e_2^*$ yields the hyperplane $H_m = \{(a,b) \in N \mid 2a+b= 0 \}$. Intersecting $H_m$ with $\sigma_3$ gives the proper face $\tau_2 = \cone \{2e_1-e_2\}$. 
	
	In this example, $\tau_1$ and $\tau_2$ are 1-dimensional faces of $\sigma_3$.  Since $\dim \sigma_3 =2$, they are also facets of $\sigma_3$.  \hfill$\square$
	
	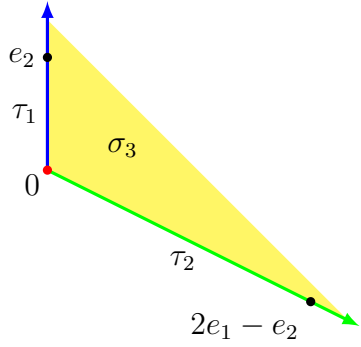
\begin{figure}[!ht]
		\centering
		\begin{tikzpicture}[line cap=round,line join=round,>=latex,scale=1]
		
		draw[->] (0,0) -- (2,0); 
		\fill[line width=0.pt,color=green,fill=yellow,fill opacity=0.6] (0,0) -- (4,-2)  -- (0,2) -- cycle;
		[line cap=round,line join=round,>=latex,scale=1]
		
		\node at (1,0.3)  {$\sigma_3$};
		
		\draw[->,blue,very thick] (0,0) -- (0,2.25); 
		\node at (-.3,.75) {$\tau_1$};
		
		\draw[->,green,very thick] (0,0) -- (4.15,-2.075); 
		\node at (1.8,-1.2) {$\tau_2$};
		
		\filldraw[very thick,red] (0,0) circle (.04cm);
		\node at (-.2,-.2) {$0$};
		\filldraw[very thick] (3.5,-1.75) circle (.04cm)node[anchor=north east] {$2e_1-e_2$};
		\filldraw[very thick] (0,1.5) circle (.04cm)node[left] {$e_2$};
		\end{tikzpicture}
		
		\caption{Faces of a cone.}
		\label{F:FacesofCone}
	\end{figure}
\end{example}

We continue with some properties of faces of a polyhedral cone.

\begin{lemma}
	Let $\sigma  =\cone\{v_1, \ldots,v_r\}\subset N$ be a polyhedral cone. Then:
	\begin{enumerate}
		\item Every face of $\sigma$ is a polyhedral cone.
		\item Intersection of any two faces of $\sigma$ is again a face of $\sigma$. 
		\item A face of a face of $\sigma$ is again a face of $\sigma$.
		\item Let $\tau$ be a face of  $\sigma$. Then $\sigma^\vee \subset \tau^\vee$. 
	\end{enumerate}
\end{lemma}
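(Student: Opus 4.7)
The plan is to handle the four parts essentially in order, since the later parts build on the formula obtained in the first part.

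For part (1), I would write $\sigma = \cone\{v_1,\ldots,v_r\}$ and let $\tau = H_m \cap \sigma$ for some $m \in \sigma^\vee$. The key observation is that any $v \in \sigma$ can be written as $v = \sum_{i=1}^r \lambda_i v_i$ with $\lambda_i \geq 0$, and since $\langle m, v_i\rangle \geq 0$ for every $i$, the condition $\langle m, v\rangle = 0$ forces $\lambda_i = 0$ whenever $\langle m, v_i\rangle > 0$. Setting $I = \{i : \langle m, v_i\rangle = 0\}$, this gives $\tau = \cone\{v_i : i \in I\}$, which is polyhedral. This explicit description will be used repeatedly.

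For part (2), given $\tau_1 = H_{m_1} \cap \sigma$ and $\tau_2 = H_{m_2} \cap \sigma$ with $m_1, m_2 \in \sigma^\vee$, I would take $m := m_1 + m_2$, which lies in $\sigma^\vee$ since it is a conical combination. For $v \in \sigma$ one has $\langle m_1, v\rangle, \langle m_2, v\rangle \geq 0$, so $\langle m, v\rangle = 0$ is equivalent to both summands vanishing. Hence $H_m \cap \sigma = \tau_1 \cap \tau_2$. Part (4) is immediate: if $u \in \sigma^\vee$ then $\langle u, v\rangle \geq 0$ for all $v \in \sigma \supseteq \tau$, so $u \in \tau^\vee$.

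The main work is part (3), the transitivity of faces, where a perturbation trick is needed. Let $\tau = H_m \cap \sigma$ with $m \in \sigma^\vee$ and $\tau' = H_{m'} \cap \tau$ with $m' \in \tau^\vee$; by part (1), I may assume $\tau = \cone\{v_1,\ldots,v_s\}$ with $\langle m, v_i\rangle = 0$ for $i \leq s$ and $\langle m, v_i\rangle > 0$ for $i > s$. The idea is to set $m'' := m' + \lambda m$ for $\lambda$ sufficiently large. For $i \leq s$, $\langle m'', v_i\rangle = \langle m', v_i\rangle \geq 0$ since $m' \in \tau^\vee$; for $i > s$, choosing $\lambda$ large enough (e.g.\ larger than $\max_{i>s} |\langle m', v_i\rangle|/\langle m, v_i\rangle + 1$) makes $\langle m'', v_i\rangle > 0$. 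Hence $m'' \in \sigma^\vee$. If $v = \sum \lambda_i v_i \in \sigma$ satisfies $\langle m'', v\rangle = 0$, the strict positivity of $\langle m'', v_i\rangle$ for $i > s$ forces $\lambda_i = 0$ there, so $v \in \tau$; then $\langle m'', v\rangle = \langle m', v\rangle$, and the vanishing places $v$ in $\tau'$. The reverse inclusion $\tau' \subseteq H_{m''} \cap \sigma$ is immediate since both $\langle m, \cdot\rangle$ and $\langle m', \cdot\rangle$ vanish on $\tau'$. The main obstacle here is choosing the right perturbation and verifying that one can simultaneously achieve $m'' \in \sigma^\vee$ and $H_{m''}\cap\sigma = \tau'$; the finite generation of $\sigma$ from part (1) is what makes the large-$\lambda$ choice possible.
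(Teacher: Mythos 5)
Your proof is correct and follows essentially the same strategy as the paper: for (1) take generators $v_i$ with $\langle m,v_i\rangle = 0$; for (2) use $m_1+m_2$; for (3) perturb $m'$ by a large positive multiple $\lambda m$ so that the new functional lies in $\sigma^\vee$ and cuts out $\tau'$; and for (4) simply use $\tau\subseteq\sigma$. In fact your version of (3) is cleaner than the paper's, which contains a couple of typos (its displayed formula for the scalar $p$ has $\langle u,v_i\rangle$ in the numerator where $\langle v,v_i\rangle$ is clearly intended, matching your $\max_{i>s}|\langle m',v_i\rangle|/\langle m,v_i\rangle$).
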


\begin{proof}$\ $
	Let $\calA =\{v_1,\ldots,v_r \}$ and $\tau = H_m \cap \sigma$ for some $m \in \sigma^\vee$. Then $\tau$ is generated by $\{v_i \in \calA \mid \langle m, v_i \rangle=0 \}$. Hence it is again a polyhedral cone in $N$. 
	
	Suppose $\tau = H_m \cap \sigma$ and $\tau'=H_{m'} \cap \sigma$ for some $m, m' \in \sigma^\vee$. The intersection of these faces is $\tau \cap \tau' = \left(H_m \cap H_{m'} \right)\cap \sigma$. We will prove that  $\tau \cap \tau' = H_{m+m'} \cap \sigma$. 
	
	First note that the sum $m+m' \in \sigma^\vee$. 
	Then, for any $v \in (H_m \cap H_{m'} ) \cap \sigma$, we have $\langle m+m',v\rangle =\langle m,v\rangle + \langle m',v\rangle$. Each summand vanishes. Hence $\langle m+m',v\rangle=0$ which implies $\tau \cap \tau' \subset H_{m+m'} \cap \sigma$. 
	Conversely, for any $v \in H_{m+m'} \cap \sigma$, we have $0=\langle m+m',v \rangle = \langle m,v\rangle + \langle m',v\rangle$. Since $v \in \sigma$ and $m,m' \in \sigma^\vee$, we have $\langle m,v\rangle , \langle m',v\rangle \geq 0$. Hence both summand has to be 0, in which case $v \in (H_m \cap H_{m'}) \cap \sigma.$ This gives 
	$H_{m+m'}  \cap \sigma \subset \tau \cap \tau' $, which completes the proof. 
	
	Let $\gamma$ be a face of $\tau$ which is a face of $\sigma$. We claim that $\gamma$ is a face of $\sigma$. Since $\gamma \preceq \tau$ and $\tau \preceq \sigma$,  we can write $\gamma = \tau \cap H_v$ and $\tau = \sigma\cap H_u$ for some $v\in \tau^\vee$ and $u \in \sigma^\vee$. We want to find a  positive number $p  \in \RR_>$ so that $v+pu \in \sigma^\vee$ and $\tau = \sigma \cap H_{v+pu}$. We claim that 
	\[
	p := 1+ \max_{v_i \notin \tau} \frac{\vert \langle u, v_i \rangle \vert}{\langle u,v_i \rangle }
	\]
	satisfies the desired properties. 
	
	First note that when $v_i \notin \tau $, $\langle u, v_i \rangle >0$. Hence $p \in \RR_>$. To see $v+pu \in \sigma^\vee$, we look at $\langle  v+pu,v_i \rangle$ for any $i \in \{1,\ldots,r\}.$ Observe that when $v_i \in \tau, $ $\langle u , v_i \rangle \geq 0$, hence  $\langle  v+pu,v_i \rangle \geq 0$.  If $v_i \notin \tau$, then 
	\[
	\langle v+pu,v_i \rangle = \langle u, v_i \rangle + p\langle u, v_i \rangle \geq \langle u,v_i\rangle >0,
	\]
	where the final inequality holds because $\langle u,v_i\rangle  \geq 0$ for all $i$, with equality if and only if $v_i \in \tau$. This proves that $v+pu \in \sigma^\vee$. 
	
	Next we want to show $\gamma = \tau \cap H_v= \sigma \cap H_u \cap H_v = \sigma \cap H_{v+pu}$. For an element $w \in \tau$, we have $\langle u,w \rangle =\langle v,w \rangle =0$. This implies that $\langle v+pu,w \rangle =0$. Hence $w \in \sigma \cap H_{v+pu}$. For the reverse inclusion, it suffices to check that for every $v_i \in \sigma \cap H_{v+pu}$, we have $v_i \in \gamma$. For $v_i \notin \gamma$, we showed above that  $\langle  v+pu,v_i \rangle >0$. So if $v_i \notin \tau$, then $v_i \notin H_{v+pu}$. 
	Now, suppose $v_i \in \sigma \cap H_{v+pu}$. Then $v_i$ has to be in $H_u$. So we have $0 = \langle v+pu,v_i \rangle = \langle u,v_i \rangle + p \langle u ,v_i \rangle = \langle u,v_i \rangle$. This means that $u_i \in H_v$. So $v_i \in \sigma \cap H_u \cap H_v$. 
	Hence $\gamma = \sigma \cap H_{v+pu}$. In particular $\gamma$ is a face of $\sigma$. 
	
	Lastly, since $\tau$ is contained in $\sigma$, for any element $m \in \sigma^\vee$ and $t\in \tau \subset \sigma$, we have $\langle m,t \rangle \geq 0.$ Hence $m \in \tau^\vee$. Thus $\sigma^\vee \subset \tau^\vee$. 
\end{proof}

For a cone $\sigma : = \cone (\calA)$, a face $\tau = H_m \cap \sigma$ of $\sigma$ is generated by a subset $\calF := \{u \in \calA \mid \langle m,u \rangle = 0 \} $ of $\calA$. We call $\calF$ a \demph{face} of $\calA$ and denote it by $\calF \preceq \calA$. 

\begin{definition}
	Given a face $\tau$ of a polyhedral cone $\sigma \subset N$, we define
	\begin{align*}
	\tau^\perp &= \{m \in M \mid \langle m , u \rangle = 0 \text{ for all } u \in \tau \},\\
	\tau^* & = \{m \in \sigma^\vee \mid \langle m , u \rangle = 0 \text{ for all } u \in \tau \} = \sigma^\vee \cap \tau^\perp.
	\end{align*}
	
	We call $\tau^*$ the face of $\sigma^\vee$ dual to $\tau$. 
\end{definition}

In Corollary \ref{DualIsaCone}, we have seen that $\sigma^\vee$ is again a polyhedral cone in $M$. Now, we will relate the faces of $\sigma$ to faces of its dual $\sigma^\vee$. 

\begin{definition}
	Given a polyhedral cone $\sigma \in N$, the \demph{relative interior}, denoted by $\Relint(\sigma)$, of $\sigma$ is the topological interior of $\sigma$ in its span $\langle \sigma \rangle $.
\end{definition}

The following theorem gives a relation between faces of $\sigma$ and $\sigma^\vee$. 

\begin{theorem}\label{ConeCorrespondence}
	Let $\tau$ be a face of a polyhedral cone $\sigma \subset N$. Then
	\begin{enumerate}
		\item $\tau^*$ is a face of $\sigma^\vee$. 
		\item The map $\tau \to \tau^*$ is an inclusion reversing bijection between the faces of $\sigma$ and the faces of $\sigma^\vee$. 
		
	\end{enumerate}
\end{theorem}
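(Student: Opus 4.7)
The plan is to prove (1) by producing, for each face $\tau \preceq \sigma$, a single vector $v \in \sigma = (\sigma^\vee)^\vee$ that cuts out $\tau^*$ as a face of $\sigma^\vee$, and then to prove (2) by exhibiting an explicit inverse to the map $\tau \mapsto \tau^*$ using the symmetric construction on $\sigma^\vee$.

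For part (1), I would pick any $v \in \Relint(\tau)$ and claim $\tau^* = H_v \cap \sigma^\vee$. Since $v \in \tau \subset \sigma$, for any $m \in \sigma^\vee$ we have $\langle m,v\rangle \geq 0$, so $v \in (\sigma^\vee)^\vee$ and $H_v \cap \sigma^\vee$ is a face of $\sigma^\vee$ by definition. The inclusion $\tau^* \subseteq H_v \cap \sigma^\vee$ is immediate from $v \in \tau$. For the reverse, suppose $m \in \sigma^\vee$ satisfies $\langle m,v\rangle = 0$. Because $v$ is in the relative interior of $\tau$, for any $u \in \tau$ there is an $\varepsilon > 0$ with $v + \varepsilon(v-u) \in \tau$, and evaluating $m$ on this point gives $-\varepsilon \langle m,u\rangle \geq 0$; combined with $\langle m,u\rangle \geq 0$ from $u \in \tau \subset \sigma$, this forces $\langle m,u\rangle = 0$, so $m \in \tau^\perp$ and hence $m \in \tau^*$.

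For part (2), inclusion-reversal is immediate: $\tau_1 \subseteq \tau_2$ implies $\tau_2^\perp \subseteq \tau_1^\perp$, hence $\tau_2^* \subseteq \tau_1^*$. To get bijectivity, apply part (1) to $\sigma^\vee$, using $(\sigma^\vee)^\vee = \sigma$, to obtain for each face $\eta \preceq \sigma^\vee$ a face $\eta^* := \sigma \cap \eta^\perp$ of $\sigma$. I would then verify that $\tau \mapsto \tau^*$ and $\eta \mapsto \eta^*$ are mutually inverse by showing $(\tau^*)^* = \tau$ and $(\eta^*)^* = \eta$. For the first: the inclusion $\tau \subseteq (\tau^*)^*$ follows from the definitions, while for the reverse, writing $\tau = \sigma \cap H_{m_0}$ with $m_0 \in \sigma^\vee$ shows $m_0 \in \tau^*$, and any $u \in (\tau^*)^* \subseteq \sigma$ must then satisfy $\langle m_0, u\rangle = 0$, placing $u$ in $\sigma \cap H_{m_0} = \tau$. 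The identity $(\eta^*)^* = \eta$ is proved identically by swapping the roles of $\sigma$ and $\sigma^\vee$.

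The main obstacle is the key claim in part (1), namely that a \emph{single} vector $v$ suffices to cut out $\tau^*$. A priori $\tau^*$ is defined by infinitely many linear conditions (one per $u \in \tau$), and the proof that one relative interior point encodes all of them is the essential geometric content; the rest of the argument is then a formal consequence combined with the self-duality $(\sigma^\vee)^\vee = \sigma$ established earlier.
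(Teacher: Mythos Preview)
Your proof is correct and follows essentially the same approach as the paper: part (1) uses a relative interior point of $\tau$ to exhibit $\tau^*$ as $H_v \cap \sigma^\vee$, and part (2) establishes bijectivity via the symmetric construction on $\sigma^\vee$. Your verification of $(\tau^*)^* = \tau$ using the defining hyperplane $m_0$ is in fact slightly more direct than the paper's version, which instead argues surjectivity separately and then deduces injectivity from the identity $\tau^* = ((\tau^*)^*)^*$.
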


\begin{proof}
	Let $v \in \Relint(\tau)$. Since $v\in \sigma = (\sigma^\vee)^\vee$, $F = H_v \cap \sigma^\vee$ is a face of $\sigma^\vee$. We will show that $F = \tau^*$. Pick an element $u \in \tau^*$. We have $\langle u , t \rangle = 0$ for all $t \in \tau$. In particular $\langle u,v \rangle =0$. Hence $u \in F$. Conversely, take any $u \in F$. If $\{ v_1, \ldots, v_k \}$ is a generating set of $\tau$, since $v \in \Relint(\tau)$, we can write $v = a_1 v_1 + \cdots + a_k v_k$, where  each $a_i$ is positive. Then we have $\langle u,v \rangle = a_1 \langle u, v_1 \rangle + \cdots + a_k \langle u, v_k \rangle$. Since $u \in \sigma^\vee$, we have $\langle u , v_i \rangle \geq 0$ for all $i$. Since the $a_i$'s are strictly positive, we must have $\langle u,v_i \rangle =0$ for all $i$, which means for any $t \in \tau$, $\langle u,t \rangle =0 $, i.e., $u \in \tau^\perp$. Thus $ u \in \sigma^\vee \cap  \tau^\perp$. This proves $\tau^* = F $. Hence $\tau^*$ is a face of $\sigma^\vee$.
	
	For the second part of the theorem, first note that the map is inclusion reversing. If $\tau \subset \tau'$ are two faces of $\sigma$, then $\tau^\perp \supset (\tau')^\perp$, which implies $\tau^* \supset (\tau')^*$. 
	Moreover, every face of $\sigma$ arises this way. If $F$ is a face of $\sigma^\vee$, then $F = H_v \cap \sigma^\vee$ for some $v \in \sigma$. Pick the smallest face $\tau$ of $\sigma$ containing $v$. We claim $F = \tau^*$. For any element $u \in \tau^*$, we have $u \in \tau^\perp \cap \sigma^\vee$. Since $u \in \tau^\perp$ and $v \in \tau$, we have $u \in H_v$. Hence $u \in H_v \cap \sigma^\vee = F$. Conversely, pick an element $u\in F$. Note that $u \in v^\perp$. By assumption, $\tau  \subset u^\perp$. Hence $u \in \tau^*$. 
	By definition of $\tau^*$, we have $\tau \subset (\tau^*)^*. $ Since the mapping is order reversing, we have $\tau^* \supset ((\tau^*)^*)^*$. On the other hand, any element $m \in \tau^*$ is orthogonal to every element of $\sigma \cap (\tau^*)^\perp \subset (\tau^*)^\perp$, so $\tau^* \subset ((\tau^*)^*)^*$, which implies $\tau^*  = ((\tau^*)^*)^*$. Combining this with the surjectivity we deduce that the mapping is bijective. 
\end{proof}

\begin{corollary}\label{C:LinealitySpace}
	The polyhedral cone $\sigma$ has a unique minimal face $W$ with respect to inclusion. In particular, $W$ satisfies the following properties.
	\begin{enumerate}
		\item $W = (\sigma^\vee)^\perp$.
		\item $W = \sigma \cap (-\sigma)$.
		\item $W$ is the largest subspace contained in $\sigma$.
	\end{enumerate} 
\end{corollary}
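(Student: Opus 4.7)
My plan is to leverage Theorem \ref{ConeCorrespondence} to get existence of the minimal face essentially for free, and then to unravel the definitions for the three characterizations. The bijection $\tau \mapsto \tau^*$ is inclusion-reversing between the faces of $\sigma$ and those of $\sigma^\vee$. Since $\sigma^\vee$ is the unique maximal face of itself, its preimage under this bijection is a unique face $W$ of $\sigma$ which, by the order-reversing property, is contained in every other face of $\sigma$. This establishes the existence and uniqueness of a minimal face.

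To pin down $W$ concretely, I would apply Theorem \ref{ConeCorrespondence} with the roles of $\sigma$ and $\sigma^\vee$ swapped, using $(\sigma^\vee)^\vee = \sigma$. This provides an inclusion-reversing bijection from faces of $\sigma^\vee$ to faces of $\sigma$ given by $F \mapsto \sigma \cap F^\perp$, which must coincide with the inverse of the original map. Setting $F = \sigma^\vee$ yields $W = \sigma \cap (\sigma^\vee)^\perp$.

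For the three stated properties I would argue as follows. For (1), the inclusion $W \subseteq (\sigma^\vee)^\perp$ is immediate from the description just obtained, and the reverse inclusion holds because any $v \in (\sigma^\vee)^\perp$ satisfies $\langle u, v\rangle = 0 \geq 0$ for every $u \in \sigma^\vee$, so $v \in (\sigma^\vee)^\vee = \sigma$. For (2), if $v \in (\sigma^\vee)^\perp$ then $-v \in (\sigma^\vee)^\perp$ as well, and both lie in $\sigma$ by (1), so $v \in \sigma \cap (-\sigma)$; conversely, if $v \in \sigma \cap (-\sigma)$ and $u \in \sigma^\vee$, then $\langle u, v\rangle \geq 0$ and $\langle u, -v\rangle \geq 0$ force $\langle u, v\rangle = 0$, giving $v \in (\sigma^\vee)^\perp$. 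For (3), $W = (\sigma^\vee)^\perp$ is automatically a linear subspace as the orthogonal annihilator of a set, and it lies in $\sigma$; for maximality, any subspace $V \subseteq \sigma$ is closed under negation, so $V \subseteq \sigma \cap (-\sigma) = W$.

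The only real subtlety, if one wishes to call it that, is identifying the inverse of the bijection from Theorem \ref{ConeCorrespondence} in order to write $W$ explicitly as $\sigma \cap (\sigma^\vee)^\perp$; once we accept that the theorem applies symmetrically via $(\sigma^\vee)^\vee = \sigma$, the remainder is a short definition chase.
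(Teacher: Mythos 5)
Your argument is correct and follows the same route as the paper: invoke Theorem \ref{ConeCorrespondence} to identify the minimal face as the one corresponding to the maximal face $\sigma^\vee$ of the dual, obtain $W = \sigma \cap (\sigma^\vee)^\perp = (\sigma^\vee)^\perp$, and then verify (2) and (3) by short direct computations. If anything, your handling of (3) is cleaner than the paper's—you note that $(\sigma^\vee)^\perp$ is a linear subspace because it is an orthogonal annihilator, whereas the paper asserts closure under scalar multiplication from the fact that $W$ is a face, which by itself only gives closure under nonnegative scaling; and your observation that applying the theorem directly to the cone $\sigma^\vee$ (sending its top face to $\sigma\cap(\sigma^\vee)^\perp$) would spare you the brief digression about identifying the inverse bijection is worth keeping in mind.
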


\begin{proof}
	Because of the correspondence between faces of $\sigma$ and $\sigma^\vee$, the minimal face W of $\sigma$ must correspond to the maximal face of $\sigma^\vee$, namely $\sigma^\vee$ itself. Thus, the minimal face of $\sigma$ is $(\sigma^\vee)^* = (\sigma^\vee)^\vee \cap (\sigma^\vee)^\perp = (\sigma^\vee)^\perp.$ 
	
	To show $(\sigma^\vee)^\perp= \sigma \cap (-\sigma)$, pick any $v  \in (\sigma^\vee)^\perp$. By definition, $v \in (\sigma^\vee)^\vee = \sigma$ and also $v$ lies in $((-\sigma)^\vee)^\vee= -\sigma$. Hence $v \in \sigma \cap (-\sigma).$ Conversely, given any $v \in \sigma \cap (-\sigma)$, for any element $u \in \sigma^\vee$, we have $\langle u,v \rangle \geq 0$ (since $v \in \sigma$) and  $-\langle u,v \rangle = \langle u, -v \rangle \geq 0$ (since $-v \in \sigma$). Thus $\langle u,v \rangle =0$, i.e., $v \in (\sigma^\vee)^\perp$. Hence $W = \sigma \cap (-\sigma)$. 
	
	First note that since $W$ is a face of $\sigma$, it contains $0$, it is closed under addition, and closed under scalar multiplication. Hence $W$ is subspace in $\sigma$. Then note that for any linear subspace $W'$ in $\sigma$ and any $u \in \sigma^\vee$, we must have $W' \subseteq u^\perp$. Hence $W' \subset W$. So $W$ is the largest subspace contained in $\sigma$.  
\end{proof}

\begin{definition}
	$W$ is called the \demph{lineality space} of $\sigma$. 
\end{definition}

The following example from \cite[Example 1.2.11]{CLS} illustrates Corollary \ref{C:LinealitySpace} when $\dim \sigma < \dim N$. 

\begin{example}
	Let $\sigma = \cone\{e_1,e_2 \} \subseteq \RR^3$. Figure \ref{F:LinealitySpace}
	shows $\sigma$ in $\RR^3$ and $\sigma^\vee$ in $(\RR^3)^*$. 
\end{example}	
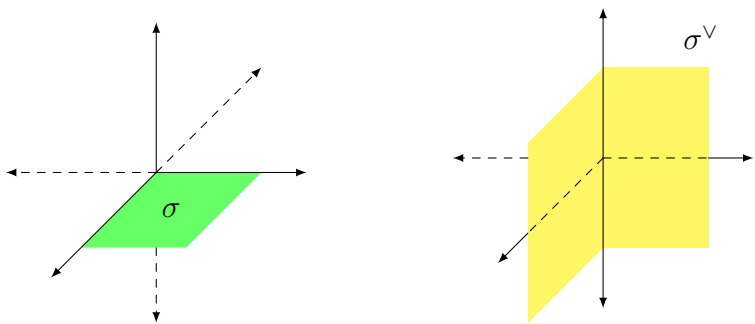
\begin{figure}[!h]
	\centering
	\begin{tikzpicture}[line cap=round,line join=round,>=latex,scale=1]
	
	\draw[->,dashed] (0,-1) -- (0,-2); 
	\fill[line width=0.pt,color=green,fill=green,fill opacity=0.6] (0,0) -- (-1,-1)  -- (.4,-1)--(1.4,0) -- cycle;
	
	\draw[->] (0,0) -- (0,2); 
	\draw[->] (0,0) -- (2,0); 
	\draw[->,dashed] (0,0) -- (-2,0); 
	\draw[->] (0,0) -- (-1.4,-1.4); 
	\draw[->,dashed] (0,0) -- (1.4,1.4); 
	
	\node at (0.2,-.5) {$\sigma$};
	\end{tikzpicture} \hspace{4em}
	\begin{tikzpicture}[line cap=round,line join=round,>=latex,scale=1]
	
	\fill[line width=12pt,color=green,fill=yellow,fill opacity=0.6] (0,-1.2) -- (-1,-2.2)  -- (-1,.2)--(0,1.2) -- cycle;
	\fill[line width=1pt,color=green,fill=yellow,fill opacity=0.6] (0,-1.2) -- (1.4,-1.2)  -- (1.4,1.2)--(0,1.2) -- cycle;
	
	\draw[<->] (0,-2) -- (0,2); 
	\draw[-,dashed] (0,0) -- (1.4,0); 
	\draw[->] (1.4,0) -- (2,0); 
	\draw[->,dashed] (-1,0) -- (-2,0); 
	\draw[-, dashed] (0,0) -- (-1,-1); 
	\draw[->] (-1,-1) -- (-1.4,-1.4); 
	\node at (1.3,1.6) {$\sigma^\vee$};
	\end{tikzpicture}
	
	\caption{A two-dimensional cone in $\RR^3$ and its dual in $(\RR^3)^*$.}
	\label{F:LinealitySpace}
\end{figure}

The maximal face of $\sigma$ is itself. Then
\begin{align*}
\sigma^* &= \{m\in (\RR^3)^* \mid \langle m,v \rangle = 0 \text{ for all } v \in \sigma \}\\
&=  \{m\in (\RR^3)^* \mid \langle m,e_1 \rangle = \langle m,e_2 \rangle = 0\}\\
&=\cone\{\pm e_3^*\}.
\end{align*}
Hence the minimal face of $\sigma^\vee$ is the $z$-axis. Note also that $\dim \sigma + \dim \sigma^* = 3$.  \hfill$\square$

We next prove some useful properties of faces. 
\begin{lemma} \label{SumofElementsinCone}
	Let $\tau$ be a face of a polyhedral cone $\sigma$ in $N$. If $v,w \in \sigma$ such that $v+w \in \tau$, then $v,w \in \tau$. 
\end{lemma}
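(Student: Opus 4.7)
The plan is to use the defining characterization of a face: since $\tau$ is a face of $\sigma$, there exists $m \in \sigma^\vee$ with $\tau = H_m \cap \sigma$. This immediately reduces the claim to an elementary positivity argument in the pairing.

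First I would unpack what $v+w \in \tau$ means with this supporting hyperplane description. By definition of $H_m$, we have $\langle m, v+w\rangle = 0$, and by bilinearity this becomes
\[
\langle m, v\rangle + \langle m, w\rangle \;=\; 0.
\]
Next, since $v, w \in \sigma$ and $m \in \sigma^\vee$, the definition of the dual cone gives $\langle m, v\rangle \geq 0$ and $\langle m, w\rangle \geq 0$. Two nonnegative real numbers summing to zero must each be zero, so $\langle m, v\rangle = \langle m, w\rangle = 0$. Therefore $v, w \in H_m$, and combined with $v, w \in \sigma$ this gives $v, w \in H_m \cap \sigma = \tau$.

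There is no real obstacle here; the whole argument reduces to observing that a sum of nonnegative numbers vanishes only when each summand vanishes, once the face has been written in its supporting-hyperplane form. I would keep the write-up to essentially these two displayed observations and conclude.
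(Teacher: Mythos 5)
Your proof is correct and matches the paper's argument essentially line for line: write $\tau = H_m \cap \sigma$ for some $m \in \sigma^\vee$, observe both pairings are nonnegative and sum to zero, conclude both vanish.
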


\begin{proof}
	Since $\tau$ is a face of $\sigma$, there exists an $m \in \sigma^\vee$ such that $\tau = H_m \cap \sigma$. Since $v,w \in \sigma$, we have $\langle m,v \rangle,  \langle m,w \rangle \geq 0$. On the other hand, since $v+w \in \tau$, we have $\langle m, v+w \rangle =0 $. So, we must have $ \langle m,v \rangle = \langle m ,w \rangle =0$, i.e., $v,w \in \tau$. 
\end{proof}

\begin{lemma}\label{L:coneFact}
	Let $\tau$ be a face of a polyhedral cone $\sigma$ in $N$. Then for any $w \in \tau^\vee$, there are $u,v \in \sigma^\vee$ with $v \in \tau^\perp$ such that $w = u-v$. 
\end{lemma}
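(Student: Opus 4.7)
The plan is to build $u$ from $w$ by adding a large positive multiple of a cleverly chosen element of $\sigma^\vee \cap \tau^\perp$. Since $\tau$ is a face of $\sigma$, by definition there exists some $m \in \sigma^\vee$ with $\tau = H_m \cap \sigma$. Crucially, $m$ simultaneously belongs to $\sigma^\vee$ and to $\tau^\perp$ (since $\tau \subset H_m = m^\perp$), making it the perfect candidate for the ``correction term''. I will set $v := k m$ for some scalar $k \in \RR_\geq$ to be determined, and then define $u := w + k m$; then $u - v = w$ and $v \in \sigma^\vee \cap \tau^\perp$ automatically, so the only remaining task is to show that $k$ can be chosen so that $u \in \sigma^\vee$.

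Write $\sigma = \cone\{v_1, \ldots, v_r\}$. By the lemma that precedes Corollary~\ref{DualIsaCone}, checking $u \in \sigma^\vee$ reduces to checking $\langle u, v_i\rangle \geq 0$ for each generator $v_i$. I will split the generators into two classes: those with $v_i \in \tau$ and those with $v_i \notin \tau$. For $v_i \in \tau$, we have $\langle m, v_i\rangle = 0$ (since $v_i \in H_m$) and $\langle w, v_i\rangle \geq 0$ (since $w \in \tau^\vee$ and $v_i \in \tau$), so $\langle u, v_i\rangle = \langle w, v_i\rangle \geq 0$ regardless of $k$. For $v_i \notin \tau$, we have $v_i \in \sigma \setminus H_m$, hence $\langle m, v_i\rangle > 0$ strictly, and so $\langle u, v_i\rangle = \langle w, v_i\rangle + k\langle m, v_i\rangle$ can be made nonnegative by choosing $k$ large enough.

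Concretely, I will set
\[
k := \max\Bigl\{\,0,\ \max_{v_i \notin \tau} \frac{-\langle w, v_i\rangle}{\langle m, v_i\rangle}\,\Bigr\},
\]
a finite maximum because there are only finitely many generators and each denominator is strictly positive. With this choice, $\langle u, v_i\rangle \geq 0$ holds for all $i$, so $u \in \sigma^\vee$, and $v = km \in \sigma^\vee \cap \tau^\perp$ since $k \geq 0$ and $m$ itself lies in both sets. Finally, $w = u - v$ by construction, completing the argument.

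There is no serious obstacle here; the one subtlety is simply recognizing that the defining functional $m$ of the face $\tau$ already lies in $\sigma^\vee \cap \tau^\perp$ and thus serves as the universal ``shift'' that moves $w$ into $\sigma^\vee$ without changing its difference-representative modulo $\tau^\perp \cap \sigma^\vee$. Once this observation is made, the rest is the same finite-max bookkeeping used in the proof of the face-of-a-face lemma above.
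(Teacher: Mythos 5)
Your proof is correct and follows essentially the same strategy as the paper's: write $\tau = H_m \cap \sigma$, take $v = km$ for a suitably large $k \geq 0$, and verify $w + km \in \sigma^\vee$ by checking the generators of $\sigma$ in the two cases $v_i \in \tau$ and $v_i \notin \tau$. Your choice $k = \max\{0, \max_{v_i\notin\tau}(-\langle w, v_i\rangle/\langle m, v_i\rangle)\}$ is a slightly cleaner normalization than the paper's $\max_{v_i\notin\tau}|\langle w, v_i\rangle|/\langle m, v_i\rangle$, but the argument is the same.
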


\begin{proof}
	Let $\sigma = \cone (v_1,\ldots,v_r)$, and $\tau = H_m \cap \sigma$ for some $m \in \sigma^\vee$. Note that if $v_i \notin \tau$, then $\langle v_i , m \rangle >0$. Hence define 
	\[
	k := \max_{v_i \notin \tau} \frac{\vert \langle w,v_i \rangle \vert }{\langle m , v_i\rangle}.
	\]
	Note that since $m\in \tau^\perp$, $km \in \tau^\perp$ as well. We claim that $w+km \in \sigma^\vee$. If $v_i \in \tau $, $\langle w +km,v_i \rangle = \langle w,v_i \rangle + k \langle m,v_i \rangle \geq 0$ as all terms that appear are nonnegative. If $v_i \notin \tau$,
	\[
	\langle w +km,v_i \rangle = \langle w, v_i \rangle + k \langle m, v_i\rangle =  \langle w,v_i \rangle + \max_{v_i \notin \tau} \frac{\vert \langle w,v_i \rangle \vert }{\langle m , v_i\rangle} \langle v_i, m\rangle  \geq 0.
	\]
	Hence $w+km \in \sigma^\vee$. So pick $u = w+km$ and $v=km$. 
\end{proof}

Using the correspondence given in Theorem \ref{ConeCorrespondence}, we can prove a stronger version of Corollary \ref{Seperation}. That is, when two cones intersect in a common face, we can separate these cones by a hyperplane. 

\begin{lemma}[Separation Lemma] \label{SeparationLemma}
	Let $\sigma$ and $\sigma'$ be polyhedral cones in $N$ that meet in a common face $\tau =\sigma \cap \sigma'$. Then, there exists an element $m \in \sigma^\vee \cap (-\sigma')^\vee$ such that 
	\[
	\tau = H_m\cap \sigma = H_m \cap \sigma'.
	\] 
\end{lemma}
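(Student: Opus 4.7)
The plan is to reduce the separation problem to a single polyhedral cone by considering the Minkowski difference $C := \sigma + (-\sigma')$, and then to pick $m$ in the relative interior of $C^\vee$. Since $\sigma$ and $-\sigma'$ are both polyhedral, so is $C$, and a short computation gives $C^\vee = \sigma^\vee \cap (-\sigma')^\vee$ (an element pairs nonnegatively with every $v - v'$ iff it pairs nonnegatively with every $v\in\sigma$ and nonpositively with every $v'\in\sigma'$). So any $m \in C^\vee$ automatically lies in $\sigma^\vee \cap (-\sigma')^\vee$; the task is to arrange that the vanishing locus of $m$ on $\sigma$ and on $\sigma'$ is exactly $\tau$.

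The key geometric step is to identify the lineality space $W = C \cap (-C)$ of $C$ (provided by Corollary \ref{C:LinealitySpace}) with the linear span $\tau - \tau$ of $\tau$. The inclusion $\tau - \tau \subseteq W$ is immediate because $\tau \subseteq \sigma \cap \sigma'$. Conversely, if $w \in W$ then $w = v_1 - v_1' = v_2' - v_2$ for some $v_i\in\sigma$ and $v_i'\in\sigma'$; rearranging gives $v_1 + v_2 = v_1' + v_2' \in \sigma \cap \sigma' = \tau$, and then Lemma \ref{SumofElementsinCone} applied to the face $\tau$ of both $\sigma$ and $\sigma'$ forces all four vectors to lie in $\tau$, so $w \in \tau - \tau$. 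A second, slightly easier application of Lemma \ref{SumofElementsinCone} then shows $W \cap \sigma = \tau$ and symmetrically $W \cap \sigma' = \tau$: if $v \in \sigma$ and $v = t_1 - t_2$ with $t_i \in \tau$, then $v + t_2 = t_1 \in \tau$, which by the lemma forces $v \in \tau$.

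Now choose $m$ in the relative interior of $C^\vee$. By Theorem \ref{ConeCorrespondence} (applied to the cone $C^\vee$, whose dual is $C$), the face of $C$ dual to the maximal face $C^\vee$ of $C^\vee$ is the minimal face, i.e.\ the lineality space $W$; in particular, for $c \in C$ one has $\langle m, c\rangle = 0$ iff $c \in W$. Since $\sigma \subseteq C$ via $v = v - 0$, this gives $H_m \cap \sigma = W \cap \sigma = \tau$, and since $-\sigma' \subseteq C$ via $-v' = 0 - v'$, the same argument yields $H_m \cap \sigma' = W \cap \sigma' = \tau$. The only potential obstacle is the degenerate case $C^\vee = \{0\}$, forcing $m = 0$; but $C^\vee = \{0\}$ means $C = N$, so $W = N$, and the identifications $W \cap \sigma = \tau$ and $W \cap \sigma' = \tau$ collapse to $\sigma = \sigma' = \tau$, where $m = 0$ trivially works. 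The main conceptual hurdle is recognizing that $\tau - \tau$ is exactly the lineality space of $\sigma + (-\sigma')$; once this is in hand, Theorem \ref{ConeCorrespondence} immediately delivers a uniform separating $m$.
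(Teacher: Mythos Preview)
Your proof is correct and follows essentially the same approach as the paper: form the cone $C=\sigma+(-\sigma')$, choose $m$ in the relative interior of $C^\vee$, and use that $H_m\cap C$ is the lineality space of $C$ together with Lemma~\ref{SumofElementsinCone} to identify $H_m\cap\sigma$ and $H_m\cap\sigma'$ with $\tau$. The only difference is organizational: you explicitly prove the intermediate identity $C\cap(-C)=\tau-\tau$ and then deduce $W\cap\sigma=\tau$, whereas the paper works directly with $\gamma\cap(-\gamma)$ without naming its linear span, and you add an explicit check of the degenerate case $C^\vee=\{0\}$ that the paper omits.
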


\begin{proof}
	Let $\gamma = \sigma + (-\sigma')$. Notice that $\gamma$ is a polyhedral cone. Fix any $m \in\Relint (\gamma^\vee).$ Then by Theorem \ref{ConeCorrespondence}, $(\gamma^\vee)^* = \gamma \cap H_m$ is the smallest face of $\gamma$, and 
	$\gamma \cap H_m = \gamma \cap (-\gamma)$. Hence $\gamma \cap H_m = (\sigma - \sigma') \cap (\sigma' - \sigma).$
	We claim that $m$ satisfies the desired properties. Since $\sigma \subset \gamma$, we have $m \in \gamma^\vee \subset \sigma^\vee$. Since $\tau \subset \sigma$, we have $\tau \subset \gamma$. Also since $\tau \subset \sigma'$, we have $\tau \subset -\gamma$. Hence $\tau \subset \gamma \cap (-\gamma) = \gamma \cap H_m \subset H_m$. 
	
	Conversely, given any element $v \in H_m \cap \sigma$, we have $$v \in H_m \cap \sigma \subset \gamma \cap H_m = \gamma \cap (-\gamma) \subset -\gamma = \sigma' - \sigma.$$ Hence $v = w-w'$ for some $w \in \sigma$ and $w' \in \sigma'$. This means that $v+w \in \sigma \cap \sigma' = \tau$. By Lemma \ref{SumofElementsinCone}, we have $v \in \tau$. This proves $\tau = H_m \cap \sigma$. Similar arguments can be made to show $\tau = H_m \cap \sigma'$.  
\end{proof}

\section{Monoids}\label{S:Monoids}

Monoids play an important role in constructing toric varieties associated to rational polyhedral cones. We will define these objects and give some important properties which will be helpful later.

\begin{definition}
	
	A \demph{monoid} is a nonempty set $S$ with an associative operation $* \colon S \times S \to S$ that has an identity element $1_S$, $1_S * s =s \text{ for all } s \in S$.
	If $S$ and $T$ are monoids, a map $f \colon S \to T$ is called a \demph{monoid homomorphism} if $f$ is compatible with the structure of the monoids, that is $f(s * t) =f(s) * f(t)$ and $f(1_S)= 1_T$.
	
	A monoid $S$ is called an \demph{affine monoid} if $S$ is finitely generated and isomorphic to a submonoid of a free abelian group.
\end{definition}

\begin{example}
	The monoids $\NN^n$ and $\ZZ^n$ with usual addition are affine monoids. Also for a given finite set $\calA \in \ZZ^n$, the monoids $\ZZ \calA$ and $\NN \calA$ are again affine monoids.  \hfill$\square$
\end{example}

\begin{definition}
	Given an affine monoid $S \subset M$, the \demph{monoid algebra} $\CC\left[S \right]$ is the vector space over $\CC$ with $S$ as a basis and multiplication induced by the monoid structure of $S$. To be more precise
	\[
	\CC\left[S\right] = \biggl\{  \sum_{m \in S} c_m \chi^m \mid c_m \in \CC \text{ and } c_m =0 \text{ for all but finitely many } m\biggr\},
	\]
	where the multiplication is distributive and induced by $\chi^m \cdot \chi^{m'} = \chi^{m * m'}$.
\end{definition}

\begin{example}
	The polynomial ring $\CC \left[\NN^n\right] = \CC \left[x_1, \ldots, x_n \right]$ is a monoid algebra.
	The ring of Laurent polynomials $\CC \left[\ZZ^n\right] = \CC \left[x_1^\pm, \ldots, x_n^\pm \right]$ is a monoid algebra. 
	
	For the set $\calA = \{ 2,3\} \subset \ZZ$, consider the affine monoid $S = \NN \calA$. Then the corresponding monoid algebra is given by $\CC \left[S\right] = \CC\left[ t^2,t^3\right] \subset \CC\left[t\right]. $
	
	For the set $\calA = \{(3,0), (2,1), (1,2), (0,3) \} \subset \ZZ^2$, consider the affine monoid $S= \NN \calA$. Then the corresponding monoid algebra is given by $\CC \left[S\right] = \CC\left[s^3,s^2t, st^2,t^3\right] \subset \CC \left[s,t\right]$.
	\hfill$\square$
\end{example}

Given a rational polyhedral cone $\sigma \subset N$,  we are particularly interested in the monoid $S_\sigma : = \sigma^\vee \cap M_\ZZ.$ These monoids play an important role when we construct toric varieties associated to $\sigma$. 

\begin{proposition}[Gordan's Lemma]
	$S_\sigma$ is finitely generated. 
\end{proposition}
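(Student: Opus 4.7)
The plan is to reduce the problem to a standard ``fundamental parallelepiped'' argument using integer generators for the dual cone.

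First, I would establish that $\sigma^\vee$ is itself a rational polyhedral cone, so that it admits generators $u_1,\ldots,u_s \in M_\ZZ$. Since $\sigma = \cone(v_1,\ldots,v_r)$ with $v_i \in N_\ZZ$, the lemma preceding Corollary \ref{DualIsaCone} writes $\sigma^\vee$ as the intersection of the rational halfspaces $H_{v_i}^+$. A quick application of Theorem \ref{FiniteIntofHalfspaces} combined with Theorem \ref{ConeCorrespondence} (applied to $\sigma^\vee$) then yields integer generators for $\sigma^\vee$, because each extreme ray of $\sigma^\vee$ is the intersection of some of these rational hyperplanes and hence is spanned by a rational, and therefore integer, vector.

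Next, define the bounded set
\[
K \;:=\; \{\,\lambda_1 u_1 + \cdots + \lambda_s u_s \mid 0 \leq \lambda_i \leq 1\,\} \;\subset\; M.
\]
Because $K$ is compact and $M_\ZZ$ is discrete in $M$, the intersection $K \cap M_\ZZ$ is finite. I claim this finite set generates $S_\sigma$ as a monoid. Given any $m \in S_\sigma$, write $m = \sum_{i=1}^s \lambda_i u_i$ with $\lambda_i \in \RR_\geq$, and split each coefficient into its integer and fractional parts:
\[
m \;=\; \sum_{i=1}^s \lfloor \lambda_i \rfloor\, u_i \;+\; \sum_{i=1}^s \bigl(\lambda_i - \lfloor \lambda_i \rfloor\bigr)\, u_i .
\]
The first summand is an $\NN$-linear combination of the $u_i$ (which themselves lie in $K \cap M_\ZZ$), while the second summand lies in $K$ by construction and is integral, being the difference of two lattice points. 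Hence $m$ is an $\NN$-linear combination of elements of $K \cap M_\ZZ$, proving $S_\sigma$ is finitely generated.

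The main obstacle is the opening step: showing rationality of $\sigma^\vee$. Once that is in hand, the rest is a short discretization argument. If one prefers to avoid a direct proof of the rationality of $\sigma^\vee$, one can instead proceed by induction on $\dim \sigma$ using faces and the correspondence of Theorem \ref{ConeCorrespondence}, but the parallelepiped approach outlined above is cleaner and yields an explicit generating set.
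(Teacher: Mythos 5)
Your proof is correct and takes essentially the same approach as the paper: the fundamental-parallelepiped argument using integer generators $u_1,\ldots,u_s$ of $\sigma^\vee$, finiteness of $K\cap M_\ZZ$ by compactness and discreteness, and the floor/fractional decomposition to show $K\cap M_\ZZ$ generates $S_\sigma$. The only difference is that you elaborate on why $\sigma^\vee$ has rational generators (the paper simply asserts this); note that your appeal to extreme rays does not literally apply when $\sigma^\vee$ has a positive-dimensional lineality space, though the conclusion is still easy to reach by other means.
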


\begin{proof}
	Since $\sigma$ is rational, so is $\sigma^\vee$. Take a generating set $\{u_1, \ldots, u_r \} \subset \sigma^\vee \cap M_\ZZ$ for $\sigma^\vee$, and define $K := \{\sum_{i=1}^{r} t_i u_i \mid 0\leq t_i \leq 1 \}$. Since $K$ is closed and  bounded in $M \simeq \RR^n$, it is compact. Since $M_\ZZ$ is discrete, the intersection $K\cap M_\ZZ$ is finite. (If this intersection were infinite, then by compactness there has to be a limit point $x$ of any infinite sequence of elements. But then $x$ is an element of the discrete set $K \cap M_\ZZ$ all of whose neighborhoods contain other points of $K \cap M_\ZZ$, which contradicts the discreteness.)
	
	Next we claim that $K \cap M_\ZZ$ generates $S_\sigma$ as a monoid. To prove this, let $w \in S_\sigma$. Since $w \in \sigma^\vee$, we can write $w = \sum_{i=1}^{r} a_i u_i$, where $a_i \geq 0$ for all $i$. Then write $a_i = z_i + t_i$, where $z_i$ is a nonnegative integer and $0 \leq t_i \leq 1$.  Hence, $u = \sum_{i=1}^{r} z_i u_i + \sum_{i=1}^{r} t_iu_i$. Note that the former sum is in $M_\ZZ$ and since $M_\ZZ$ is closed under subtraction, the latter sum is in $M_\ZZ$. It is in $K$ by definition. Hence the latter sum is in $K \cap M_\ZZ$. Since each $u_i$ is also in $K \cap M_\ZZ$, $w$ is a nonnegative integer combination of elements in $K \cap M_\ZZ$. 
\end{proof}

\begin{example} \label{Ex:Gordan}
	Consider the cones in Figure \ref{F:ConesR2}. Their duals are given in Figure \ref{F:DualConesR2}. The monoids $S_{\sigma_1}$, $S_{\sigma_2}$, and $S_{\sigma_3}$ are generated by the sets $\{e_1^*, e_2^*, -e_2^* \}$, $\{e_1^*, e_2^*\}$, and  $\{e_1^*, e_1^*+e_2^*, e_1^*+2e_2^*\}$, respectively. This also shows the cone generators and monoid generators may differ. \hfill$\square$
\end{example}

Next we would like to understand the relation between $S_\sigma$ and $S_\tau$ for some face $\tau$ of $\sigma$. We begin with a lemma.

\begin{lemma} \label{L:DualFaceRelation}
	Let $\sigma \subset N$ be a polyhedral cone. For an element $m \in \sigma^\vee$, let $\tau = H_m \cap \sigma$. Then, $\tau^\vee = \sigma^\vee + \RR_\geq(-m)$. 
\end{lemma}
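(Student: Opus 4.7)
The plan is to prove the stated equality by two inclusions, treating one as a routine verification and reserving the effort for the other.

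For the inclusion $\sigma^\vee + \RR_\geq(-m) \subseteq \tau^\vee$, I would just compute: given $u \in \sigma^\vee$, $\lambda \geq 0$, and $v \in \tau = H_m \cap \sigma$, expand
\[
\langle u - \lambda m,\, v\rangle \;=\; \langle u, v\rangle \;-\; \lambda\langle m, v\rangle.
\]
The first term is nonnegative since $v \in \sigma$ and $u \in \sigma^\vee$, while the second term vanishes because $v \in H_m$. So $u - \lambda m \in \tau^\vee$.

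For the reverse inclusion $\tau^\vee \subseteq \sigma^\vee + \RR_\geq(-m)$, the approach is to imitate the construction appearing in the proof of Lemma~\ref{L:coneFact}, where the correction term is explicitly a scalar multiple of $m$. Fix generators $\sigma = \cone(v_1,\ldots,v_r)$ and let $w \in \tau^\vee$. Since $\tau = H_m \cap \sigma$, we have $\langle m, v_i\rangle > 0$ precisely when $v_i \notin \tau$, so the quantity
\[
k \;:=\; \max_{v_i \notin \tau} \frac{|\langle w, v_i\rangle|}{\langle m, v_i\rangle}
\]
is a well-defined nonnegative real (take $k = 0$ if every $v_i$ lies in $\tau$). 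I would then check that $w + km \in \sigma^\vee$ by testing against each generator: if $v_i \in \tau$ then $\langle w, v_i\rangle \geq 0$ (since $w \in \tau^\vee$ and $v_i \in \tau$) and $\langle m, v_i\rangle \geq 0$ (since $m \in \sigma^\vee$), so the sum is nonnegative; if $v_i \notin \tau$, the definition of $k$ gives $k\langle m, v_i\rangle \geq |\langle w, v_i\rangle| \geq -\langle w, v_i\rangle$, hence $\langle w + km, v_i\rangle \geq 0$. Writing $w = (w + km) + k(-m)$ then exhibits $w$ as an element of $\sigma^\vee + \RR_\geq(-m)$.

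There is no real obstacle here; the only point that requires a moment's thought is the choice of the scalar $k$, and this is already foreshadowed by the analogous argument in Lemma~\ref{L:coneFact}. One could alternatively invoke Lemma~\ref{L:coneFact} as a black box to get $w = u - v$ with $v \in \sigma^\vee \cap \tau^\perp$, but the lemma alone does not force $v$ to be a multiple of $m$, so either one peeks inside its proof or one runs the short argument directly as above.
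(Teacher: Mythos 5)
Your argument is correct, but it takes a genuinely different route from the paper's. The paper avoids constructing any explicit decomposition of $w \in \tau^\vee$: instead it passes to duals, observing that since both $\tau^\vee$ and $\sigma^\vee + \RR_\geq(-m)$ are polyhedral cones, it suffices by biduality to show they have the same dual. It then checks directly that $(\sigma^\vee + \RR_\geq(-m))^\vee = \sigma \cap (-m)^\vee$, and that this set equals $H_m \cap \sigma = \tau$ (because for $v \in \sigma$, $\langle m, v\rangle \geq 0$ automatically, so $\langle -m, v\rangle \geq 0$ forces $\langle m, v\rangle = 0$). You instead prove both inclusions on the nose, and for the hard inclusion you lift the scalar-$k$ trick out of the proof of Lemma~\ref{L:coneFact} and re-run it here. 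Both are sound. The paper's proof buys you a cleaner argument with no case analysis, at the price of invoking $(\sigma^\vee)^\vee = \sigma$; yours is more self-contained and exhibits an explicit witness $u = w + km \in \sigma^\vee$, which can be useful elsewhere. Your closing remark is also correct: Lemma~\ref{L:coneFact} as stated only gives $v \in \sigma^\vee \cap \tau^\perp$, which need not be a multiple of $m$, so you do have to re-run the argument rather than cite the lemma. One tiny sharpening: in your first case ($v_i \in \tau$), you actually have $\langle m, v_i\rangle = 0$, not merely $\geq 0$, since $v_i \in H_m$; this does not affect the conclusion.
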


\begin{proof}
	Since both $\tau^\vee$ and $\sigma^\vee + \RR_\geq(-m)$ are polyhedral cones, it suffices to show that they have the same duals. We claim that $(\sigma^\vee + \RR_\geq(-m))^\vee = \sigma \cap (-m)^\vee$. To see this, take an element $v \in \sigma \cap (-m)^\vee$. For any element $u+r (-m) \in \sigma^\vee + \RR_\geq(-m) $, since $v$ is both in $\sigma$ and $(-m)^\vee$, we have $\langle u+r(-m),v \rangle = \langle u, v\rangle + r \langle -m,v \rangle \geq 0$. On the other hand, any element $t \in (\sigma^\vee + \RR_\geq(-m))^\vee$ must in particular be in $(\sigma^\vee)^\vee$ and $(-m)^\vee$. Hence we have $(\sigma^\vee + \RR_\geq(-m))^\vee = \sigma \cap (-m)^\vee$. 
	
	For  an element $v \in \sigma$, since $m \in \sigma^\vee$, we have $\langle m,v \rangle \geq 0$. Thus, $v \in (-m)^\vee$ if and only if $\langle -m,v \rangle \geq 0$. Hence $\langle m,v \rangle =0.$ So $\sigma \cap (-m)^\vee = H_m \cap \sigma = \tau$. Since $(\tau^\vee)^\vee =\tau$, $\tau^\vee$ and $\sigma^\vee + \RR_\geq(-m)$ have the same duals, so they are equal.
\end{proof}

\begin{example} \label{Ex:FaceViaCone}
	Consider the cones $\tau =\cone \{e_2\}, \ \sigma_1 = \cone\{e_1,e_2\}$, and $\sigma_2 = \cone \{e_2, -e_1-e_2 \}$ in $\RR^2$.  Their duals are given by $\tau^\vee =\cone\{\pm e_1^*,e_2^*\}, \ \sigma_1^\vee = \cone \{e_1^*, e_2^* \}$, and $\sigma_2^\vee = \cone \{ -e_1^*, -e_1^*+e_2^*\}.$
	
	First consider $\tau$ as a face of $\sigma_1$. For the vector $m_1=e_1^* \in \sigma_1^\vee$, we have $\tau = H_{m_1} \cap \sigma_1$ and $\tau^\vee = \sigma^\vee_1 + \RR_\geq(-m_1)$.
	
	Now consider $\tau$ as a face of $\sigma_2$. For the vector $m_2= -e_1^* \in \sigma_2^\vee$, we have $\tau = H_{m_2} \cap \sigma_2$ and $\tau^\vee = \sigma^\vee_2 + \RR_\geq(-m_2)$.\hfill$\square$
	
\end{example}

Using Lemma \ref{L:DualFaceRelation}, we deduce a result on lattice points in a face of a polyhedral cone. 

\begin{corollary} \label{C:MonoidFace}
	Let $\sigma$ be a rational cone, and $m \in \sigma^\vee \cap M_\ZZ$. Then $S_\tau = S_\sigma + \ZZ_\geq (-m)$, where $\tau =H_m \cap \sigma$. 
\end{corollary}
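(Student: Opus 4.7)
The plan is to deduce this corollary from Lemma \ref{L:DualFaceRelation} together with a short rounding argument to pass from real coefficients to integer ones.

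The easy inclusion $S_\sigma + \ZZ_\geq(-m) \subset S_\tau$ is essentially automatic. Since $\tau \preceq \sigma$, we have $\sigma^\vee \subset \tau^\vee$, so $S_\sigma \subset \tau^\vee$. Moreover $-m \in \tau^\vee$ because $\langle -m, v\rangle = -\langle m, v\rangle = 0$ for every $v \in \tau = H_m \cap \sigma$. Both $S_\sigma$ and $\ZZ_\geq(-m)$ therefore lie in $S_\tau = \tau^\vee \cap M_\ZZ$, and $S_\tau$ is closed under addition, which gives the containment.

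For the reverse inclusion, I would take $w \in S_\tau$ and use Lemma \ref{L:DualFaceRelation} to write $w = u + r(-m)$ for some $u \in \sigma^\vee$ and $r \in \RR_\geq$. The key trick is to round $r$ up: let $k$ be the smallest nonnegative integer with $k \geq r$. Then
\[
w + km \;=\; u + (k-r)m,
\]
and since $m \in \sigma^\vee$ with $k-r \geq 0$, the right-hand side is a sum of two elements of the cone $\sigma^\vee$, hence belongs to $\sigma^\vee$. On the other hand $w + km \in M_\ZZ$ because $w, m \in M_\ZZ$ and $k \in \ZZ$. Thus $w + km \in \sigma^\vee \cap M_\ZZ = S_\sigma$, and
\[
w \;=\; (w + km) + k(-m) \;\in\; S_\sigma + \ZZ_\geq(-m),
\]
which finishes the proof.

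The main obstacle, if any, is really just the bookkeeping in the rounding step: we must use that $m \in M_\ZZ$ (so $km$ stays integral and $w+km \in M_\ZZ$) together with the fact that $\sigma^\vee$ is closed under addition and nonnegative scaling (so the real slack $(k-r)m$ can safely be absorbed into $u$). Both facts are immediate, so once Lemma \ref{L:DualFaceRelation} is in hand the corollary follows quickly without any further case analysis.
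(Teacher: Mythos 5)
Your proof is correct and follows exactly the route the paper intends (the paper states this corollary as a direct consequence of Lemma~\ref{L:DualFaceRelation} without writing out the details); the rounding step $w + km = u + (k-r)m \in \sigma^\vee \cap M_\ZZ$ is precisely the right bookkeeping to pass from the real decomposition of $\tau^\vee$ to the integral one of $S_\tau$.
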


Lemma \ref{L:DualFaceRelation} also helps us to understand what $S_\tau$ looks like when $\tau$ is the intersection of two rational polyhedral cones.

\begin{theorem}
	Let $\sigma$ and $\sigma'$ be two rational polyhedral cones, and $\tau =\sigma \cap \sigma'$ be a face of both $\sigma$ and $\sigma'$. Then $S_\tau = S_\sigma + S_{\sigma'}.$
\end{theorem}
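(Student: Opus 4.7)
My plan is to prove the two inclusions separately, using the Separation Lemma together with Corollary \ref{C:MonoidFace}.

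First I would dispose of the easy inclusion $S_\sigma + S_{\sigma'} \subseteq S_\tau$. Since $\tau \subseteq \sigma$ and $\tau \subseteq \sigma'$, taking duals reverses inclusions, so $\sigma^\vee \subseteq \tau^\vee$ and $(\sigma')^\vee \subseteq \tau^\vee$. Intersecting with $M_\ZZ$ gives $S_\sigma, S_{\sigma'} \subseteq S_\tau$, and because $S_\tau$ is closed under addition, $S_\sigma + S_{\sigma'} \subseteq S_\tau$.

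For the reverse inclusion, I would invoke Lemma \ref{SeparationLemma} to obtain $m \in \sigma^\vee \cap (-\sigma')^\vee$ such that $\tau = H_m \cap \sigma = H_m \cap \sigma'$. The subtle point here is that we need $m$ to be in $M_\ZZ$. Examining the proof of the Separation Lemma, $m$ is chosen in the relative interior of $\gamma^\vee$, where $\gamma = \sigma + (-\sigma')$. Since $\sigma$ and $\sigma'$ are rational, $\gamma$ is a rational polyhedral cone, and so is $\gamma^\vee$. The relative interior of a rational polyhedral cone is open in its linear span and contains rational points; clearing denominators by a positive integer scalar (which keeps us in the relative interior) yields an integer point. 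Hence $m$ may be chosen in $\sigma^\vee \cap (-\sigma')^\vee \cap M_\ZZ$, which means $m \in S_\sigma$ and $-m \in S_{\sigma'}$.

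Now I would apply Corollary \ref{C:MonoidFace} to the pair $(\sigma,\tau)$ using this integer $m \in \sigma^\vee \cap M_\ZZ$ with $\tau = H_m \cap \sigma$: it yields
\[
S_\tau = S_\sigma + \ZZ_\geq(-m).
\]
Since $-m \in S_{\sigma'}$ and $S_{\sigma'}$ is closed under nonnegative integer combinations, $\ZZ_\geq(-m) \subseteq S_{\sigma'}$, giving $S_\tau \subseteq S_\sigma + S_{\sigma'}$. Combined with the first inclusion this finishes the proof.

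The main obstacle I anticipate is precisely the integrality issue for $m$: the Separation Lemma as stated produces only a real $m$, and one needs the rationality hypothesis on $\sigma,\sigma'$ to upgrade to $m \in M_\ZZ$. Once that is handled, everything else is a direct application of Corollary \ref{C:MonoidFace}.
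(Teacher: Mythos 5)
Your proof follows the paper's own argument: invoke the Separation Lemma (Lemma \ref{SeparationLemma}) to obtain $m$ exposing $\tau$ in both cones, then apply Corollary \ref{C:MonoidFace} and observe $-m \in S_{\sigma'}$. You also correctly notice a detail the paper glosses over — the Separation Lemma as stated produces a real $m$, not a lattice point — and your argument that $\Relint(\gamma^\vee)$ is a nonempty open subset of a rational subspace, hence contains a rational point that can be scaled into $M_\ZZ$, cleanly fills that gap.
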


\begin{proof}
	By the Separation Lemma, we have $\tau = H_m \cap \sigma = H_m \cap \sigma'$ for some $m \in \sigma^\vee \cap (-\sigma')^\vee \cap M_\ZZ$. Hence $S_\tau = S_\sigma+\ZZ_\geq (-m) = S_{\sigma'} + \ZZ_\geq(-m). $ In particular, $S_\sigma + S_{\sigma'} \subset S_\tau$. On the other hand, since $m \in (-\sigma')^\vee \cap M_\ZZ$, we have $-m \in S_{\sigma'}$. Hence $S_\tau = S_\sigma + \ZZ_\geq(-m) \subset S_\sigma + S_{\sigma'}.$ So, $S_\tau = S_\sigma+ S_{\sigma'}$ as desired.
\end{proof}

\begin{example}
	We continue with Example \ref{Ex:FaceViaCone}. The monoids $S_\tau$, $S_{\sigma_1}$, and $S_{\sigma_2}$ are generated by the sets $\{ \pm e_1^*, e_2^*\}$, $\{e_1^*, e_2^*\}$, and $\{ -e_1^*, -e_1^*+e_2^*\}$, respectively. Then one has $S_\tau = S_{\sigma_1} + \ZZ_\geq(-m_1)$ and $S_\tau = S_{\sigma_2} + \ZZ_\geq(-m_2)$.\hfill$\square$
\end{example}

\section{Fans}
We now introduce objects called \demph{fans}, which we will use when we construct abstract toric varieties. 
We start with an important type of polyhedral cones. 

\begin{definition}
	A polyhedral cone $\sigma \subseteq N$ is called \demph{strongly convex} if $\sigma \cap (-\sigma) = \{0\}.$
\end{definition}

This condition can be stated in several ways. 

\begin{proposition}
	Let $\sigma \subset N$ be a polyhedral cone. Then the following are equivalent.
	
	\begin{enumerate}
		\item $\sigma$ is strongly convex. 
		\item $\sigma$ contains no positive dimensional subspace of $N$.
		\item $\{0\}$ is a face of $\sigma$. 
	\end{enumerate}
\end{proposition}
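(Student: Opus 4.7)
The plan is to reduce all three conditions to a single statement about the lineality space $W$ of $\sigma$ by invoking Corollary \ref{C:LinealitySpace}, which supplies the three key facts that $W = \sigma \cap (-\sigma)$, that $W$ is the largest subspace contained in $\sigma$, and that $W$ is the unique minimal face of $\sigma$.

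First I would show $(1) \Leftrightarrow (2)$. By the corollary, $W = \sigma \cap (-\sigma)$ is the largest linear subspace contained in $\sigma$. So $\sigma \cap (-\sigma) = \{0\}$ says precisely that the largest subspace in $\sigma$ is the zero subspace, which is exactly the statement that $\sigma$ contains no positive dimensional subspace of $N$.

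Next I would show $(1) \Leftrightarrow (3)$. Again by the corollary, $W = \sigma \cap (-\sigma)$ is the unique minimal face of $\sigma$, and in particular $0 \in W$. If $\sigma$ is strongly convex, then $W = \{0\}$, so $\{0\}$ is a face of $\sigma$. Conversely, if $\{0\}$ is a face of $\sigma$, then by the uniqueness of the minimal face we must have $W \subseteq \{0\}$, and since $0 \in W$ this forces $W = \{0\}$, i.e., $\sigma \cap (-\sigma) = \{0\}$.

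There is essentially no obstacle here; the entire content of the proposition has already been packaged into Corollary \ref{C:LinealitySpace}, and the proof amounts to three short appeals to that corollary. The only subtlety worth flagging is the use of the \emph{uniqueness} of the minimal face in the direction $(3) \Rightarrow (1)$, since without uniqueness one could not conclude that the face $\{0\}$ must coincide with $W$.
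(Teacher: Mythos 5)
Your proof is correct and follows the same route as the paper: reduce all three conditions to statements about the lineality space $W=\sigma\cap(-\sigma)$ via Corollary \ref{C:LinealitySpace}, using that $W$ is the largest subspace in $\sigma$ for $(1)\Leftrightarrow(2)$ and that $W$ is the unique minimal face for $(1)\Leftrightarrow(3)$. The paper states this in two lines; you have merely spelled out the same appeals in more detail, and your flag about the uniqueness of the minimal face correctly identifies the one subtlety.
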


\begin{proof}
	(1) and (2) are equivalent, since $\sigma \cap (-\sigma)$ is the largest linear subspace of $\sigma$. (1) and (3) are equivalent because $\sigma \cap (-\sigma)$ is the smallest face of $\sigma$. 
\end{proof}

\begin{definition}
	A \demph{fan} $\Sigma$ in $N$ is a finite collection of polyhedral cones in $N$ such that
	\begin{enumerate}
		\item Every face of a cone in $\Sigma$ is also in $\Sigma$.
		\item The intersection of two cones in $\Sigma$ is a face of each cone.
	\end{enumerate}
	
	A fan $\Sigma$ is called \demph{rational} if every cone in $\Sigma$ is rational. It is called \demph{strongly convex} if every cone in $\Sigma$ is strongly convex.  The \demph{support} of $\Sigma$ is $\vert \Sigma \vert = \bigcup_{\sigma \in \Sigma} \sigma \subseteq N$. The fan $\Sigma$ is called \demph{complete} if $\vert \Sigma \vert =N$. 
	
\end{definition}

\begin{example}
	Figure \ref{F:Fans} are some examples of rational  fans in $\RR^2$. The fan on the left is not complete whereas the other two are complete. Note that any intersection of the cones in each fan is a face of each cone and a cone in the fan. \hfill$\square$
	
	\begin{figure}[!ht]
		\centering
		\begin{tikzpicture}[line cap=round,line join=round,>=latex,scale=.6]
		\fill[color=yellow,fill opacity=0.3] (0,0) -- (3,0) -- (3,3) -- (0,3) -- cycle;
		\fill[color=blue,fill opacity=0.3] (0,0) -- (6.5,-3.25) -- (3,0) -- cycle;

		\draw [->,thick] (0,0) -- (3.35,0);
		\draw [->,thick] (0,0) -- (6.5,-3.25);
		\draw [->,thick] (0,0) -- (0,3.5);
		
		\filldraw[very thick] (5,-2.5) circle (.04cm)node[anchor=east] {$2e_1-e_2$};
		\filldraw[very thick] (2.5,0) circle (.04cm)node[above] {$e_1$};
		\filldraw[very thick] (0,2.5) circle (.04cm)node[left] {$e_2$};
		
		\draw (1.5,1.5) node {$\sigma_0$};
		\draw (3,-.75) node {$\sigma_1$};
		\draw (3,-4) node[below] {$\Sigma_1$};
		\end{tikzpicture}\hspace{1cm}
		\begin{tikzpicture}[line cap=round,line join=round,>=latex,scale=.6]
		\fill[color=yellow,fill opacity=0.3] (0,0) -- (3,0) -- (3,3) -- (0,3) -- cycle;
		\fill[color=blue,fill opacity=0.3] (0,0) -- (0,3) -- (-3,3) -- (-3,-3) -- cycle;
		\fill[color=green,fill opacity=0.3] (0,0) -- (-3,-3) -- (3,-3) -- (3,0) -- cycle;

		\draw [->,thick] (0,0) -- (3.5,0);
		\draw [->,thick] (0,0) -- (-3.25,-3.25);
		\draw [->,thick] (0,0) -- (0,3.5);
		
		\filldraw[very thick] (-2.5,-2.5) circle (.04cm)node[right] {$-e_1-e_2$};
		\filldraw[very thick] (2.5,0) circle (.04cm)node[above] {$e_1$};
		\filldraw[very thick] (0,2.5) circle (.04cm)node[left] {$e_2$};
		
		\draw (1.5,1.5) node {$\sigma_0$};
		\draw (-1.5,1)  node {$\sigma_1$};
		\draw (1,-1.5) node {$\sigma_2$};
		\draw (0,-4) node[below] {$\Sigma_2$};
		\end{tikzpicture}\hspace{1cm}
		\begin{tikzpicture}[line cap=round,line join=round,>=latex,scale=.6]
		\fill[color=yellow,fill opacity=0.3] (0,0) -- (3,0) -- (3,3) -- (0,3) -- cycle;
		\fill[color=blue,fill opacity=0.3] (0,0) -- (0,3) -- (-3,3) -- (-3,0) -- cycle;
		\fill[color=green,fill opacity=0.3] (0,0) -- (0,-3) -- (3,-3) -- (3,0) -- cycle;
		\fill[color=red,fill opacity=0.3] (0,0) -- (0,-3) -- (-3,-3) -- (-3,0) -- cycle;
		
		\draw [->,thick] (0,0) -- (3.5,0);
		\draw [->,thick] (0,0) -- (-3.5,0);
		\draw [->,thick] (0,0) -- (0,3.5);
		\draw [->,thick] (0,0) -- (0,-3.5);
		
		\filldraw[very thick] (-2.5,0) circle (.04cm)node[above] {$-e_1$};
		\filldraw[very thick] (2.5,0) circle (.04cm)node[above] {$e_1$};
		\filldraw[very thick] (0,2.5) circle (.04cm)node[left] {$e_2$};
		\filldraw[very thick] (0,-2.5) circle (.04cm)node[left] {$-e_2$};
		
		\draw (1.5,1.5) node {$\sigma_0$};
		\draw (-1.5,1.5)  node {$\sigma_1$};
		\draw (1.5,-1.5) node {$\sigma_3$};
		\draw(-1.5,-1.5) node {$\sigma_2$};
		\draw (0,-4) node[below] {$\Sigma_3$};
		\end{tikzpicture}
		
		\caption{Fans in $\RR^2$.}
		\label{F:Fans}
	\end{figure}
	
\end{example}

Let $\Sigma $ and $\Sigma'$ be two fans in real vector spaces $N$ and  $N'$, respectively. Then the \demph{product} of $\Sigma$ and $\Sigma'$ in $N \times N'$ is given by
\[
\Sigma\times \Sigma' := \{\sigma \times \sigma' \textrm{ } \mid \sigma \in \Sigma \text{ and } \sigma' \in \Sigma'\}.
\]

\begin{lemma}
	The product $\Sigma \times \Sigma'$ is a fan in the real vector space
	$N \times N'$.  Moreover, $\Sigma \times \Sigma'$ is strongly convex if both $\Sigma$ and $\Sigma'$ are strongly convex.
\end{lemma}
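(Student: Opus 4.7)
The plan is to verify the two defining properties of a fan (closure under taking faces and that pairwise intersections are common faces), together with the strong-convexity statement. The key technical observation is that the face structure of a product cone is simply the product of the face structures of its factors, and that intersection distributes over the Cartesian product.

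First I would check that each $\sigma \times \sigma'$ is in fact a polyhedral cone in $N \times N'$. If $\sigma = \cone(v_1,\ldots,v_r)$ and $\sigma' = \cone(w_1,\ldots,w_s)$, a direct verification shows
\[
\sigma \times \sigma' \;=\; \cone\bigl((v_1,0),\ldots,(v_r,0),(0,w_1),\ldots,(0,w_s)\bigr),
\]
so $\sigma \times \sigma'$ is polyhedral. Under the natural identification $(N \times N')^\vee = M \times M'$ with pairing $\langle (m,m'),(v,v')\rangle = \langle m,v\rangle + \langle m',v'\rangle$, one also sees that $(\sigma \times \sigma')^\vee = \sigma^\vee \times (\sigma')^\vee$, which will be useful for faces.

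Next I would show that the faces of $\sigma \times \sigma'$ are exactly the products $\tau \times \tau'$ with $\tau \preceq \sigma$ and $\tau' \preceq \sigma'$. Given $(m,m') \in (\sigma \times \sigma')^\vee$, a point $(v,v') \in \sigma \times \sigma'$ lies in $H_{(m,m')}$ iff $\langle m,v\rangle + \langle m',v'\rangle = 0$; since each summand is nonnegative, this forces $\langle m,v\rangle = \langle m',v'\rangle = 0$. Hence $H_{(m,m')} \cap (\sigma \times \sigma') = (H_m \cap \sigma) \times (H_{m'} \cap \sigma')$, which is a product of faces. Conversely, for any faces $\tau = H_m \cap \sigma$ and $\tau' = H_{m'} \cap \sigma'$, the same identity shows $\tau \times \tau' = H_{(m,m')} \cap (\sigma \times \sigma')$ is a face. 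This immediately gives property (1) of a fan, since $\Sigma$ and $\Sigma'$ are closed under taking faces.

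For property (2), observe that $(\sigma_1 \times \sigma_1') \cap (\sigma_2 \times \sigma_2') = (\sigma_1 \cap \sigma_2) \times (\sigma_1' \cap \sigma_2')$ from the definition of Cartesian product. Because $\Sigma$ is a fan, $\sigma_1 \cap \sigma_2$ is a face of both $\sigma_1$ and $\sigma_2$, and similarly $\sigma_1' \cap \sigma_2'$ is a face of both $\sigma_1'$ and $\sigma_2'$. By the face characterization from the previous step, the product $(\sigma_1 \cap \sigma_2) \times (\sigma_1' \cap \sigma_2')$ is a face of $\sigma_1 \times \sigma_1'$ and of $\sigma_2 \times \sigma_2'$. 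Finiteness of $\Sigma \times \Sigma'$ is clear, so $\Sigma \times \Sigma'$ is a fan. Finally, for strong convexity, compute
\[
(\sigma \times \sigma') \cap \bigl(-(\sigma \times \sigma')\bigr) \;=\; \bigl(\sigma \cap (-\sigma)\bigr) \times \bigl(\sigma' \cap (-\sigma')\bigr);
\]
if $\sigma$ and $\sigma'$ are strongly convex this equals $\{0\}\times\{0\}=\{(0,0)\}$, so $\sigma \times \sigma'$ is strongly convex. I do not anticipate a serious obstacle; the only slightly subtle point is the face characterization, which hinges on the nonnegativity of each summand in the pairing forcing both to vanish.
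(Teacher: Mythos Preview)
Your proof is correct and follows essentially the same approach as the paper: both use that faces of $\sigma\times\sigma'$ are exactly products $\tau\times\tau'$ of faces, and that intersection distributes over the Cartesian product. You supply more detail than the paper (which simply asserts the face characterization without the $H_{(m,m')}$ argument), and for strong convexity you use the lineality-space criterion $(\sigma\times\sigma')\cap(-(\sigma\times\sigma'))=\{0\}$ whereas the paper uses the equivalent condition that $\{0\}=\{0_N\}\times\{0_{N'}\}$ is a face; both are immediate from the paper's proposition characterizing strong convexity.
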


\begin{proof}
	Note that a face of $\sigma \times \sigma'$ has the form $\tau \times \tau'$, where $\tau$ is a face of $\sigma$ and $\tau'$ is a face of $\sigma'$. Since $\Sigma$ and $\Sigma'$ are fans, we have $\tau \in \Sigma$ and $\tau' \in \Sigma'$. Hence their product is in $\Sigma \times \Sigma'$.  Also let $\sigma_1 \times \sigma_1', \sigma_2 \times \sigma_2' \in \Sigma \times \Sigma$. Again since $\Sigma$ and $\Sigma'$ are both fans and intersection of any two cones in a  fan is a face of each, $(\sigma_1 \cap \sigma_2) \times (\sigma_1' \times \sigma_2')$ is a face of both $\sigma_1 \times \sigma_1'$ and $\sigma_2 \times \sigma_2'$.
	
	Now assume that both $\Sigma$ and $\Sigma'$ are strongly convex, and let $\sigma \in \Sigma $ and $\sigma' \in \Sigma'$. Since both $\sigma$ and $\sigma'$ are strongly convex, $0_N$ is a face of $\sigma$ and   $0_{N'}$ is a face of $\sigma'$. Hence $0_{N\times N'} = 0_N \times 0_{N'}$ is a face of $\sigma \times \sigma'$, and thus $\sigma \times \sigma'$ is strongly convex.  Hence $\Sigma \times \Sigma'$ is strongly convex.
\end{proof}

\begin{example}
	Consider the fan $\Sigma_3$ in Figure \ref{F:Fans}. It can be viewed as a product of two fans, where each fan consists of three cones $\RR_\geq$, $\RR_\leq$, and 0.\hfill$\square$
\end{example}

For a cone $\sigma$ in $N$, let $(N_\sigma)_\ZZ$ be the sublattice of $N_\ZZ$ generated (as a group) by $\sigma \cap N_\ZZ$. The lattice $(N_\sigma)_\ZZ$ has the quotient lattice $N(\sigma)_\ZZ := N_\ZZ/(N_\sigma)_\ZZ$, and the dual lattice of $N(\sigma)_\ZZ$ is given by $M(\sigma)_\ZZ := \sigma^\perp \cap M_\ZZ$ \cite[Page 52]{Fulton}. The \demph{star} of a cone $\sigma$  can be defined abstractly as the set of cones $\tau$ in $\Sigma$ that contain $\sigma$ as a face. Such cones are determined by their images in the real vector space $N(\sigma)$, i.e., by  
$$\overline{\tau} = (\tau + N_\sigma) / N_\sigma \subset N / N_\sigma = N(\sigma).$$ 
These cones $\{ \overline{\tau} \colon \sigma \preceq \tau \}$ form a fan in $N(\sigma)$, and we denote this fan by $\text{star}(\sigma)$. (We think of $\text{star}(\sigma)$ as the cones containing $\tau$, but realized as a fan in the quotient lattice $N(\sigma)_\ZZ$.)

We next show that $\text{star}(\sigma)$ is indeed a fan, and we explore some of its properties. 

\begin{lemma}
	\label{L:StarStronglyConvex}
	Let $\Sigma$ be a strongly convex fan in $N$ and $\sigma \in \Sigma$. Then $\text{star}(\sigma)$ is a strongly convex fan in $N(\sigma)$. Moreover, if $\Sigma$ is complete, then $\text{star}(\sigma)$ is also complete.  
\end{lemma}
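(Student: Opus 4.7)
The plan is to verify the three fan axioms for $\text{star}(\sigma)$ in turn and then address completeness. First, each $\overline{\tau} = (\tau + N_\sigma)/N_\sigma$ is a polyhedral cone because it is the linear image of the polyhedral cone $\tau$ under the quotient map $N \to N(\sigma)$. For strong convexity of $\overline{\tau}$, suppose $\overline{v} \in \overline{\tau} \cap (-\overline{\tau})$, so that $\overline{v} = \overline{v_1} = -\overline{v_2}$ for some $v_1, v_2 \in \tau$. Then $v_1 + v_2 \in N_\sigma = \sigma - \sigma$, so we may write $v_1 + v_2 = s_1 - s_2$ with $s_1, s_2 \in \sigma$. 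Rearranging gives $v_1 + (v_2 + s_2) = s_1 \in \sigma$, with both summands in $\tau$. Applying Lemma \ref{SumofElementsinCone} to the face $\sigma \preceq \tau$ forces $v_1 \in \sigma \subseteq N_\sigma$, so $\overline{v} = \overline{0}$.

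For the face and intersection axioms, I would first establish that the faces of $\overline{\tau}$ are exactly the cones $\overline{\tau'}$ with $\sigma \preceq \tau' \preceq \tau$. If $\tau' = H_m \cap \tau$ with $m \in \tau^\vee$ and $\sigma \preceq \tau'$, then $\sigma \subseteq H_m$ forces $m \in \sigma^\perp$, so $m$ descends to a functional $\overline{m}$ on $N(\sigma)$, and a direct check yields $\overline{\tau'} = H_{\overline{m}} \cap \overline{\tau}$; conversely every supporting hyperplane of $\overline{\tau}$ lifts to an element of $\tau^\vee \cap \sigma^\perp$, producing such a $\tau'$. Now for $\sigma \preceq \tau_1, \tau_2$ in $\Sigma$, the cone $\tau_1 \cap \tau_2$ is a face of each containing $\sigma$, and $\sigma$ is itself a face of $\tau_1 \cap \tau_2$ (since $H_m \cap (\tau_1 \cap \tau_2) = \sigma \cap \tau_2 = \sigma$ for any $m$ cutting out $\sigma$ from $\tau_1$). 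To prove $\overline{\tau_1} \cap \overline{\tau_2} = \overline{\tau_1 \cap \tau_2}$, the nontrivial inclusion uses a shifting trick: if $\overline{v} = \overline{v_1} = \overline{v_2}$ with $v_i \in \tau_i$, then writing $v_1 - v_2 = s_1 - s_2$ with $s_i \in \sigma$ produces $v_0 := v_1 + s_2 = v_2 + s_1 \in \tau_1 \cap \tau_2$ with $\overline{v_0} = \overline{v}$. Combined with the face description, this makes $\overline{\tau_1 \cap \tau_2}$ a common face of $\overline{\tau_1}$ and $\overline{\tau_2}$.

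For completeness, assume $\Sigma$ is complete and let $\overline{v} \in N(\sigma)$ with lift $v \in N$. Fix $v_0 \in \Relint(\sigma)$; the union of cones of $\Sigma$ not containing $v_0$ is closed and avoids $v_0$, so for sufficiently small $\epsilon > 0$ the point $v_0 + \epsilon v$ lies in some cone $\tau \in \Sigma$ with $v_0 \in \tau$. Then $\sigma \cap \tau$ is a face of $\sigma$ meeting $\Relint(\sigma)$, which forces $\sigma \cap \tau = \sigma$ and hence $\sigma \preceq \tau$. Reducing modulo $N_\sigma$ gives $\epsilon \overline{v} = \overline{v_0 + \epsilon v} \in \overline{\tau}$, so $\overline{v} \in \overline{\tau}$. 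The main obstacle is the intersection argument, whose shifting trick rests on $\tau \cap N_\sigma = \sigma$ (another application of Lemma \ref{SumofElementsinCone}); the remaining steps are essentially bookkeeping with the dual pairing together with a short continuity argument.
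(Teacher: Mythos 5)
Your proposal is correct and follows essentially the same route as the paper: both rely on Lemma~\ref{SumofElementsinCone} for strong convexity, both prove $\overline{\tau_1}\cap\overline{\tau_2}=\overline{\tau_1\cap\tau_2}$ by writing a difference in $N_\sigma$ as a difference of two elements of $\sigma$ and shifting, and both prove completeness by a continuity argument that perturbs a relative-interior point of $\sigma$ into a cone of $\Sigma$ containing $\sigma$. Your strong-convexity step is in fact slightly leaner than the paper's — you only need $v_1+v_2\in N_\sigma=\sigma-\sigma$, whereas the paper expands both error terms $w_1,w_2$ as differences and carries more bookkeeping — and you make explicit the two-way correspondence between faces of $\overline{\tau}$ and faces $\tau'$ with $\sigma\preceq\tau'\preceq\tau$, which the paper states in one direction (every face of $\overline{\tau}$ descends from some $\rho\in\text{star}(\sigma)$) and uses the other direction implicitly when concluding that $\overline{\tau_1\cap\tau_2}$ is a common face. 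One small remark: the "shifting trick" in your intersection argument does not actually depend on the identity $\tau\cap N_\sigma=\sigma$ that you flag at the end; it needs only $N_\sigma=\sigma-\sigma$ and $\sigma\subseteq\tau_i$, so that $v_1+s_2=v_2+s_1$ lands in $\tau_1\cap\tau_2$.
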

\begin{proof}
	We will first show that $\text{star}(\sigma)$ is strongly convex. Let $\overline{\tau}$ be a cone in $\text{star}(\sigma)$. We need to show $\overline{\tau} \cap (-\overline{\tau})=\{\overline{0}\}$. Assume that $\overline{v} \in \overline{\tau} \cap (-\overline{\tau})$. Then we can write
	\begin{equation}
	\label{Eq:StarStronglyConvex}
	v = v_1 + w_1 = -v_2 +w_2
	\end{equation}
	for some $v_1,v_2 \in \tau$ and  $w_1, w_2 \in N_\sigma$.
	Since every element of $N_\sigma$ can be written as a difference of elements of $\sigma$, we have
	$$
	w_1 = \alpha_1 - \beta_1, \ \ w_2 = \alpha_2 - \beta_2,
	$$
	for some $\alpha_1,\alpha_2,\beta_1,\beta_2 \in \sigma$. Hence by \eqref{Eq:StarStronglyConvex} we have
	\[
	v_1 +v_2 = w_2 -w_1 = (\alpha_2 + \beta_1) - (\alpha_1 + \beta_2).
	\]
	Therefore, we have
	\[
	v_1 + v_2 + (\alpha_1 + \beta_2) = \alpha_2 + \beta_1.
	\]
	Since $v_1+v_2 \in \tau$, $\alpha_1 + \beta_2 \in \sigma \subset \tau$, and $\alpha_2+\beta_1 \in \sigma$, by Lemma \ref{SumofElementsinCone} we have $v_1 + v_2 \in \sigma$. Again by Lemma \ref{SumofElementsinCone}   we have $v_1,v_2 \in \sigma$. So $\overline{v} \in N_\sigma$, i.e.,  $\overline{v}=\overline{0}$. Hence $\text{star}(\sigma)$ is strongly convex.
	
	Next, we show $\text{star}(\sigma)$ is a fan in $N(\sigma)$. Let $\tau_1$ and $\tau_2$ be two cones in $\Sigma$ containing $\sigma$. We claim that $\overline{\tau_1} \cap \overline{\tau_2} = \overline{\tau_1 \cap \tau_2}$. Reverse inclusion is immediate as any element $\overline{v} \in \overline{\tau_1 \cap \tau_2}$ is both in $\overline{\tau_1}$ and $\overline{\tau_2}$.
	Now, let $\overline{v} \in \overline{\tau_1} \cap \overline{\tau_2}$. Then
	$
	v = v_1 + w_1 =v_2 + w_2
	$
	for some $v_1 \in \tau_1$, $v_2 \in \tau_2$, and $w_1, w_2 \in N_\sigma$. Write $w_i = \alpha_i - \beta_i$ for some $\alpha_i, \beta_i \in \sigma$. Then $v_1 + \beta_1 + \beta_2 = v_2 +\alpha_1 + \alpha_2 \in \tau_2$. So $ v_1 + \beta_1 + \beta_2 \in \tau_1 \cap \tau_2 $. Then $v = (v_1 + \beta_1 + \beta_2) + (w_1 - \beta_1 - \beta_2) \in \tau_1 \cap \tau_2 + N_\sigma$. Hence $\overline{v} \in  \overline{\tau_1 \cap \tau_2}$. We conclude that $\overline{\tau_1} \cap \overline{\tau_2} = \overline{\tau_1 \cap \tau_2}$.
	Since $\tau_1, \tau_2 \in \Sigma$, their intersection is a common face of both. Since both contain $\sigma$, their intersection will contain $\sigma$ as a face. 
	In particular, $\overline{\tau_1} \cap \overline{\tau_2} \in \text{star}(\sigma)$ is a face of both $\overline{\tau_1}$ and $\overline{\tau_2}$.
	
	For a cone $\overline{\tau} \in \text{star}(\sigma)$, let $\overline{\rho}:=u^\perp \cap \overline{\tau}$ be a face of $\overline{\tau}$ for some $u \in M(\sigma)_\ZZ= \sigma^\perp \cap M_\ZZ$. Note that for the face $\overline{\rho}$, we have $\rho = u^\perp \cap \tau$. Since $\rho$ is a face of $\tau$ which contains $\sigma$, we deduce that $\rho \in \text{star}(\sigma)$. Hence $\text{star}(\sigma)$ is a fan.
	
	Next assume $\Sigma$ is complete. Let $v \in \Relint(\sigma)$. Since $N$ is complete, the set 
	\[
	N':= N \setminus \bigcup_{\gamma \nsucceq \sigma } \text{Cl}(\gamma),
	\]
	is open in $N$ as  the union of the closures $\text{Cl}(\gamma)$ is closed.
	Since $v \in N'$, there exists an open ball $B_r := \{ w \in N \mid |v-w| \leq r \}$ such that $B_r$ lies entirely in $\cup_{\tau \succeq \sigma} \tau$, where $|\cdot |$ is the Euclidean distance. Hence $\overline{B_r} \subset \text{star}(\sigma)$. Let $\overline{p} \in N(\sigma)$. We claim that $\overline{p} \in \text{star}(\sigma)$. Let $p \in N$ such that its image in $N(\sigma)$ is $\overline{p}$. If $p \in B_r$, then $\overline{p} \in \overline{B_r} \subset \text{star}(\sigma)$. If $ p \notin B_r$, let $|v-p| := R$. Then $r \leq R$. Hence for $s:= r/2R>0$ and $t := (1-r/2R)>0$, $q:=tv+ sp$ is in $B_r$. So there exists a cone $\tau \succeq \sigma$, such that $q \in \tau$. Then 
	\[
	p = \frac{1}{s} q - \frac{t}{s} v \in \tau + N_\sigma,
	\]
	as $(-t/s) v \in N_\sigma$.
	So $\overline{p} \in \text{star}(\sigma)$. Hence $\text{star}(\sigma)$ is complete. 
\end{proof}

\section{Polytopes} \label{S:Polytopes}

Polytopes provide a natural source of fans. We will define and give some properties of polytopes, and later we will define toric varieties associated to polytopes.

\begin{definition}
	A \demph{polytope} in $M$ is a set of the form
	\[
	P := \conv(S) =\left\{\sum_{a\in S} \lambda_a a \mid \lambda_a \geq 0, \ \sum_{a \in S} \lambda_a = 1 \right\},
	\]
	where $S \in M$ is finite. 
	We say that $P$ is the \demph{convex hull} of $S$. 
	
	If $S \subseteq M_\ZZ$ , i.e., $P$ is the convex hull of lattices in $M$, then $P$ is called a \demph{lattice polytope}. 
\end{definition}

\begin{example}
	The \demph{$n$-simplex} in $\RR^n$ is given by $\Delta_n = \conv\{0,e_1,\ldots,e_n\}.$ The 2-simplex can be seen in Figure \ref{F:2Simplex}.
	
	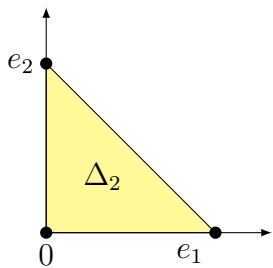
\begin{figure}[!ht]
		\centering
		\begin{tikzpicture}[line cap=round,line join=round,>=latex,scale=1.5]
		\filldraw[color=black,fill=yellow,fill opacity=0.4] (0,0) -- (1.5,0)  -- (0,1.5) -- cycle;
		[line cap=round,line join=round,>=latex,scale=1]
		
		\node at (.5,.5)  {$\Delta_2$};
		
		\draw[->] (0,0) -- (0,2); 
		
		\draw[->] (0,0) -- (2,0);

		\filldraw[very thick] (0,0) circle (.04cm) node[below]{$0$};
		
		\filldraw[very thick] (1.5,0) circle (.04cm)node[anchor=north east] {$e_1$};
		\filldraw[very thick] (0,1.5) circle (.04cm)node[left] {$e_2$};
		\end{tikzpicture}
		
		\caption{The 2-simplex.}
		\label{F:2Simplex}
	\end{figure}
	
	Another polytope in $\RR^2$ is given by the set $ S=\{e_1 + e_2, e_1 - e_2, -e_1 + e_2, -e_1 - e_2\}$, which can be seen in Figure \ref{F:P1P1Polytope}. \hfill$\square$
	
	\begin{figure}[!ht]
		\centering
		\begin{tikzpicture}[line cap=round,line join=round,>=latex,scale=1]
		\filldraw[color=black,fill=yellow,fill opacity=0.4] (-1.5,-1.5) -- (1.5,-1.5)  -- (1.5,1.5) -- (-1.5,1.5)--cycle;
		[line cap=round,line join=round,>=latex,scale=1]
		
		\node at (1,1)  {$P$};
		
		\draw[<->] (0,-2) -- (0,2); 
		
		\draw[<->] (-2,0) -- (2,0);

		\filldraw[very thick] (1.5,0) circle (.04cm)node[anchor=south west] {$e_1$};
		\filldraw[very thick] (0,1.5) circle (.04cm)node[anchor=south east] {$e_2$};
		\filldraw[very thick] (-1.5,0) circle (.04cm)node[anchor=south east] {$-e_1$};
		\filldraw[very thick] (0,-1.5) circle (.04cm)node[anchor=north east] {$-e_2$};
		\end{tikzpicture}
		
		\caption{The polytope $P = \conv \{\pm e_1 \pm e_2 \} \subseteq \RR^2.$}
		\label{F:P1P1Polytope}
	\end{figure}
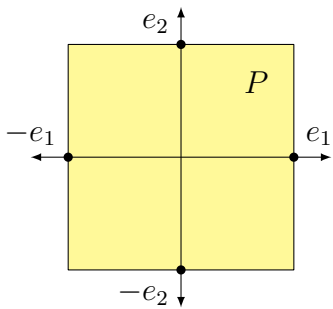
\end{example}

The \demph{dimension} of a polytope $P \in M$ is the dimension of the smallest affine subspace of $M$ containing $P$. 

Given a vector $v \in N$ and a scalar $b$, we get the affine hyperplane $H_{v,b} := \{m \in M\mid \langle m,v \rangle = b \}$ and the closed half-space $H_{v,b}^+:=\{m \in M\mid \langle m,v \rangle \geq b \} $.
A subset $F$ of $P$ is called a \demph{face} of $P$ if there exist $v \in N$ and a scalar $r$ such that $F = H_{v,b} \cap P$ and $P \subseteq H_{v,b}^+$.

Notice that, if we take $u =0$, we get $F = P$. Hence $P$ is always a face of itself. 
A \demph{proper} face of $P$ is a face that is not equal to $P$, and a \demph{facet} of $P$ is a face with codimension $1$. A face of $P$ with dimension 0 is called a \demph{vertex} of $P$. and a face of $P$ with dimension 1 is called an \demph{edge}  of $P$.

\begin{lemma} \label{L:PolytopeFace}
	Let $P =\conv(S)$ be a polytope, and let $F$ be a face of $P$. Then $F$ is the convex hull of $S \cap F$. 
\end{lemma}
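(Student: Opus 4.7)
The plan is to prove the two inclusions $\conv(S \cap F) \subseteq F$ and $F \subseteq \conv(S \cap F)$ separately, with the first being essentially immediate and the second carrying the real content.

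For the easy inclusion, I would first observe that $F = H_{v,b} \cap P$ is an intersection of two convex sets, hence convex. Since every element of $S \cap F$ lies in $F$, the convexity of $F$ forces $\conv(S \cap F) \subseteq F$.

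For the nontrivial direction, unpack the definition of a face: fix $v \in N$ and $b \in \RR$ such that $F = H_{v,b} \cap P$ and $P \subseteq H_{v,b}^+$. Take an arbitrary $x \in F \subseteq P = \conv(S)$ and write $x = \sum_{a \in S} \lambda_a a$ with $\lambda_a \geq 0$ and $\sum_{a \in S} \lambda_a = 1$. Pair with $v$ and use linearity together with $x \in H_{v,b}$ to get
\[
b \;=\; \langle x, v \rangle \;=\; \sum_{a \in S} \lambda_a \langle a, v \rangle.
\]
Since each $a \in S \subseteq P \subseteq H_{v,b}^+$ satisfies $\langle a, v\rangle \geq b$, and the weights $\lambda_a$ sum to $1$, the right-hand side is at least $b$, with equality forcing $\langle a, v \rangle = b$ for every $a$ with $\lambda_a > 0$. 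Such an $a$ lies in $H_{v,b} \cap P = F$, hence in $S \cap F$, so $x$ is a convex combination of elements of $S \cap F$.

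The main (only) obstacle is the observation that a nonnegatively-weighted average of numbers bounded below by $b$ equals $b$ only when each participating number equals $b$; this is what forces the nonzero-weight elements of the representation to lie in the supporting hyperplane. Everything else is definitional unpacking.
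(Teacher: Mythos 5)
Your proof is correct and takes essentially the same route as the paper's: both write an arbitrary $x \in F$ as a convex combination of $S$, pair against the defining linear functional, and use that equality in a weighted average of numbers $\geq b$ forces every positively weighted term to equal $b$. The only cosmetic difference is that you spell out the easy inclusion $\conv(S \cap F) \subseteq F$ separately, which the paper leaves implicit.
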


\begin{proof}
	Let $F = H_{v,b} \cap P$ for some $v \in N$ and a scalar $b$. Define the subsets $S_0 := \{ a\in S \mid \langle a,v \rangle = b  \}$ and $S_> := \{a\in S \mid \langle a,v \rangle > b  \}$. Since  $P \subseteq H_{v,b}^+$, we have $S = S_0 \cup S_>$. Also note that $S_0 = S \cap F$. We will show $F = \conv(S_0)$.  Let $u \in F$. Since also $u \in P$,  we can write it as a convex combination $u = \sum_{a \in S}\lambda_a a$ where $\lambda_u \geq 0$ and  $\sum_{a \in S} \lambda_a = 1$. 
	Then we have
	\begin{align*}
	b = \langle u,v \rangle & = \biggl \langle \ \sum_{a \in S}\lambda_a a ,v \biggr\rangle\\
	&= \sum_{a \in S} \lambda_a  \langle  a,v \rangle\\
	&= \sum_{a \in S_0} \lambda_a  \langle  a,v \rangle +  \sum_{a \in S_>} \lambda_a  \langle  a,v \rangle\\
	&= b \biggl(\ \sum_{a \in S_0}\lambda_a \biggr)+ \sum_{a \in S_>} \lambda_a  \langle a,v \rangle\\
	& \geq b
	\end{align*}
	and the equality holds if and only if $\lambda_a = 0$ for all $a \in S_>$. In this case 
	$u =\sum_{a \in S_0}\lambda_a a$, where  $\sum_{a \in S_0}\lambda_a =1$. Hence $P = \conv(S_0).$
\end{proof}

We list some immediate results following from Lemma \ref{L:PolytopeFace}. 

\begin{proposition}
	Let $P \subseteq M$ be a polytope. Then
	\begin{enumerate}
		\item A face of $P$ is a polytope.
		\item $P$ has only finitely many faces. 
		\item Any face of a face of $P$ is a face of $P$.
		\item The intersection of any two faces of $P$ is a face of $P$.
		\item Every proper face of $Q$ of $P$ is the intersection of the facets containing $Q$. 
	\end{enumerate}
\end{proposition}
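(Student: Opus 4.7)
The plan is to derive each assertion from Lemma~\ref{L:PolytopeFace}, which says every face of $P = \conv(S)$ has the form $\conv(S \cap F)$ for the finite set $S \subseteq M$ defining $P$. Parts (1), (2), and (4) will follow almost immediately; part (3) takes a short perturbation argument; and part (5) is the main obstacle.

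For (1) and (2), since $S \cap F$ is a finite subset of $M$, $F = \conv(S \cap F)$ is by definition a polytope, and distinct faces are determined by distinct subsets of $S$, so there are at most $2^{|S|}$ of them. For (4), given $F_i = H_{v_i, b_i} \cap P$ with $P \subseteq H^+_{v_i, b_i}$ for $i = 1, 2$, I would set $v := v_1 + v_2$ and $b := b_1 + b_2$; then for each $u \in P$,
\[
\langle u, v \rangle - b \;=\; (\langle u, v_1 \rangle - b_1) + (\langle u, v_2 \rangle - b_2)
\]
is a sum of two nonnegative terms, zero exactly when both summands vanish, i.e., when $u \in F_1 \cap F_2$. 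So $F_1 \cap F_2 = H_{v,b} \cap P$ is a face of $P$.

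For (3), I would use a perturbation trick in the spirit of Lemma~\ref{L:coneFact}. Suppose $F = H_{v, b} \cap P$ is a face of $P$ and $G = H_{v', b'} \cap F$ is a face of $F$, with the usual supporting inequalities. Set $v'' := v' + \lambda v$ and $b'' := b' + \lambda b$ for a scalar $\lambda \geq 0$ to be chosen. For each $a \in S$,
\[
\langle a, v'' \rangle - b'' \;=\; (\langle a, v' \rangle - b') + \lambda(\langle a, v \rangle - b).
\]
If $a \in S \cap F$ the second summand vanishes and the first is nonnegative, zero precisely when $a \in G$. If $a \in S \setminus F$ the second summand is strictly positive, so taking $\lambda$ larger than each ratio $(b' - \langle a, v' \rangle)/(\langle a, v \rangle - b)$ (over $a \in S \setminus F$ with positive numerator) makes the whole expression strictly positive. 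Hence $P \subseteq H^+_{v'', b''}$ and $S \cap H_{v'', b''} = S \cap G$; two applications of Lemma~\ref{L:PolytopeFace}—to $H_{v'', b''} \cap P$ as a face of $P$, and to $G$ as a face of $F$—give $H_{v'', b''} \cap P = \conv(S \cap G) = G$.

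The hard part is (5), which needs more than the lemma alone. My plan is a strong induction on $\dim P$, with the trivial base case $\dim P = 1$ where the only proper faces are the two endpoints, each a facet. In the inductive step I would first pick a facet $F$ of $P$ containing $Q$, using a maximality argument to show that any proper face is contained in some facet. Then $Q$ is a face of $F$, and by the inductive hypothesis applied to the lower-dimensional polytope $F$, $Q$ is the intersection of facets of $F$ containing it. Part (3) promotes each such facet of $F$ to a face of $P$ of codimension two. The final and most delicate step is showing that each such codimension-two face of $P$ is itself the intersection of two facets of $P$, which is the \emph{diamond property} of the face lattice at codimension two; combining these decompositions then writes $Q$ as an intersection of facets of $P$, and each such facet contains $Q$ because it contains the corresponding codimension-two face. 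The existence of the facet $F$ and the verification of the diamond property are where the main work lies, and both rely on the interplay between supporting hyperplanes and relative interiors established earlier in the section.
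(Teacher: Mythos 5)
Parts (1)--(4) are fine and match the path the paper indicates (the paper itself gives no proof, merely asserting that the proposition follows from Lemma~\ref{L:PolytopeFace}). Items (1) and (2) are immediate because a face is $\conv(S\cap F)$, hence determined by a subset of the finite set $S$; (4) by summing the defining functionals is the standard argument; and your perturbation for (3) correctly mirrors the proof of Lemma~\ref{L:coneFact}, with the right bookkeeping: the choice of $\lambda$ keeps all of $S$ on the correct side of $H_{v'',b''}$ while carving out exactly $S\cap G$, and two applications of Lemma~\ref{L:PolytopeFace} identify $H_{v'',b''}\cap P$ with $G$.

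For part (5), however, your argument is a sketch with two acknowledged holes, not a proof. You reduce (5) to two lemmas that you do not establish: (i) every proper face of a polytope is contained in a facet, and (ii) the diamond property for codimension-two faces. Both are genuine facts needing separation or relative-interior arguments of their own, and the induction is only complete once they are supplied. Since the paper simply declares the whole proposition an ``immediate result,'' you can hardly be faulted for finding (5) nontrivial, but the gap is real. A cleaner route, available once the polar duality theorem stated immediately after this proposition is in hand, is to dualize: under $F\mapsto F^*$ facets of $P$ correspond to vertices of $P^\circ$, and the claim ``$Q$ is the intersection of the facets of $P$ containing $Q$'' is precisely the dual of ``$Q^*$ is the convex hull of the vertices of $P^\circ$ lying in $Q^*$,'' which is Lemma~\ref{L:PolytopeFace} applied to $P^\circ$. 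That path bypasses the induction and both subclaims entirely.
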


\begin{definition}
	Let $P \subseteq M$ be a polytope. Then the \demph{cone of $P$} is defined by
	\[
	C(P) := \{ \lambda \cdot (v,1) \in M \times \RR \mid v \in P, \ \lambda \geq 0 \}.
	\]
\end{definition}

\begin{example}
	Consider a pentagon $P$ in $\RR^2$, which is a polytope. Figure \ref{F:ConeOfPolytope} illustrates the cone of $P$.  \hfill$\square$
	
	\begin{figure}[!ht]
		\centering
		\begin{tikzpicture}[line cap=round,line join=round,>=latex,scale=1]
		\draw[->] (0,0) -- (0,3); 
		\draw[->] (0,0) -- (3,3); 
		\draw[->] (0,0) -- (1.6,4); 
		\draw[->] (0,0) -- (-3,3); 
		\draw[->] (0,0) -- (-1.6,4); 
		
		\filldraw[color=black,fill=yellow] (0,1) -- (1.5,1.5)  -- (0.8,2) -- (-.8,2)--(-1.5,1.5)--cycle;
		\node at (0.8,1.5)  {$P$};
		\node at (-3,2)  {$C(P)$};
		\draw[->] (0,0) -- (0,3); 
		\draw[->,dashed] (0,0) -- (1.6,4); 
		\draw[->,dashed] (0,0) -- (-1.6,4); 
		\end{tikzpicture}
		
		\caption{The cone of a pentagon P.}
		\label{F:ConeOfPolytope}
		
	\end{figure}
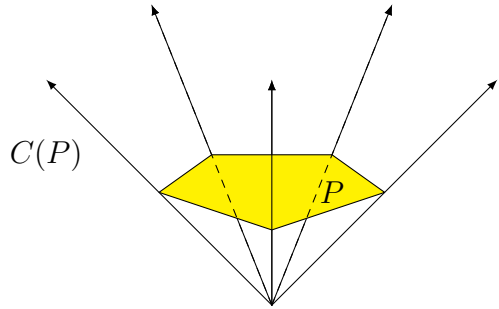
	
\end{example}

Note that $C(P) = \cone(S \times \{1\})$, where $P = \conv(S)$ is a strongly convex cone in $M \times \RR$. The faces of $C(P)$ are the cones over the faces of $P$ with the cone $\{0\}$ corresponding to the empty face of $P$. 

A polytope $P$ can be written as a finite intersection of closed half-spaces \cite[Theorem 1.1]{Ziegler}, i.e.,
\[
P = \bigcap_{i=1}^s H_{v_i,b_i}^+,
\] 
for some $v_1, \ldots,v_s \in N$ and scalars $b_1,\ldots,b_s$. 

When $P$ is full dimensional, that is  $\dim P = \dim M$, its representation as an intersection of closed half-spaces has a nice form because each facet $F$ of $P$ has a unique supporting hyperplane and corresponding closed half-space,
\[
H_F = \{ m \in M \mid \langle m, u_F \rangle = a_F \} \text{ and } H_F^+ =\{ m \in M \mid \langle m, u_F \rangle\geq a_F \},
\]
where $(u_F,a_F)$ is unique up to multiplication. We call $u_F$ an (\emph{inward}) \demph{facet normal} of the facet $F$.  Then one can write 
\[
P = \bigcap_{\mathclap{\substack{F \subseteq P \\
			\text{ is a facet}}}} \ H_F^+.
\]
For simplicity we will assume $P$ is full dimensional and contains $0$ in its interior. 

We have already seen the connection between cones and polytopes. Next we will develop a theory of duality of polytopes similar to cones. 

\begin{definition}
	Let $P \subseteq M$ be a polytope. Then we define the \demph{polar} of $P$ by
	\[
	P^\circ = \{ v \in N \mid \langle m,v \rangle \geq 1 \text{ for all } m\in P \}.
	\]
\end{definition}

\begin{example}
	Consider the polytope $P$ in Figure \ref{F:P1P1Polytope}. The facet normals  of $P$ are $\pm e_1$ and $\pm e_2$. Hence the polar $P^\circ$ is given by the inequalities $\vert x \vert + \vert y \vert \leq 1$ for $x,y \in N$. The polytope $P^\circ$ is pictured in Figure \ref{F:DualP1P1Polytope}. \hfill$\square$
	
	\begin{figure}[!ht]
		\centering
		\begin{tikzpicture}[line cap=round,line join=round,>=latex,scale=1]
		\filldraw[color=black,fill=yellow,fill opacity=0.4] (-1.5,0) -- (0,-1.5)  -- (1.5,0) -- (0,1.5)--cycle;
		[line cap=round,line join=round,>=latex,scale=1]
		
		\node at (1,1)  {$P$};
		
		\draw[<->] (0,-2) -- (0,2); 
		
		\draw[<->] (-2,0) -- (2,0); 
		\filldraw[very thick] (1.5,0) circle (.04cm)node[anchor=south west] {$e_1$};
		\filldraw[very thick] (0,1.5) circle (.04cm)node[anchor=south east] {$e_2$};
		\filldraw[very thick] (-1.5,0) circle (.04cm)node[anchor=south east] {$-e_1$};
		\filldraw[very thick] (0,-1.5) circle (.04cm)node[anchor=north east] {$-e_2$};
		\end{tikzpicture}
		
		\caption{Polar $P^\circ$ of the polytope $P = \conv \{\pm e_1, \pm e_2 \} \subseteq \RR^2$.}
		\label{F:DualP1P1Polytope}
	\end{figure}
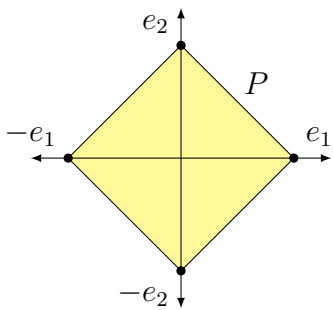
\end{example}

With $C(P)$ the cone of $P$, the dual cone $C(P)^\vee$ consists of vectors $(v,r) \in N \times \RR$ such that $\langle u,v \rangle + r \geq 0$ for all $u \in P$. It follows that $C(P)^\vee$ is the cone of $P^\circ$. This allows us to use the theory we developed in Section \ref{S:Cones}.

\begin{theorem}
	Let $P \subseteq M$ be a polytope. Then
	\begin{enumerate}
		\item $P^\circ$ is a polytope in $N$  and $(P^\circ)^\circ = P$. 
		\item For any face $F$ of $P$, $F^*:= \{v \in P^\circ \mid \langle u,v \rangle =1 \text{ for all } u \in F \}$ is a face of $P^\circ$. The map $F \mapsto F^* $ is one to one and order reversing correspondence between the faces of $P$ and $P^\circ$. 
		\item If $P$ is a lattice polytope, then so is $P^\circ$. 
	\end{enumerate}
	
\end{theorem}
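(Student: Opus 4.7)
The strategy is to pass back and forth between the polytope $P$ and its cone $C(P) \subseteq M \times \RR$, so that all three parts become corollaries of the theory of polyhedral cones already developed. The key input is the observation made just before the theorem statement: $C(P)$ is a polyhedral cone and $C(P^\circ) = C(P)^\vee$. Under the standing hypothesis that $P$ is full-dimensional and contains the origin in its interior, $C(P)$ is full-dimensional and $C(P)^\vee$ is strongly convex in the ``height'' direction, so slicing at height one produces a bounded set.

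For part (1), by Lemma \ref{L:PolytopeFace} the polar inequalities need only be checked at vertices of $P$, so $P^\circ$ is cut out by finitely many linear inequalities and is therefore a polyhedron. Boundedness follows because a small ball $B_r(0) \subseteq P$ forces $P^\circ$ into the ball of radius $1/r$. For $(P^\circ)^\circ = P$, one inclusion is immediate from the definition; for the reverse, I would argue either via the cone identity $C((P^\circ)^\circ) = (C(P)^\vee)^\vee = C(P)$ and then slice at height one, or directly by invoking Theorem \ref{Seperation} to separate a point $u \notin P$ from $P$ and thereby manufacture some $v \in P^\circ$ witnessing $u \notin (P^\circ)^\circ$.

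Part (2) I regard as the main content of the theorem. Every nonempty face $F$ of $P$ lifts to the face $C(F) \preceq C(P)$, and every nonzero face of $C(P)$ arises this way; the analogous statement holds for $P^\circ$ and $C(P^\circ)$. Applying Theorem \ref{ConeCorrespondence} to $C(P)$ yields an order-reversing bijection between faces of $C(P)$ and faces of $C(P)^\vee$. The critical step, and the hardest part of the argument, is to check that this bijection restricts correctly to the height-one slice: namely, that $(C(F))^* = C(P)^\vee \cap C(F)^\perp$, intersected with the affine hyperplane at height one, equals $F^*$. Unwinding the definitions, the orthogonality condition on $(v,r) \in C(F)^\perp$ translates into $\langle u,v\rangle + r = 0$ for all $u \in F$, which after fixing $r$ from the height-one normalization reduces to the defining condition of $F^*$. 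Order-reversal and bijectivity are then inherited directly from Theorem \ref{ConeCorrespondence}.

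For part (3), if $P$ is a lattice polytope then $C(P)$ is a rational polyhedral cone, its dual $C(P)^\vee$ is again a rational polyhedral cone by the rational case of Corollary \ref{DualIsaCone}, and the vertices of $P^\circ$ are obtained by solving linear systems whose coefficients come from lattice vertices of $P$, placing them in $N_\QQ$; this is the content one needs for the rational structure on $P^\circ$.
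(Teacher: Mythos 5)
The paper states this theorem without proof and merely points the reader at the cone machinery via the remark that $C(P^\circ) = C(P)^\vee$, so passing through $C(P)$ and invoking Theorem~\ref{ConeCorrespondence} is precisely the intended argument; your treatment of parts~(1) and~(2) is sound in outline, and the unwinding of the orthogonality condition on $(v,r) \in C(F)^\perp$ at height one is exactly the computation that makes part~(2) go through. Two issues are worth flagging, one notational and one substantive.

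The notational point: the paper's stated definition $P^\circ = \{v \mid \langle m,v\rangle \geq 1 \text{ for all } m \in P\}$ cannot be right, since $0 \in P$ forces $P^\circ = \emptyset$; consistency with $C(P^\circ) = C(P)^\vee$ (and with the worked example, where the square and the diamond are mutually polar) requires $\langle m,v\rangle \geq -1$, and correspondingly $F^*$ is cut out by $\langle u,v\rangle = -1$, not $+1$. Your derivation ``$\langle u,v\rangle + r = 0$ at $r = 1$'' already produces $\langle u,v\rangle = -1$, so you have tacitly used the correct sign; but since you claim this ``reduces to the defining condition of $F^*$'' as given, you should state explicitly that the condition in the theorem (and in the definition of $P^\circ$) carries a sign error, otherwise the chain of equalities you assert does not literally close.

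The substantive point concerns part~(3), which is false as stated. The polar of a lattice polytope is a \emph{rational} polytope, not in general a lattice polytope; the interval $P = [-2,2] \subset \RR$ has $P^\circ = [-\tfrac{1}{2}, \tfrac{1}{2}]$. Your own argument only ever delivers vertices of $P^\circ$ in $N_\QQ$ (you write ``placing them in $N_\QQ$'' and ``the rational structure on $P^\circ$''), so what you have proved is rationality of $P^\circ$, which is both correct and the best one can do without a reflexivity hypothesis on $P$. You should say this plainly rather than leave the reader to infer from your phrasing that the lattice claim does not actually follow: either replace ``lattice polytope'' by ``rational polytope'' in the conclusion, or add the hypothesis that $P$ is reflexive, which is exactly the condition under which $P^\circ$ is again a lattice polytope.
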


We next construct a fan using a polytope $P$ called the \demph{normal fan} of $P$.

\begin{definition}
	Let $P \subset M$ be a polytope with $\conv(\calA)=P$, and $F$ be a face of $P$. For each face $\calF := F\cap \calA$ of $\calA$, define a set $\sigma_\calF$ by
	\[
	\sigma_\calF := \{v \in N \mid \langle f,v \rangle \leq \langle a ,v \rangle \text{ for all } f \in \calF \text{ and } a \in \calA \}.
	\]
\end{definition}

The following proposition shows that $\sigma_\calF$ is a cone by associating it to a cone over a particular face in the dual polytope. 

\begin{proposition}
	The set $\sigma_\calF$ is a cone in $N$.
\end{proposition}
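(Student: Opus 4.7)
The plan is to exhibit $\sigma_\calF$ as a finite intersection of closed halfspaces in $N$ and verify it is closed under nonnegative scaling, so that Theorem \ref{FiniteIntofHalfspaces} gives that it is a polyhedral cone.

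The defining inequality $\langle f, v\rangle \leq \langle a, v\rangle$ is equivalent to $\langle a-f, v\rangle \geq 0$, i.e., $v \in H_{a-f}^+$. Since $P = \conv(\calA)$ with $\calA$ finite and $\calF \subseteq \calA$, this gives
\[
\sigma_\calF \;=\; \bigcap_{f \in \calF,\, a \in \calA} H_{a-f}^+,
\]
a finite intersection of closed halfspaces (the constraint being trivial when $a=f$). Each $H_{a-f}^+$ contains $0$ and is closed under nonnegative scaling, so the intersection inherits both properties; hence $\sigma_\calF$ is a cone, and Theorem \ref{FiniteIntofHalfspaces} upgrades this to polyhedral.

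To make the connection with the dual polytope suggested by the statement, I would identify $\sigma_\calF$ with (the image of) a face of $C(P)^\vee$, which the paper has already identified as the cone over $P^\circ$. The cone $C(P) = \cone(\calA \times \{1\})$ has a face $\tau_\calF := \cone(\calF \times \{1\})$, and by Theorem \ref{ConeCorrespondence} this has a dual face $\tau_\calF^* \subseteq C(P)^\vee$ consisting of pairs $(v,r) \in N \times \RR$ such that $\langle f,v\rangle + r = 0$ for every $f \in \calF$ and $\langle a,v\rangle + r \geq 0$ for every $a \in \calA$. The projection $(v,r) \mapsto v$ then carries $\tau_\calF^*$ bijectively onto $\sigma_\calF$, transferring the cone structure. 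The only mild subtlety, common to both approaches, is that $\langle \cdot,v\rangle$ is constant on $\calF$ for $v \in \sigma_\calF$, which one sees by applying the defining inequality in both directions between any $f_1, f_2 \in \calF$; since the real content is just the halfspace description above, I do not expect any serious obstacle.
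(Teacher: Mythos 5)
Your main argument is correct and takes a genuinely different, more elementary route than the paper. You rewrite the defining conditions as $v\in H_{a-f}^+$ for $f\in\calF$, $a\in\calA$, observe that $\sigma_\calF$ is therefore a finite intersection of closed halfspaces through the origin (so manifestly a convex cone), and invoke Theorem~\ref{FiniteIntofHalfspaces} to conclude it is polyhedral. The paper instead proves the proposition by identifying $\sigma_\calF$ with the cone $\RR_\geq\cdot F^*$ over the dual face $F^*$ of $F$ in the polar polytope $P^\circ$, checking the two inclusions by hand; that argument leans on the standing assumption that $P$ is full-dimensional with $0$ in its interior, which your halfspace computation never needs. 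The payoff of the paper's approach is a concrete generating description of $\sigma_\calF$ that makes the duality between the normal fan and the face lattice of $P$ explicit; the payoff of yours is brevity and fewer hypotheses. Your second-paragraph sketch through $C(P)^\vee$ is close in spirit to the paper's and essentially sound; the only caveat is that the projection $(v,r)\mapsto v$ from $\tau_\calF^*$ onto $\sigma_\calF$ is a bijection only when $\calF\neq\emptyset$ (for nonempty $\calF$ the $r$-coordinate is pinned down, as you observe, by the constancy of $a\mapsto\langle a,v\rangle$ on $\calF$), but since polyhedrality passes along any linear surjection the conclusion holds regardless.
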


\begin{proof}
	Define $C:=\RR_\geq \cdot F^*$, which is a cone in $N$. We claim that $\sigma_F = C$. For any element $ v \in C$, we can write $v = \alpha f^*$ for $\alpha \geq 0 $ and $f^*\in F^*$.  In particular $f^* \in P^\circ$, hence for any $a \in \calA$, we have $\langle a, v \rangle = \alpha \langle a,f^*\rangle \geq \alpha$. On the other hand, since $f^* \in F^*$, we have $\langle f,f^* \rangle =1$ for all $f \in \calF$. So we have $\langle f,v \rangle = \alpha \langle f,f^* \rangle = \alpha$. Hence $\langle f,v \rangle \leq \langle a,v \rangle$ for all $f \in \calF$ and $a \in \calA$. Thus $v \in \sigma_\calF$. 
	
	Conversely, suppose $v \in \sigma_\calF$. Since $0$ belongs to the interior of $P$, for all $f \in \calF$ we have $\langle f , v \rangle \leq \langle 0,v \rangle = 0$ with equality if and only if either $f=0$ or $v=0$. Since $F$ is a face of $P$, it must be on the boundary, and $0$ is an interior point of $P$, therefore $u \neq 0$. If $v=0$, then $v  \in C$ and we are done. If $v \neq 0$, then we have $\langle f,v \rangle > 0$. Let $\langle f,v \rangle = \alpha$ for some $\alpha >0$. Then $ v= \alpha (\frac{1}{\alpha} v)$ and $\langle f , \frac{1}{\alpha} v \rangle = 1$, which implies $\frac{1}{\alpha}v \in F^*$. So $v\in C$. 
\end{proof}

The following lemma is an immediate consequence of \cite[Proposition 2.3.7]{CLS}.
\begin{lemma}
	The collection 
	\[
	\Sigma_P \  := \{\sigma_\calF \mid \calF \text{ is a face of } \calA \}
	\] 
	of cones forms a fan in $N$.
\end{lemma}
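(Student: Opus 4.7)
The plan is to exploit the identification $\sigma_\calF = \RR_\geq \cdot F^*$ established in the preceding proposition, where $F = \conv(\calF)$ is the face of $P$ corresponding to $\calF$ and $F^*$ is the dual face in $P^\circ$. This already shows each $\sigma_\calF$ is a polyhedral cone, and $\Sigma_P$ is a finite collection since $\calA$ has finitely many faces. It remains to verify the two fan axioms. Throughout, I will use the inclusion-reversing bijection $F \mapsto F^*$ between faces of $P$ and $P^\circ$: if $\calF \subseteq \calG$ are faces of $\calA$, then $F \subseteq G$ as faces of $P$, so $F^* \supseteq G^*$, and hence $\sigma_\calG \subseteq \sigma_\calF$.

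For the first axiom (every face of $\sigma_\calF$ belongs to $\Sigma_P$), I would argue as follows. Since $0$ lies in the interior of $P$, the same holds for $P^\circ$, so any nonempty face $F^*$ of $P^\circ$ avoids $0$. Consequently, the faces of the cone $\RR_\geq \cdot F^*$ are precisely $\{0\}$ together with the cones $\RR_\geq \cdot H$, where $H$ ranges over the faces of $F^*$. A face $H$ of $F^*$ is itself a face of $P^\circ$, so polar duality writes $H = G^*$ for a unique face $G$ of $P$ containing $F$. Setting $\calG := G \cap \calA$, the given face equals $\sigma_\calG$. The cone $\{0\}$ is $\sigma_\calA$: since $P$ is full dimensional and $0$ lies in its interior, the only $v \in N$ with $\langle a, v\rangle$ constant on $\calA$ is $v = 0$.

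For the second axiom, I would show that for any two faces $\calF, \calF'$ of $\calA$ one has $\sigma_\calF \cap \sigma_{\calF'} = \sigma_\calH$, where $\calH$ is the smallest face of $\calA$ containing $\calF \cup \calF'$ (such $\calH$ exists because the face poset of $P$ is a lattice: take $H$ to be the intersection of all faces of $P$ containing both $F$ and $F'$). For $v \in N$, let $\calG(v) := \{a \in \calA \mid \langle a, v\rangle \text{ is minimal on } \calA\}$; this is a face of $\calA$, namely $F_v \cap \calA$ where $F_v$ is the face of $P$ on which $\langle \cdot, v\rangle$ attains its minimum. The defining inequalities of $\sigma_\calF$ read exactly $\calF \subseteq \calG(v)$. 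Thus $v \in \sigma_\calF \cap \sigma_{\calF'}$ if and only if $\calF \cup \calF' \subseteq \calG(v)$, which holds if and only if $\calH \subseteq \calG(v)$, i.e., $v \in \sigma_\calH$. Since $\calH$ contains both $\calF$ and $\calF'$, the first axiom already guarantees that $\sigma_\calH$ is a common face of $\sigma_\calF$ and $\sigma_{\calF'}$.

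The most delicate step will be the first axiom: the careful bookkeeping that matches faces of the cone $\RR_\geq \cdot F^*$ with cones over faces of $F^*$, and then tracks them back through polar duality to faces of $P$ (hence to faces of $\calA$). Everything rests on the standing assumptions that $P$ is full dimensional with $0$ in its interior, without which $P^\circ$ and the duality between their face lattices would degenerate.
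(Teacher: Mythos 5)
The paper proves this lemma only by citation, deferring to \cite[Proposition 2.3.7]{CLS}, so there is no in-paper argument against which to compare your route directly. Your self-contained proof is correct. You anchor everything on the identification $\sigma_\calF = \RR_\geq\cdot F^*$ from the preceding proposition and the polar-duality bijection $F\mapsto F^*$, then appeal to the standard fact (which the paper itself states for $C(P)=\cone(S\times\{1\})$ just before the polar section) that the face lattice of a cone over a polytope $Q$ avoiding the origin is the face lattice of $Q$ with the apex $\{0\}$ replacing the empty face. That gives the clean classification: faces of $\sigma_\calF$ are exactly $\sigma_\calG$ for faces $\calG\supseteq\calF$ of $\calA$, together with $\sigma_\calA=\{0\}$. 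Your second-axiom argument via $\calG(v)=F_v\cap\calA$ and the join $\calH$ in the face lattice of $P$ is sound, and the claim $\sigma_\calF\cap\sigma_{\calF'}=\sigma_\calH$ checks out because $\calG(v)$ is a face of $\calA$ so containing $\calF\cup\calF'$ is equivalent to containing $\calH$.

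One small imprecision in exposition: you write that ``the first axiom already guarantees that $\sigma_\calH$ is a common face.'' The first axiom per se (every face of a cone in $\Sigma_P$ lies in $\Sigma_P$) does not say that; what does the work is the stronger classification you actually established while proving it, namely that the faces of $\sigma_\calF$ are \emph{precisely} the $\sigma_\calG$ with $\calG\supseteq\calF$. Since $\calH\supseteq\calF$ and $\calH\supseteq\calF'$, that classification shows $\sigma_\calH$ is a face of each, which is exactly what the second axiom requires. It would be cleaner to state the classification as a standalone consequence of the first-axiom analysis and then invoke it explicitly. The case $F=P$ (so $F^*=\emptyset$, $\sigma_\calA=\{0\}$) also deserves a line to note it is trivial, as you implicitly realize by treating $\{0\}=\sigma_\calA$ separately.
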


\begin{definition}\label{D:NormalPolytope}
	The fan $\Sigma_P$ defined above is called (\emph{inner})  \demph{normal fan} of $P =\conv(\calA)$.
\end{definition}

\begin{example} \label{Ex:NormalFanP2}
	Consider the 2-simplex $\Delta_2$ given in Figure \ref{F:2Simplex}. It has three vertices $0^*, e_1^*$, and $e_2^*$. It has three edges $F_1 = \left[0^*,e_1^*\right]$, $F_2 = \left[0^*,e_2^*\right]$, and $F_3 = \left[e_1^*,e_2^*\right]$. 
	Then one has $\sigma_0 = \cone \{e_1,e_2\}, \  \sigma_{e_1} = \cone\{-e_1, -e_1+e_2\}$, and $\sigma_{e_2} = \cone\{e_1-e_2, e_2 \}$. For the edges $\sigma_{F_1} = \cone \{e_2\}, \ \sigma_{F_2} = \cone \{ e_1\}$, and $\sigma_{F_3} = \cone \{ -e_1-e_2\}$. These cones give the fan $\Sigma_2$ in Figure \ref{F:Fans}.\hfill$\square$
\end{example}

\pagebreak{}

\chapter{CLASSICAL TORIC VARIETIES}\label{CH:ClassicalTV}

We will now see different constructions of toric varieties. According to Theorem \ref{Th:EquivalentConstructions} these constructions are equivalent.  We will omit the proofs of some results in this section. The proofs of these results can be found in \cite{Fulton, CLS, Ewald}. 

\section{Affine Toric Varieties}
Let $\CC^\times$ be the group of nonzero complex numbers. A (complex) algebraic torus $\TT$ is an algebraic group isomorphic to $(\CC^\times)^n$ with componentwise multiplication. 

We will need the following useful facts about tori. We refer to \cite[Chapter 16]{humphreys} for proofs.

\begin{proposition}\label{P:Tori}$\ $
	\begin{enumerate}
		\item Let $\TT$ be a torus and $H \subseteq \TT$  be an irreducible subvariety of $\TT$ which is also a subgroup. Then $H$ is a torus. 
		
		\item If a morphism $\psi \colon \TT_1 \to\TT_2$ between two tori is a group homomorphism, then the image of $\psi$ is a closed torus in $\TT_2$.
	\end{enumerate}
	
\end{proposition}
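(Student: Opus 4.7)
The plan is to leverage the character lattice description of an algebraic torus $\TT \cong (\CC^\times)^n$, whose coordinate ring is the group algebra $\CC[M]$ of a free abelian group $M \cong \ZZ^n$, and to use the resulting correspondence between closed subgroups of $\TT$ and quotients of $M$.

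For part (1), I would begin by noting that since $H$ is a subvariety (hence closed) and a subgroup of $\TT$, its vanishing ideal $I(H) \subseteq \CC[M] = \CC[x_1^{\pm 1},\ldots,x_n^{\pm 1}]$ is a Hopf ideal. The key structural fact, provable either by averaging over the $\TT$-action on $\CC[M]$ or directly from the $M$-grading of $\CC[M]$, is that every such Hopf ideal is generated by binomials of the form $\chi^a - \chi^b$. Consequently, $H$ consists of those $t\in\TT$ satisfying $\chi^{a-b}(t)=1$ for each binomial generator. Letting $L$ be the sublattice of $M$ spanned by these differences $a-b$, we have
\[
H \;=\; \{t \in \TT \mid \chi^c(t) = 1 \text{ for all } c \in L\} \;\cong\; \Hom_\ZZ(M/L,\, \CC^\times).
\]
Irreducibility of $H$ forces $H$ to be connected, which in turn forces $M/L$ to be torsion-free: a cyclic torsion summand of order $k$ would otherwise contribute $k$ connected components indexed by the $k$-th roots of unity. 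Hence $M/L \cong \ZZ^m$ for some $m$, and $H \cong (\CC^\times)^m$ is a torus.

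For part (2), the image $\psi(\TT_1)$ is automatically a subgroup of $\TT_2$, and it is irreducible because it is the continuous image of the irreducible variety $\TT_1$. The remaining task is to show $\psi(\TT_1)$ is closed, after which part (1) applies directly. The plan is a standard density argument: by Chevalley's theorem on constructibility of morphism images, $\psi(\TT_1)$ is constructible in $\TT_2$, hence contains a subset $U$ that is open and dense in its Zariski closure $\overline{\psi(\TT_1)}$. Because multiplication and inversion are continuous in the Zariski topology, $\overline{\psi(\TT_1)}$ is itself an irreducible closed subgroup of $\TT_2$. For any $g \in \overline{\psi(\TT_1)}$, both $U$ and $g U^{-1}$ are nonempty open subsets of the irreducible variety $\overline{\psi(\TT_1)}$, so they intersect; any point $u \in U \cap gU^{-1}$ can be written as $u = g v^{-1}$ with $v \in U$, giving $g = u v \in U \cdot U \subseteq \psi(\TT_1) \cdot \psi(\TT_1) = \psi(\TT_1)$ since $\psi(\TT_1)$ is a subgroup. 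Therefore $\psi(\TT_1) = \overline{\psi(\TT_1)}$, and part (1) identifies it as a torus.

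The main obstacle is the binomial-ideal statement invoked in part (1): asserting that every Hopf ideal in $\CC[M]$ is generated by binomials $\chi^a - \chi^b$ requires either an explicit Reynolds-operator argument using the conjugation action of $\TT$ on its own coordinate ring, or a direct argument using the $M$-grading of $\CC[M]$ (which decomposes $I(H)$ into graded pieces, each of which is spanned by characters that coincide modulo a sublattice of $M$). Part (2) is comparatively mechanical: its only nontrivial input is Chevalley's constructibility theorem, and the density argument is essentially formal once closure of the image is identified as the main issue.
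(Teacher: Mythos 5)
The paper itself does not prove this proposition---it defers entirely to the cited chapter of Humphreys---so there is no internal proof to compare against. Your argument is correct and is the standard one from the structure theory of diagonalizable groups (exactly what that reference presents): for part~(1), the ideal $I(H)\subset\CC[M]$ is spanned by binomials $\chi^a-\chi^b$ with $\chi^{a-b}|_H\equiv 1$ (the cleanest justification is Dedekind's lemma on linear independence of distinct characters of $H$, which is what your grading remark amounts to once you decompose $\CC[\TT]$ into $H$-weight spaces under translation), so $H\cong\Hom_\ZZ(M/L,\CC^\times)$; irreducibility then forces $M/L$ torsion-free and $H$ a torus. For part~(2), constructibility plus the open-dense translate argument shows a constructible subgroup equals its closure, and part~(1) finishes. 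One small calibration: the phrase ``averaging over the $\TT$-action'' is a little loose, since the relevant action preserving $I(H)$ is translation by $H$ (equivalently, linear independence of characters of $H$), not of $\TT$; this does not affect the correctness of the proof.
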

We now introduce two important lattices associated to a torus $\TT$. 

\begin{definition}
	A \demph{character} of  $\TT$ is a morphism $\chi \colon \TT \to \CC^\times$ that is a group homomorphism. 
	A \demph{one parameter subgroup} or \demph{cocharacter} of $\TT$ is a morphism $\lambda \colon \CC^\times \to \TT$ that is a group homomorphism. 
	
\end{definition}

\begin{example}\label{Ex:Characters}
	Let $u = (u_1, \ldots, u_n) \in \ZZ^n$. The homomorphism defined by
	\begin{align*}
	\chi^u\colon \TT &\longrightarrow \CC^\times\\
	(t_1, \ldots ,t_n)  &\longmapsto t^u:=t_1^{u_1} \cdots t_n^{u_n}
	\end{align*} 
	is a group homomorphism. Hence $\chi^u$ is a character of $\TT$. \hfill$\square$
\end{example}

\begin{proposition}
	All characters of $\TT$ arise in the way described in Example \ref{Ex:Characters}.
\end{proposition}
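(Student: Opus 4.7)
The plan is to reduce the statement to the one-dimensional case $\TT = \CC^\times$ by restricting to coordinate subgroups, and then classify all algebraic-group homomorphisms $\CC^\times \to \CC^\times$ directly using the fact that regular functions on $\CC^\times$ are Laurent polynomials.

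First I would set up the reduction. Given a character $\chi \colon \TT \to \CC^\times$ with $\TT = (\CC^\times)^n$, define for each $i = 1, \ldots, n$ the inclusion $\iota_i \colon \CC^\times \to \TT$ by $t \mapsto (1,\ldots,1,t,1,\ldots,1)$ (with $t$ in the $i$-th slot). The composition $\chi \circ \iota_i \colon \CC^\times \to \CC^\times$ is again a morphism of algebraic groups. Assuming the one-dimensional case, there exists $u_i \in \ZZ$ with $(\chi \circ \iota_i)(t) = t^{u_i}$. Since every $(t_1,\ldots,t_n) \in \TT$ factors as the product $\iota_1(t_1)\cdots\iota_n(t_n)$ and $\chi$ is a group homomorphism, we get
\[
\chi(t_1,\ldots,t_n) \;=\; \prod_{i=1}^n \chi(\iota_i(t_i)) \;=\; t_1^{u_1}\cdots t_n^{u_n} \;=\; \chi^u(t_1,\ldots,t_n),
\]
where $u = (u_1,\ldots,u_n) \in \ZZ^n$, which is exactly the form in Example \ref{Ex:Characters}.

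Next I would handle the base case $n=1$. A morphism $\chi \colon \CC^\times \to \CC^\times$ is a regular map into $\CC^\times$, hence (as the coordinate ring of $\CC^\times$ is the Laurent polynomial ring $\CC[t^\pm]$) can be written as a Laurent polynomial $\chi(t) = \sum_{i \in \ZZ} a_i t^i$ with only finitely many nonzero $a_i$. The group-homomorphism condition $\chi(st) = \chi(s)\chi(t)$ becomes an identity of Laurent polynomials in two variables:
\[
\sum_{i} a_i\, s^i t^i \;=\; \Bigl(\sum_{i} a_i s^i\Bigr)\Bigl(\sum_{j} a_j t^j\Bigr) \;=\; \sum_{i,j} a_i a_j\, s^i t^j.
\]
Comparing coefficients of $s^i t^j$ yields $a_i a_j = 0$ for $i \neq j$ and $a_i^2 = a_i$ for every $i$. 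Hence each $a_i \in \{0,1\}$ and at most one $a_i$ is nonzero. Since $\chi(1) = 1$ (or equivalently $\chi$ is not identically zero), exactly one $a_u$ equals $1$, giving $\chi(t) = t^u$.

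The main obstacle, if any, is really the $n=1$ step above: one has to invoke that morphisms from $\CC^\times$ to $\CC^\times$ correspond to Laurent polynomials (which is a standard fact about the affine variety $\CC^\times = \Spec \CC[t^\pm]$), and then push the multiplicativity constraint to kill all but one monomial. The multivariable reduction is then essentially formal, relying only on the direct product decomposition of $\TT$ and the homomorphism property of $\chi$.
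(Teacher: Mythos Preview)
Your proof is correct and follows essentially the same route as the paper: reduce to the one-dimensional case by restricting to the coordinate subgroups $\iota_i(\CC^\times)$, then classify algebraic group homomorphisms $\CC^\times\to\CC^\times$ via their Laurent-polynomial expression. The only minor difference is in the $n=1$ step: the paper argues that a Laurent polynomial mapping $\CC^\times$ into $\CC^\times$ can have no zeros or poles away from $0$ and hence is a monomial, whereas you compare coefficients in the identity $\chi(st)=\chi(s)\chi(t)$; both arguments are standard and yours is arguably tidier since it directly forces the coefficient to be $1$.
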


\begin{proof}
	Let $\chi \colon  \TT \to \CC^\times$ be a character of $\TT$.
	Also let $T_i$ be the coordinate subgroup of  $\TT$ with all but the $i$-th coordinate equal to $1$.  Each $T_i$ is isomorphic as a group and variety to $\CC^\times$ under the map $\phi_i(t_1,\ldots,t_n)=t_i$. Let $\chi_i \colon \CC^\times \to \CC^\times$ be the morphism induced by $\chi$ restricted to $T_i$. Since $\chi$ is a group homomorphism, we have
	\[
	\chi = \chi_1 \cdots \chi_n,
	\]
	where each $\chi_i$ is a character of $\CC^\times$. Each $\chi_i$ is a single variable Laurent polynomial of the form $f(t)/g(t)$. Since it must map $\CC^\times$ to $\CC^\times$ and be defined everywhere, the only possible zeros of $f(t)$ and $g(t)$ is 0. Thus, each $\chi_i(t)=t^{u_i}$ for some integer $u_i$. Hence we have $\chi(t_1, \ldots, t_n) = t_1^{u_1} \cdots t_n^{u_n}$ for some $u_1, \ldots, u_n \in \ZZ$. 
\end{proof}

\begin{example}\label{Ex:OneParameterSubgroups}.
	Let $v = (v_1, \ldots,v_n) \in \ZZ^n$. The homomorphism defined by
	\begin{align*}
	\lambda^v \colon \CC^\times &\longrightarrow \TT\\
	t \ &\longmapsto (t^{v_1}, \ldots, t^{v_n})
	\end{align*} 
	is a group homomorphism. Hence $\lambda^v$ is a cocharacter of $\TT$.\hfill$\square$
\end{example}

\begin{proposition}
	All cocharacters of $\TT$ arise in the way described in Example \ref{Ex:OneParameterSubgroups}.
\end{proposition}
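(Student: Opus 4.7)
The plan is to reduce this statement to the character case just established, by using coordinate projections. Let $\lambda \colon \CC^\times \to \TT$ be any cocharacter. Composing with the $i$-th coordinate projection $\pi_i \colon \TT \to \CC^\times$, $(t_1,\ldots,t_n) \mapsto t_i$, yields for each $i \in \{1,\ldots,n\}$ a morphism
\[
\lambda_i := \pi_i \circ \lambda \colon \CC^\times \to \CC^\times.
\]
Since both $\lambda$ and $\pi_i$ are group homomorphisms, so is $\lambda_i$. Thus $\lambda_i$ is a character of the one-dimensional torus $\CC^\times$.

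Next I would invoke the previous proposition (applied to the one-dimensional torus $\CC^\times$, equivalently the argument given there for $n=1$): every such $\lambda_i$ must be of the form $\lambda_i(t) = t^{v_i}$ for some integer $v_i \in \ZZ$. Indeed, $\lambda_i$ is a morphism of affine varieties $\CC^\times \to \CC^\times$, hence given by a single Laurent polynomial $f(t)/g(t)$ in the coordinate ring $\CC[t,t^{-1}]$; the requirement that it maps $\CC^\times$ into $\CC^\times$ forces both $f$ and $g$ to have no zeros on $\CC^\times$, leaving only monomials $ct^{v_i}$, and the homomorphism condition $\lambda_i(1)=1$ forces $c=1$.

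Assembling the coordinates, for every $t \in \CC^\times$ we obtain
\[
\lambda(t) = \bigl(\lambda_1(t),\ldots,\lambda_n(t)\bigr) = (t^{v_1},\ldots,t^{v_n}) = \lambda^v(t),
\]
where $v := (v_1,\ldots,v_n) \in \ZZ^n$. Hence $\lambda$ is of the form described in Example \ref{Ex:OneParameterSubgroups}, which completes the proof.

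There is essentially no obstacle here; the main content has already been done in the character proposition, and the only new observation is that each coordinate of a cocharacter is itself a character of $\CC^\times$. One could even phrase the proof in one line as: the $n$ coordinate projections identify cocharacters of $\TT=(\CC^\times)^n$ with $n$-tuples of characters of $\CC^\times$, and characters of $\CC^\times$ are precisely the integer powers of the identity.
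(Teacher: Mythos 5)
Your proof is correct and follows essentially the same route as the paper: compose $\lambda$ with the coordinate projections $\pi_i = \chi_i$ to get characters of $\CC^\times$, identify each as $t\mapsto t^{v_i}$, and reassemble. (If anything, your phrasing is slightly more careful than the paper's, which loosely calls $\chi_i\circ\lambda$ an automorphism of $\CC^\times$ when it is in general only an endomorphism, i.e., a character.)
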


\begin{proof}
	Let $\chi_i \colon \TT \to \CC^\times$ be the character given by $\chi_i(t_1,\ldots, t_n) = t_i$ and let $\lambda \colon \CC^\times \to \TT$ be a cocharacter of $\TT$. Then the composition $\chi_i \circ \lambda \colon \CC^\times \to \CC^\times$ is an automorphism of $\CC^\times$. So there exists an integer $v_i$ so that  $\chi_i \circ \lambda (t) = t^{v_i}$. Hence $\lambda^v (t) = (t^{v_1}, \ldots, t^{v_n})$ for some $v_1, \ldots, v_n \in \ZZ$. 
\end{proof}

For an arbitrary torus $\TT \simeq (\CC^\times)^n$, its cocharacters and characters form free abelian groups $N_\ZZ$ and $M_\ZZ$ of rank $n$, respectively.  We say that an element $u \in M_\ZZ$ gives the character $\chi^u$, and an element $v \in N_\ZZ$ gives the cocharacter $\lambda^v$.  There is a natural pairing $\langle \ , \ \rangle \colon M_\ZZ \times N_\ZZ \to \ZZ$. Given a character $\chi^u$ and a cocharacter $\lambda^v$, the composition $\chi^u \circ \lambda^v \colon \CC^\times \to \CC^\times$ is a character of $\CC^\times$, which is given by $t \mapsto t^l$ for some $l \in \ZZ$. Then, we define $\langle u,v\rangle = l$.  We can identify $N_\ZZ$ with $\Hom(M_\ZZ,\ZZ)$ and $M_\ZZ$ with $\Hom(N_\ZZ,\ZZ)$. Hence it is customary to write a  torus as $\TT_N$ to indicate its cocharacters. Note that $\TT_N \simeq N_\ZZ \otimes_\ZZ \CC^\times$.

\begin{definition}\label{D:ATV}
	An \demph{affine toric variety} is an affine variety $V$ that contains $\TT_N$ as a Zariski open subset such that the action of $\TT_N$ on itself extends to an algebraic action of $\TT_N$ on $V$. 
\end{definition}

\begin{example} \label{Ex:AffineTV}
	The basic examples of affine toric varieties are $(\CC^\times)^n$ and $\CC^n$. 
	A less trivial example is the cuspidal cubic curve $C = V(x^3-y^2) \subseteq \CC^2$ with torus 
	$$C \setminus \{0 \}=\{ (t^2,t^3) \mid t \neq 0 \} \simeq \CC^\times.$$
	Note that $C$ is a nonnormal toric variety. 
	Another example of an affine toric variety is the variety $V=V(xy-zw) \subseteq \CC^4$ with the torus 
	$V\cap(\CC^\times)^4 = \{(t_1,t_2,t_3,t_1t_2t_3^{-1} ) \mid t_i \in \CC^\times \} \simeq (\CC^\times)^3.$ \hfill $\square$
\end{example}

We will now construct affine toric varieties from a given finite set $\calA$ in $M_\ZZ$. For $\calA = \{ a_1, \ldots, a_s \} \subseteq M_\ZZ$, every element $a_i \in M_\ZZ$ gives a character $\chi^{a_i} \colon \TT_N \to \CC^\times$. 

\begin{definition}\label{D:AffineTV}
	Let $\calA =\{a_1, \ldots, a_s\}$ be a finite subset in $M_\ZZ$. The \demph{affine toric variety} $Y_\calA$ is the Zariski closure of the image of the monomial map
	\begin{align*}
	\Phi_\calA \colon  \TT_N &\longrightarrow \CC^s \\
	t \ &\longmapsto (\chi^{a_1}(t) , \ldots,\chi^{a_s}(t) ).
	\end{align*}
\end{definition}  

We will justify this definition in Proposition \ref{P:AffineTV}, but first let us look at some examples. 

\begin{example}
	Consider the set $\calA = \{2,3\} \subseteq \ZZ$. Then the monomial map is given by
	\begin{align*}
	\Phi_\calA \colon \CC^\times &\longrightarrow \CC^2\\
	t \ &\longmapsto (t^2, t^3).
	\end{align*}
	The closure of the image of $\Phi_\calA$ is the cuspidal cubic of Example \ref{Ex:AffineTV}. 
	
	For another example, consider the set $\calA = \{(3,0), (2,1), (1,2), (0,3)\} \in \ZZ^2$. Then the monomial map is given by
	\begin{align*}
	\Phi_\calA \colon (\CC^\times)^2 &\longrightarrow \CC^4\\
	(s,t)\ &\longmapsto (s^3, s^2t, st^2, t^3).
	\end{align*}
	The closure of the image of $\Phi_\calA$ is the affine cone over the twisted cubic. \hfill$\square$
\end{example}

\begin{proposition}\label{P:AffineTV}
	$Y_\calA$ is an affine toric variety. 
\end{proposition}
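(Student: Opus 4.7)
The plan is to verify the three conditions in Definition~\ref{D:ATV}: that $Y_\calA$ is an affine variety containing an algebraic torus $\TT$ as a Zariski open subset, and that the multiplication of $\TT$ on itself extends to an algebraic action of $\TT$ on $Y_\calA$. Being the Zariski closure of a subset of $\CC^s$, the first property (affine variety) is immediate from the definition.

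First I would analyze the image $\Phi_\calA(\TT_N) \subseteq (\CC^\times)^s$. Since each coordinate $\chi^{a_i}$ is a group homomorphism $\TT_N \to \CC^\times$, the map $\Phi_\calA$ is a morphism of algebraic groups from $\TT_N$ to $(\CC^\times)^s$. By Proposition~\ref{P:Tori}(2) its image is a closed algebraic subtorus of $(\CC^\times)^s$; call it $\TT_\calA$. Because $\TT_\calA$ is closed in $(\CC^\times)^s$, we have the identity
\[
\TT_\calA \;=\; Y_\calA \cap (\CC^\times)^s,
\]
which exhibits $\TT_\calA$ as the intersection of $Y_\calA$ with the Zariski open set $(\CC^\times)^s \subset \CC^s$. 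Hence $\TT_\calA$ is open in $Y_\calA$, and it is also dense since $Y_\calA$ was defined as the Zariski closure of $\TT_\calA$.

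Next I would handle the torus action. The torus $\TT_\calA$ acts on $\CC^s$ coordinatewise: for $t=(t_1,\dots,t_s)\in \TT_\calA$ and $x=(x_1,\dots,x_s)\in\CC^s$, set $t\cdot x = (t_1 x_1,\dots,t_s x_s)$. This restricts on $\TT_\calA$ to the usual multiplication of the torus on itself. The main point is to show that $Y_\calA$ is stable under this action. Fix $t\in\TT_\calA$; translation by $t$ is an automorphism of $\CC^s$, so it sends Zariski closures to Zariski closures. Since $t\cdot \TT_\calA = \TT_\calA \subseteq Y_\calA$, taking closures gives $t\cdot Y_\calA = t\cdot \overline{\TT_\calA} = \overline{t \cdot \TT_\calA} = \overline{\TT_\calA} = Y_\calA$. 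Therefore the $\TT_\calA$-action on $\CC^s$ restricts to an algebraic action on $Y_\calA$ extending the multiplication of $\TT_\calA$ on itself.

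The step I expect to be the main subtlety is the identification of $\Phi_\calA(\TT_N)$ as a closed algebraic torus in $(\CC^\times)^s$; this is where Proposition~\ref{P:Tori} does the real work. Once $\TT_\calA$ is known to be a closed torus, the remaining verifications (density and openness of $\TT_\calA$ in $Y_\calA$, and invariance of $Y_\calA$ under the coordinatewise action) are short topological arguments using that Zariski closure commutes with the automorphisms of $\CC^s$ given by translations.
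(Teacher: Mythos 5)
Your proposal is correct and follows essentially the same route as the paper: invoke Proposition~\ref{P:Tori} to identify $\Phi_\calA(\TT_N)$ as a closed torus, observe that it is then the intersection of $Y_\calA$ with the open set $(\CC^\times)^s$, and show invariance under the coordinatewise action by comparing $t\cdot Y_\calA$ with $Y_\calA$. The only cosmetic difference is that you obtain $t\cdot Y_\calA = Y_\calA$ in one step by noting that translation by $t$ is an automorphism of $\CC^s$ and hence commutes with Zariski closure, whereas the paper derives the equality from the two inclusions $Y_\calA \subseteq t\cdot Y_\calA$ and $Y_\calA \subseteq t^{-1}\cdot Y_\calA$; these are the same argument phrased slightly differently.
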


\begin{proof}
	First note that the map $\Phi_\calA$ in Definition \ref{D:AffineTV} can be regarded as a map of tori $\Phi_\calA \colon \TT_N \to (\CC^\times)^s$. Hence, by Proposition \ref{P:Tori} the image $\TT:= \Phi_\calA(\TT_N)$ is a closed torus in $(\CC^\times)^s$. As $Y_\calA = \overline{\TT}$, we have $Y_\calA \cap (\CC^\times)^s = \TT$. Hence $\TT$ is Zariski open in $Y_\calA$. Also, since $\TT$ is irreducible, so is its Zariski closure $Y_\calA$. 
	
	We next examine the action of $\TT$. For every $t \in \TT$, if $W \subset \CC^s$ is a subvariety, then $t \cdot W$ is a variety. Now $ \TT = t \cdot \TT \subseteq t \cdot Y_\calA$. After taking Zariski closure we have $Y_\calA \subseteq t \cdot Y_\calA$. 
	Replacing $t$ with $t^{-1}$ we get an inclusion $Y_\calA \subseteq t^{-1} \cdot Y_\calA$. Hence we get $Y_\calA = t\cdot Y_\calA$. Therefore the action extends and $Y_\calA$ is an affine toric variety.   
\end{proof}

In particular, the torus $\TT$ defined in the previous proof has  character lattice $\ZZ \calA$, the sublattice generated by $\calA$, and so $\dim Y_\calA = \text{rank} (\ZZ\calA)$ \cite[Proposition 1.1.8]{CLS}. 

\section{Toric Ideals}
We will now see a construction of affine toric varieties via toric ideals. We begin with a finite set $\calA =\{a_1, \ldots, a_s\}\subset \ZZ^n$.  Each element $a_i$ in $\calA$ gives a Laurent monomial $t^{a_i} \in \CC \left[t^{\pm1}\right] := \CC\left[t_1^{\pm1}, \ldots,t_n^{\pm1}\right]$. Consider the map
\begin{align} \label{IdealMap}
\Phi_\calA^* \colon \CC\left[y_1,\ldots,y_s\right] &\longrightarrow \CC\left[t^{\pm1}\right] \\
y_i \quad &\longmapsto t^{a_i}. \nonumber
\end{align}

\begin{definition}
	The kernel of the map $\Phi_\calA^*$ in \eqref{IdealMap} is called the \demph{toric ideal} $I_\calA$ associated to $\calA$. 
\end{definition}

\begin{example}\label{E:ToricIdeal}
	Consider the set $\calA = \{ (0,1), (1,1), (1,2)\}$. Then the map $\Phi^*_\calA \colon \CC\left[x,y,z\right] \to \CC\left[s^{\pm1}, t^{\pm1}\right]$ is defined by sending $x \mapsto t$, $y \mapsto st$, and $z \mapsto st^2$. This map has kernel 
	$I_\calA =\langle xz-y^2 \rangle \subset \CC\left[x,y,z\right].$\hfill$\square$
\end{example}

In Example \ref{E:ToricIdeal}, the toric ideal is generated by a set of binomials. This is true for all toric ideals. 

\begin{notation}
	Let $\calA=\{a_1,\ldots a_s\} \subset \ZZ^n$ be a finite set, and $\omega =(\omega_1, \ldots, \omega_s) \in \ZZ^s$. Then $\calA \omega$ denotes the sum
	\[
	\calA \omega \ := \sum_{i=1}^{s} a_i \omega_i.
	\]
\end{notation}
\begin{lemma}\label{L:Binomials}
	The toric ideal $I_\calA$ is spanned as a complex vector space by the binomials
	\[
	\{y^u - y^v \mid u,v \in \NN^s \text{ with } \calA u = \calA v \}.
	\]
\end{lemma}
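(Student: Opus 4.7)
The plan is to prove both inclusions of the spanning statement: first that every binomial $y^u - y^v$ with $\calA u = \calA v$ lies in $I_\calA$, and then that every element of $I_\calA$ is a $\CC$-linear combination of such binomials.

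For the first direction, I would compute directly. For any $u = (u_1,\ldots,u_s) \in \NN^s$, the definition of $\Phi_\calA^*$ gives
\[
\Phi_\calA^*(y^u) \;=\; \prod_{i=1}^s (t^{a_i})^{u_i} \;=\; t^{\sum_i u_i a_i} \;=\; t^{\calA u}.
\]
Hence $\Phi_\calA^*(y^u - y^v) = t^{\calA u} - t^{\calA v}$, which vanishes precisely when $\calA u = \calA v$. So the listed binomials lie in the kernel $I_\calA$.

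For the reverse direction, I would take an arbitrary $f = \sum_u c_u y^u \in I_\calA$ (finite sum, $c_u \in \CC$) and partition its support $\supp(f) \subset \NN^s$ into equivalence classes under the relation $u \sim v \iff \calA u = \calA v$. For each class $C$, all the monomials $y^u$ with $u \in C$ have the same image $t^{\calA u}$ under $\Phi_\calA^*$, so the condition $\Phi_\calA^*(f) = 0$, after collecting like Laurent monomials, forces $\sum_{u \in C} c_u = 0$ for every class $C$. Fixing one representative $v_C \in C$ in each class, I can then rewrite
\[
\sum_{u \in C} c_u y^u \;=\; \sum_{u \in C} c_u (y^u - y^{v_C}) \;+\; y^{v_C}\sum_{u \in C} c_u \;=\; \sum_{u \in C} c_u (y^u - y^{v_C}),
\]
since the last sum vanishes. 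Summing over the finitely many classes expresses $f$ as a $\CC$-linear combination of binomials $y^u - y^{v_C}$ with $\calA u = \calA v_C$, as required.

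There is no real obstacle here; the argument is essentially bookkeeping. The only subtle point to state carefully is that distinct exponents $\calA u$ produce distinct (linearly independent) Laurent monomials in $\CC[t^{\pm 1}]$, which is what justifies concluding $\sum_{u \in C} c_u = 0$ class by class from the single identity $\Phi_\calA^*(f)=0$.
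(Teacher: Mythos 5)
Your proof is correct, but it takes a genuinely different route from the paper's. The paper fixes a term order $\prec$ on the polynomial ring and argues by descent on the initial term of a putative counterexample: if $f \in I_\calA$ cannot be written as a combination of binomials, cancellation of its leading term after applying $\Phi_\calA^*$ produces another element of $I_\calA$ with strictly smaller initial term, contradicting the choice of a $\prec$-minimal counterexample. You instead partition $\supp(f)$ by the equivalence $u \sim v \iff \calA u = \calA v$, use the linear independence of distinct Laurent monomials $t^{\calA u}$ in $\CC[t^{\pm 1}]$ to conclude $\sum_{u \in C} c_u = 0$ on each class $C$, and then telescope each class against a fixed representative $v_C$. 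Your argument is more elementary in that it dispenses with term orders and well-ordering, settles all cancellations simultaneously rather than one leading term at a time, and makes explicit the linear-algebra fact (independence of distinct Laurent monomials) that the paper's descent leaves implicit. The paper's term-order argument is the traditional one and naturally extends to Gröbner-basis statements about $I_\calA$, but for the bare spanning claim your bookkeeping is cleaner and, if anything, closes a small looseness in the paper's version, which asserts that a single summand $g$ of $f$ satisfies $\Phi_\calA^*(c_u y^u) = \Phi_\calA^*(g)$ when in general it is the sum over the whole class, not a single term, that must cancel.
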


\begin{proof}
	Let $u,v \in \NN^s$ with $ \calA u = \calA v$. Since $t^{\calA u} = t^{\calA v}$, the binomial $y^u - y^v$ lies in the kernel of $\Phi^*_\calA$.
	We next show that $I_\calA$ is spanned by these binomials. Fix a term order $\prec$ on $\CC\left[t\right]$, and let $f \in I_\calA$. Suppose $f$ cannot be written as a linear combination of binomials. We may assume that the initial term $\text{in}_\prec (f) =c_uy^u$ of $f$ is minimal with respect to $\prec$ among those which cannot be written as a linear combination of binomials.	
	Note that $\Phi^*_\calA (f) = 0$. In particular, $c_u t^{\calA u}$ must cancel during the expansion of $\Phi^*_\calA (f)$. That means $f$ has a summand $g$ with $\text{in}_\prec(g) \prec \text{in}_\prec(f)$ 
	such that $\Phi^*_\calA(c_u y^u) = \Phi^*_\calA(g) $. Set $f' = f - c_uy^u +g$. Note that $\Phi^*(f') = 0,$ i.e., $f' \in I_\calA$ and $\text{in}_\prec (f') \prec \text{in}_\prec (f)$. Since $ \text{in}_\prec (f)$ is minimal, $f'$ has to be $0$, but then this means $f$ is a multiple of a binomial which is a contradiction. 
\end{proof}

Note that the definition of $I_\calA$ gives an injection
\[
\CC[y_1,\ldots, y_s]/ I_\calA \to \CC\left[t^{\pm1}\right].
\]
Since the ring on the right is an integral domain, we deduce that toric ideals are always prime. 

The ideal $I_\calA$ defines an irreducible variety $V(I_\calA) \subset \CC^s$. This variety is indeed a toric variety \cite[Theorem 1.1.17]{CLS}.
\begin{theorem}\label{C:TVIdeal}
	$V(I_\calA)$ is an affine toric variety. 
\end{theorem}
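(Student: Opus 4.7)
The plan is to identify $V(I_\calA)$ with the affine toric variety $Y_\calA$ of Definition \ref{D:AffineTV}, since Proposition \ref{P:AffineTV} has already established that $Y_\calA$ is an affine toric variety. It suffices to prove that $I_\calA = I(Y_\calA)$, for then
\[
V(I_\calA) \;=\; V(I(Y_\calA)) \;=\; Y_\calA,
\]
the last equality holding because $Y_\calA$ is by construction Zariski closed.

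The inclusion $I_\calA \subseteq I(Y_\calA)$ is the easy direction. Given $f \in I_\calA$, by definition $\Phi_\calA^*(f) = f(t^{a_1}, \ldots, t^{a_s})$ is the zero element of $\CC[t^{\pm 1}]$, so for every $t \in \TT_N$ we have $f(\Phi_\calA(t)) = \Phi_\calA^*(f)(t) = 0$. Hence $f$ vanishes on $\im \Phi_\calA$, and since the vanishing locus of a polynomial is Zariski closed, $f$ also vanishes on $\overline{\im \Phi_\calA} = Y_\calA$.

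For the reverse inclusion $I(Y_\calA) \subseteq I_\calA$, suppose $f \in I(Y_\calA)$. Then $f$ vanishes on $\im \Phi_\calA \subseteq Y_\calA$, so the Laurent polynomial $g := \Phi_\calA^*(f) \in \CC[t^{\pm 1}]$ evaluates to zero at every point of $\TT_N = (\CC^\times)^n$. Clearing denominators by multiplying by a sufficiently high monomial $t_1^{N_1} \cdots t_n^{N_n}$ produces an ordinary polynomial in $\CC[t_1, \ldots, t_n]$ that still vanishes identically on $(\CC^\times)^n$. Since $(\CC^\times)^n$ is Zariski dense in $\CC^n$, this polynomial is identically zero, whence $g = 0$ and therefore $f \in I_\calA$.

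Combining the two inclusions yields $I_\calA = I(Y_\calA)$, so $V(I_\calA) = Y_\calA$ and the conclusion follows from Proposition \ref{P:AffineTV}. The only subtle point is the density argument in the second inclusion, but this is a standard use of Zariski density of the torus inside affine space, so I do not expect any serious obstacle.
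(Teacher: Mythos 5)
Your proof is correct. The paper does not give its own argument for this statement---it simply cites \cite[Theorem 1.1.17]{CLS}---but your route of proving $I_\calA = I(Y_\calA)$ (hence $V(I_\calA)=Y_\calA$ since $Y_\calA$ is Zariski closed) and then invoking Proposition~\ref{P:AffineTV} is exactly the standard argument underlying that citation, and the Zariski-density step you flag as the only subtlety is handled correctly: a Laurent polynomial vanishing on all of $(\CC^\times)^n$ is zero because clearing denominators gives a polynomial vanishing on a dense open subset of the irreducible variety $\CC^n$.
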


In Theorem \ref{Th:EquivalentConstructions}, we will relate the affine toric variety $V(I_\calA)$ to the affine toric variety $Y_\calA$. 

\section{Toric Varieties Associated to Affine Monoids}

We now connect affine monoids to toric varieties. We first construct an affine toric variety associated to a given affine monoid $S \subset M_\ZZ$. Then we will show the equivalence of the constructions we have seen so far. 

For an affine monoid $S \subset M_\ZZ$, let $\calA = \{a_1, \ldots, a_s\}$ be a finite set in $M_\ZZ$ so that $\NN \calA = S$. First note that $\CC[S] = \CC [\chi^{a_1}, \ldots, \chi^{a_s}]$ is finitely generated, and since $\CC[S] \subset \CC[M_\ZZ]$, it is an integral domain. So it defines an irreducible variety $\Spec (\CC[S])$ over $\CC$. We also have a $\CC$-algebra homomorphism 
\begin{align*}
\pi \ \colon \CC[y_1, \ldots, y_s] & \longrightarrow \CC[M_\ZZ]\\
y_i & \longmapsto \chi^{a_i}.
\end{align*}        
Hence the kernel $\ker(\pi)$ is the toric ideal $I_\calA$. Note that we get an isomorphism 
\[
\CC[y_1,\ldots,y_s]/\ker(\Phi_\calA^*) \simeq \im(\Phi_\calA^*) = \CC[S].
\] Corollary \ref{C:TVIdeal} implies that $\Spec(\CC[S]) \simeq V(I_\calA)$ is an affine toric variety. 

\begin{definition}
	Let $S \subset M_\ZZ$ be an affine monoid. Then the variety $\Spec(\CC[S])$ is called the \demph{affine toric variety} associated to $S$. 
\end{definition}

\begin{example}
	Let $\calA = \{2,3 \} \subseteq \ZZ$ and let $S = \NN \calA$. The kernel of the matrix $A = (2\ 3)$ has basis $(3 \ -2)^T$. Hence $\Spec \CC\left[S\right] \simeq V(x^3-y^2)$ is the cuspidal cubic of Example \ref{Ex:AffineTV}, which we have already seen is an affine toric variety.\hfill$\square$
\end{example}

\begin{example}
	Let $\calA = \{ (1,0,0,1), (0,1,0,1), (0,0,1,-1) \} \subseteq \ZZ^4$ and let $S = \NN \calA$. The kernel of the matrix $A$ whose rows are the elements of $\calA$ is spanned by $(1,1,-1,-1).$ Hence $\Spec \CC\left[S\right] \simeq V(xy-zw)$. We have already seen in Example \ref{Ex:AffineTV} that the variety $V(xy-zw)$ is an affine toric variety. \hfill$\square$
\end{example}

We now state a result \cite[Theorem 1.1.17]{CLS} which tells us that the different constructions of affine toric varieties we have seen so far are equivalent. 

\begin{theorem}\label{Th:EquivalentConstructions} Let $V$ be an affine variety. The following are $\TT_N$ equivariantly isomorphic.
	\begin{enumerate}
		\item V is an affine toric variety.
		\item $V=Y_\calA$ for a finite set $\calA$ in a lattice.
		\item $V$ is an affine variety defined by a toric ideal. 
		\item $V=\Spec(\CC[S])$ for an affine monoid $S$.
	\end{enumerate}
\end{theorem}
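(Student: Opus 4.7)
The plan is to establish a cycle of implications; since (2) $\Rightarrow$ (1) is Proposition \ref{P:AffineTV} and (3) $\Rightarrow$ (1) is Theorem \ref{C:TVIdeal}, I will organize the proof as (1) $\Rightarrow$ (4) $\Rightarrow$ (3) $\Rightarrow$ (2). The last two links are essentially dictionary translations contained in the material preceding the theorem, so the real work is concentrated in the first implication, which packages the geometry of a torus action as the combinatorics of an affine monoid.

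For (4) $\Rightarrow$ (3): choose a finite set $\calA = \{a_1,\ldots,a_s\}$ that generates $S$ as a monoid. The surjection $\pi \colon \CC[y_1,\ldots,y_s] \to \CC[S]$ sending $y_i \mapsto \chi^{a_i}$ has kernel the toric ideal $I_\calA$, so $\Spec \CC[S] \cong V(I_\calA)$. For (3) $\Rightarrow$ (2): the ideal $I_\calA = \ker \Phi_\calA^*$ is prime (since $\CC[y_1,\ldots,y_s]/I_\calA$ embeds in the domain $\CC[M_\ZZ]$), so $V(I_\calA)$ is irreducible, contains $\Phi_\calA(\TT_N)$, and therefore equals its Zariski closure $Y_\calA$. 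In both steps the isomorphisms respect the action of $\TT_N$ via its standard embedding.

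The substantive direction is (1) $\Rightarrow$ (4). Let $V$ be an affine toric variety with dense open torus $\TT_N \hookrightarrow V$, and set $R := \CC[V]$. The algebraic $\TT_N$-action on $V$ makes $R$ into a rational $\TT_N$-representation, so by linear reductivity $R$ decomposes canonically into character eigenspaces
\[
R \;=\; \bigoplus_{m \in M_\ZZ} R_m, \qquad R_m \;=\; \{\, f \in R \mid t \cdot f = \chi^m(t)\, f \text{ for all } t \in \TT_N \,\}.
\]
The density of $\TT_N$ in $V$ gives an injective, $\TT_N$-equivariant restriction $R \hookrightarrow \CC[\TT_N] = \CC[M_\ZZ] = \bigoplus_m \CC \cdot \chi^m$, which forces each $R_m$ to embed into $\CC \cdot \chi^m$; hence every $R_m$ is either $\{0\}$ or exactly $\CC \cdot \chi^m$. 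Setting $S := \{m \in M_\ZZ \mid R_m \neq 0\}$, the image of $R$ is the subring $\CC[S] \subseteq \CC[M_\ZZ]$. Because $R$ contains $1$ and is closed under multiplication, $S$ is a submonoid of $M_\ZZ$; because $R$ is finitely generated as a $\CC$-algebra, $S$ is finitely generated as a monoid; thus $S$ is an affine monoid and $V \cong \Spec \CC[S]$ as $\TT_N$-varieties.

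The main obstacle is justifying the character decomposition of $R$, which rests on the representation-theoretic fact that any regular action of an algebraic torus on a commutative $\CC$-algebra is locally finite and semisimple with one-dimensional isotypic components indexed by $M_\ZZ$. Granted this input, the rest of the argument is bookkeeping; the delicate point is that the embedding into $\CC[M_\ZZ]$ automatically trims each $R_m$ to at most a line, which is precisely what matches the geometric datum (a toric variety) with the combinatorial datum (an affine monoid).
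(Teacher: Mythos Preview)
The paper does not supply its own proof of this theorem; it is stated with a citation to \cite[Theorem 1.1.17]{CLS}, and the surrounding discussion only sketches the links (4) $\Leftrightarrow$ (3) via the presentation $\CC[y_1,\dots,y_s]/I_\calA \cong \CC[S]$. Your cycle (1) $\Rightarrow$ (4) $\Rightarrow$ (3) $\Rightarrow$ (2) $\Rightarrow$ (1), with the substantive step being the character decomposition of $\CC[V]$ under the rational $\TT_N$-action, is exactly the standard argument in the cited source, so you have reconstructed what the paper defers to.

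One step deserves tightening. In your (3) $\Rightarrow$ (2) you write that $V(I_\calA)$ ``is irreducible, contains $\Phi_\calA(\TT_N)$, and therefore equals its Zariski closure $Y_\calA$.'' Irreducibility together with containment only yields $Y_\calA \subseteq V(I_\calA)$; for equality you need that $\Phi_\calA(\TT_N)$ is \emph{dense} in $V(I_\calA)$. You already have the ingredient: the injection $\CC[y_1,\dots,y_s]/I_\calA \hookrightarrow \CC[M_\ZZ]$ you invoked to prove primality of $I_\calA$ is precisely the statement that $\Phi_\calA$ is dominant onto $V(I_\calA)$. Make that inference explicit and the argument closes. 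The equivariance claims are correct but also somewhat telegraphic; in each step the torus acts through the same characters $\chi^{a_i}$, which is worth one sentence.
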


We return to a special type of monoid we introduced in Section \ref{S:Monoids}. Recall that, for a rational polyhedral cone $\sigma \in N$, $S_\sigma := \sigma^\vee \cap M_\ZZ$ is an affine monoid.  Hence $U_\sigma:= \Spec(\CC[S_\sigma])$ is an affine toric variety, called the \demph{affine toric variety} associated to $\sigma$. 

\begin{example}
	Consider the cone $\sigma_3$ given in Figure \ref{F:ConesR2}. In Example \ref{Ex:Gordan} we saw that the affine monoid $S_{\sigma_3}$ is generated by the set $\{e_1^*, e_1^*+e_2^*, e_1^*+2e_2^* \}$. The $\CC$-algebra $\CC\left[S_{\sigma_3}\right]$ can be represented as $\CC \left[S_{\sigma_3}\right] = \CC\left[z_1, z_1z_2, z_1z_2^2 \right] $. Hence $U_{\sigma_3} = V(xz-y^2) \subset \CC^3$ is the affine toric variety associated to $\sigma_3$. \hfill$\square$
\end{example}

The torus of the affine toric variety $U_\sigma$ has character lattice $\ZZ S_\sigma \subset M_\ZZ$, where $\ZZ S_\sigma = \{m_1-m_2 \mid m_i \in S_\sigma \}.$ Note that $M_\ZZ / \ZZ S_\sigma $ is torsion free. To see this, let $km \in \ZZ S_\sigma$ for some $k>1$ and $m \in M_\ZZ$. Then $km = m_1 - m_2$ for some $m_1, m_2 \in S_\sigma$. Since both $m_1,m_2 \in \sigma^\vee$, which is a polyhedral cone, we have 
\[
m_1+m_2 = \frac{1}{k} m_1 + \frac{k-1}{k}m_2 \in\sigma^\vee.
\]
Hence $m= (m+m_2) - m_2 \in S_\sigma - S_\sigma = \ZZ S_\sigma$, which proves the claim. If $\sigma$ is strongly convex, then $\sigma^\vee$ is full dimensional. So, $\text{rank} (\ZZ S_\sigma )= \text{rank}(M_\ZZ) = n$. Hence we have $\ZZ S_\sigma = M_\ZZ$, which implies that the torus of $U_\sigma$ is $\TT_N$.

Note that we can represent $\CC[S_\sigma]$ as a coordinate ring in different ways, according to a choice of generators of $S_\sigma$. Different choices provide different representations of $U_\sigma$ in different complex vector spaces. 

\begin{example}
	Let us consider the cone $\sigma =\{0\} \subset \RR^n.$ The dual cone is $\sigma^\vee = (\RR^n)^*$. We can choose different systems of generators of $S_\sigma$. For example both $A_1 =\{\pm e_1^*, \ldots, \pm e_n^*\}$ and $A_2 =\{e_1^*,\ldots,e_n^*,-(e_1^*+\cdots+e_n^*) \}$ generate $S_\sigma$. Let us first consider the set $A_1$. The corresponding monomial algebra is $\CC[t^{\pm1}] \simeq \CC[x_1,\ldots,x_{2n}] / I_\sigma$, where $I_\sigma$ is generated by $\{x_1x_{n+1}-1,x_2x_{n+2}-1, \ldots,   x_nx_{2n}-1\}$. Hence $U_\sigma = V((x_1x_{n+1}-1), \ldots, (x_nx_{2n}-1)).$
	Note that $U_\sigma \cong (\CC^\times)^n$ using the projection $\CC^{2n} \to \CC^n$ on the first $n$ coordinates. 
	
	With the second system of generators $A_2$, $\CC[S_\sigma] = \CC[x_1,\ldots,x_n,x_{n+1}]/I_\sigma$, where $I_\sigma$ is generated by $\{x_1\cdots x_nx_{n+1}-1\}.$ Hence $U_\sigma =V(x_1\cdots x_nx_{n+1}-1)$, which is again a copy of $(\CC^\times)^n$ that lives in $\CC^{n+1}$.\hfill$\square$ 
\end{example}

\section{Toric Varieties Associated to Fans}\label{S:TVFans}

Toric varieties associated to fans are constructed by gluing affine toric varieties associated to cones. We will start with a familiar example of a toric variety to motivate the later constructions. 

\begin{example}\label{Ex:P2}
	Let us denote by $(t_0,t_1,t_2)$ the homogeneous coordinates of the complex projective space $\PP^2:=\CC\PP^2$. It is covered by three coordinate charts:
	\begin{itemize}
		\item $U_0$ corresponding to $t_0 \neq 0$ with affine coordinates $(t_1/t_0, t_2/t_0) = (x,y)$,
		
		\item $U_1$ corresponding to $t_1 \neq 0$ with affine coordinates $(t_0/t_1, t_2/t_1) = (x^{-1},x^{-1}y)$,
		
		\item $U_2$ corresponding to $t _2\neq 0$ with affine coordinates $(t_0/t_2, t_1/t_2) = (y^{-1},xy^{-1})$.
	\end{itemize}
	Next consider the fan $\Sigma:=\Sigma_2$ in Figure \ref{F:Fans}. Its dual fan is given in Figure \ref{F:DualFan}.
	
	\begin{figure}[!ht]
		\centering
		\begin{tikzpicture}[line cap=round,line join=round,>=latex,scale=.8]
		\fill[color=yellow,fill opacity=0.3] (0,0) -- (3,0) -- (3,3) -- (0,3) -- cycle;
		\fill[color=blue,fill opacity=0.3] (0,0) -- (-3,0) -- (-3,3)  -- cycle;
		\fill[color=green,fill opacity=0.3] (0,0) -- (0,-3) -- (3,-3) -- cycle;

		\draw [<->,thick] (-3.28,0) -- (3.28,0);
		\draw [<->,thick] (3.2,-3.2) -- (-3.2,3.2);
		\draw [<->,thick] (0,-3.28) -- (0,3.28);
		
		\filldraw[very thick] (-2.5,2.5) circle (.04cm)node[above] {$  \qquad \quad -e_1^*+e_2^*$};
		\filldraw[very thick] (2.5,-2.5) circle (.04cm)node[right] {$\ e_1^*-e_2^*$};
		\filldraw[very thick] (2.5,0) circle (.04cm)node[below] {$e_1^*$};
		\filldraw[very thick] (-2.5,0) circle (.04cm)node[below] {$-e_1^*$};
		\filldraw[very thick] (0,2.5) circle (.04cm)node[right] {$e_2^*$};
		\filldraw[very thick] (0,-2.5) circle (.04cm)node[right] {$-e_2^*$};
		
		\draw (1.5,1.5) node {$\sigma_0^\vee$};
		\draw (-2,1)  node {$\sigma_1^\vee$};
		\draw (.75,-1.5) node {$\sigma_2^\vee$};
		\draw (0,-3.5) node[below] {$\Sigma_2$};
		\end{tikzpicture}

		\caption{Dual fan of $\Sigma$.}
		\label{F:DualFan}
	\end{figure}
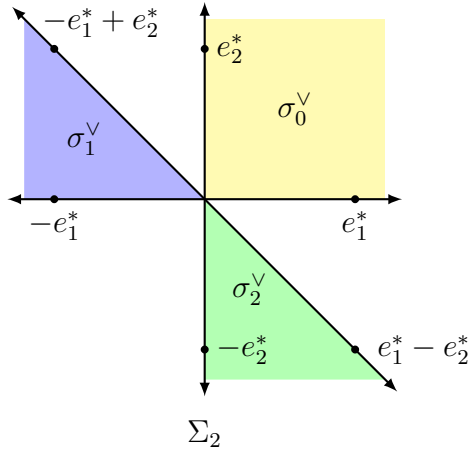

	\begin{itemize}
		\item $S_{\sigma_0}$ is generated by $\{e_1^*,e_2^*\}$, hence $\CC[S_{\sigma_0}]= \CC[x,y]$ and $U_{\sigma_0} = \CC^2_{(x,y)}$.
		
		\item $S_{\sigma_1}$ is generated by $\{-e_1^*,-e_1^*+e_2^*\}$, hence $\CC[S_{\sigma_1}]= \CC[x^{-1},x^{-1}y]$ and $U_{\sigma_1} = \CC^2_{(x^{-1},x^{-1}y)}$.
		
		\item $S_{\sigma_2}$ is generated by $\{-e_2^*,e_1^*-e_2^*\}$, hence $\CC[S_{\sigma_2}]= \CC[y^{-1},xy^{-1}]$ and $U_{\sigma_2} = \CC^2_{(y^{-1},xy^{-1})}$.
	\end{itemize}
	
	We see that three affine toric varieties correspond to the three coordinate charts of $\PP^2$. Indeed, the structure of the fan gives a gluing between these charts allowing to reconstruct the toric variety $\PP^2$ from $U_{\sigma_i}$.  Let us explain what we mean by gluing $U_{\sigma_0}$ and $U_{\sigma_1}$ along $U_{\tau}$, where $ \tau = \sigma_0 \cap \sigma_1$. 
	
	Note that $\tau = H_{-e_1^*} \cap \sigma_0 = H_{e_1^*} \cap \sigma_1$. By Corollary \ref{C:MonoidFace}, we can write $S_\tau = S_{\sigma_0} + \ZZ_\geq(-e_1^*) =  S_{\sigma_1} + \ZZ_\geq(e_1^*)$. The affine toric variety $U_\tau$ is represented by $U_\tau = \CC^\times_{x} \times \CC_y$ in $U_{\sigma_0}$ and $U_\tau = \CC^\times_{x^{-1}} \times \CC_{x^{-1}y}$ in $U_{\sigma_1}$. We can glue $U_{\sigma_0}$ and $U_{\sigma_1}$ along $U_\tau$ using the change of coordinates $(x,y) \mapsto (x^{-1},x^{-1}y)$, and obtain $\PP^2 \setminus \{(0:0:1)\}$. \hfill$\square$
\end{example}

This example is a particular case of the general construction. Let $\tau$ be a face of a cone $\sigma$. Then by Corollary \ref{C:MonoidFace}, $S_\tau = S_\sigma + \ZZ_\geq (-m)$ where $m \in \sigma^\vee \cap M_\ZZ$ and $\tau = H_m \cap \sigma$.  The monoid $S_\tau$ is obtained from $S_\sigma$ by adding one generator $-m$. As $m$ can be chosen to be an element of a generating set $\{a_1, \ldots, a_k\}$ for $S_\sigma$, we may assume $m= a_k$ and denote $a_{k+1} = -m$. Hence, the generating set for $S_\tau$ has one more relation than the generators of $S_\sigma$, namely $a_k + a_{k+1} = 0$. This corresponds the multiplicative relation $t_k t_{k+1}=1$ in $\CC[S_\tau]$ and this is the only supplementary relation we need to obtain $\CC[S_\tau]$ from $\CC[S_\sigma]$. As the generators $t_i$ are the coordinate functions on the affine toric varieties $U_\sigma$ and $U_\tau$, this means that the projection 
\begin{align*}
\CC^{k+1} &\longrightarrow\CC^k\\
(x_1,\ldots,x_k,x_{k+1}) &\longmapsto (x_1,\dots,x_k)
\end{align*}
identifies $U_\tau$ with the open subset of $U_\sigma$ defined by $x_k \neq 0$. Hence we have the following lemma from \cite[Page 225]{Ewald}.

\begin{lemma}\label{L:FaceIdentification}
	There is a natural identification $U_\tau \simeq U_\sigma \setminus (t_k =0)$.
\end{lemma}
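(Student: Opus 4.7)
The plan is to realize $\CC[S_\tau]$ as the localization of $\CC[S_\sigma]$ at the regular element $\chi^{a_k}$, so that on the level of affine varieties the coordinate projection automatically identifies $U_\tau$ with the principal open subset of $U_\sigma$ where $t_k \neq 0$. The key algebraic input is Corollary \ref{C:MonoidFace}.

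First, I would apply Corollary \ref{C:MonoidFace} with $m = a_k$, giving $S_\tau = S_\sigma + \ZZ_\geq(-a_k)$. This means every element of $S_\tau$ is of the form $s - \ell\cdot a_k$ for some $s \in S_\sigma$ and $\ell \in \ZZ_\geq$, so the monoid algebra $\CC[S_\tau]$ is generated over $\CC[S_\sigma]$ by the single element $\chi^{-a_k}$. The only relation imposed on this generator is $\chi^{a_k}\cdot \chi^{-a_k} = 1$, because the natural surjection
\[
\CC[S_\sigma][y_{k+1}]\big/\langle y_k y_{k+1} - 1 \rangle \;\longrightarrow\; \CC[S_\tau],\qquad y_{k+1}\longmapsto \chi^{-a_k},
\]
becomes injective after embedding both sides into the Laurent algebra $\CC[M_\ZZ]$. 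Consequently $\CC[S_\tau]$ is canonically the localization $\CC[S_\sigma]_{y_k}$.

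Next, I would translate this algebraic statement geometrically. Under the presentations $U_\sigma = V(I_\sigma) \subset \CC^k$ and $U_\tau = V(I_\tau) \subset \CC^{k+1}$, the previous paragraph shows $I_\tau = (I_\sigma) + \langle y_k y_{k+1} - 1\rangle$ inside $\CC[y_1,\dots,y_{k+1}]$. Therefore
\[
U_\tau \;=\; \bigl\{(x_1,\dots,x_{k+1}) \in \CC^{k+1} \;:\; (x_1,\dots,x_k) \in U_\sigma \text{ and } x_k x_{k+1} = 1\bigr\}.
\]
The relation $x_k x_{k+1} = 1$ forces $x_k \neq 0$ and pins down $x_{k+1} = x_k^{-1}$. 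Hence the coordinate projection $\pi \colon \CC^{k+1}\to \CC^k$, $(x_1,\dots,x_{k+1})\mapsto(x_1,\dots,x_k)$, restricts to a morphism $U_\tau \to U_\sigma \setminus (t_k=0)$ whose set-theoretic inverse is the morphism $(x_1,\dots,x_k)\mapsto(x_1,\dots,x_k, x_k^{-1})$, well-defined on $U_\sigma \setminus (t_k=0)$. Since these two morphisms are mutually inverse, they furnish the claimed identification.

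There is no serious obstacle. The only point demanding care is the identification $\CC[S_\tau] \cong \CC[S_\sigma]_{\chi^{a_k}}$, and this is essentially the content of Corollary \ref{C:MonoidFace}; once it is in hand, the geometric conclusion is formal. I would remark, for clarity, that the identification is natural in the sense that it does not depend on the choice of generating set for $S_\sigma$: different presentations correspond to the same abstract localization, and any two choices of distinguished generator $a_k$ with $\tau = H_{a_k}\cap\sigma$ yield the same principal open subset of $U_\sigma$.
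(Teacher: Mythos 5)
Your proposal is correct and takes essentially the same approach as the paper: the paper also applies Corollary~\ref{C:MonoidFace} with $m = a_k$, observes that $S_\tau$ gains the single generator $a_{k+1} = -a_k$ and the single relation $a_k + a_{k+1} = 0$ (i.e.\ $t_k t_{k+1} = 1$), and concludes that the coordinate projection $\CC^{k+1}\to\CC^k$ identifies $U_\tau$ with the locus $t_k \neq 0$ in $U_\sigma$. Your reframing in terms of the localization $\CC[S_\sigma]_{\chi^{a_k}}$ and the explicit check that the surjection onto $\CC[S_\tau]$ is injective (by embedding into $\CC[M_\ZZ]$) just makes precise the assertion the paper leaves informal and attributes to Ewald, so your write-up is if anything slightly more complete than the paper's.
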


For cones $\sigma,\sigma' \in \Sigma$, let $\tau=\sigma \cap \sigma'$ be their common face. Lemma \ref{L:FaceIdentification} allows us to glue together $U_\sigma$ and $U_{\sigma'}$ along their common part $U_\tau$. Let us write $(v_1,\ldots v_l)$ for the coordinates on $U_{\sigma'}$. Then there is an isomorphism $U_\tau \simeq U_{\sigma'} \setminus (v_l = 0)$, and we obtain a gluing map 
\[
\psi_{\sigma,\sigma'} \colon U_\sigma \setminus (u_k = 0) \xrightarrow{\ \ \simeq \ \ } U_\tau \xrightarrow{\ \ \simeq \ \ } U_{\sigma'} \setminus (v_l = 0).
\]

\begin{definition}\label{D:AbstractTV}
	Let $\Sigma$ be a strongly convex rational fan in $N$. Let $Y := \coprod_{\sigma \in \Sigma} U_\sigma$ be the disjoint union of affine toric varieties. Define an equivalence relation $\sim$ on $Y$, where $x \in U_\sigma$ is identified with $x' \in U_{\sigma'}$ if $\psi_{\sigma,\sigma'}(x)=x'$. The resulting space $Y_\Sigma = Y /{\sim}$ is called the (abstract) \demph{toric variety} associated to the fan $\Sigma$. 
\end{definition}

The next theorem \cite[Theorem 3.1.5]{CLS} justifies Definition \ref{D:AbstractTV}.

\begin{theorem}
	Let $\Sigma$ be a strongly convex rational fan in $N$. Then $Y_\Sigma$ is a toric variety. 
\end{theorem}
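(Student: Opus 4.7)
The plan is to check that the abstract space $Y_\Sigma$ defined by gluing satisfies the three properties required of a toric variety: it is a legitimate (separated, irreducible) variety, it contains $\TT_N$ as a Zariski-dense open subset, and the action of $\TT_N$ on itself extends algebraically to $Y_\Sigma$. The workhorse is that strong convexity of $\Sigma$ makes the trivial cone $\{0\}$ a face of every $\sigma \in \Sigma$, so $U_{\{0\}} = \Spec \CC[M_\ZZ] \cong \TT_N$ embeds as an open subvariety of every $U_\sigma$ by Lemma \ref{L:FaceIdentification}. All the gluing maps $\psi_{\sigma,\sigma'}$ restrict to the identity on this copy of $\TT_N$, so after taking the quotient we obtain a single dense open torus $\TT_N \subset Y_\Sigma$.

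First I would verify the cocycle condition for the gluing. Given three cones $\sigma_1,\sigma_2,\sigma_3 \in \Sigma$ with pairwise intersections $\tau_{ij}=\sigma_i\cap\sigma_j$ and triple intersection $\tau = \sigma_1 \cap \sigma_2 \cap \sigma_3$, I need $\psi_{\sigma_2,\sigma_3}\circ\psi_{\sigma_1,\sigma_2}=\psi_{\sigma_1,\sigma_3}$ on the appropriate open subset. This is automatic: each $\psi_{\sigma,\sigma'}$ is obtained from the inclusions $\CC[S_\sigma],\CC[S_{\sigma'}] \hookrightarrow \CC[S_\tau] \hookrightarrow \CC[M_\ZZ]$, so all the identifications take place inside the common ambient ring $\CC[M_\ZZ]$, and the compositions agree on the nose.

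Second, I would verify separation, which is the main combinatorial point. For $\sigma,\sigma' \in \Sigma$ with common face $\tau=\sigma\cap\sigma'$, I need the diagonal image of $U_\tau$ in $U_\sigma \times U_{\sigma'}$ to be Zariski closed. Algebraically this amounts to the multiplication map
\[
\CC[S_\sigma] \otimes_\CC \CC[S_{\sigma'}] \longrightarrow \CC[S_\tau]
\]
being surjective, which is exactly the earlier theorem $S_\tau = S_\sigma + S_{\sigma'}$, and that theorem rests on the Separation Lemma (Lemma \ref{SeparationLemma}). Irreducibility is then immediate, since every $U_\sigma$ is irreducible and they all share the dense open $\TT_N$.

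Finally, for the torus action, each affine toric variety $U_\sigma$ carries a natural $\TT_N$-action extending multiplication on $\TT_N$, corresponding to the $M_\ZZ$-grading of $\CC[S_\sigma]$. Because the gluing map $\psi_{\sigma,\sigma'}$ is induced by an inclusion of $M_\ZZ$-graded monoid algebras, it is $\TT_N$-equivariant. The actions therefore descend to the quotient, giving an action of $\TT_N$ on $Y_\Sigma$ that restricts to multiplication on the open subvariety $\TT_N$. The main obstacle throughout is the separation step, as it is the one place where the geometry of the fan (rather than mere formalities of gluing) really enters; everything else is essentially bookkeeping once one knows $S_\tau = S_\sigma + S_{\sigma'}$ and that $\TT_N$ sits inside every chart.
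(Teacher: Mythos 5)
The paper offers no proof of this theorem; it simply cites \cite[Theorem~3.1.5]{CLS} as its justification. Your proposal fills in that gap with what is essentially the argument from the cited source: the cocycle condition is automatic because all gluing maps are inclusions of monoid algebras inside the common ring $\CC[M_\ZZ]$; separatedness reduces to surjectivity of $\CC[S_\sigma]\otimes_\CC\CC[S_{\sigma'}]\to\CC[S_\tau]$, which is the theorem $S_\tau = S_\sigma + S_{\sigma'}$ (itself a consequence of Lemma~\ref{SeparationLemma}); and the torus action descends because the gluing maps are induced by $M_\ZZ$-graded inclusions. The argument is correct, identifies the separation step as the genuine content, and matches the standard reference proof.
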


\begin{example}\label{Ex:Pn}
	The construction of $\PP^2$ in Example \ref{Ex:P2} can be generalized to $\PP^n$ considering the fan $\Sigma \subseteq \RR^n$ generated by all proper subsets of $\{e_1,\ldots,e_n, -(e_1+\cdots+e_n) \}$. Let $\sigma_0$ be the cone generated by $\{e_1,\ldots,e_n\}$, and for $i = 1,\ldots,n$, let $\sigma_i$ be the cone generated by $\{e_1,\ldots,e_{i-1},e_{i+1},\ldots,e_n,-(e_1+\cdots+e_n) \}$. Then the affine toric varieties $U_{\sigma_i}$ are copies of $\CC^n$, corresponding to the classical charts of $\PP^n$ and glued together to obtain $\PP^n$.\hfill$\square$
\end{example}

\begin{example} \label{Ex:P1P1}
	Consider the fan $\Sigma = \Sigma_3$ in $\RR^2$ in Figure \ref{F:Fans}. The duals of the cones 
	$\sigma_0=\cone\{e_1,e_2\},\ \sigma_1=\cone\{-e_1,e_2\},\ \sigma_2=\cone\{-e_1, -e_2\} \text{, and } \sigma_3=\cone\{e_1,-e_2\}$ 
	are 
	$\sigma_0^\vee=\cone\{e_1^*,e_2^*\},\ \sigma_1^\vee=\cone\{-e_1^*,e_2^*\},\ \sigma_2^\vee=\cone\{-e_1^*, -e_2^*\} \text{, and } \sigma_3^\vee=\cone\{e_1^*,-e_2^*\}.$
	These cones give the monoids 
	$S_{\sigma_0}$ generated by $\{e_1^*, e_2^*\}$, 
	$S_{\sigma_1}$ generated by $\{-e_1^*, e_2^*\}$, 
	$S_{\sigma_2}$ generated by $\{-e_1^*,- e_2^*\}$, and
	$S_{\sigma_3}$ generated by $\{e_1^*, -e_2^*\}$.
	The $\CC$-algebras obtained from these monoids are 
	$\CC[S_{\sigma_0}]= \CC[x,y]$,
	$\CC[S_{\sigma_1}]= \CC[x^{-1},y]$, 
	$\CC[S_{\sigma_2}]= \CC[x^{-1},y^{-1}]$, and
	$\CC[S_{\sigma_3}]= \CC[x,y^{-1}]$.
	These $\CC$-algebras give us the affine toric varieties 
	$U_{\sigma_0} = \CC^2_{(x,y)}$,
	$U_{\sigma_1} = \CC^2_{(x^{-1},y)}$,
	$U_{\sigma_2} = \CC^2_{(x^{-1},y^{-1})}$, and
	$U_{\sigma_3} = \CC^2_{(x,y^{-1})}$.
	Consider the common face $\tau = \sigma_0 \cap \sigma_1$ of the cones $\sigma_0$ and $\sigma_1$. Note that, $\tau = H_{-e_1^*} \cap \sigma_0 = H_{e_1^*} \cap \sigma_0$. Considering $\tau$ as a face of $\sigma_0$ gives $U_\tau = \CC^\times_x \times \CC_y$ in $U_{\sigma_0}$, and considering $\tau$ as a face of $\sigma_1$ gives $U_\tau = \CC^\times_{x^{-1}} \times \CC_y$ in $U_{\sigma_1}$. We glue $U_{\sigma_0}$ and $U_{\sigma_1}$ along $U_\tau$ using the change of coordinates $(x,y) \mapsto (x^{-1},y)$, and obtain $\PP^1 \times \CC$ with coordinates $((t_0:t_1),y)$ (where $x = t_0 / t_1$). 
	Similarly, gluing $U_{\sigma_2}$ and $U_{\sigma_3}$ yields $\PP^1 \times \CC$ with coordinates $((t_0:t_1),y^{-1})$. Lastly, gluing these two gives $Y_\Sigma = \PP^1 \times \PP^1$ with coordinates $((t_0:t_1),(s_0:s_1) )$ (where $y = s_0/s_1$).  \hfill$\square$
\end{example}

The nature of the gluing process is compatible under taking products. In Example \ref{Ex:P1P1}, the fan $\Sigma$ can be viewed as the product $\Sigma_1 \times \Sigma_2$ of the fans given in Figure \ref{F:P1}.

\begin{figure}[!th]
	\centering
	\begin{tikzpicture}[line cap=round,line join=round,>=latex,scale=.5]
	\draw[<->,thick] (4,0)--(-4,0);
	\draw (-2,0.3) node {$\sigma_1$};
	\draw [fill] (0,0) circle [radius=0.1]node[anchor= north]{$0$};
	\draw (2,0.3) node {$\sigma_0$};
	\draw (0,-2) node{$\Sigma_1$};
	
	\draw[<->,thick] (9,-4)--(9,4);
	\draw (9.5,-2) node {$\sigma_1$};
	\draw [fill] (9,0) circle [radius=0.1] node[right]{$0$};
	\draw (9.5,2) node {$\sigma_0$};
	\draw (8,4) node{$\Sigma_2$};
	
	\end{tikzpicture}
	\caption{Fans $\Sigma_1 \text{ and } \Sigma_2$ with $Y_{\Sigma_i} \simeq \PP^1$.}
	\label{F:P1}
\end{figure}

We have seen in Example \ref{Ex:Pn} that $Y_{\Sigma_1} \simeq Y_{\Sigma_2} \simeq \PP^1$. Note that we have $Y_\Sigma \simeq Y_{\Sigma_1} \times Y_{\Sigma_2}$. This is true in general. To prove this, we will first study the product $U_{\sigma_1 \times \sigma_2}$ for cones $\sigma_1 \in \Sigma_1$ and $\sigma_2 \in \Sigma_2$. Once we describe these objects, we will be able to glue them to obtain the toric variety associated to the fan $\Sigma_1 \times \Sigma_2$.

\begin{lemma}\label{L:productofcones}
	Let $\sigma_1 \in \Sigma_1$ and $\sigma_2\in \Sigma_2$ be two cones. Then,
	\[
	U_{\sigma _1} \times U_{\sigma _2} \simeq U_{\sigma _1 \times \sigma _2}.
	\]
\end{lemma}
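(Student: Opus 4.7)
The plan is to prove the isomorphism at the level of coordinate rings and then take $\Spec$. The identification $U_\sigma \simeq \Spec(\CC[S_\sigma])$ from the previous section reduces the problem to comparing two monoid algebras.

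First I would verify the cone-theoretic identity
\[
(\sigma_1 \times \sigma_2)^\vee \;=\; \sigma_1^\vee \times \sigma_2^\vee
\]
inside $M_1 \times M_2 = (N_1 \times N_2)^*$. This follows directly from the definition of the dual cone: a pair $(u_1,u_2)$ pairs nonnegatively with every $(v_1,v_2) \in \sigma_1 \times \sigma_2$ if and only if $u_i$ pairs nonnegatively with every $v_i \in \sigma_i$ for $i=1,2$, because $\langle (u_1,u_2),(v_1,v_2)\rangle = \langle u_1,v_1\rangle + \langle u_2,v_2\rangle$ and one can set either coordinate to zero independently.

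Intersecting with the lattice $M_{1,\ZZ} \times M_{2,\ZZ}$ gives
\[
S_{\sigma_1 \times \sigma_2} \;=\; S_{\sigma_1} \times S_{\sigma_2}
\]
as monoids (with componentwise addition). Next I would pass to monoid algebras. The natural $\CC$-bilinear map sending $(\chi^{m_1},\chi^{m_2}) \mapsto \chi^{(m_1,m_2)}$ extends to an isomorphism of $\CC$-algebras
\[
\CC[S_{\sigma_1}] \otimes_\CC \CC[S_{\sigma_2}] \;\xrightarrow{\ \simeq\ }\; \CC[S_{\sigma_1 \times \sigma_2}],
\]
since both sides have the set $S_{\sigma_1} \times S_{\sigma_2}$ as a $\CC$-basis and the multiplications agree on basis elements by the definition of $\chi^m \cdot \chi^{m'} = \chi^{m+m'}$.

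Finally, applying $\Spec$ and using the standard identification $\Spec(A \otimes_\CC B) \simeq \Spec(A) \times \Spec(B)$ for finitely generated $\CC$-algebras yields
\[
U_{\sigma_1 \times \sigma_2} \;=\; \Spec(\CC[S_{\sigma_1 \times \sigma_2}]) \;\simeq\; \Spec(\CC[S_{\sigma_1}]) \times \Spec(\CC[S_{\sigma_2}]) \;=\; U_{\sigma_1} \times U_{\sigma_2}.
\]
There is no real obstacle here; the only step that needs a little care is the bookkeeping of the pairing $M \times N \to \RR$ when $N$ is replaced by $N_1 \times N_2$, to confirm that the dual of a product is the product of duals. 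Everything else is formal once one works at the level of monoids and monoid algebras.
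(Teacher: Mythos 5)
Your proposal is correct and follows essentially the same route as the paper: both first establish $(\sigma_1\times\sigma_2)^\vee = \sigma_1^\vee\times\sigma_2^\vee$ by the same two-inclusion pairing argument, then intersect with the lattice to get $S_{\sigma_1\times\sigma_2} = S_{\sigma_1}\oplus S_{\sigma_2}$, pass to the tensor product of monoid algebras, and conclude by taking $\Spec$. Your write-up is slightly more explicit about the bijection of $\CC$-bases and the standard $\Spec(A\otimes_\CC B)\simeq \Spec A\times\Spec B$ step, but the argument is the same.
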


\begin{proof}	
	First note that $\sigma _1^\vee \times \sigma _2^\vee = (\sigma _1 \times \sigma _2)^\vee$. Indeed, for $(u_1,u_2) \in \sigma_1^\vee \times \sigma_2^\vee$, 
	\[
	\langle(u_1,u_2),(v_1,v_2) \rangle = \langle u_1,v_1\rangle +\langle u_2,v_2 \rangle \geq 0
	\]
	for any $(v_1,v_2) \in \sigma_1 \times \sigma_2$. Hence $\sigma _1^\vee \times \sigma _2^\vee \subseteq (\sigma _1 \times \sigma _2)^\vee$. Conversely, if $u=(u_1,u_2)\in (\sigma _1 \times \sigma _2)^\vee$, then for any $(v_1,v_2) \in \sigma_1 \times \sigma_2$, we have $\langle u, (v_1,v_2) \rangle \geq 0.$ In particular, we have $\langle u, (v_1,0) \rangle = \langle u_1,v_1 \rangle \geq 0$. Hence $u_1 \in \sigma_1^\vee$. Similarly one has $u_2 \in \sigma_2^\vee$. Therefore $u \in \sigma _1^\vee \times \sigma _2^\vee$. This proves $\sigma _1^\vee \times \sigma _2^\vee = (\sigma _1 \times \sigma _2)^\vee$. This property implies $S_{\sigma_1 \times \sigma_2} = S_{\sigma_1} \oplus S_{\sigma_2}$ and $\CC[S_{\sigma_1 \times \sigma_2}] = \CC[ S_{\sigma_1}] \otimes_\CC \CC[ S_{\sigma_2}]. $ Thus we get $U_{\sigma _1} \times U_{\sigma _2} \simeq U_{\sigma _1 \times \sigma _2}$ as desired.
\end{proof}

Note that the toric variety $Y_{\Sigma_1 \times \Sigma_2}$ is obtained by gluing affine toric varieties $\{U_{\sigma_1 \times \sigma_2}\}$. On the other hand  $Y_{\Sigma_1} \times Y_{\Sigma_2}$ is obtained by gluing affine toric varieties $\{U_{\sigma_1} \times U_{\sigma_2}\}$. In Lemma \ref{L:productofcones} we showed that these pieces are isomorphic. Hence this implies the following theorem. 

\begin{theorem}\label{T:Product of fans}
	Let $\Sigma_1 \in N_1$ and $\Sigma_2 \in N_2$ be two strongly convex rational fans. Then,
	\[
	Y_{\Sigma _1 \times \Sigma _2} = Y_{\Sigma _1}\times Y_{\Sigma _2}.
	\]
\end{theorem}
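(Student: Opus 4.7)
The plan is to lift the affine statement from Lemma \ref{L:productofcones} to the global level by comparing the two quotient constructions. Recall that $Y_{\Sigma_1 \times \Sigma_2}$ is built by gluing the pieces $U_{\sigma_1 \times \sigma_2}$ for $\sigma_i \in \Sigma_i$, while $Y_{\Sigma_1} \times Y_{\Sigma_2}$ is (as a topological/scheme-theoretic product) covered by the pieces $U_{\sigma_1} \times U_{\sigma_2}$. By Lemma \ref{L:productofcones} each such piece is isomorphic, so I will construct an isomorphism between the two coverings and then show the gluing data agree.

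First I would define a map on the level of disjoint unions
\[
\coprod_{\sigma_1 \times \sigma_2 \in \Sigma_1 \times \Sigma_2} U_{\sigma_1 \times \sigma_2} \;\longrightarrow\; \coprod_{\sigma_1 \in \Sigma_1,\, \sigma_2 \in \Sigma_2} U_{\sigma_1} \times U_{\sigma_2}
\]
using the natural isomorphisms $\Phi_{\sigma_1,\sigma_2} \colon U_{\sigma_1 \times \sigma_2} \xrightarrow{\simeq} U_{\sigma_1} \times U_{\sigma_2}$ from Lemma \ref{L:productofcones}. These isomorphisms come from the identifications $S_{\sigma_1 \times \sigma_2} = S_{\sigma_1} \oplus S_{\sigma_2}$ and $\CC[S_{\sigma_1 \times \sigma_2}] = \CC[S_{\sigma_1}] \otimes_\CC \CC[S_{\sigma_2}]$.

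Next I need to verify that $\Phi_{\sigma_1,\sigma_2}$ descends to the quotients. Given two product cones $\sigma_1 \times \sigma_2$ and $\sigma_1' \times \sigma_2'$ with common face $\tau = (\sigma_1 \cap \sigma_1') \times (\sigma_2 \cap \sigma_2')$, the gluing datum on the left is the isomorphism between the open subsets of $U_{\sigma_1 \times \sigma_2}$ and $U_{\sigma_1' \times \sigma_2'}$ both isomorphic to $U_\tau$. On the right, the gluing datum is the product of the gluings $U_{\sigma_1} \supset U_{\sigma_1 \cap \sigma_1'} \subset U_{\sigma_1'}$ and $U_{\sigma_2} \supset U_{\sigma_2 \cap \sigma_2'} \subset U_{\sigma_2'}$. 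I would show the square
\[
\begin{tikzcd}
U_{(\sigma_1 \cap \sigma_1') \times (\sigma_2 \cap \sigma_2')} \ar[r,"\simeq"] \ar[d,hook] & U_{\sigma_1 \cap \sigma_1'} \times U_{\sigma_2 \cap \sigma_2'} \ar[d,hook] \\
U_{\sigma_1 \times \sigma_2} \ar[r,"\Phi_{\sigma_1,\sigma_2}"] & U_{\sigma_1} \times U_{\sigma_2}
\end{tikzcd}
\]
commutes, which is a direct check on coordinate rings since each map is induced by the inclusion of the corresponding monoids and tensor factors.

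The main obstacle (and what requires the most care) is ensuring the compatibility of the identifications \emph{simultaneously} on all overlaps, so that the two quotient spaces really are isomorphic and not merely locally isomorphic. Since the gluing is encoded by the face poset of the fan, and faces of $\sigma_1 \times \sigma_2$ are exactly products of faces of $\sigma_1$ with faces of $\sigma_2$, the combinatorics line up perfectly; the content is that the scheme-theoretic (or topological) product of the gluings equals the gluing of the products. Once the commuting diagram above is established for every pair of cones, the universal property of the quotient yields a map $Y_{\Sigma_1 \times \Sigma_2} \to Y_{\Sigma_1} \times Y_{\Sigma_2}$, and an inverse is constructed analogously using $\Phi_{\sigma_1,\sigma_2}^{-1}$. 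This gives the desired $\TT_{N_1} \times \TT_{N_2}$-equivariant isomorphism.
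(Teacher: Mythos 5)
Your proof follows exactly the same strategy as the paper: invoke Lemma~\ref{L:productofcones} to identify the affine charts $U_{\sigma_1\times\sigma_2}\simeq U_{\sigma_1}\times U_{\sigma_2}$, observe that faces of product cones are products of faces, and check that the gluing data on the two sides agree. The paper leaves the compatibility of the gluings implicit, whereas you spell out the commutative square on overlaps; this is a welcome elaboration but not a different argument.
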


We have seen in Section \ref{S:Polytopes} that lattice polytopes have normal fans.	We will now see examples of toric varieties associated to normal fans of polytopes. 

\begin{example}
	Consider the polytope $P$ given in Figure \ref{F:2Simplex}. In Example \ref{Ex:NormalFanP2}, we saw that its normal fan is the fan $\Sigma_2$ in Figure \ref{F:Fans}. By Example \ref{Ex:P2}, the corresponding toric variety $Y_{\Sigma_P}$ is $\PP^2$.  
	\hfill$\square$
\end{example}

\begin{example}
	Consider the polytope $P$ of Figure \ref{F:P1P1Polytope}. Its normal fan is the fan $\Sigma_3$ given in Figure \ref{F:Fans}. Hence by Example \ref{Ex:P1P1}, the corresponding toric variety $Y_{\Sigma_P}$ is $\PP^1 \times \PP^1$.\hfill$\square$
\end{example}

We end this section with a theorem that classifies normal toric varieties. Its proof is given in \cite[Corollary 3.1.8]{CLS}.

\begin{theorem}\label{T:NormalTV}
	Let $Y$ be a normal separated toric variety with torus $\TT_N$. Then there exists a fan $\Sigma \subseteq N$ such that $Y \simeq Y_\Sigma$.
\end{theorem}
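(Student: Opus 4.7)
The plan is to realize $Y$ as the gluing of affine toric varieties $U_\sigma$ indexed by the cones of a fan $\Sigma$ in $N$, exactly mimicking the construction in Section \ref{S:TVFans} in reverse. The key input is a torus-equivariant cover of $Y$ by affine open subsets; once such a cover is in hand, each piece will produce a cone, and the gluing data intrinsic to $Y$ will force these cones to fit together as a fan.

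First I would invoke Sumihiro's theorem: any normal variety with an action of an algebraic torus admits a cover by $\TT_N$-invariant affine Zariski open subsets. Applied to our $Y$, this gives a finite cover $Y = \bigcup_i V_i$ where each $V_i$ is $\TT_N$-invariant, affine, open, and contains the dense torus $\TT_N \subset Y$ (since the dense orbit meets every nonempty open set). Each $V_i$ is therefore a normal affine toric variety with torus $\TT_N$, so by Theorem \ref{Th:EquivalentConstructions} combined with the classical fact that a normal affine toric variety equals $U_{\sigma_i} = \Spec(\CC[S_{\sigma_i}])$ for some strongly convex rational polyhedral cone $\sigma_i \subseteq N$, we may write $V_i \simeq U_{\sigma_i}$. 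The cone $\sigma_i$ is recovered from $V_i$ as the dual of the monoid of $\TT_N$-eigenfunctions on $V_i$, which is canonical, so we have a well-defined collection $\{\sigma_i\}$ in $N$.

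Next I would show that this collection, together with all faces of its members, forms a fan. The essential step is: for any two indices $i, j$, the intersection $V_i \cap V_j$ is again a $\TT_N$-invariant affine open (affineness is where separatedness of $Y$ enters, to guarantee that the intersection of two affine opens is affine), hence of the form $U_\tau$ for some strongly convex rational cone $\tau$; and since the inclusions $U_\tau \hookrightarrow U_{\sigma_i}$ and $U_\tau \hookrightarrow U_{\sigma_j}$ are torus-equivariant open immersions of normal affine toric varieties, the cone-monoid dictionary forces $\tau$ to be a face of both $\sigma_i$ and $\sigma_j$. In particular $\sigma_i \cap \sigma_j = \tau$ is a common face, which is precisely the fan axiom. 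Taking $\Sigma$ to be the set of all faces of all $\sigma_i$ yields a strongly convex rational fan in $N$.

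Finally I would identify $Y$ with $Y_\Sigma$. The gluing map $\psi_{\sigma_i,\sigma_j}$ used to build $Y_\Sigma$ in Definition \ref{D:AbstractTV} is, on the level of coordinate rings, exactly the $\CC$-algebra isomorphism obtained from the canonical identifications $\CC[S_{\sigma_i}]_{\chi^m} = \CC[S_\tau] = \CC[S_{\sigma_j}]_{\chi^{-m}}$ coming from Corollary \ref{C:MonoidFace}. But this is also how the open immersions $V_i \cap V_j \hookrightarrow V_i$ and $V_i \cap V_j \hookrightarrow V_j$ are expressed in toric coordinates, so the gluing data for $Y$ coincides with the gluing data for $Y_\Sigma$, yielding a $\TT_N$-equivariant isomorphism $Y \simeq Y_\Sigma$.

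The main obstacle, and the nontrivial ingredient borrowed from the literature, is Sumihiro's existence theorem for the $\TT_N$-invariant affine cover; without it, there is no reason for a general normal $\TT_N$-variety to be glued from toric charts in a cone-compatible way. The remaining work, once the cover is given, is essentially a bookkeeping exercise that packages the face/gluing correspondence already established for the construction $\Sigma \mapsto Y_\Sigma$ and runs it backward.
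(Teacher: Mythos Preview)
The paper does not actually prove this theorem; immediately before the statement it says ``Its proof is given in \cite[Corollary 3.1.8]{CLS}'' and leaves it at that. Your outline is correct and is precisely the standard argument one finds in that reference (and in Fulton): Sumihiro's theorem supplies the $\TT_N$-invariant affine cover, normality of each piece forces it to be $U_{\sigma_i}$ for a strongly convex rational cone, separatedness makes pairwise intersections affine so they are again $U_\tau$, and the fact that a torus-equivariant open immersion $U_\tau\hookrightarrow U_{\sigma_i}$ of normal affine toric varieties corresponds to a face inclusion $\tau\preceq\sigma_i$ assembles the $\sigma_i$ into a fan. You have correctly singled out Sumihiro's theorem as the only genuinely nontrivial external input; everything else is the cone--monoid dictionary already developed in Sections~\ref{S:Cones}--\ref{S:TVFans} run in reverse.
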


\pagebreak{}

\chapter{PROPERTIES OF CLASSICAL TORIC VARIETIES} \label{CH:PropertiesClassicalTV}

In Section \ref{CH:ClassicalTV} we have seen different constructions of toric varieties. We will now study their properties. We will start by defining projective toric varieties. By Theorem \ref{T:ProjectiveTV}, a normal toric variety $Y_\Sigma$ can be embedded into a projective space only if $\Sigma$ is the normal fan of a lattice polytope. Then we study the orbits of the torus action on $Y_\Sigma$. We will associate each such orbit with a cone in the fan. Recall that a toric variety $Y_\Sigma$ is a normal and separated toric variety by Theorem \ref{T:NormalTV}. We will give an equivalence relation between the category of normal toric varieties with toric morphisms and the category of rational fans with maps of fans. Lastly, we show that a toric variety $Y_\Sigma$ is compact in the classical topology if and only if the fan $\Sigma$ is complete.  

\section{Projective Toric Varieties}
We turn our attention to toric varieties as subvarieties of projective space $\PP^n$. We first observe that $\PP^n$ is a toric variety with torus 
\begin{align*}
\TT_{\PP^n} &=  \PP^n \setminus V(x_0 \cdots x_n ) = \left\{\left[ a_0: \ldots :a_n \right] \in \PP^n \mid a_0 \cdots a_n \neq 0 \right\}\\
&= \left\{ \left[1: t_1: \ldots : t_n \right] \in \PP^n \mid t_1, \ldots, t_n \in \CC^\times \right\}\\
&\simeq  (\CC^\times)^n.
\end{align*}
The torus $\TT_{\PP^n}$ acts on $\PP^n$ via coordinatewise multiplication which makes $\PP^n$ a toric variety. 

Let us understand the torus $\TT_{\PP^n}$ as a quotient. As with projective space we have $$\TT_{\PP^n} = (\CC^\times)^{n+1} / \ \CC^\times.$$
Then we have an exact sequence of tori
\begin{equation}\label{Eq:ExactSequenceProjectiveTorus}
1 \longrightarrow \CC^\times \longrightarrow (\CC^\times)^{n+1} \overset{ \pi}  \longrightarrow \TT_{\PP^n} \longrightarrow 1,
\end{equation}
where $\pi(x_0,x_1,\ldots,x_n) = \left[a_0, \ldots, a_n\right] \in \TT_{\PP^n}.$

Applying $\Hom(-, \CC^\times)$ to \eqref{Eq:ExactSequenceProjectiveTorus}, we conclude that the character lattice $\Hom(\TT_{\PP^n}, \CC^\times)$ of $\TT_{\PP^n}$ is  $\calM_\ZZ = \{ (a_0, \ldots, a_n) \in \ZZ^{n+1} \mid \sum_{i=0}^{n} a_i = 0 \}$. Similarly, if we apply $\Hom( \CC^\times, -)$ to \eqref{Eq:ExactSequenceProjectiveTorus}, we deduce that the cocharacter lattice $\Hom(\CC^\times, \TT_{\PP^n})$ of $\TT_{\PP^n}$ is the quotient $\calN_\ZZ = \ZZ^{n+1} / \ \ZZ(1,\ldots,1)$. 

Let $\TT_N$ be a torus with lattices $M_\ZZ$ and $N_\ZZ$. For a finite set $\calA =\{ a_1, \ldots, a_s\} \subseteq M_\ZZ$, we defined the affine toric variety as the Zariski closure of the image of the map 
\begin{align*}
\Phi_\calA \colon \TT_N &\longrightarrow \CC^s\\
t \ &\longmapsto \left(t^{a_1}, \ldots, t^{a_s}\right).
\end{align*}

Now, we can compose the map $\Phi_\calA$ with the homomorphism $\pi: (\CC^\times)^s \to \TT_{\PP^{s-1}}$ by regarding $\Phi_\calA$ as a map to $(\CC^\times)^s$. 

\begin{definition}
	Let $\calA \in M_\ZZ$ be a finite set. The \demph{projective toric variety} $Y_\calA$ is the Zariski closure of the image of the map $\pi \circ \Phi_\calA$ in $\PP^{s-1}$. 
\end{definition}

\begin{proposition}
	$Y_\calA$ is a toric variety. 
\end{proposition}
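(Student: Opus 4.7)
The proof follows almost exactly the pattern of Proposition~\ref{P:AffineTV} used for the affine case, the only new ingredient being the passage through the quotient homomorphism $\pi$. My plan is to verify the three requirements of Definition~\ref{D:ATV} (adapted to the projective setting): irreducibility, containment of a torus as a Zariski-open dense subset, and extension of the torus action.

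First, I would regard the composition $\pi\circ\Phi_\calA$ as a morphism $\TT_N \to \TT_{\PP^{s-1}}$ of algebraic tori. Since $\Phi_\calA(t)=(t^{a_1},\ldots,t^{a_s})$ is a group homomorphism into $(\CC^\times)^s$ and $\pi$ is a group homomorphism onto $\TT_{\PP^{s-1}}$, the composition is again a morphism of algebraic groups between tori. Proposition~\ref{P:Tori}(2) then immediately gives that $\TT := (\pi\circ\Phi_\calA)(\TT_N)$ is a closed subtorus of $\TT_{\PP^{s-1}}$.

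Next, I would establish the topological properties. By definition $Y_\calA = \overline{\TT}$ (Zariski closure in $\PP^{s-1}$). Because $\TT$ is irreducible (as the image of the irreducible variety $\TT_N$ under a morphism), its closure $Y_\calA$ is irreducible as well. Moreover, since $\TT$ is closed inside the open set $\TT_{\PP^{s-1}}\subset \PP^{s-1}$, one has $\TT = Y_\calA \cap \TT_{\PP^{s-1}}$, showing that $\TT$ is open in $Y_\calA$. Thus $Y_\calA$ contains the torus $\TT$ as a Zariski-open dense subset.

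Finally, I would verify that the $\TT$-action on itself extends to $Y_\calA$. The torus $\TT_{\PP^{s-1}}$ acts on $\PP^{s-1}$ by coordinatewise multiplication, and this restricts to an action of $\TT$ on $\PP^{s-1}$. For any $t\in\TT$, multiplication by $t$ is an automorphism of $\PP^{s-1}$, hence a homeomorphism in the Zariski topology, and sends subvarieties to subvarieties. Since $\TT = t\cdot\TT \subseteq t\cdot Y_\calA$, taking Zariski closures (which commutes with the homeomorphism $t\cdot(-)$) gives $Y_\calA \subseteq t\cdot Y_\calA$. Applying the same argument with $t^{-1}$ yields the reverse inclusion, so $t\cdot Y_\calA = Y_\calA$. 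Therefore the action of $\TT$ on itself extends to an algebraic action of $\TT$ on $Y_\calA$, completing the verification that $Y_\calA$ is a (projective) toric variety. I do not anticipate any real obstacle here; the only subtlety is remembering to work with the image torus $\TT \subseteq \TT_{\PP^{s-1}}$ rather than with $\TT_N$ itself, since $\pi\circ\Phi_\calA$ need not be injective.
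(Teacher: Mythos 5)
Your proof is correct and follows exactly the route the paper itself indicates: the text states that this proposition "can be done similarly to the proof given in Proposition~\ref{P:AffineTV}" and then defers to \cite[Proposition 2.1.2]{CLS} rather than writing it out. Your adaptation --- compose with $\pi$, apply Proposition~\ref{P:Tori}(2) to get a closed subtorus $\TT\subseteq\TT_{\PP^{s-1}}$, observe $\TT=Y_\calA\cap\TT_{\PP^{s-1}}$ is open and dense, and extend the action by the $t\cdot Y_\calA \subseteq Y_\calA$ and $t^{-1}\cdot Y_\calA \subseteq Y_\calA$ sandwich --- is precisely the intended argument, and your closing remark about working with the image torus rather than $\TT_N$ is the right subtlety to flag.
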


The proof of this proposition can be done similarly to the proof given in Proposition \ref{P:AffineTV}. We refer to \cite[Proposition 2.1.2]{CLS} for its proof. We will describe the torus of a projective toric variety in Proposition \ref{P:TorusofPTV}, but first we give some examples. 

\begin{example}
	Consider the set $\calA = \{ (d,0), (d-1,1), \ldots, (0,d)\} \subseteq \ZZ^2$. Then the map $\pi \circ \Phi_\calA$ is given by
	\begin{align*}
	\pi \circ \Phi_\calA \colon (\CC^\times)^2&\longrightarrow \PP^{d}\\
	(s,t)&\longmapsto \left[s^d: s^{d-1}t:\ldots:t^d\right].
	\end{align*}
	The toric variety $Y_\calA$ is the rational normal curve in $\PP^d$. \hfill$\square$
\end{example}

\begin{example}
	Consider the triangle given by $n\Delta = \{ (x,y) \in \RR^2 \mid 0 \leq x,y,x+y\leq n \}$.  Let $\calA = n\Delta \cap \ZZ^2$. Then the map $\pi \circ \Phi_\calA$ is given by
	\begin{align*}
	\pi \circ \Phi_\calA \colon (\CC^\times)^2&\longrightarrow \PP^{\binom{n+2}{2}-1}\\
	(s,t)&\longmapsto \left[1:s:t:s^2:st:t^2:\ldots: s^n:s^{n-1}t: \ldots :t^n \right].
	\end{align*}
	The toric variety $Y_\calA$ is the Veronese embedding of $\PP^2$ in $\PP^{\binom{n+2}{2}-1}$.\hfill$\square$
\end{example}

\begin{example}
	Consider the set $\calA = \{ 0,2,3\} \subseteq \ZZ$. Then the map $\pi \circ \Phi_\calA$ is given by
	\begin{align*}
	\pi \circ \Phi_\calA \ \colon \  \CC^\times &\longrightarrow \PP^2\\
	t \ \  &\longmapsto \left[1:t^2:t^3\right].
	\end{align*}
	The toric variety $Y_\calA$ is the cuspidal cubic.\hfill$\square$
\end{example}

For a given set $\calA = \{ a_1,\ldots,a_s\} \subseteq M_\ZZ$, set $\ZZ'\calA \ := \{ \sum_{i=1}^{s} c_i a_i \mid c_i \in \ZZ,  \sum_{i=1}^{s} c_i =0 \}$. Then the following proposition identifies the character lattice of a projective toric variety \cite[Proposition 2.1.6]{CLS}.

\begin{proposition}\label{P:TorusofPTV}
	Let $Y_\calA$ be the projective toric variety corresponding to $\calA \subseteq M_\ZZ$. Then the lattice $\ZZ'\calA$ is the character lattice of the torus of $Y_\calA$. 
\end{proposition}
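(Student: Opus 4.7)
The plan is to identify the torus of $Y_\calA$ explicitly and then read off its character lattice from a short factorization of tori. First I would observe that the map $\pi \circ \Phi_\calA \colon \TT_N \to \TT_{\PP^{s-1}}$ is a morphism of tori that is also a group homomorphism, so by Proposition \ref{P:Tori}(2) its image $\TT := \im(\pi\circ\Phi_\calA)$ is a closed subtorus of $\TT_{\PP^{s-1}}$. Since $Y_\calA$ is the Zariski closure of this image, $\TT$ is the torus of $Y_\calA$, and $\pi\circ\Phi_\calA$ factors as a surjection onto $\TT$ followed by the closed immersion into $\TT_{\PP^{s-1}}$:
\[
\TT_N \twoheadrightarrow \TT \hookrightarrow \TT_{\PP^{s-1}}.
\]

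Next I would translate this to character lattices. Recall from the discussion preceding the proposition that $M(\TT_{\PP^{s-1}}) = \calM_\ZZ = \{(c_1,\dots,c_s)\in\ZZ^s \mid \sum c_i = 0\}$, and $M(\TT_N) = M_\ZZ$. The map $\Phi_\calA$ sends $t \mapsto (t^{a_1},\dots,t^{a_s})$, so on character lattices $\ZZ^s \to M_\ZZ$ it sends the standard basis vector $e_i$ to $a_i$. Restricting to $\calM_\ZZ \subset \ZZ^s$, the composition $\pi\circ\Phi_\calA$ induces the map
\[
\calM_\ZZ \longrightarrow M_\ZZ, \qquad (c_1,\dots,c_s) \longmapsto \sum_{i=1}^s c_i a_i,
\]
whose image is exactly $\ZZ'\calA$ by definition.

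The key step is to show that this image equals $M(\TT)$. Applying $\Hom(-,\CC^\times)$ to the factorization above, the surjection $\TT_N \twoheadrightarrow \TT$ yields an injection $M(\TT) \hookrightarrow M_\ZZ$, while the closed immersion $\TT \hookrightarrow \TT_{\PP^{s-1}}$ yields a surjection $\calM_\ZZ \twoheadrightarrow M(\TT)$ (restriction of characters from a torus to a closed subtorus is surjective). Consequently $M(\TT)$, viewed as a subgroup of $M_\ZZ$, coincides with the image of the composed map $\calM_\ZZ \to M_\ZZ$, which is $\ZZ'\calA$.

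The only subtle point is the surjectivity of restriction $\calM_\ZZ \twoheadrightarrow M(\TT)$, which is where I expect to spend the most care: it follows from the fact that a closed subtorus of a torus admits an extension of any of its characters to the ambient torus, equivalently that $\TT_{\PP^{s-1}}/\TT$ is again a torus. Once this is granted, the identification $M(\TT) = \ZZ'\calA$ is forced by the commutative diagram of character lattices, and the proposition follows.
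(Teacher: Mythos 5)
Your argument is correct. The paper does not prove this proposition itself; it simply cites \cite[Proposition 2.1.6]{CLS}, so there is no in-paper proof to compare against. Your route is the natural one: factor $\pi \circ \Phi_\calA$ as $\TT_N \twoheadrightarrow \TT \hookrightarrow \TT_{\PP^{s-1}}$, pass to character lattices, and use that restriction of characters to a closed subtorus is surjective. That last step --- the one you flagged --- does hold: since $\TT_{\PP^{s-1}}/\TT$ is again a torus, the short exact sequence of tori gives a short exact sequence of character lattices, which are free abelian groups of finite rank, and any such sequence splits, so $\TT$ is a direct factor of $\TT_{\PP^{s-1}}$ and every character of $\TT$ extends. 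The remaining bookkeeping --- the induced map $\calM_\ZZ \to M_\ZZ$ sends $(c_1,\ldots,c_s) \mapsto \sum_i c_i a_i$, with image $\ZZ'\calA$ --- is exactly as you state, and the identification of the character lattice of $\TT$ with $\ZZ'\calA$ follows.
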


We end this section with a theorem which classifies projective toric varieties. We refer to \cite[Section VII.3]{Ewald} for its proof. 

\begin{theorem}\label{T:ProjectiveTV}
	Let $\Sigma$ be a fan in $N$. Then the toric variety $Y_\Sigma$ is projective if and only if $\Sigma$ is the normal fan of a full dimensional polytope $P$ in $M$.
\end{theorem}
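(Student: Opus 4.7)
The plan is to prove the two directions separately, both by working with projective toric varieties $Y_\calA$ attached to lattice point configurations and matching their affine charts against the affine charts $U_\sigma$ of $Y_\Sigma$ built earlier in Section \ref{S:TVFans}.

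For the $(\Leftarrow)$ direction, I would start with a full-dimensional lattice polytope $P \subset M$ whose inner normal fan is $\Sigma$. After replacing $P$ by a sufficiently large dilate $kP$ (which does not change the normal fan but ensures the polytope is \emph{very ample}, a standard fact from toric geometry), set $\calA := kP \cap M_\ZZ$. The projective toric variety $Y_\calA \subset \PP^{|\calA|-1}$ is covered by the standard affine opens $Y_\calA \cap \{x_v \neq 0\}$ indexed by the vertices $v$ of $kP$. Translating by $-v$ and using the very-ample hypothesis, one checks that $\NN(\calA - v) = \sigma_v^\vee \cap M_\ZZ = S_{\sigma_v}$, where $\sigma_v \in \Sigma$ is the maximal cone dual to $v$ in the sense of Definition \ref{D:NormalPolytope}. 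Hence each affine chart of $Y_\calA$ is $\TT_N$-equivariantly isomorphic to $U_{\sigma_v} = \Spec(\CC[S_{\sigma_v}])$. The key point is to verify that the gluings among these charts coming from the inclusions $\{x_v \neq 0\} \cap \{x_{v'} \neq 0\}$ agree with the gluings of Definition \ref{D:AbstractTV} along the common faces $\sigma_v \cap \sigma_{v'}$; this follows because the intersection corresponds exactly to the edge of $P$ between $v$ and $v'$, and the monoid relations match via Corollary \ref{C:MonoidFace}. We conclude $Y_\Sigma \simeq Y_\calA$, so $Y_\Sigma$ is projective.

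For the $(\Rightarrow)$ direction, suppose $Y_\Sigma$ is projective, so it admits an embedding into some $\PP^N$. I would first promote this to a $\TT_N$-equivariant embedding by averaging over the torus action, i.e., lifting the torus action on $Y_\Sigma$ to a linear action on $\CC^{N+1}$ (possibly after passing to a Veronese power of the line bundle). Once equivariant, the torus orbit in $\PP^N$ through a generic point has the form $[\chi^{a_0}(t) : \cdots : \chi^{a_N}(t)]$ for a finite set $\calA = \{a_0,\ldots,a_N\} \subset M_\ZZ$, so $Y_\Sigma = Y_\calA$ as projective toric varieties. Setting $P := \conv(\calA)$, the affine chart of $Y_\calA$ corresponding to a vertex $v$ of $P$ is $\Spec(\CC[\NN(\calA - v)])$, whose normalization is $U_{\sigma_v}$ with $\sigma_v \in \Sigma_P$ the normal cone at $v$. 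Matching with the unique affine open decomposition of the normal toric variety $Y_\Sigma$ (Theorem \ref{T:NormalTV}), we read off a bijection between maximal cones of $\Sigma$ and maximal cones of $\Sigma_P$ preserving the face lattice; since a fan is determined by its maximal cones, $\Sigma = \Sigma_P$.

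The main obstacle is the equivariance step in the $(\Rightarrow)$ direction: an arbitrary closed immersion $Y_\Sigma \hookrightarrow \PP^N$ need not be $\TT_N$-equivariant, and proving that one can always be chosen requires linearizing the torus action on the pullback of $\calO_{\PP^N}(1)$, which uses that $\TT_N$ is connected and that line bundles on toric varieties admit equivariant structures after a suitable power. An alternative and arguably cleaner route is to bypass the embedding entirely and instead use the correspondence between very ample $\TT_N$-invariant divisors on $Y_\Sigma$ and full-dimensional lattice polytopes with normal fan $\Sigma$, which is the approach sketched in \cite[Section VII.3]{Ewald}; either way, the heart of the argument is the bijection between equivariant ample classes on $Y_\Sigma$ and polytopes defining $\Sigma$ as a normal fan.
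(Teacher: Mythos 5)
Your proposal follows the same route as the reference the paper cites (Ewald, Section~VII.3) and is essentially sound: both directions come down to matching the affine charts $\Spec(\CC[\NN(\calA-v)])$ of a projective toric variety $Y_\calA$ against the charts $U_{\sigma_v}$ of $Y_\Sigma$, and you correctly flag that the only nontrivial technical point in the forward direction is linearizing an ample line bundle so that the embedding becomes $\TT_N$-equivariant. Two small remarks. First, the phrase \emph{averaging over the torus action} is misleading for the noncompact group $(\CC^\times)^n$; what you actually invoke, and correctly name in the next clause, is that an ample line bundle on a toric variety without torus factors admits a $\TT_N$-linearization after a suitable power, which is the standard fact and is exactly where the $\TT_N$-invariant very ample divisor of the alternative route enters. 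Second, in the backward direction the intersection $\{x_v\neq 0\}\cap\{x_{v'}\neq 0\}$ is the chart for the face $\sigma_v\cap\sigma_{v'}$, which is dual to the \emph{smallest face of $P$ containing both $v$ and $v'$}; calling it ``the edge between $v$ and $v'$'' is only accurate when $v,v'$ are adjacent, but the argument via Corollary~\ref{C:MonoidFace} (with $m=v'-v$ exposing $\sigma_v\cap\sigma_{v'}$ inside $\sigma_v$) goes through for any pair of vertices. With those phrasings tightened, and the linearization cited explicitly, your sketch is a complete proof in the spirit of the reference.
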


\section{Torus Orbits}
We will study the action of $\TT_N$ on the toric variety $Y_\Sigma$. The torus $\TT_N$ is a group acting on itself by multiplication. We first describe the action of $\TT_N$ on the affine toric variety $U_\sigma$. Then we will define the torus action on the abstract toric variety $Y_\Sigma$. We will show that there is a bijective correspondence between $\TT_N$-orbits in $Y_\Sigma$ and cones in $\Sigma$. We will end this section with a structure theorem.

Let $\Sigma$ be a fan in $M$, and let $\sigma \in \Sigma$ be a cone. Consider the affine toric variety $U_\sigma = \Spec (\CC\left[S_\sigma\right]) $. By Theorem \ref{Th:EquivalentConstructions}, we may assume $U_\sigma \simeq Y_\calA \subset \CC^s$ for a finite set $\calA = \{a_1,\ldots,a_s\}\in M_\ZZ$. There is a bijective correspondence between the complex points of $Y_\calA$ and the monoid homomorphisms $S_\sigma \to \CC$ as follows: Given a point $p \in U_\sigma$, define a map $\gamma \colon S_\sigma \rightarrow \CC$ by sending $m \in S_\sigma$ to $\chi^m(p) \in \CC$. This gives a monoid homomorphism. 

Conversely, we can construct $p \in Y_\calA$ as follows. Let $p = (\gamma(a_1), \ldots, \gamma(a_s))$. We claim $p \in Y_\calA$. It suffices to show $x^u - x^v$ vanishes at $p$ for all exponents $u,v \in \ZZ^s$ with $\calA u = \calA v$. Since $\gamma$ is a monoid homomorphism, we have
\[
\prod_{i=1}^{s} \gamma(a_i)^{u_i} = \gamma (\calA u) = \gamma (\calA v) =  \prod_{i=1}^{s}\gamma(a_i)^{v_i}.
\]
Hence $p \in Y_\calA$. This gives us a bijection between the points of the affine toric variety $Y_\calA$ and semigroup homomorphism from $S_\sigma$ to $\CC$. We will often refer to $p$ as an element in $U_\sigma$, where we actually mean its image under the isomorphism $Y_\calA \simeq U_\sigma$.

\begin{lemma}
	Let $p \in U_\sigma$ and $\gamma$ be its corresponding monoid homomorphism. For $t \in \TT_N$, the monoid homomorphism corresponding to $t \cdot p$ is $m \mapsto \chi^m(t) \gamma(m)$. 
\end{lemma}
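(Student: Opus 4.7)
The plan is to work inside the explicit embedding $U_\sigma \simeq Y_\calA \subset \CC^s$, where $\calA = \{a_1, \ldots, a_s\}$ is a generating set of the monoid $S_\sigma$. Under the bijection between points of $Y_\calA$ and monoid homomorphisms $S_\sigma \to \CC$ described just before the lemma, the point $p$ corresponds to the tuple $(\gamma(a_1), \ldots, \gamma(a_s)) = (\chi^{a_1}(p), \ldots, \chi^{a_s}(p))$. The first step is to record this identification together with the fact that the torus action on $Y_\calA$ is inherited from the coordinatewise action of $(\CC^\times)^s$ on $\CC^s$ via the embedding $\Phi_\calA$.

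Next, I would compute $t \cdot p$ concretely. Since $t \in \TT_N$ acts on $Y_\calA$ through its image $\Phi_\calA(t) = (\chi^{a_1}(t), \ldots, \chi^{a_s}(t)) \in (\CC^\times)^s$, and the ambient action is coordinatewise multiplication,
\[
t \cdot p \;=\; \bigl(\chi^{a_1}(t)\gamma(a_1),\, \ldots,\, \chi^{a_s}(t)\gamma(a_s)\bigr).
\]
Reading off the monoid homomorphism $\gamma'$ associated to $t \cdot p$ yields $\gamma'(a_i) = \chi^{a_i}(t)\gamma(a_i)$ for every generator $a_i$.

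Finally, I would verify that $\gamma'$ equals the map $m \mapsto \chi^m(t)\gamma(m)$ on all of $S_\sigma$. Both maps are monoid homomorphisms: $\gamma'$ by construction, and the product $m \mapsto \chi^m(t)\gamma(m)$ because it is the pointwise product of two monoid homomorphisms (using that $\chi^{m_1 + m_2}(t) = \chi^{m_1}(t)\chi^{m_2}(t)$ and $\gamma(m_1 + m_2) = \gamma(m_1)\gamma(m_2)$, and that both send $0$ to $1$). Since $\calA$ generates $S_\sigma$ and the two homomorphisms agree on $\calA$, they agree on all of $S_\sigma$.

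The argument is essentially bookkeeping, so there is no real obstacle; the only subtle point is making the verification clean rather than ad hoc. To keep things painless I would handle it by invoking the universal property that a monoid homomorphism out of $S_\sigma$ is determined by its values on a generating set, which reduces the whole problem to the one-line check $\gamma'(a_i) = \chi^{a_i}(t)\gamma(a_i)$ read off from the coordinate computation above.
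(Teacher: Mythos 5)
Your proof is correct and takes essentially the same approach as the paper: both compute the coordinates of $t\cdot p$ in the embedding $U_\sigma\simeq Y_\calA\subset\CC^s$ as $(\chi^{a_1}(t)\gamma(a_1),\ldots,\chi^{a_s}(t)\gamma(a_s))$ and read off the corresponding monoid homomorphism. The only difference is cosmetic — you derive the coordinate formula directly from the coordinatewise action of $(\CC^\times)^s$ through $\Phi_\calA$ rather than from the dual $\CC$-algebra comultiplication $\chi^m\mapsto\chi^m\otimes\chi^m$ used in the paper, and you spell out the generator-agreement argument that the paper leaves implicit.
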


\begin{proof}
	Let $\calA = \{a_1,\ldots,a_s \} \subset M_\ZZ$, and $\sigma= \cone (\calA) $. Hence $U_\sigma = Y_\calA \subseteq \CC^s$. The action of the torus $\TT_N$ on $U_\sigma$ is given by a map $\TT_N \times U_\sigma \to U_\sigma$. Since both sides are affine varieties, it comes from a $\CC$-algebra homomorphism $\CC\left[S_\sigma \right] \to \CC\left[M_\ZZ\right] \otimes \CC\left[S_\sigma\right]$ given by $\chi^m \mapsto \chi^m \otimes \chi^m$, which becomes $\CC \left[ x_1, \ldots,x_s\right] / I_\calA \to \CC \left[ t_1^\pm, \ldots,t_s^\pm\right]/ I_\calA \otimes\CC \left[ y_1, \ldots,y_s\right] / I_\calA$ given by $x_i \mapsto t_iy_i$. So $t \cdot p$ is given by $\left( \chi^{a_1}(t) \gamma(a_1), \ldots, \chi^{a_s}(t) \gamma(a_s) \right)$. Hence the corresponding monoid homomorphism is given by $m \mapsto \chi^m(t) \gamma(m)$. 
\end{proof}

\begin{definition}\label{D:DistinguishedPoint}
	For each cone $\sigma$ in $\Sigma$, the homomorphism $\gamma_\sigma \colon S_\sigma \to \CC$ defined by
	\[
	\gamma_\sigma(m)=
	\begin{cases}
	1 &\text{if } m \in M_\ZZ \cap \sigma^\perp\\
	0 &\text{otherwise}
	\end{cases}
	\]
	for $m \in S_\sigma$, is called the \demph{distinguished homomorphism}. The point $x_\sigma \in U_\sigma$ corresponding to the monoid homomorphism $\gamma_\sigma$ is called the \demph{distinguished point}.
\end{definition}

\begin{example}\label{Ex:DistinguishedPoints}
	Consider the cone $\sigma_3$ in Example \ref{ConesInR2}. The generators of $S_{\sigma_3}$ are $m_1 = e_1^*$, $m_2 = e_1^*+ e_2^*$ and $m_3 = e_1^*+2e_2^*$. Let $\tau_1$ be the face of $\sigma_3$ generated by $2e_1-e_2$. Then $m_1,m_2 \notin \tau_1^\perp$ and $m_3 \in \tau_1^\perp$. Then $\gamma_{\tau_1} (m_1) = \gamma_{\tau_1}(m_2) = 0$, and $\gamma_{\tau_1}(m_3)=1$. The distinguished point is given by $x_{\tau_1} = (\gamma_{\tau_1}(m_1), \gamma_{\tau_1}(m_2), \gamma_{\tau_1}(m_3)) = (0,0,1)$. 
	Similarly, if $\tau_2$ is the face of $\sigma$ generated by $e_2$, then we obtain that $\gamma_{\tau_2}(m_1)=1$ and $\gamma_{\tau_2} (m_2) = \gamma_{\tau_2}(m_3) = 0$. Hence $x_{\tau_2} = (1,0,0)$. 
	Considering $\sigma$ as a face of itself, we get the distinguished point $x_\sigma = (0,0,0)$. 
	Lastly, the origin $0$ is a face of $\sigma$, which will give the distinguished point $x_0 = (1,1,1)$. \hfill$\square$
\end{example}

Note that $\gamma_\sigma$ is a monoid homomorphism, as $\sigma^\vee \cap \sigma^\perp$ is a face of $\sigma^\vee$. Hence, if $m,m' \in S_\sigma$ and $m+m' \in S_\sigma \cap \sigma^\perp$, then by Lemma \ref{SumofElementsinCone} we have $m,m' \in S_\sigma \cap \sigma^\perp$. 

We now begin to explain torus orbits of a toric variety.

\begin{lemma}
	Let $\sigma \in N$ be a cone. Then there is a bijection between the set
	\[
	O_\sigma\ := U_\sigma -  \bigcup_{\tau \prec \sigma} U_\tau 
	\]
	and the set of monoid homomorphisms 
	$$\{\gamma \in \Hom (S_\sigma,\CC) \mid \gamma(m) \neq 0 \text{ if and only if } m \in \sigma^\perp \cap M_\ZZ \}.$$
\end{lemma}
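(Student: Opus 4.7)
The plan is to leverage the bijection, established earlier in the excerpt, between the points of an affine toric variety $U_\sigma$ and the monoid homomorphisms $\gamma \colon S_\sigma \to \CC$. Under this bijection, a point $p \in U_\sigma$ lies in $O_\sigma$ precisely when $\gamma$ fails to extend to any $S_\tau$ with $\tau \prec \sigma$, so the task reduces to characterizing such $\gamma$ in terms of their support.

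First, I would recall that every proper face $\tau \prec \sigma$ has the form $\tau = H_m \cap \sigma$ for some $m \in \sigma^\vee$, and by rationality we can take $m \in S_\sigma = \sigma^\vee \cap M_\ZZ$; conversely, for any $m \in S_\sigma$, the set $H_m \cap \sigma$ is a face, and it is proper exactly when $m \notin \sigma^\perp$. Next, by Corollary \ref{C:MonoidFace}, $S_\tau = S_\sigma + \ZZ_\geq(-m)$, so extending a monoid homomorphism $\gamma \colon S_\sigma \to \CC$ to $S_\tau$ amounts to assigning a value $\gamma(-m)$ with $\gamma(m)\gamma(-m) = 1$. This is possible if and only if $\gamma(m) \in \CC^\times$. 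Reading this through the identification $U_\tau \simeq U_\sigma \setminus (\chi^m = 0)$ from Lemma \ref{L:FaceIdentification}, I get that $p \in U_\tau$ iff $\gamma(m) \neq 0$.

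Combining these observations yields: $p \in \bigcup_{\tau \prec \sigma} U_\tau$ iff there exists $m \in S_\sigma \setminus \sigma^\perp$ with $\gamma(m) \neq 0$. Taking the contrapositive, $p \in O_\sigma$ iff $\gamma(m) = 0$ for every $m \in S_\sigma \setminus \sigma^\perp$. To complete the ``iff'' in the target characterization, I would observe that $\sigma^\perp \cap M_\ZZ$ is a subgroup of $M_\ZZ$ (if $m \in \sigma^\perp$ then $-m \in \sigma^\perp \subseteq S_\sigma$), and hence $\gamma(m)\gamma(-m) = \gamma(0) = 1$ forces $\gamma(m) \in \CC^\times$ for every $m \in \sigma^\perp \cap M_\ZZ$. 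So the support condition ``$\gamma(m) \neq 0 \iff m \in \sigma^\perp \cap M_\ZZ$'' holds precisely for the $\gamma$ corresponding to $O_\sigma$.

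The one subtlety, which I would treat carefully, is the claim that \emph{every} proper face arises as $H_m \cap \sigma$ for some $m$ in the \emph{lattice} $M_\ZZ$, not just in $M$. This is where the rationality of $\sigma$ is used: the supporting hyperplane can be chosen rational because $\sigma^\vee$ is a rational cone, so any relative interior point of a rational face of $\sigma^\vee$ may be scaled to lie in $M_\ZZ$. Once this point is settled, the rest of the argument is a direct bookkeeping exercise combining the bijection ``points $\leftrightarrow$ monoid homomorphisms'' with Corollary \ref{C:MonoidFace}, so I expect no further obstacle.
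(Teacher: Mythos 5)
Your proof is correct and takes essentially the same route as the paper: both rest on the bijection between points of $U_\sigma$ and $\Hom(S_\sigma,\CC)$, then characterize $O_\sigma$ as the homomorphisms that fail to extend to $S_\tau$ for any proper face $\tau$, using Corollary~\ref{C:MonoidFace} ($S_\tau = S_\sigma + \ZZ_\geq(-m)$). One small streamlining on your part: for the direction ``$\gamma(m)\neq 0$ for some $m \in S_\sigma\setminus\sigma^\perp$ implies $p\notin O_\sigma$,'' you extend $\gamma$ directly to $S_\tau$ with $\tau = H_m\cap\sigma$, whereas the paper first decomposes $m$ as a nonnegative integer combination of monoid generators $a_i$ of $S_\sigma$, deduces $\gamma(a_j)\neq 0$ for some generator $a_j\notin\sigma^\perp$, and then extends to $S_\sigma+\ZZ(-a_j)$. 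Both arguments are valid; yours avoids the generator bookkeeping, and the paper's version makes explicit which coordinate of $U_\sigma$ is being inverted, in keeping with the coordinate picture of Lemma~\ref{L:FaceIdentification}. Your treatment of the rationality subtlety (rescaling a relative interior point of the dual face $\tau^*$ to land in $M_\ZZ$) is exactly the needed justification for restricting attention to supporting hyperplanes coming from $S_\sigma$.
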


\begin{proof}
	Let $\tau$ be a face of $\sigma$. Since dualizing reverses inclusions, we have $S_\sigma \subset S_\tau$. Then the points of $U_\sigma$ that are not in $U_\tau$ are the homomorphisms $\gamma \in \Hom (S_\sigma,\CC)$ that do not extend to a homomorphism $\tilde{\gamma} \in \Hom (S_\tau,\CC)$. An extension is available unless there exists an element $m \in S_\sigma$ such that $ \gamma(m) =0$ but $m$ is invertible in $S_\tau$. 
	Let  $\gamma \in \Hom (S_\sigma,\CC)$ be a homomorphism such that $\gamma(m) \neq 0 \text{ if and only if } m \in \sigma^\perp \cap M_\ZZ$. For a proper face $\tau = \sigma \cap H_m$, we have $m \in (\sigma^\vee  \setminus \sigma^{\perp}) \cap M_\ZZ$, because $m \in \sigma^\perp \cap M_\ZZ$ implies $\tau = \sigma$. Hence $\gamma$ cannot be extended to a homomorphism in $\Hom (S_\tau,\CC)$.
	
	Now suppose $\gamma \in \Hom (S_\sigma,\CC)$ with $\gamma(m) \neq 0$ for some $m \in (\sigma^\vee  \setminus \sigma^{\perp}) \cap M_\ZZ$. Assume $ \{ a_1, \ldots, a_k \} \subset M_\ZZ$ generates $\sigma^\vee$. Then  $m =c_1a_1 +\cdots + c_k a_k$ for some integers $c_i$ not all zero. Since $\gamma(m) \neq 0$, we must have $\gamma(a_i) \neq 0$ for all $i$ with $c_i \neq 0$. Note there exists $a_j \notin S_{\sigma^\perp}$ with $c_j \neq 0$ (If such $j$ does not exist then $m \in \sigma^\perp \cap M$). But this implies that $\gamma$ extends to $S_\sigma + \ZZ(-a_j) = S_\tau$, where $\tau = \sigma \cap H_{a_j}$ is a proper face of $\sigma$. 
\end{proof}

Let $\gamma \in \Hom(S_\sigma,\CC)$ be a homomorphism that vanishes on $(\sigma^\vee \setminus \sigma^\perp)\cap M_\ZZ$ and is nonzero on $\sigma^\perp \cap M_\ZZ$. For an element $t \in \TT_N$ and $m \in S_\sigma$, $(t \cdot \gamma)(m) = \chi^m(t) \gamma(m)$, which again vanishes on $(\sigma^\vee \setminus \sigma^\perp)\cap M_\ZZ$ and is nonzero on $\sigma^\perp \cap M_\ZZ$. Hence $t\cdot \gamma \in O_\sigma$, which shows that $O_\sigma$ is a $\TT_N$-orbit. Moreover, since $\gamma_\sigma \in O_\sigma$, we have $O_\sigma = \TT_N \cdot \gamma_\sigma$.

\begin{example}
	In Example \ref{Ex:DistinguishedPoints} we have the following orbits. 
	
	$O_\sigma = \{ (0,0,0)\}$ is the orbit through the distinguished point $x_\sigma = (0,0,0)$.
	
	$O_{\tau_1} = \{0\} \times \{ 0\} \times \CC^\times$ is the orbit through the distinguished point $x_{\tau_1} = (0,0,1)$.
	
	$O_{\tau_2} =\CC^\times \times \{0\} \times \{ 0\} $ is the orbit through the distinguished point $x_{\tau_2} = (1,0,0)$.
	
	$O_0 = (\CC^\times)^2$ is the orbit through the distinguished point $x_0 = (1,1,1)$. \hfill$\square$
	
\end{example}

We summarize the results of this section in the following theorem \cite[Theorem 3.2.6]{CLS}.

\begin{theorem}\label{T:OrbitConeCorrespondence}
	Let $\Sigma \subseteq N$ be a fan, and $Y_\Sigma$ be the toric variety corresponding to $\Sigma$. Then 
	\begin{enumerate}
		\item There is a bijective correspondence between cones in $\Sigma$ and $\TT_N$-orbits in $Y_\Sigma$. 
		\item The affine toric variety $U_\sigma$ is the union of $\TT_N$-orbits,
		\[
		U_\sigma = \bigcup_{\tau \preceq \sigma} O_\tau.
		\]
	\end{enumerate}
\end{theorem}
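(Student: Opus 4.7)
My plan is to establish part (2) first by induction on $\dim\sigma$, and then deduce part (1) from (2) together with the preceding lemma which already identifies $O_\sigma$ with the $\TT_N$-orbit through the distinguished point $x_\sigma$.

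For part (2), I would induct on $\dim\sigma$. The base case $\sigma=\{0\}$ gives $U_\sigma = \TT_N = O_\sigma$. For the inductive step, the definition $O_\sigma := U_\sigma \setminus \bigcup_{\tau \prec \sigma} U_\tau$ rewrites immediately as
\[
U_\sigma \;=\; O_\sigma \cup \bigcup_{\tau \prec \sigma} U_\tau,
\]
and the inductive hypothesis applied to each proper face $\tau$ gives $U_\tau = \bigcup_{\rho \preceq \tau} O_\rho$. Combining these yields $U_\sigma = \bigcup_{\tau \preceq \sigma} O_\tau$. Disjointness inside $U_\sigma$ follows from the homomorphism characterization already established: a point of $O_\tau \subseteq U_\sigma$ corresponds to $\gamma \in \Hom(S_\sigma,\CC)$ whose nonzero support is $\tau^\perp \cap S_\sigma = \tau^* \cap M_\ZZ$, where $\tau^* = \sigma^\vee \cap \tau^\perp$. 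Since the map $\tau \mapsto \tau^*$ is the inclusion-reversing bijection of Theorem \ref{ConeCorrespondence}, distinct faces of $\sigma$ produce distinct supports and hence disjoint orbits.

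For part (1), I would verify that $\sigma \mapsto O_\sigma$ is a well-defined globally on $Y_\Sigma$: if $\sigma \preceq \sigma'$, the inclusion $U_\sigma \hookrightarrow U_{\sigma'}$ from the gluing construction is an open $\TT_N$-equivariant embedding, so the orbit of $x_\sigma$ is the same subset of $Y_\Sigma$ whether computed in $U_\sigma$ or in $U_{\sigma'}$. For surjectivity, given any $p \in Y_\Sigma$, choose a cone $\sigma \in \Sigma$ with $p \in U_\sigma$; by (2), $p \in O_\tau$ for some $\tau \preceq \sigma$, and $\tau \in \Sigma$ since $\Sigma$ is closed under taking faces. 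For injectivity, suppose $O_\sigma = O_{\sigma'}$ with $\sigma, \sigma' \in \Sigma$ and pick $p$ in the common orbit; then $p \in U_\sigma \cap U_{\sigma'} = U_{\sigma \cap \sigma'}$, and applying (2) inside $U_\sigma$ puts $p$ in $O_\rho$ for a unique $\rho \preceq \sigma \cap \sigma'$. Since $p \in O_\sigma$ as well, the disjointness from (2) forces $\rho = \sigma$, whence $\sigma \preceq \sigma'$. Symmetry then gives $\sigma = \sigma'$.

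The main obstacle I anticipate is the two global compatibility statements used above: that orbits defined chart-by-chart glue consistently, and that $U_\sigma \cap U_{\sigma'} = U_{\sigma \cap \sigma'}$ inside $Y_\Sigma$. The first reduces to the observation that open $\TT_N$-equivariant embeddings preserve orbits. The second is a standard consequence of the gluing prescription from Section \ref{S:TVFans}, ultimately resting on the Separation Lemma (Lemma \ref{SeparationLemma}) together with the fact that $S_{\sigma \cap \sigma'} = S_\sigma + S_{\sigma'}$ realizes $U_{\sigma \cap \sigma'}$ as a principal open subset of both $U_\sigma$ and $U_{\sigma'}$. Once these gluing facts are recorded, the remainder of the argument is purely combinatorial, driven by the face correspondence of Theorem \ref{ConeCorrespondence}.
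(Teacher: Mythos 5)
Your plan is correct, and it supplies a genuine proof where the paper provides none: the paper simply cites \cite[Theorem 3.2.6]{CLS} and calls the theorem a ``summary'' of the preceding lemma. What you have written is essentially the deduction of the theorem from that lemma together with the face correspondence (Theorem~\ref{ConeCorrespondence}) and the gluing data from Section~\ref{S:TVFans}, which is the natural route.

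A few points worth recording. The disjointness argument in part (2) is the real content, and you have it right: via the monoid-homomorphism description, a point $p\in O_\tau\subset U_\sigma$ has nonvanishing locus in $S_\sigma$ equal to $\tau^*\cap M_\ZZ$, and the bijection $\tau\mapsto\tau^*$ of Theorem~\ref{ConeCorrespondence} ensures that distinct faces of $\sigma$ yield distinct such loci. (One small clarification: for $\tau\prec\sigma$ the orbit $O_\tau$ is a priori defined in terms of $\Hom(S_\tau,\CC)$; one restricts along $S_\sigma\hookrightarrow S_\tau$ to compare supports inside $S_\sigma$, which is what you indicate.) For part (1), the injectivity argument is sound but leans on the identity $U_\sigma\cap U_{\sigma'}=U_{\sigma\cap\sigma'}$ inside $Y_\Sigma$. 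This is exactly what Definition~\ref{D:AbstractTV} and the discussion around Lemma~\ref{L:FaceIdentification} assert as the gluing prescription, though the paper never isolates it as a lemma; you are right to flag it as the one global input that needs to be on record. With that noted, your proof is complete and correct.
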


\begin{example}\label{Ex:Orbits}
	Consider the fan $\Sigma = \Sigma_2$ in Figure \ref{F:Fans}. Let us examine the orbits and affine toric varieties contained in $Y_\Sigma \simeq \RR^2$.
	
	It contains three $2$-dimensional cones, three edges, and one vertex. 
	
	\begin{enumerate}
		\item The vertex $0$ corresponds to the orbit $O_0= \{ (x:y:z) \in \PP^2 \mid x,y,z \neq 0 \} \simeq \TT_N \subseteq \PP^2$. Note that $U_0= O_0 \simeq (\CC^\times)^2$, as $0$ has no face other than itself. 
		
		\item The edge $\tau_1$ of $\Sigma$ is generated by $e_1$. The distinguished point in $U_{\tau_1}$ is $(1:0:1)$. Hence, $O_{\tau_1} = \{ (x:0:z) \in \PP^2 \mid x,z \neq 0 \}$ and $U_{\tau_1} = O_0 \cup O_{\tau_1}$. 
		
		\item The edge $\tau_2$ of $\Sigma$ is generated by $e_2$. The distinguished point in $U_{\tau_2}$ is $(1:1:0)$. Hence, $O_{\tau_2} = \{ (x:y:0) \in \PP^2 \mid x,y \neq 0 \}$ and $U_{\tau_2} = O_0 \cup O_{\tau_2}$. 
		
		\item The edge $\tau_3$ of $\Sigma$ is generated by $-e_1-e_2$. The distinguished point in $U_{\tau_3}$ is $(0:1:1)$. Hence, $O_{\tau_3} = \{ (0:y:z) \in \PP^2 \mid y,z \neq 0 \}$ and $U_{\tau_3} = O_0 \cup O_{\tau_3}$. 
		
		\item Consider $\sigma_0$. The distinguished point in $U_{\sigma_0}$ is $(1:0:0)$. Hence, $O_{\sigma_0} =  \{ (0:0:z) \in \PP^2 \mid z \neq 0 \}$ and $U_{\sigma_0} = O_0 \cup O_{\tau_1} \cup O_{\tau_2}$.
		
		\item Consider $\sigma_1$. The distinguished point in $U_{\sigma_0}$ is $(0:1:0)$. Hence, $O_{\sigma_1} =  \{ (0:y:0) \in \PP^2 \mid z \neq 0 \}$ and  $U_{\sigma_1} = O_0 \cup O_{\tau_2} \cup O_{\tau_3}$.
		
		\item Lastly, consider $\sigma_2$. The distinguished point in $U_{\sigma_0}$ is $(0:0:1)$. Hence, $O_{\sigma_2} =  \{ (0:0:z) \in \PP^2 \mid z \neq 0 \}$ and  $U_{\sigma_2} = O_o \cup O_{\tau_1} \cup O_{\tau_3}$. \hfill$\square$
	\end{enumerate}
	
\end{example}

\section{Toric Morphisms}
We will define toric morphisms between toric varieties and maps between fans. Then we will show a toric morphism gives rise to a map of fans, and a map of fans gives rise to a toric morphism. This will give an equivalence between two categories.

\begin{definition}
	Let $\Sigma_1$ and $\Sigma_2$ be two fans in real vector spaces $N_1$ and $N_2$, respectively. 
	
	A $\ZZ$-linear mapping $\bar{\phi} \colon (N_1)_\ZZ \to (N_2)_\ZZ$ with induced map $\bar{\phi}_\RR(z \otimes r) = \bar{\phi}(z) \otimes r$ is called a \demph{map of fans} if for every cone $\sigma_1 \in \Sigma_1$, there exists a cone $\sigma_2 \in \Sigma_2$ such that $\bar{\phi}(\sigma_1) \subseteq \sigma_2$. 
	
	A morphism $ \phi \colon Y_{\Sigma_1}  \to Y_{\Sigma_2}$ is called \demph{toric} if $\phi(\TT_{N_1}) \subseteq \TT_{N_2}$, and $\restr{\phi}{\TT_{N_1}}$ is a group homomorphism. 
\end{definition}

\begin{example}
	Let $N_1 = \RR^2$ with basis $e_1$ and $e_2$. For $r \in \NN$, let $\Sigma_r$ be the union of four cones
	$\sigma_1 = \cone \{e_1,e_2\}, \ \sigma_2 = \cone \{e_1, -e_2\}, \ \sigma_3= \cone\{-e_1+re_2, - e_2 \}, \ \sigma_4 = \cone \{-e_1 +re_2, e_2\}$,
	and their faces, shown in Figure \ref{F:Hirzebruch}. 
	\begin{figure}[!ht]
		\centering
		
		\begin{tikzpicture}[line cap=round,line join=round,>=latex,scale=.9]
		\fill[color=yellow,fill opacity=0.3] (0,0) -- (3,0) -- (3,3) -- (0,3) -- cycle;
		\fill[color=blue,fill opacity=0.3] (0,0) -- (0,3) -- (-2,3)  -- cycle;
		\fill[color=red,fill opacity=0.3] (0,0) -- (-2,3)  --(-3,3)--(-3,-3)--(0,-3)-- cycle;
		\fill[color=green,fill opacity=0.3] (0,0) -- (0,-3) -- (3,-3) -- (3,0) -- cycle;

		\draw [->,thick] (0,0) -- (3.5,0);
		\draw [->,thick] (0,0) -- (-2.33,3.5);
		\draw [->,thick] (0,0) -- (0,3.5);
		\draw [->,thick] (0,0) -- (0,-3.5);
		
		\filldraw[very thick] (-1.4,2.1) circle (.04cm)node[left] {\footnotesize$(-1,r)$};
		
		\draw (1.5,1.5) node {$\sigma_1$};
		\draw (1,-1.5) node {$\sigma_2$};
		\draw (-1.5,0)  node {$\sigma_3$};
		\draw (-.5,2)  node {$\sigma_4$};
		
		\end{tikzpicture}
		
		\caption{The fan $\Sigma_r$.}
		\label{F:Hirzebruch}
	\end{figure}
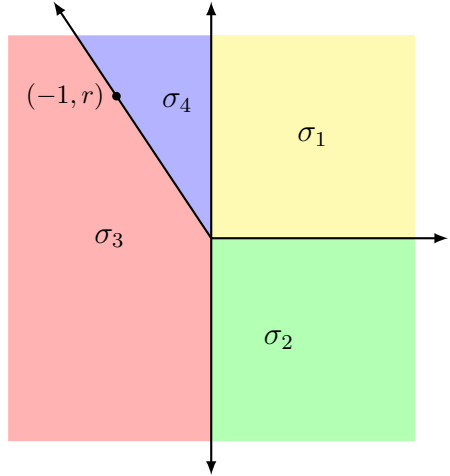

	Also, let $N_2 = \RR$, and let $\Sigma$ be the fan given in Figure \ref{F:P1}.
	Consider the mapping
	\begin{align*}
	\bar{\phi} \colon (N_1)_\ZZ &\longrightarrow \ (N_2)_\ZZ \\
	ae_1+be_2 &\longmapsto a.
	\end{align*}
	Note that $\bar{\phi}(\sigma_1) =\bar{\phi}(\sigma_2) = \bar{\phi}(\cone\{e_1\}) = \bar{\phi}(\cone\{-e_1+re_2\}) =\RR_\geq \in \Sigma$,  $\bar{\phi}(\sigma_3) =\bar{\phi}(\sigma_4) = \RR_\leq \in \Sigma$ and 
	$\bar{\phi}(\cone\{e_2\}) = \bar{\phi}(\cone\{-e_2\}) =\bar{\phi}(0) = 0 \in \Sigma$.
	Hence $\bar{\phi}$ is a map of fans between $\Sigma_r$ and $\Sigma$. \hfill$\square$
\end{example}

\begin{lemma}\label{L:ActionisEquivariant}
	Let $\sigma_1\subset N_1$ and $\sigma_2 \subset N_2$ be cones. A toric morphism $\phi \colon U_{\sigma_1} \to U_{\sigma_2}$ is equivariant, i.e., $\phi(t\cdot p) = \phi(t)  \cdot \phi(p)$ for all $t \in \TT_{N_1}$ and  $p \in V_1$. 
\end{lemma}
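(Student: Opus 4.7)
The plan is to first verify equivariance on the dense torus $\TT_{N_1} \subset U_{\sigma_1}$ and then propagate it to all of $U_{\sigma_1}$ using a density/irreducibility argument, since an equation between two morphisms of varieties that holds on a dense subset holds everywhere.

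For the first step, fix $t \in \TT_{N_1}$ and take $p \in \TT_{N_1}$. Then $t \cdot p$ is simply the group product $tp$ in $\TT_{N_1}$, because the torus action on $U_{\sigma_1}$ restricted to $\TT_{N_1}$ is the multiplication of $\TT_{N_1}$ on itself. Since $\phi$ is toric, $\restr{\phi}{\TT_{N_1}} \colon \TT_{N_1} \to \TT_{N_2}$ is a group homomorphism, so $\phi(tp) = \phi(t)\phi(p)$. Both $\phi(t)$ and $\phi(p)$ lie in $\TT_{N_2}$, and again the action of $\TT_{N_2}$ on itself coincides with the group multiplication, so $\phi(t)\phi(p) = \phi(t) \cdot \phi(p)$. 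This gives $\phi(t \cdot p) = \phi(t) \cdot \phi(p)$ for all $p \in \TT_{N_1}$.

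For the second step, fix $t \in \TT_{N_1}$ and define two morphisms of affine varieties
\[
f_1, f_2 \colon U_{\sigma_1} \longrightarrow U_{\sigma_2}, \qquad f_1(p) = \phi(t \cdot p), \quad f_2(p) = \phi(t) \cdot \phi(p),
\]
where $f_1$ is the composition of $p \mapsto t \cdot p$ (translation by $t$, which is a morphism of $U_{\sigma_1}$ to itself) with $\phi$, and $f_2$ is the composition of $\phi$ with translation by $\phi(t)$ on $U_{\sigma_2}$. By the first step, $f_1$ and $f_2$ agree on $\TT_{N_1}$, which is a dense Zariski open subset of the irreducible variety $U_{\sigma_1}$. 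Since $U_{\sigma_2}$ is a separated affine variety, the set on which two morphisms to $U_{\sigma_2}$ agree is closed in $U_{\sigma_1}$; being dense, it must be all of $U_{\sigma_1}$. Therefore $f_1 = f_2$, which is exactly the claimed equivariance.

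There is essentially no obstacle here beyond being careful that ``$\phi(t)\phi(p)$'' in the torus and ``$\phi(t) \cdot \phi(p)$'' as an action agree on $\TT_{N_2}$, and that the density/separatedness argument applies; the only step that uses more than formal manipulation is the observation that two morphisms of varieties agreeing on a dense open subset are equal, which follows from separatedness of the target together with irreducibility of the source.
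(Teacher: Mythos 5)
Your proof is correct and takes essentially the same approach as the paper: both reduce to the dense torus, where equivariance is immediate from $\restr{\phi}{\TT_{N_1}}$ being a group homomorphism, and then propagate the identity to all of $U_{\sigma_1}$ by a density argument. The paper phrases the density step as commutativity of a square of morphisms $\TT_{N_1}\times U_{\sigma_1}\to U_{\sigma_2}$ (using density of $\TT_{N_1}\times\TT_{N_1}$), while you fix $t\in\TT_{N_1}$ and compare two morphisms $U_{\sigma_1}\to U_{\sigma_2}$; these are the same argument in different packaging.
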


\begin{proof}
	Suppose the action of $\TT_{N_i}$ on $U_{\sigma_i}$ is given by a morphism $\Phi_i$. We need to show that the following diagram
	
	\begin{equation*}
	\begin{tikzpicture}[node distance=2.5cm, auto]
	\node (LT) {$\TT_{N_1} \times U_{\sigma_1} $};
	\node (RT) [node distance=5cm, right of=LT]{$U_{\sigma_1}$};
	
	\node (LD) [below of=LT] {$\TT_{N_2} \times U_{\sigma_2}$};
	\node (RD) [node distance=5cm, right of=LD] {$U_{\sigma_2}$};
	\draw[->] (LT) to node {$\Phi_1$} (RT);
	\draw[->] (LD) to node {$\Phi_2$} (RD);
	\draw[->] (LT) to node [swap] {$\restr{\phi}{\TT_{N_1}}\times \phi$} (LD);
	\draw[->] (RT) to node  {$\phi$} (RD);
	\end{tikzpicture}	
	\end{equation*}
	commutes. Replacing $U_{\sigma_i}$ by $\TT_{N_i}$ in the above diagram, we get
	
	\begin{equation*}
	\begin{tikzpicture}[node distance=2.5cm, auto]
	\node (LT) {$\TT_{N_1} \times \TT_{N_1} $};
	\node (RT) [node distance=5cm, right of=LT]{$\TT_{N_1}$};
	
	\node (LD) [below of=LT] {$\TT_{N_2} \times \TT_{N_2}$};
	\node (RD) [node distance=5cm, right of=LD] {$\TT_{N_2}.$};
	\draw[->] (LT) to node {$\Phi_1$} (RT);
	\draw[->] (LD) to node {$\Phi_2$} (RD);
	\draw[->] (LT) to node [swap] {$\restr{\phi}{\TT_{N_1}}\times \restr{\phi}{\TT_{N_1}}$} (LD);
	\draw[->] (RT) to node  {$\phi$} (RD);
	\end{tikzpicture}	
	\end{equation*}
	Since $\restr{\phi}{\TT_{N_1}}$ is a group homomorphism, this diagram commutes. Also note that since $\TT_{N_1}\times \TT_{N_1}$ is dense in $\TT_{N_1} \times U_{\sigma_1}$, the first diagram also commutes.
	Hence $\phi$ is equivariant.
\end{proof}
Lemma  \ref{L:ActionisEquivariant} generalizes to show that any toric morphism $\phi \colon Y_{\Sigma_1} \to Y_{\Sigma_2}$ is an equivariant mapping. 
We want to state a theorem which gives a correspondence between maps of fans and toric morphisms. We start with the following lemma.

\begin{lemma}\label{L:Morphisms}
	Let $\sigma_1 \subset N_1$ and $\sigma_2 \subset N_2$ be two cones. A homomorphism $\bar{\phi} \colon (N_1)_\ZZ \to (N_2)_\ZZ$ induces a toric morphism $\phi \colon U_{\sigma_1} \to U_{\sigma_2}$ extending $\phi \colon \TT_{N_1} \to \TT_{N_2}$ if and only if $\bar{\phi}_\RR(\sigma_1) \subset \sigma_2$. 
\end{lemma}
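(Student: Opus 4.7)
The plan is to translate the geometric cone condition into a monoid condition on the dual lattices via the standard cone duality developed in Section \ref{S:Cones}, and then use the correspondence between morphisms of affine toric varieties and $\CC$-algebra maps of their coordinate rings.

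First I set up the algebraic framework. A $\ZZ$-linear map $\bar\phi\colon (N_1)_\ZZ \to (N_2)_\ZZ$ induces by adjunction a transpose $\bar\phi^*\colon (M_2)_\ZZ \to (M_1)_\ZZ$ determined by $\langle \bar\phi^*(m), v\rangle = \langle m, \bar\phi(v)\rangle$, and tensoring with $\CC^\times$ produces the torus homomorphism $\phi\colon \TT_{N_1}\to\TT_{N_2}$ whose pullback on characters is $\chi^m \mapsto \chi^{\bar\phi^*(m)}$. Since $U_{\sigma_i}=\Spec\CC[S_{\sigma_i}]$, a toric morphism $U_{\sigma_1}\to U_{\sigma_2}$ extending $\phi$ is the same datum as a $\CC$-algebra homomorphism $\CC[S_{\sigma_2}]\to\CC[S_{\sigma_1}]$ that restricts to $\chi^m\mapsto\chi^{\bar\phi^*(m)}$ on characters. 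Such an algebra map exists if and only if $\bar\phi^*$ sends the monoid $S_{\sigma_2}$ into $S_{\sigma_1}$, i.e., $\bar\phi^*(\sigma_2^\vee\cap (M_2)_\ZZ)\subseteq \sigma_1^\vee\cap (M_1)_\ZZ$. Thus the lemma reduces to showing that this monoid condition is equivalent to $\bar\phi_\RR(\sigma_1)\subseteq\sigma_2$.

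For the forward implication, suppose $\bar\phi_\RR(\sigma_1)\subseteq\sigma_2$. For any $u\in\sigma_2^\vee\cap (M_2)_\ZZ$ and any $v\in\sigma_1$, the identity $\langle \bar\phi^*(u),v\rangle = \langle u,\bar\phi_\RR(v)\rangle\ge 0$ shows $\bar\phi^*(u)\in\sigma_1^\vee$; since $\bar\phi^*(u)\in(M_1)_\ZZ$ already, we get $\bar\phi^*(u)\in S_{\sigma_1}$, so the required monoid inclusion holds and the toric morphism is produced as above. For the converse, assume $\bar\phi^*(S_{\sigma_2})\subseteq S_{\sigma_1}$. Because $\sigma_2$ is rational, Gordan's lemma and its proof show that $S_{\sigma_2}$ generates $\sigma_2^\vee$ as a cone over $\RR_\geq$, so extending scalars yields $\bar\phi^*_\RR(\sigma_2^\vee)\subseteq \sigma_1^\vee$. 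Dualizing this inclusion of cones and invoking the identity $(\sigma^\vee)^\vee=\sigma$ proved earlier gives $\bar\phi_\RR(\sigma_1)=\bar\phi_\RR((\sigma_1^\vee)^\vee)\subseteq (\sigma_2^\vee)^\vee=\sigma_2$, as desired.

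The main obstacle I expect is bookkeeping the duality correctly: one must verify that passing from lattice points of $\sigma^\vee$ back up to the real cone $\sigma^\vee$ preserves containments (which uses rationality of the cones, implicit in the definition of $U_\sigma$), and that the adjunction $\langle \bar\phi^*(m),v\rangle=\langle m,\bar\phi(v)\rangle$ together with the double-dual identity lets us swap cone inclusions between $N$ and $M$ without loss. Once these are stated cleanly, both directions are short. It also needs to be observed that the constructed algebra map is well-defined (it is automatic since $\bar\phi^*$ is a monoid homomorphism into $S_{\sigma_1}$ and the multiplication $\chi^m\chi^{m'}=\chi^{m+m'}$ in each monoid algebra is respected), and that on the open subtorus $\TT_{N_1}\subseteq U_{\sigma_1}$ the induced morphism agrees with $\phi$, which follows because $\CC[(M_i)_\ZZ]$ is the localization of $\CC[S_{\sigma_i}]$ at the characters and the algebra map extends uniquely.
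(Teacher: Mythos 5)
Your proof is correct and follows essentially the same route as the paper: pass to the transpose map $\bar\phi^*$, identify a toric morphism $U_{\sigma_1}\to U_{\sigma_2}$ extending $\phi$ with a monoid map $S_{\sigma_2}\to S_{\sigma_1}$, and then show by duality that this is equivalent to $\bar\phi_\RR(\sigma_1)\subseteq\sigma_2$. You spell out the two directions of the duality equivalence and the role of rationality (via Gordan's lemma) more explicitly than the paper, which compresses these into a single line, but the underlying argument is the same.
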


\begin{proof}
	Let $\bar{\phi}^*  \colon (M_2)_\ZZ \to (M_1)_\ZZ$ be the map dual to $\bar{\phi}$. Note we have $\phi = \Spec \bar{\phi}^*$ when $\bar{\phi}^*$ is considered as a map between coordinate rings $\CC\left[(M_2)_\ZZ\right] \to \CC\left[(M_1)_\ZZ\right]$. Also $\bar{\phi}_\RR(\sigma_1) \subseteq \sigma_2$ if and only if  $\bar{\phi}^\vee_\RR(\sigma_2) \subseteq \sigma_1$, which is exactly when $\restr{\bar{\phi}^\vee}{\sigma_2^\vee} \colon \sigma_2^\vee \to \sigma_1^\vee$ is a monoid homomorphism. Hence $\phi$ is a toric morphism if and only if $\bar{\phi}_\RR(\sigma_1) \subseteq \sigma_2$. 
\end{proof}

We now state the main theorem of this section. A toric morphism gives rise to a map of fans and vice versa. 
\begin{theorem}\label{T:Equivalance}
	Let $\Sigma_i \subseteq N_i$ be fans for $i=1,2$. 
	
	\begin{enumerate}
		\item  If $\bar{\phi} \colon N_1 \to N_2$ is a map of fans between $\Sigma_1$ and $\Sigma_2$, then there exists a toric morphism $\phi \colon Y_{\Sigma_1} \to Y_{\sigma_2}$ such that $\restr{\phi}{\TT_{N_1}}= \bar{\phi} \otimes1 \colon N_1 \otimes\CC^\times \to N_2 \otimes \CC^\times. $
		
		\item If $\phi \colon Y_{\Sigma_1} \to Y_{\Sigma_2}$ is a toric morphism, then $\phi$ induces a map of fans $\bar{\phi} \colon (N_1)_\ZZ \to (N_2)_\ZZ$.
	\end{enumerate}
\end{theorem}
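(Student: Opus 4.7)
My plan is to establish the two directions separately, using Lemma \ref{L:Morphisms} as the bridge between $\RR$-linear maps that send cone into cone and toric morphisms of the corresponding affine charts.

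For part (1), given the map of fans $\bar\phi$, I would apply Lemma \ref{L:Morphisms} cone by cone: for each $\sigma_1\in \Sigma_1$ choose any $\sigma_2\in \Sigma_2$ containing $\bar\phi_\RR(\sigma_1)$ and obtain a toric morphism $\phi_{\sigma_1}\colon U_{\sigma_1}\to U_{\sigma_2}\hookrightarrow Y_{\Sigma_2}$. Two valid choices $\sigma_2,\sigma_2'$ both contain $\bar\phi_\RR(\sigma_1)$, so the image factors through $U_{\sigma_2}\cap U_{\sigma_2'}=U_{\sigma_2\cap \sigma_2'}$ and the composite into $Y_{\Sigma_2}$ is independent of the choice. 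I would then glue along overlaps: for $\tau=\sigma_1\cap \sigma_1'\in \Sigma_1$, both restrictions $\phi_{\sigma_1}\vert_{U_\tau}$ and $\phi_{\sigma_1'}\vert_{U_\tau}$ extend the same torus homomorphism $\bar\phi\otimes 1\colon \TT_{N_1}\to \TT_{N_2}$; because $\TT_{N_1}$ is dense in the irreducible affine variety $U_\tau$ and $Y_{\Sigma_2}$ is separated, these two extensions agree on all of $U_\tau$. The local pieces then glue to a global toric morphism $\phi\colon Y_{\Sigma_1}\to Y_{\Sigma_2}$.

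For part (2), the restriction $\phi\vert_{\TT_{N_1}}\colon \TT_{N_1}\to \TT_{N_2}$ is a homomorphism of algebraic tori, so post-composition with one-parameter subgroups induces a group homomorphism $\bar\phi\colon (N_1)_\ZZ\to (N_2)_\ZZ$ on cocharacter lattices satisfying $\phi\circ\lambda^w=\lambda^{\bar\phi(w)}$. What must be verified is that $\bar\phi_\RR$ sends every $\sigma_1\in \Sigma_1$ into some cone of $\Sigma_2$. I exploit distinguished points and one-parameter subgroups. A direct computation with the distinguished homomorphism $\gamma_\sigma$ of Definition \ref{D:DistinguishedPoint} shows that for any $w\in \Relint(\sigma)\cap N_\ZZ$ the limit $\lim_{t\to 0}\lambda^w(t)=x_\sigma$ holds in $U_\sigma$, and such a limit exists in an ambient toric variety only if $w$ lies in one of the cones of the fan. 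Since $\sigma_1$ is a rational strongly convex cone, choose $v\in \Relint(\sigma_1)\cap (N_1)_\ZZ$; by continuity,
\[
\phi(x_{\sigma_1})\;=\;\lim_{t\to 0}\phi(\lambda^v(t))\;=\;\lim_{t\to 0}\lambda^{\bar\phi(v)}(t).
\]
The orbit--cone correspondence (Theorem \ref{T:OrbitConeCorrespondence}) supplies a unique $\sigma_2\in \Sigma_2$ with $\phi(x_{\sigma_1})\in O_{\sigma_2}$, and the limit characterization forces $\bar\phi(v)\in \Relint(\sigma_2)$.

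Finally I upgrade this pointwise conclusion to $\bar\phi_\RR(\sigma_1)\subseteq \sigma_2$. Let $v_1,\dots,v_r\in (N_1)_\ZZ$ be generators of $\sigma_1$. For every positive integer $n$ and each $i$, the point $v+nv_i$ remains in $\Relint(\sigma_1)\cap (N_1)_\ZZ$, because adding elements of $\sigma_1$ to a relative-interior point preserves the relative interior. The same distinguished-point argument applied to $v+nv_i$ gives $\bar\phi(v+nv_i)\in \Relint(\sigma_2)$, so $\tfrac{1}{n}\bar\phi(v)+\bar\phi(v_i)\in \tfrac{1}{n}\Relint(\sigma_2)=\Relint(\sigma_2)$. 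Letting $n\to\infty$ and using that $\sigma_2$ is closed yields $\bar\phi(v_i)\in \sigma_2$ for every $i$. Since $\sigma_1=\cone(v_1,\dots,v_r)$ and $\sigma_2$ is closed under nonnegative real combinations, $\bar\phi_\RR(\sigma_1)\subseteq \sigma_2$, which is exactly what it means for $\bar\phi$ to be a map of fans. The main obstacle is this last step: transferring information from a single interior lattice point to every cone generator while keeping the target cone $\sigma_2$ fixed; the key trick is that moving along any $v_i$ away from the interior point $v$ still stays in $\Relint(\sigma_1)$, so the orbit--cone correspondence pins down the same $\sigma_2$ throughout.
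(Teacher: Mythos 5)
Your proof is correct and follows essentially the same path as the paper: part (1) glues the local morphisms from Lemma \ref{L:Morphisms} over the affine cover, and part (2) derives the cocharacter-lattice map and invokes the orbit--cone correspondence to show $\bar\phi_\RR$ respects cones. Where the paper compresses part (2) into the one-line assertion that ``orbits map to orbits, hence cones map to cones,'' you supply the missing mechanism — the limit computation via Lemma \ref{L:LimitsofOPSG} showing $\lim_{t\to 0}\lambda^{\bar\phi(v)}(t)=\phi(x_{\sigma_1})$ forces $\bar\phi(v)\in\Relint(\sigma_2)$, followed by the $v+nv_i$ closure argument to push this from a single interior lattice point to the whole cone — which is exactly the detail the paper's argument implicitly requires.
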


\begin{proof} 
	Take the open cover $\left\{ U_{\sigma_i} \right\}_{\sigma_i \in \Sigma_1}$ of $Y_{\Sigma_1}$. For each $\sigma_i \in \Sigma_1$, there exist a cone $\sigma_i' \in \Sigma_2$ such that $\bar{\phi}(\sigma_i) \subseteq \sigma_i'$.
	By Lemma \ref{L:Morphisms}, we have toric morphisms $\phi_{\sigma_i} \colon U_{\sigma_i} \to U_{\sigma_i'}$. 
	Now let $\sigma_1 ,\sigma_1' \in \Sigma_1$ be two cones, and let $\tau \in \Sigma_1$ be their intersection. Then we have $U_{\sigma_1} \cap U_{\sigma_1'} = U_\tau$. Hence for $u \in \tau \cap (N_1)_\ZZ$ and $c\in \CC^\times$, i.e.,  $u \otimes z \in (N_1)_\ZZ \otimes \CC = \TT_{N_1}$, we have $\restr{\phi_{\sigma_1}}{U_\tau} (u \otimes c)= \bar{\phi}(u) \otimes c = \restr{\phi_{\sigma_1'}}{U_\tau}(u \otimes c)$.
	If we take $\sigma_1 = \{0\}$, then $U_{\{0\}} = \TT_{N_1}$, and we get $\phi_{\{0\}} \colon \TT_{N_1} \to \TT_{N_2}$, which is a group homomorphism by definition. Hence $\phi \colon Y_{\Sigma_1} \to Y_{\Sigma_2}$ is a toric morphism. This proves the first part.

	For the second part, let $v \in (N_1)_\ZZ$, and consider the cocharacter $\lambda^v \colon \CC^\times \to  \TT_{N_1}$. Since $\restr{\phi}{\TT_{N_1}}$ is a group homomorphism, the composition $\restr{\phi}{\TT_{N_1}} \circ \lambda^v \colon \CC^\times \to \TT_{N_2}$ gives an element $\bar{\phi}(v) \in N_2$ and linearity is preserved. Hence $\bar{\phi} \colon N_1 \to N_2$ gives a map $\bar{\phi} \colon (N_1)_\ZZ \to (N_2)_\ZZ$. We claim $\bar{\phi}$ is a map of fans. Since a toric morphism is equivariant, $\TT_{N_1}$-orbits are mapped into $\TT_{N_2}$-orbits. Hence by Theorem \ref{T:OrbitConeCorrespondence} cones in $\Sigma_1$ have to be mapped into cones in $\Sigma_2$. 
\end{proof}

\section{Limits of One Parameter Subgroups and Compactness}\label{S:LimitsOPSGTV}

In this section we first show a way to recover the fan $\Sigma$ from the toric variety $Y_\Sigma$. The key idea is to look at \emph{limits} $\lim\limits_{s \to 0} \lambda^v(s)$ for various $v \in N$.  Next we show that a toric variety $Y_\Sigma$ is compact if and only if the fan $\Sigma$ is complete.

The following lemma \cite[Proposition 3.2.2]{CLS} shows that the limits $\lim\limits_{t \to 0} \lambda^v(t)$ of one parameter subgroups are exactly the distinguished points for the cones in the fan. 
\begin{lemma} \label{L:LimitsofOPSG}
	Let $\sigma \subseteq N$ be a cone and $v \in N_\ZZ$. Then 
	\[
	v \in \sigma \iff \lim\limits_{s \to 0} \lambda^v(s) \text{ exists in } U_\sigma.
	\] 
	Moreover, if $v \in \Relint(\sigma)$, then $\lim\limits_{s \to 0} \lambda^v(s)$ is the distinguished point $x_\sigma$. 
	
\end{lemma}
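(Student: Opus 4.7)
The plan is to identify points of $U_\sigma = \Spec(\CC[S_\sigma])$ with monoid homomorphisms $S_\sigma \to \CC$, as done after Theorem \ref{Th:EquivalentConstructions}, and then read off the limit of $\lambda^v(s)$ directly from how $\lambda^v(s)$ pairs with characters $\chi^m$. For $t \in \CC^\times$ and $m \in S_\sigma$, the character $\chi^m$ satisfies $\chi^m(\lambda^v(t)) = t^{\langle m, v\rangle}$ by the very definition of the pairing $\langle\,,\,\rangle$ given in the section on tori. Hence the point $\lambda^v(t) \in \TT_N \subseteq U_\sigma$ corresponds to the monoid homomorphism $\gamma_t \colon m \mapsto t^{\langle m,v\rangle}$.

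For the forward implication, suppose $v \in \sigma \cap N_\ZZ$. Then $\langle m, v\rangle \geq 0$ for every $m \in \sigma^\vee \supset S_\sigma$, so $\lim_{s\to 0} s^{\langle m,v\rangle}$ exists in $\CC$ for each $m \in S_\sigma$, equalling $1$ if $\langle m,v\rangle = 0$ and $0$ otherwise. I would define $\gamma_0(m) := \lim_{s\to 0} s^{\langle m,v\rangle}$ and check directly that $\gamma_0(m+m') = \gamma_0(m)\gamma_0(m')$ and $\gamma_0(0)=1$, so $\gamma_0$ is a monoid homomorphism $S_\sigma \to \CC$ and thus corresponds to a point of $U_\sigma$; this is $\lim_{s \to 0}\lambda^v(s)$.

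For the reverse implication, suppose the limit exists in $U_\sigma$. Let $m_1,\ldots,m_k$ be a generating set for $S_\sigma$ (which exists by Gordan's Lemma). Existence of the limit in $U_\sigma \simeq Y_\calA \subset \CC^k$ forces each coordinate $\chi^{m_i}(\lambda^v(s)) = s^{\langle m_i, v\rangle}$ to have a limit as $s \to 0$, which in turn forces $\langle m_i, v\rangle \geq 0$. By taking nonnegative real combinations, $\langle m,v\rangle \geq 0$ for all $m \in \sigma^\vee$, so $v \in (\sigma^\vee)^\vee = \sigma$ by the earlier corollary. The main subtlety here is just making sure the implication ``limit exists in $U_\sigma$'' really translates to ``each $s^{\langle m_i,v\rangle}$ has a limit in $\CC$'', which follows because the embedding $U_\sigma \hookrightarrow \CC^k$ is a closed inclusion and the component functions are precisely the $\chi^{m_i}$.

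For the moreover statement, assume $v \in \Relint(\sigma)$. For $m \in S_\sigma \subseteq \sigma^\vee$ we have $\langle m,v\rangle \geq 0$ always, and if $\langle m,v\rangle = 0$ then the supporting hyperplane $H_m$ meets $\Relint(\sigma)$, which by the standard face argument (an element of the relative interior cannot lie in a proper supporting hyperplane) forces $\sigma \subseteq H_m$, i.e., $m \in \sigma^\perp$. Conversely, any $m \in S_\sigma \cap \sigma^\perp$ has $\langle m,v\rangle = 0$. Therefore $\gamma_0(m) = 1$ if $m \in S_\sigma \cap \sigma^\perp$ and $\gamma_0(m) = 0$ otherwise; this is exactly the distinguished homomorphism $\gamma_\sigma$ of Definition \ref{D:DistinguishedPoint}, so $\lim_{s\to 0}\lambda^v(s) = x_\sigma$.
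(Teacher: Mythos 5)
The paper does not supply its own proof of this lemma; it simply cites \cite[Proposition 3.2.2]{CLS}. Your argument is correct and is essentially the standard argument found there: identify points of $U_\sigma$ with monoid homomorphisms $S_\sigma\to\CC$, note that $\lambda^v(s)$ corresponds to $m\mapsto s^{\langle m,v\rangle}$, reduce existence of the limit to nonnegativity of $\langle m_i,v\rangle$ on a finite generating set for $S_\sigma$ via the closed embedding $U_\sigma\hookrightarrow\CC^k$, use $(\sigma^\vee)^\vee=\sigma$ for the converse, and appeal to the fact that a relative-interior point cannot lie on a proper supporting hyperplane to see the limit is $\gamma_\sigma$ when $v\in\Relint(\sigma)$.
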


We next illustrate Lemma \ref{L:LimitsofOPSG} in an example. 

\begin{example}\label{Ex:RecoveringFansTV}
	Consider the fan $\Sigma= \Sigma_3$ of Figure \ref{F:Fans}. We showed in Example \ref{Ex:P2} that $Y_\Sigma = \PP^2$. The torus $\TT_{\PP^2} = \{ (1,s,t) \in \PP^2 \mid s,t \neq 0\} = (\CC^\times)^2 \subseteq \PP^2$. Let $v = (a,b)$ be a point in $N_\ZZ = \ZZ^2$. Then we have $\lambda^v(s) = (1,s^a,s^b) \in \TT_{\PP^2}$. We will analyze $(1,s^a,s^b)$ as $s \to 0$. The reader can compare these limits with Example \ref{Ex:Orbits}.
	\begin{enumerate}
		\item $a,b = 0$: $\lim\limits_{s\to 0} (1,s^a, s^b) = (1,1,1)$. 
		\item $a>0$, $b=0$: $\lim\limits_{s\to 0} (1,s^a, s^b) = (1,0,1)$. 
		\item $a=0$, $b>0$: $\lim\limits_{s\to 0} (1,s^a, s^b) = (1,1,0)$.
		\item $a=b<0$: $\lim\limits_{s\to 0} (1,s^a, s^b) = \lim\limits_{s\to 0} (1,s^a, s^a)= \lim\limits_{s\to 0} (s^{-a},1, 1) = (0,1,1)$.
		\item $a, b>0$: $\lim\limits_{s\to 0} (1,s^a, s^b) = (1,0,0)$.
		\item $a<0$, $a-b <0$ : $\lim\limits_{s\to 0} (1,s^a, s^b) = \lim\limits_{s\to 0} (s^{-a},1, s^{b-a} )= (0,1,0)$.
		\item $a>0$, $a-b >0$ : $\lim\limits_{s\to 0} (1,s^a, s^b) = \lim\limits_{s\to 0} (s^{-b}, s^{a-b},1)= (0,0,1)$.
	\end{enumerate} 
	The regions explained above corresponds to cones of the fan $\Sigma$. This way we can recover the fan from these limit points. \hfill$\square$
\end{example}

We now state the main theorem of this section.

\begin{theorem}\label{T:CompactTV}
	Let $Y_\Sigma$ be a toric variety. Then the following are equivalent. 
	\begin{enumerate}
		\item $Y_\Sigma$ is compact. 
		\item The limit $\lim\limits_{s \to 0} \lambda^v(s)$ exists in $Y_\Sigma$ for all $v \in N_\ZZ$.
		\item $\Sigma$ is complete. 
	\end{enumerate}
\end{theorem}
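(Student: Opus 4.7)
I would prove the cycle $(1) \Rightarrow (2) \Rightarrow (3) \Rightarrow (1)$. The middle equivalence $(2) \Leftrightarrow (3)$ is essentially a direct translation of Lemma~\ref{L:LimitsofOPSG}, whereas $(3) \Rightarrow (1)$ is the main technical task.

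For $(1) \Rightarrow (2)$: Fix $v \in N_\ZZ$ and consider the algebraic morphism $\lambda^v \colon \CC^\times \to Y_\Sigma$. The Zariski closure $Z := \overline{\lambda^v(\CC^\times)}$ in $Y_\Sigma$ is an irreducible subvariety of dimension at most $1$. Since $Y_\Sigma$ is compact, $Z$ is a compact (hence complete) irreducible curve, or a point. Passing to its normalization $\widetilde{Z}$, a smooth complete irreducible curve, $\lambda^v$ lifts to a morphism $\widetilde{\lambda}^v \colon \CC^\times \to \widetilde{Z}$ which extends to $\CC \to \widetilde{Z}$ by properness of $\widetilde{Z}$; composing the extension at $0$ with $\widetilde{Z} \to Z \hookrightarrow Y_\Sigma$ produces $\lim_{s \to 0} \lambda^v(s)$.

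For $(2) \Rightarrow (3)$: Let $v \in N_\ZZ$. Since $\lim_{s \to 0} \lambda^v(s) \in Y_\Sigma = \bigcup_{\sigma \in \Sigma} U_\sigma$, the limit lies in some $U_\sigma$, and Lemma~\ref{L:LimitsofOPSG} gives $v \in \sigma \subseteq |\Sigma|$. Hence $N_\ZZ \subseteq |\Sigma|$. Each cone is closed under multiplication by nonnegative reals, so $|\Sigma|$ is too; consequently $|\Sigma| \supseteq \QQ_{\geq 0} \cdot N_\ZZ$. This set is dense in $N$, while $|\Sigma|$ is closed (as a finite union of closed cones), so $|\Sigma| = N$.

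For $(3) \Rightarrow (1)$: I would verify sequential compactness. Given $(p_k) \subseteq Y_\Sigma$, by Theorem~\ref{T:OrbitConeCorrespondence} and the finiteness of $\Sigma$, a subsequence lies in a single orbit $O_\tau$. Writing the points of the orbit via a polar decomposition $\TT_N \simeq (\RR_>)^n \times (S^1)^n$, the phase components have a convergent subsequence by compactness of $(S^1)^n$. The $\Log$-components $w_k = \Log(p_k) \in N$ either stay bounded, in which case $(p_k)$ has a convergent subsequence in $\TT_N$, or $\|w_k\| \to \infty$ along a subsequence with $w_k/\|w_k\| \to u$ for some unit $u \in N$. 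Completeness of $\Sigma$ gives $u \in \sigma$ for some cone $\sigma$; approximating the ray $\RR_{\geq 0} u$ by a primitive lattice vector $v \in \sigma \cap N_\ZZ$ and refining Lemma~\ref{L:LimitsofOPSG} to handle sequences of the form $\lambda^v(s_k) \cdot t_k$ with $(t_k)$ bounded in $\TT_N$, I would show $(p_k)$ has a subsequence converging to a point of $U_\sigma$. The main obstacle lies here: promoting the one-parameter statement of Lemma~\ref{L:LimitsofOPSG} to arbitrary sequences whose logarithms escape in a direction $u \in \sigma$ requires carefully tracking both the direction of $\Log(p_k)$ and the bounded multiplicative corrections in order to pinpoint the correct limiting orbit of $U_\sigma$.
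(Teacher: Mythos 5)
Your implications $(1)\Rightarrow(2)$ and $(2)\Rightarrow(3)$ are correct but take a different route from the paper.  For $(1)\Rightarrow(2)$ the paper uses a direct analytic argument: take $s_i\to 0$, extract a convergent subsequence of $\lambda^v(s_i)$ by compactness, locate its limit $p$ in some $U_\sigma$, and use continuity of $\chi^m$ for $m\in S_\sigma$ to conclude $\langle m,v\rangle\geq 0$, i.e.\ $v\in\sigma$.  Your normalization-of-curve-closure argument works too, and is arguably cleaner in that it uses only properness of complete curves, but it invokes slightly heavier algebraic geometry.  For $(2)\Rightarrow(3)$ the paper simply asserts completeness once $N_\ZZ\subseteq|\Sigma|$ is established; your density-plus-closedness step fills in a small detail the paper elides.

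The genuine problem is in $(3)\Rightarrow(1)$, and you have identified it yourself: the direction-of-escape argument does not close.  Knowing that $\|w_k\|\to\infty$ with $w_k/\|w_k\|\to u\in\sigma$ does \emph{not} give you a subsequence of $w_k$ lying in $\sigma$, nor does it control the components of $w_k$ orthogonal to the ray $\RR_{\geq 0}u$, which can drift without bound.  Promoting Lemma~\ref{L:LimitsofOPSG} from a one-parameter subgroup to an arbitrary escaping sequence would in effect require the ``minimum face of boundedness'' machinery used for the irrational analogue (Theorem~\ref{T:CompactnessITV}), which is considerably more delicate than what you outline.  The paper's proof sidesteps this entirely with two ideas you do not use.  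First, it inducts on $\dim N$: a sequence lying in a non-dense orbit $O_\sigma$ lives in $\overline{O_\sigma}\simeq Y_{\operatorname{star}(\sigma)}$, which has strictly smaller dimension and is still complete by Lemma~\ref{L:StarStronglyConvex}, so induction handles it.  Second, for a sequence $\{\gamma_i\}$ in the dense torus it applies the map $L(\gamma)(m)=-\log|\gamma(m)|$ and uses the pigeonhole observation that since $\Sigma$ is a finite covering of $N$ by cones, infinitely many $L(\gamma_i)$ must land in a single cone $\sigma$ — landing \emph{in} $\sigma$, not merely escaping in a direction asymptotic to $\sigma$.  From $L(\gamma_i)\in\sigma$ one gets $|\gamma_i(m)|\leq 1$ for every $m\in S_\sigma$, so the values $\gamma_i(m_j)$ on a finite generating set of $S_\sigma$ lie in the closed unit disk; compactness of the disk and a diagonal argument give convergence to a semigroup homomorphism $\gamma\in\Hom(S_\sigma,\CC)=U_\sigma$, with no need to track which orbit the limit lands in.  Adopting this pigeonhole-on-$L$-images step, rather than the normalized-direction argument, is what makes the proof go through without the technical obstacle you flagged.
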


\begin{proof}
	Assume $Y_\Sigma$ is compact and let $v \in N_\ZZ$. For a sequence $s_i$ in $\CC^\times$ converging to $0$, we get the sequence $\lambda^v(s_i) \in Y_\Sigma$. Since $Y_\Sigma$ is compact, this sequence has a convergent subsequence. We may assume $\lim\limits_{i \to \infty} \lambda^v(s_i) = p \in Y_\Sigma$.  Since $Y_\Sigma$ is covered by affine open subsets, there exists a cone $\sigma \in \Sigma$ such that $p \in U_\sigma$. Pick $m \in S_\sigma$. Since the character $\chi^m$ is continuous, we have 
	\[
	\chi^m(\gamma) = \lim\limits_{i \to \infty} \chi^m (\lambda^v(s_i)) = \lim\limits_{i \to \infty}  s_i^{\langle m,v \rangle}.
	\]
	Since $s_i \to 0$, for the last limit to exist we must have $\langle m,v \rangle \geq0$ for all $m \in S_\sigma$. Hence we have $v \in \sigma$. Then by Lemma \ref{L:LimitsofOPSG}, the limit $\lim\limits_{s \to 0} \lambda^v(s)$ exists in $U_\sigma$.
	
	Now assume that the limit $\lim\limits_{s \to 0} \lambda^v(s)$ exists in $Y_\Sigma$ for all $v \in N_\ZZ$. Pick any $v \in N_\ZZ$. The limit $\lim\limits_{s \to 0} \lambda^v(s)$ exists in $Y_\Sigma$, hence it is an element of $U_\sigma$ for some $\sigma \in \Sigma$ . Again by Lemma \ref{L:LimitsofOPSG}, $v\in \sigma$. Hence $\Sigma$ is complete. 
	
	Now assume $\Sigma$ is complete. We want to show that $Y_\Sigma$ is compact. We will apply induction on $n=\dim N$. When $n =1$, the only complete fan is given in Figure \ref{F:P1}. In this case we have $Y_\Sigma = \PP^1$, which is compact. Now assume that the statement is true for all complete fans of dimension strictly less than $n$, and consider a complete fan $\Sigma$ in $N$. 
	Let $\{\gamma_i \mid i \in \NN\}$ be an infinite sequence in $Y_\Sigma$. Since $Y_\Sigma$ is a union of finitely many orbits $O_\sigma$, we may assume $\{\gamma_i \mid i \in \NN\}$ lies in an orbit $O_\sigma$. If $\sigma \neq \{0\}$ then the closure $\overline{O_\sigma}$ of $O_\sigma$ in $Y_\Sigma$ is isomorphic to $Y_{\text{star}(\sigma)} $ which has dimension $\leq n-1$ \cite[Proposition 3.2.7]{CLS}. Since $\Sigma$ is strongly convex and complete, by Lemma \ref{L:StarStronglyConvex}  $\text{star}(\sigma)$ is also strongly convex and complete. Hence by induction $\{\gamma_i \mid i \in \NN\}$ has a convergent subsequence in $Y_\Sigma$. 
	
	Now assume the sequence $\{\gamma_i \mid i \in \NN\}$ lies entirely in $O_0=\TT_N$. Define a map $L \colon \TT_N \rightarrow N$ as follows. A homomorphism 
	$\gamma \in \TT_N=\Hom (M_\ZZ, \CC^\times) $ is mapped to $L(\gamma) \in \Hom(M_\ZZ, \RR)$, where $L(\gamma)(m) = -\log |\gamma(m)|$.  
	
	Apply $L$ to $\{\gamma_i\mid i \in \NN\}$  to get a sequence $\{L(\gamma_i) \mid i \in \NN\}$  in $N$. Since $\Sigma$ is complete, there exists a cone $\sigma$ in $\Sigma$ such that the intersection $\{L(\gamma_i) \mid i \in \NN\} \cap \sigma$ is infinite. Passing to a subsequence, if necessary, we may assume $\{L(\gamma_i) \mid i \in \NN\}$ lies entirely in $\sigma$. Then for any $m \in S_\sigma$, since $L(\gamma_i) \in \sigma$, we have 
	\[
	\log|\gamma_i(m)| =-\langle m, L(\gamma_i) \rangle \leq 0
	\]
	for all $i \in \NN$.
	Hence $\{\gamma_i\mid i \in \NN\}$ is a sequence of mappings to the closed disk in $\CC$. The monoid $S_\sigma$ is generated by a finite set $\{m_1,\ldots,m_s \}$, and since the closed disk is compact, $\{\gamma_i(m_j)\mid i \in \NN\}$ has a convergent subsequence in $\CC$ for all $j$. Hence $\{\gamma_i\mid i \in \NN\}$ has a subsequence converges to a homomorphism $\gamma \in \Hom(S_\sigma, \CC)$.
	Hence $Y_\Sigma$ is compact. 
\end{proof}

\pagebreak{}

\chapter{IRRATIONAL TORIC VARIETIES}\label{CH:ITV}
In Section \ref{CH:ClassicalTV} we constructed toric varieties corresponding to rational fans. We now develop a theory of irrational toric varieties associated to fans in a finite dimensional real vector space that are not necessarily rational.

\section{Irrational Affine Toric Varieties}
Let $M$ and $N$ be finite dimensional dual real vector spaces. The vector space $N$ is the torus for our irrational toric varieties. When $N$ acts on a space, we will write $T_N$, and use a multiplicative notation for its group operation. 
We identify $T_N$ with the group $\Hom_c(M,\RR_>)$ of continuous homomorphisms from $M$ to $\RR_>$. Here, an element $v \in N$ is sent to the homomorphism $\gamma_v$ whose value at $u \in M$ is $\exp(- \langle u,v\rangle)$.
When $t = \gamma_v$, we write $t^u$ for $\gamma_v(u)$. Elements of $M$ are called \demph{characters} of $T_N$, and elements of $N$ are called \demph{cocharacters} of $T_N$. For a linear subspace $L$ of $N$, we write $T_L$ for the corresponding subgroup of $T_N$.
\begin{notation}
	Let $\calA$ be a finite subset of $M$. We will use $\calA$ as an index set, so that $\RR^\calA := \RR^{|\calA|},\ \RR_>^\calA := \RR_>^{|\calA|}$, and $\RR_\geq^\calA := \RR_\geq^{\vert \calA \vert}$ is the set of $\vert \calA \vert$-tuples of real numbers, positive real numbers, and nonnegative real numbers whose coordinates are indexed by elements of $\calA$, respectively. 
\end{notation}

\begin{definition}
	Let $\calA$ be a finite subset of $M$, and define a map given by
	\begin{align*}
	\varphi_\calA \colon T_N &\longrightarrow \RR_\geq^\calA\\
	t &\longmapsto (t^a \mid a \in \calA ).
	\end{align*}
	The closure $X_\calA$ of the image $X_\calA^\circ$ of $T_N$ under $\varphi_\calA$
	is called an \demph{irrational affine toric variety}.
\end{definition}
The map $\varphi(\calA)$ is a group homomorphism from $T_N$ to $\RR_>^\calA$. Note that $X_\calA$ inherits a $T_N$-action from the homomorphism $\varphi_\calA$.

\begin{definition}
	Let $\calA$ be a subset of $M$. The \demph{annihilator} of $\calA$ is a set
	\[
	\calA^\perp \ :=  \{ v \in N \mid \langle a,v \rangle = 0 \text{ for all } a \in \calA \}.
	\]  
\end{definition}

The kernel of $\varphi_\calA$ is $T_{\calA^\perp}$. Hence $X_\calA^\circ$ is identified with the quotient $T_N / T_{\calA^\perp} (\simeq N / \calA^\perp).$ 
In Lemma \ref{L:Binomials}, we saw that a complex affine toric variety satisfies some binomial equations. This is true for irrational affine toric varieties. 

\begin{proposition}\label{P:IATV}
	The irrational affine toric variety $X_\calA$ is the set of points 
	\begin{equation}\label{Eq:Binomials}
	\mathcal{Z}_A :=  \{ z\in\RR^\calA_\geq \ \mid \ z^u = z^v \text{ for all } u,v\in\RR^\calA_\geq \text{ with }\calA u = \calA v \}.
	\end{equation}
\end{proposition}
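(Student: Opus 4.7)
The inclusion $X_\calA \subseteq \mathcal{Z}_\calA$ is straightforward. For any $t \in T_N$ and any $u \in \RR_\geq^\calA$, one has $\varphi_\calA(t)^u = \prod_{a\in\calA}(t^a)^{u_a} = t^{\calA u}$, so $\calA u = \calA v$ implies $\varphi_\calA(t)^u = \varphi_\calA(t)^v$, giving $X_\calA^\circ \subseteq \mathcal{Z}_\calA$. Since $\mathcal{Z}_\calA$ is closed in $\RR_\geq^\calA$ (with the convention $0^0 := 1$), taking closures yields $X_\calA \subseteq \mathcal{Z}_\calA$.

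For the reverse inclusion, my plan is to realize each $z \in \mathcal{Z}_\calA$ as a limit of points in $X_\calA^\circ$ along an explicit one-parameter family. Set $\calF := \{a\in\calA : z_a>0\}$ and $\calG := \calA\setminus\calF$. I will produce $t_0 \in T_N$ with $t_0^a = z_a$ for all $a \in \calF$, and $w \in N$ with $\langle a, w\rangle = 0$ for $a \in \calF$ and $\langle a, w\rangle > 0$ for $a \in \calG$. The path $t_s := t_0 \cdot \gamma_{sw}$ then satisfies $\varphi_\calA(t_s)_a = t_0^a = z_a$ on $\calF$ and $\varphi_\calA(t_s)_a = t_0^a e^{-s\langle a, w\rangle} \to 0 = z_a$ on $\calG$, so $\varphi_\calA(t_s) \to z$ as $s \to +\infty$, placing $z$ in $X_\calA$.

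The existence of $t_0$ is routine. Taking logarithms, I need $v_0 \in N$ with $-\langle a, v_0\rangle = \log z_a$ for $a \in \calF$, which is solvable precisely when every real linear relation $\sum_{a\in\calF} c_a a = 0$ in $M$ forces $\sum_{a\in\calF} c_a \log z_a = 0$. Splitting $c = c_+ - c_-$ into nonnegative vectors supported on $\calF$, the relation gives $\calA c_+ = \calA c_-$, whence $z^{c_+} = z^{c_-}$ by the defining condition of $\mathcal{Z}_\calA$; taking logarithms (valid since $z > 0$ on $\calF$) yields the required identity.

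The existence of $w$ is the main obstacle and is where the full force of $z \in \mathcal{Z}_\calA$ enters. I plan to invoke Gordan's theorem of the alternative: either such $w$ exists, or there are $\lambda \in \RR_\geq^\calG \setminus \{0\}$ and $\mu \in \RR^\calF$ with $\sum_{b \in \calG} \lambda_b b = \sum_{a \in \calF} \mu_a a$. Assuming the latter and splitting $\mu = \mu_+ - \mu_-$, define $u, v \in \RR_\geq^\calA$ by $u|_\calF = \mu_-$, $u|_\calG = \lambda$, $v|_\calF = \mu_+$, $v|_\calG = 0$; then $\calA u = \calA v$, while $z^v > 0$ (a product of positive numbers) and $z^u = 0$ (since some $\lambda_b > 0$ and $z_b = 0$), contradicting $z \in \mathcal{Z}_\calA$. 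Hence $w$ exists. The edge case $\calF = \emptyset$ (so $z = 0$) is subsumed under the same alternative argument with $t_0$ trivial, producing $w$ that is strictly positive on all of $\calA$.
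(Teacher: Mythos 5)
Your proof is correct, and it follows the same overall strategy as the paper's: verify $X_\calA\subseteq\mathcal{Z}_\calA$ directly, then realize any $z\in\mathcal{Z}_\calA$ as a limit of $\varphi_\calA$ along a one-parameter family pointing "into" the face where $z$ is supported. The difference is in how the two arguments produce the direction vector. The paper separates the cases $z\in\RR^\calA_>$ and $z\notin\RR^\calA_>$, and in the second case shows by hand that $F=\cone(\calF)$ is a face of $\cone(\calA)$ (in particular that $\calA\cap\langle\calF\rangle=\calF$), picking an $\omega$ with $F=\cone(\calA)\cap\omega^\perp$; the existence and choice of such an $\omega$ is left somewhat informal there. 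You instead invoke Gordan's theorem of the alternative (in the quotient $M/\langle\calF\rangle$) to produce $w$ vanishing on $\calF$ and strictly positive on $\calG$, and show the failure alternative contradicts $z\in\mathcal{Z}_\calA$ via a single binomial relation. This is cleaner: it subsumes the "$\calA\cap\langle\calF\rangle=\calF$" verification and the positive case ($\calG=\emptyset$) in one stroke, and it makes explicit exactly which part of the defining conditions of $\mathcal{Z}_\calA$ is used at each step (solvability of the log-linear system on $\calF$ in step 3, and separability of $\calG$ from $\langle\calF\rangle$ in step 4). Both proofs buy the same result; yours is more modular and arguably easier to audit.
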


\begin{proof}
	Let $z = (t^a \mid a\in \calA)$. For any $u,v \in \RR^\calA_\geq$ with $\calA u = \calA v$
	\[
	z^u = \prod_{a \in \calA} (t^a)^{u_a} = t^{\calA u} = t^{\calA v} = \prod_{a \in \calA} (t^a)^{v_a} = z^v.
	\]
	Hence, $X_\calA \subset \mathcal{Z}_A$.
	
	To show the reverse inclusion, first let $z \in \mathcal{Z}_\calA \cap \ \RR_>^\calA$. Let $\mathcal{B} \subset \calA$ be a basis for the linear span of $\calA$.  Any element $m \in \calA$ can be represented as the sum
	\[
	m = \sum_{b \in \mathcal{B} } \beta_{m,b} \ b \ ,
	\]
	where $\beta_{m,b} \in \RR$ for all $b \in \calB$.
	Let $\calB' := \{b\in \calB \mid \beta_{m,b} \geq 0 \}$, and consider the vectors $u:= (u_a \mid a \in \calA), \ u':= (u'_a \mid a \in \calA)$, and $v:= (v_a \mid a \in \calA)$ in 
	$\RR^\calA_\geq$ given by
	\begin{equation*}
	u_a=
	\begin{cases}
	1 &\text{if } a = m \\
	0  &\text{otherwise}
	\end{cases}, \ \ 
	u'_a=
	\begin{cases}
	-\beta_{m,a} &\text{if } a \in \calB \setminus \calB'\\
	0 &\text{otherwise}
	\end{cases}, \ \text{ and } \ 
	v_a=
	\begin{cases}
	\beta_{m,a} &\text{if } a \in \calB'\\
	0 &\text{otherwise}
	\end{cases}.
	\end{equation*}
	Note that
	\[
	\calA (u+u') = \calA v.
	\]
	This implies $z^{u+u'} = z^v$. Hence
	\begin{equation}\label{Eq:fisonto}
	z_m =  \prod_{ b \in \mathcal{B}} z_b^{\beta_{m,b}} \ .
	\end{equation}
	The image of $\varphi_\calA$ is identified with  the quotient $T_N / T_{\calA^\perp}$. Replacing $T_N$ with $T_N / T_{\calA^\perp}$, we may assume $X_\calA^\circ$ is identified with $T_N$. Note that in this setting, 
	$\calA$ spans $M$, and $\calB$ is a basis for $M$. 
	Let $\calB^*$ be the dual basis for $N$,  write $b^* \in \calB^*$ is the dual basis element to $b\in \calB$. Consider the map
	\begin{align*}
	f \colon N &\longrightarrow \RR^\calA_> \\
	v &\longmapsto \bigg(\prod_{ b \in \mathcal{B}} \exp(-\beta_{a,b} v_{b^*})\mid a \in \calA\bigg),
	\end{align*}
	where $v=\sum_{b^* \in \mathcal{B}}v_{b^*}b^*$. Note that by \eqref{Eq:fisonto}, $\mathcal{Z}_\calA \cap \RR_>^\calA \subset \im(f)$. Let $t=\gamma_v \in T_N$. Then for any $a \in \calA$,
	\begin{align*} 
	t^a &= \exp(-\langle a, v \rangle) \\
	& = \exp\bigg(- \Big\langle \sum_{b \in \mathcal{B} } \beta_{a,b}b,v \Big\rangle \bigg) \\
	& = \prod_{b \in \calB} \exp(- \langle \beta_{a,b}b,v \rangle)\\
	& = \prod_{b \in \calB} \exp (-\beta_{a,b}v_{b^*}).
	\end{align*}
	Therefore, $f(v) = \varphi_\calA(t)$. Let $v=\sum_{b^* \in \mathcal{B}^*}v_{b^*}b^*$, and set $v_{b^*} := -\log(z_b)$ for all $b^* \in \calB^*$. Then, 
	\begin{align*}
	\varphi_\calA(t) &= f(v) = \bigg( \prod_{b \in \calB} \exp (-\beta_{a,b}v_b^*) \mid a \in \calA\bigg)\\
	&= \bigg( \prod_{b \in \calB} \exp (\beta_{a,b} \log(z_b)) \mid a \in \calA\bigg)\\
	&= \bigg( \prod_{b \in \calB} z_b^{\beta_{a,b}} \mid a \in \calA\bigg)\\
	&=z.
	\end{align*}
	
	Now, let $z \in \mathcal{Z}_\calA \setminus \RR_>^\calA$. Consider the cone $F$ generated by the set $\calF := \{ a \in \calA \mid z_a \neq 0 \}$. We claim $F$ is a face of $\cone(\calA)$. Pick any element $\omega \in F^\perp$. We will show $F = \cone(\calA) \cap \omega^\perp$. Let $m \in \langle \calF \rangle \cap \calA$. Then $m= \calF x - \calF y$ for some $x,y \in \RR^\calF_\geq$. Similar to \eqref{Eq:fisonto} we have
	%
	\[
	z_m \prod_{a \in \calF}z_a^{y_a} = \prod_{a \in \calF} z_a^{x_a}.
	\]
	This implies $z_m \neq 0$, i.e., $m \in \calF$.  Therefore $\calF = \langle \calF \rangle \cap \calA$.

	Assume $F$ is not a face of $\cone(\calA)$.  Then there exist an element $x \in \calA$ so that $\langle x, \omega \rangle \neq 0$. Assume $\langle x, \omega \rangle >0$. Then $\langle a, \omega \rangle > 0$ for all $a \in \calA \setminus \langle \calF \rangle.$ Otherwise, if $\langle y, \omega \rangle < 0$ for some $y \in \calA \setminus \langle F \rangle$, then $r_1x+r_2y \in F$ for some scalars $r_1,r_2 \in \RR_>$.  Let $r_1x+r_2y = \calF \lambda$ for some $\lambda \in \RR^\calF_\geq$. 
	As in \eqref{Eq:fisonto}, we must have
	$$z_x^{r_1}z_y^{r_2} = \prod_{a \in \calF} z_a^{\lambda_a}.$$
	%
	Note that
	$
	z_x^{r_1} z_y^{r_2} = 0,
	$
	as $x,y \notin \calF$,
	but 
	$
	\prod_{a \in \calF} z_a^{\lambda_a} \neq 0,
	$
	which is a contradiction. So, for $a \in \calA \setminus \langle \calF \rangle$, we have $\langle a, \omega \rangle >0$. Then $\cone(\calA) \cap \omega^\perp = F$. 
	Hence, $F$ is a face of $\cone(\calA)$.

	Set $z' = (z_f \mid f \in \calF)$. By similar arguments to when $z \in \RR_>^\calA$, we can find an element $v \in F^\vee \subset N$ such that 
	$z'= \varphi_\calF (v)$. 
	Note that $z_a = z'_a$ if $a \in \calF$ and $z_a=0$ otherwise.
	Next, consider the curve
	\[
	\varphi_\calA(v+ r \omega ) = (\exp(\langle -a,v+ r \omega\rangle ) \mid a\in \calA)
	\]
	for a scalar $r \in \RR$. Since $ a \in \calA$ and $\omega \in \cone(\calA)^\vee$, we have $\langle a,\omega \rangle \geq 0$, and it is equal to $0$ only if $a \in \calF$. Hence
	\[
	\lim\limits_{r\to \infty} \exp (\langle -a,v+ r \omega\rangle ) = 
	\begin{cases*}
	0 & \text{if} $\ a \notin \calF$,\\
	\exp (\langle -a,v \rangle ) & \text{if} $\ a \in \calF$.
	\end{cases*}
	\]
	Thus, $\lim_{r \to \infty} \varphi_\calA(v+ r\omega)  = z,$ which implies $z \in X_\calA$ as it is a limit point in $X_\calA$. Hence, we get the reverse inclusion $\mathcal{Z}_\calA \subseteq X_\calA$.   
\end{proof}

For each face $\calF \subset \calA$, $\RR_\geq^\calF$ is naturally included in $\RR_\geq^\calA$, where $\RR_\geq^\calF$ is the set of points $z \in \RR_\geq^\calA$ whose coordinates $z_a$ are zero for $a \notin \calF$. Then $X_\calF^\circ$ is the image of the composition 
\[
T_N  \xrightarrow{\ \varphi_\calA\ }\RR_\geq^\calA \xrightarrow{\ \pi_\calF \ } \RR_\geq^\calF,
\]
where $\pi_\calF$ is the projection of $\RR_\geq^\calA $ onto $\RR_\geq^\calF$. 
Proposition~\ref{P:IATV} implies that $X^\circ_\calF \subset X_\calA$ and $X_\calA$ is the disjoint union
\begin{equation}\label{Eq:decomposition}
X_\calA \ =\ \bigsqcup_{\calF \preceq \calA} X^\circ_\calF\,.
\end{equation}

We end this section with an application of Birch's Theorem \cite{Birch}. 
\begin{proposition} \label{P:Birch}
	The irrational affine toric variety $X_\calA$ is homeomorphic to $\cone\calA$. 
\end{proposition}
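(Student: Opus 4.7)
The plan is to construct the moment map
\[
\mu\colon X_\calA \longrightarrow \cone(\calA), \qquad \mu(z)\;=\;\sum_{a\in\calA} z_a\cdot a,
\]
and to show it is a homeomorphism. It is manifestly continuous, and since $z_a\geq 0$ for every $a\in\calA$, its image lies in $\cone(\calA)$. Bijectivity and continuity of the inverse are separate points; I would address them in that order, invoking Birch's Theorem for the first.

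For bijectivity, I would apply Birch's Theorem face by face. Birch's Theorem asserts that for any finite subset $\calB$ of a real vector space, the restriction
\[
\mu\colon X_\calB^\circ \;\xrightarrow{\ \cong\ }\; \Relint(\cone(\calB))
\]
is a homeomorphism onto the relative interior. Applying this to every face $\calF \preceq \calA$ yields a stratum-wise homeomorphism $X_\calF^\circ \to \Relint(\cone(\calF))$. Combining the decomposition \eqref{Eq:decomposition} of $X_\calA$ with the analogous face decomposition
\[
\cone(\calA) \;=\; \bigsqcup_{\calF \preceq \calA} \Relint(\cone(\calF))
\]
(which follows from the fact that each point of $\cone(\calA)$ lies in the relative interior of a unique face), $\mu$ is a bijection from $X_\calA$ onto $\cone(\calA)$ that is a homeomorphism on each stratum.

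The main obstacle is passing from these stratum-wise homeomorphisms to a global one, that is, continuity of the inverse at points lying on proper faces. I would resolve this by showing that $\mu$ is proper, for then a continuous proper bijection between locally compact Hausdorff spaces is automatically a homeomorphism. Properness amounts to the following: if $(z^{(n)})\subseteq X_\calA$ with $\mu(z^{(n)})$ bounded, then $(z^{(n)})$ is bounded in $\RR_\geq^\calA$, and therefore has a convergent subsequence whose limit lies in the closed set $X_\calA$. The subtlety is that the linear surjection $\RR_\geq^\calA \twoheadrightarrow \cone(\calA)$ need not be proper on all of $\RR_\geq^\calA$, but restriction to $X_\calA$ eliminates every escape direction: if one parametrizes via $z_a=\exp(-\langle a,v\rangle)$ on $X_\calA^\circ$ and a sequence $v^{(n)}\in N/\calA^\perp$ has $\|v^{(n)}\|\to\infty$, then after passing to a subsequence with $v^{(n)}/\|v^{(n)}\|\to w\neq 0$, some $a\in\calA$ with $\langle a,w\rangle<0$ would force $z^{(n)}_a\to\infty$ uncancelled in $\mu(z^{(n)})$ (since $w$ lives in $\mathrm{span}(\calA)$ modulo $\calA^\perp$, so some such $a$ must exist), contradicting boundedness of $\mu(z^{(n)})$. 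One then passes to a subsequence on which the supports $\{a\in\calA:z^{(n)}_a>0\}$ stabilize to a fixed face, and the same argument applies on that stratum.

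Once properness is in hand, the result follows: $\mu$ is a continuous proper bijection from $X_\calA$ onto $\cone(\calA)$ between locally compact Hausdorff spaces, hence a homeomorphism. The hard part is the properness estimate; everything else is formal.
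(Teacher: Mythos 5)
Your overall strategy is sound — the map $\mu(z)=\sum_a z_a a$ is the same map the paper uses, and the idea of establishing surjectivity/injectivity stratum by stratum via Birch's theorem and then upgrading the continuous bijection to a homeomorphism via properness is a legitimate plan. In fact the paper's own proof only explicitly proves that $\pi$ is a bijection (via the entropy function $H(p)=-\sum p_i(\log p_i -1)$, i.e.\ a direct proof of Birch's theorem rather than a citation), and does not address continuity of the inverse; so your attention to that point is well placed. Where you differ from the paper is (a) you cite Birch's theorem rather than rederive it, and (b) you supply a properness step that is absent from the paper's proof.

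However, the properness argument as written has a genuine gap. You assert that after passing to the limit direction $w=\lim v^{(n)}/\|v^{(n)}\|$, ``some $a\in\calA$ with $\langle a,w\rangle<0$ must exist,'' on the grounds that $w\neq 0$ modulo $\calA^\perp$. This is false: $w\notin\calA^\perp$ only guarantees that $\langle a,w\rangle\neq 0$ for some $a$, and it can happen that $\langle a,w\rangle\geq 0$ for \emph{every} $a\in\calA$. This is exactly the case of sublinear escape — take $\calA=\{e_1,e_2\}\subset\RR^2$ and $v^{(n)}=(-\sqrt{n},\,n)$; then $w=(0,1)$, so $\langle e_1,w\rangle=0$ and $\langle e_2,w\rangle=1$, with no negative pairing, yet $z^{(n)}_{e_1}=e^{\sqrt{n}}\to\infty$. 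Here $\mu(z^{(n)})$ is in fact unbounded, but your argument cannot see it: the pairing with $-w$ gives $\langle -w,\mu(z^{(n)})\rangle = -z^{(n)}_{e_2}\to 0$, not $\to\infty$. The coordinate that escapes lies in $w^\perp$ and contributes nothing to that pairing. To repair this one must iterate: observe that for $\calA_+:=\{a:\langle a,w\rangle>0\}$ the coordinates $z^{(n)}_a$ are bounded (pair $\mu(z^{(n)})$ with $w$ and use nonnegativity), hence $\sum_{a\in\calA_0}z^{(n)}_a\,a$ is bounded where $\calA_0:=\{a:\langle a,w\rangle=0\}$, and then re-run the argument with $\calA$ replaced by the strictly smaller set $\calA_0$, descending until either a negative pairing appears or the quotient torus becomes trivial. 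Your closing remark about passing to a subsequence on which supports stabilize to a fixed face addresses which stratum $X^\circ_\calF$ the sequence lives in, which is a different issue — it does not touch the sublinear-escape case inside a fixed stratum. Until that recursion is written out, the properness step is not complete.
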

\begin{proof}
	Let $\calA = \{a_1, \ldots,a_m\}$. Consider the map 
	\begin{align*}
	\pi \colon X_\calA &\longrightarrow \cone(A)\\
	p  \ &\longmapsto \calA p \ .
	\end{align*}
	
	We will show that this map is a bijection. Let $b  \in \cone(\calA)$. Then $b$ lies in the relative interior of a face  $\cone(\calF)$ of $\cone(\calA)$ for some subset $\calF$ of $\calA$. After rearranging, if necessary, we may assume $\calF = \{a_1, \ldots, a_k\}$ for some $k \leq m$. Then we can write $b = a_1 u_1 + \ldots + a_k u_k$ for some $u_1, \ldots,u_k \in \RR_>$.  Consider the set
	\[
	C_\calF(b) := \{p \in \RR_>^k \mid \calF p = b\}.
	\]
	We will show that the intersection $X_\calF^\circ \cap C_\calF(b)$ contains only one element. We claim that this implies that $\pi$ is a bijection. Let $p \in X_\calF^\circ \cap C_\calF(b)$. Then $\pi(p) = b$, which implies $\pi$ is onto. To show injectivity, suppose for some element $q \in X_\calA$, we have $\calA q = b$. Since  $X_\calA$ is the disjoint union $\bigsqcup_{\calF \preceq \calA} X^\circ_\calF$,  we have $q \in X^\circ_{\calF'}$ for some $\calF' \preceq \calA$. Then $b = \calA q = \calF' q$ lies in the relative interior of $\cone(F')$. Hence we must have $\calF' = \calF$, as $b$ also lies in the relative interior of $\cone(\calF$). So $q \in X_\calF^\circ \cap C_\calF(b)$. This implies $p=q$, which implies $\pi$ is injective.  
	
	Define a function 
	\begin{align*}
	H \colon \RR_>^k &\longrightarrow \quad \RR \\
	p \ &\longmapsto -\sum_{i = 1}^k p_i (\log(p_i)-1).
	\end{align*}
	The Hessian matrix of $H$ is a diagonal matrix with entries $(-1/p_1, \ldots, -1/p_k)$, which is a negative definite matrix. Hence $H$ is strictly concave on $\RR_>^k$. The restriction of $H$ on $C_\calF(b)$ is strictly concave as well. So it attains its maximum at a unique point $p^* \in C_\calF(b)$. 
	
	Consider $\calF$ as a matrix with columns $a_1, \ldots, a_k$.  For any vector $u$ in the kernel of the transpose matrix $\calF^T$, the directional derivative of $H$ vanishes at $p^*$ \cite[Section 1.2]{Pachter}. So we get
	\begin{align} \label{Eq:Hessian}
	0= \sum_{i=1}^k u_i\  \frac{\partial H}{\partial p_i^*}(p^*) = -\sum_{i=1}^k u_i \log(p_i^*).
	\end{align}
	That means $(\log(p_1^*), \ldots, \log(p_k^*) )$ lies in the column span of $\calF$. Pick a vector $\mu \in \RR^n$ such that $\log(p_i^*) = \sum_{j=1}^{n}\mu_j  (a_i)_j$.
	Then $p_i^* = t^{a_i}$ where $t = (\exp(\mu_1), \ldots, \exp(\mu_n))$. Hence $p^* \in X_\calF^\circ \cap C_\calF(b)$. 
	
	To see that $X_\calF^\circ \cap C_\calF(b)$ contains no other point, suppose $q \in X_\calF^\circ \cap C_\calF(b)$. Then $(\log(q_1), \ldots, \log(q_k) )$ lies in the column span of $\calF$. Hence for any vector $u$ in the kernel of the matrix $\calF^T$, the directional derivative of $H$ vanishes at $q$.  
	So $q$ is a critical point of the function $H$ \cite[Section 1.2]{Pachter}. Since the Hessian matrix is negative definite at $q$, this point is a maximum of a strictly concave function $H$, therefore $q=p^*$.  
\end{proof}

\section{Irrational Toric Varieties Associated to Cones}
We will construct irrational affine toric varieties associated to arbitrary cones in $N$,  and show some properties of them.  

Let $C \subset M$ be a cone. Define $\Hom_c(C, \RR_\geq)$ to be the set of all monoid homomorphisms $\varphi \colon C \to \RR_\geq$ that are continuous on the relative interior of each face of $C$. We equip $\Hom_c(C, \RR_\geq)$ with the weakest topology such that every map to $\RR_\geq$ given by point evaluation is continuous. That is, a sequence of homomorphisms $\{\varphi_n \mid n\in \NN \} \subset \Hom_c(C, \RR_\geq)$ converges to a homomorphism $\varphi \in \Hom_c(C, \RR_\geq)$ if and only if for every $u \in C$, the sequence of real numbers $\{\varphi_n(u)  \mid n \in \NN \}$ converges to $\varphi(u)$.

\begin{example}\label{Ex:PosLine}
	Let $M = \RR$ and $C = [0, \infty)$. Note that $C$ is a cone in $M$. Let $\varphi \in \Hom_c(C, \RR_\geq)$. Since $\varphi$ is a monoid homomorphism, we have $\varphi(0)=1$. Set $\alpha := \varphi(1) \geq 0.$ Then for any $n \in \NN$, $\varphi(n) = \alpha^n$. Also since $\alpha = \varphi(1) = \varphi(n\cdot (1/n)) = (\varphi(1/n))^n$, we have $\varphi(1/n) = \alpha^{1/n}$. For any positive rational number $r=m/n$ with $m,n \in \NN$, we have $\varphi(r) = \varphi(m \cdot (1/n)) = (\varphi(1/n))^m = (\alpha^{1/n})^m = \alpha^r$. By the continuity of $\varphi$ on the interior $(0,\infty)$ of $C$, $\varphi(s) = \alpha^s$ for all $s>0$.

	For any $\alpha \in \RR_\geq $, define a map $\varphi_\alpha \colon \RR_\geq \rightarrow  \RR_\geq$ by 
	
	\[
	\varphi_\alpha(s) :=
	\begin{cases*}
	\alpha^s & $s>0$,\\
	1 & $s=0$.
	\end{cases*}
	\]
	If we set $0^0 :=1$, then $\varphi(s) = \alpha^s$ for any $\alpha,s \geq 0$. For any $\alpha \in \RR_\geq$ we write $\varphi_\alpha$ for the monoid homomorphism such that $\varphi_\alpha(s)=\alpha^s$. Figure  \ref{F:HomPositiveLine} displays the graphs of $\varphi_\alpha$ for several $\alpha \in  \RR_\geq$.
	
	\begin{figure}[h]
		\begin{center}
			\begin{picture}(232,89)(-18,-10)
			\put(-0.5,-0.5){\includegraphics[scale=1]{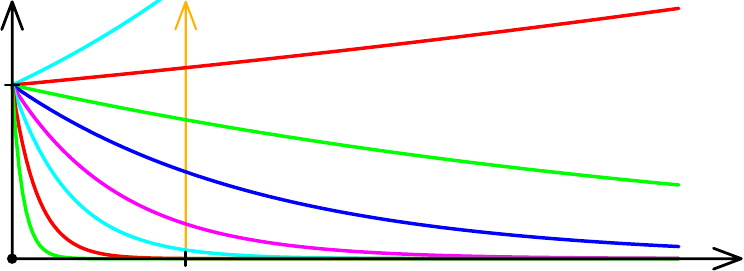}}
			\put(-8,49){$1$} \put(50,-10){$1$}  \put(190,-10){$C$} 
			\put(-18,67){$\RR_\geq$}
			\end{picture}
			\caption{Monoid homomorphisms in $\Hom_c(C,\RR_\geq)$.}
			\label{F:HomPositiveLine}
		\end{center}
	\end{figure}
	Since $\lim_{\alpha\to 0}\varphi_\alpha=\varphi_0$, the evaluation map $\varphi\mapsto\varphi(1)$ induces a homeomorphism between $\Hom_c(C,\RR_\geq)$ and $\RR_\geq$.\hfill$\square$
\end{example}

\begin{lemma}
	Let $C$ be a cone in $M$. For any monoid homomorphism $\varphi \in \Hom_c(C, \RR_\geq)$, the set 
	$\{m\in C \mid\varphi(m) > 0\}$
	is a face of $C$. 
\end{lemma}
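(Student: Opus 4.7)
The plan is to set $F:=\{m\in C\mid\varphi(m)>0\}$, exploit the partition $C=\bigsqcup_{\tau\preceq C}\Relint(\tau)$, identify a unique maximal face $\tau^*$ of $C$ on whose relative interior $\varphi$ is positive, and then show $F=\tau^*$. Two features of $\varphi$ drive the argument: the multiplicative identity $\varphi(a+b)=\varphi(a)\varphi(b)$, which forces $\varphi(a+b)>0$ if and only if $\varphi(a),\varphi(b)>0$, and the continuity of $\varphi$ on each $\Relint(\tau)$.

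First I would prove a dichotomy on each face: on $\Relint(\tau)$ the function $\varphi$ is either identically zero or everywhere positive. Suppose $\varphi(m_0)>0$ for some $m_0\in\Relint(\tau)$. Since $\Relint(\tau)$ is stable under positive scaling, the ray $\RR_> m_0$ lies in $\Relint(\tau)$, and the restriction of $\varphi$ there is a continuous monoid homomorphism $\RR_>\to\RR_>$; the argument of Example~\ref{Ex:PosLine} then yields $\varphi(\lambda m_0)=\varphi(m_0)^\lambda>0$ for all $\lambda>0$. For an arbitrary $m\in\Relint(\tau)$, a standard relative-interior property provides $\lambda>0$ with $\lambda m_0-m\in\tau$; writing $\lambda m_0=m+(\lambda m_0-m)$ and applying the multiplicative identity forces $\varphi(m)>0$.

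Next I would show that the family $\mathcal{P}$ of faces on whose relative interior $\varphi$ is positive is closed under joins, so that, by finiteness of the face lattice, $\mathcal{P}$ has a unique maximum $\tau^*$. Given $\sigma,\tau\in\mathcal{P}$, let $\tau'$ be the smallest face of $C$ containing $\sigma\cup\tau$, and choose $m\in\Relint(\sigma)$, $m'\in\Relint(\tau)$. If $m+m'$ lay in a proper subface $\tau''\prec\tau'$, Lemma~\ref{SumofElementsinCone} would give $m,m'\in\tau''$, whence $\sigma,\tau\subseteq\tau''$, contradicting the minimality of $\tau'$; hence $m+m'\in\Relint(\tau')$. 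Since $\varphi(m+m')=\varphi(m)\varphi(m')>0$, the dichotomy places $\tau'\in\mathcal{P}$.

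Finally, $\mathcal{P}$ is downward closed: if $\sigma\preceq\tau\in\mathcal{P}$ and $m\in\Relint(\sigma)$, $m'\in\Relint(\tau)$, then $m+m'\in\Relint(\tau)+\tau\subseteq\Relint(\tau)$ (the relative interior of a convex cone is stable under addition by elements of the cone), so $\varphi(m)\varphi(m')>0$ forces $\varphi(m)>0$ and $\sigma\in\mathcal{P}$. Combining the two closure properties gives $F=\bigsqcup_{\sigma\preceq\tau^*}\Relint(\sigma)=\tau^*$, a face of $C$. The subtlest step looks to be the identification $m+m'\in\Relint(\tau')$ in the join argument; it hinges on the ``smallest face containing a point'' characterization of the relative interior, which reduces back to Lemma~\ref{SumofElementsinCone}.
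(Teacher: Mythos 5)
Your proof is correct and shares the paper's key first step (the dichotomy: $\varphi$ is either identically zero or nowhere zero on $\Relint(\tau)$, derived from $\varphi(sm)=\varphi(m)^s$ together with the fact that for $m_0\in\Relint(\tau)$ and $m\in\tau$ there is $\lambda>0$ with $\lambda m_0-m\in\tau$), but you finish in a genuinely different way. The paper's route is to show directly that $S=\{m:\varphi(m)>0\}$ is a cone closed under addition and nonnegative scaling, that $S$ absorbs any face whose relative interior it meets (using $m+\omega\in\Relint(F)$ for $m\in F$, $\omega\in\Relint(F)$), and then to invoke the unproved but standard fact that a convex union of faces of a cone is itself a face. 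You instead work on the face lattice: you show the collection $\mathcal{P}$ of faces with positive relative interior is closed under joins (via Lemma~\ref{SumofElementsinCone}, noting that $m+m'\in\Relint(\tau')$ for the join $\tau'$, since any smaller face containing $m+m'$ would absorb both $m$ and $m'$ and hence both $\sigma$ and $\tau$) and downward closed (via $\Relint(\tau)+\tau\subseteq\Relint(\tau)$), then identify $F$ with the unique maximal element $\tau^*$ of $\mathcal{P}$ through the stratification $C=\bigsqcup\Relint(\sigma)$. Your version trades the paper's slick one-line appeal for an explicit argument that is entirely self-contained given the lemmas already established in the text, which is arguably a pedagogical improvement; the price is a slightly longer bookkeeping step in the join argument, whose hinge (that $m+m'\in\Relint(\tau')$) you correctly flag as the subtle point.
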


\begin{proof}
	As in Example \ref{Ex:PosLine}, for any $s \in \RR_\geq$ and $m \in C$, we have $\varphi(sm)  = \left( \varphi(m)\right)^s$. Suppose $m$ lies in the relative interior $\Relint(F)$ of a face $F$ of $C$, and $\varphi(m) = 0$. Then for any $s > 0$, we have $\varphi(sm) = 0$. For any $\omega \in \Relint(F)$, there exists an $s>0$ such that $\omega - sm \in \Relint(F)$. Hence $\varphi(\omega) = \varphi(\omega-sm)\varphi(sm)= 0$. 
	
	Now let $S:= \{ m \in C \mid \varphi(m) > 0\}$. Note that for $m_1, m_2 \in S$, $\varphi(m_1+m_2) = \varphi(m_1) \varphi(m_2) >0$ as both $\varphi(m_1)$ and $\varphi(m_2)$ are positive. Also note that for any nonnegative real $r$ and $m \in S$, $\varphi(rm)= \varphi(m)^r >0$. Hence $S$ is closed under addition and scaling by nonnegative real numbers. So $S$ is a cone. By previous arguments, if $S$ meets the relative interior $\Relint(F)$ of any face $F$ of $C$, then it has to contain $\Relint(F)$. Note that for any $m \in F$ and $\omega \in \Relint(F)$, we have $m+\omega \in \Relint(F)$. So $0 \neq \varphi (m+\omega) = \varphi(m) \varphi(\omega)$. Hence $\varphi(m) \neq 0$, which implies $m \in S$. Thus $F$ is a subset of $S$. As a convex union of faces of a cone is a face of that cone, we showed that $S$ is a face of $C$. 
\end{proof}

\begin{definition}
	For $\varphi \in \Hom_c(C,\RR_\geq)$, the face $\text{supp}(\varphi) := \{ m \in C \mid \varphi(m) > 0 \}$ of $C$ is called the \demph{support} of $\varphi$. 
\end{definition}

\begin{example}
	Let $L$ be a linear subspace of $M$. Then for any $\varphi \in \Hom_c(L,\RR_\geq)$ and $x \in L$, since $x + (-x) = 0$ in L, we have $\varphi(x) \cdot \varphi(-x) = \varphi(0) = 1$. So $\varphi(x) >0$. Hence $\Hom_c(L,\RR_\geq) = \Hom_c(L, \RR_>)$, which is isomorphic to $T_{L^\vee} = T_N / T_{L^\perp} \simeq N / L^\perp$. \hfill$\square$
\end{example}

Consider the map 
\begin{align*}
C &\longrightarrow M \oplus C\\
m &\longmapsto (m,m).
\end{align*}
Applying $\Hom_c(-,\RR_\geq)$ to this map induces another map
\begin{align*}
\mu \colon \Hom_c(M, \RR_\geq) \times \Hom_c(C, \RR_\geq) &\longrightarrow \Hom_c(C, \RR_\geq) \\
(t,\varphi) \quad &\longmapsto \quad t\cdot \varphi,
\end{align*}
where $(t \cdot \varphi) (u) = t^u \varphi(u)$ for $u \in C$. This gives an action of $T_N$ on $\Hom_c(C, \RR_\geq)$. 

\begin{lemma} \label{L:HomIsAffine}
	Let $\calA$ be a finite subset of $M$ and set $C = \cone(\calA)$. Then the map 
	\begin{align*}
	f_\calA \ \colon \Hom_c(C, \RR_\geq) &\longrightarrow \ \RR_\geq^\calA \\
	\varphi \qquad &\longmapsto \ (\varphi(a) \mid a \in \calA),
	\end{align*}
	is a $T_N$-equivariant homeomorphism between $\Hom_c(C, \RR_\geq)$ and the irrational affine toric variety $X_\calA$. In particular, $\Hom_c(C, \RR_\geq)$ is homeomorphic to $C$ under the map $\varphi \mapsto \sum_{a \in \calA} \varphi(a)a.$ 
\end{lemma}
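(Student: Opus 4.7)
The map $f_\calA$ is continuous by definition of the topology on $\Hom_c(C,\RR_\geq)$ (namely, pointwise convergence), and it is $T_N$-equivariant: $f_\calA(t\cdot\varphi)=(t^a\varphi(a))_{a\in\calA}=t\cdot f_\calA(\varphi)$, with respect to the diagonal action on $\RR_\geq^\calA$ by $(t^a)_{a\in\calA}$. The remaining work is to identify the image of $f_\calA$ with $X_\calA$, to show $f_\calA$ is a bijection onto this image, and to show the inverse is continuous. The second statement of the lemma will then follow by composing $f_\calA$ with the homeomorphism $X_\calA\xrightarrow{\sim}\cone\calA$ of Proposition~\ref{P:Birch}, given by $z\mapsto\sum_{a\in\calA} z_a a$.

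The workhorse is the identity
\[
\varphi\Big(\sum_{a\in\calA} r_a\, a\Big)\ =\ \prod_{a\in\calA}\varphi(a)^{r_a}, \qquad r_a\in\RR_\geq,
\]
with the conventions $0^0=1$ and $0^r=0$ for $r>0$. For rational $r_a$ this follows from the monoid homomorphism property and extraction of $n$th roots, exactly as in Example~\ref{Ex:PosLine}; for real $r_a$ it follows from continuity of $\varphi$ on the relative interior of each face of $C$, noting via Lemma~\ref{SumofElementsinCone} that if some $a$ with $r_a>0$ lies outside $\supp(\varphi)$ then $\sum r_a a$ also lies outside $\supp(\varphi)$, so both sides vanish. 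Applying this to $u,v\in\RR_\geq^\calA$ with $\calA u=\calA v$ yields $\prod\varphi(a)^{u_a}=\prod\varphi(a)^{v_a}$, so $f_\calA(\varphi)$ satisfies the binomial equations that define $X_\calA$ in Proposition~\ref{P:IATV}. Injectivity of $f_\calA$ is then immediate, since $C=\cone\calA$ and the identity shows $\varphi$ is determined on all of $C$ by its values on $\calA$.

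For surjectivity, let $z\in X_\calA$ and set $\calF:=\{a\in\calA\mid z_a>0\}$. The proof of Proposition~\ref{P:IATV} shows that $F:=\cone\calF$ is a face of $C$ satisfying $F\cap\calA=\calF$, and produces $t\in T_N$ with $z_a=t^a$ for $a\in\calF$. Define $\varphi\colon C\to\RR_\geq$ by $\varphi(m)=t^m$ if $m\in F$ and $\varphi(m)=0$ otherwise. By Lemma~\ref{SumofElementsinCone}, if $m_1+m_2\in F$ with $m_1,m_2\in C$ then both summands lie in $F$, which makes $\varphi$ a monoid homomorphism. On the relative interior of a face $F'$ of $C$ one has either $F'\subseteq F$ (where $\varphi|_{F'}=t^{\bullet}$ is continuous) or $\Relint F'\cap F=\emptyset$ (where $\varphi|_{\Relint F'}\equiv 0$); hence $\varphi\in\Hom_c(C,\RR_\geq)$, and by construction $f_\calA(\varphi)=z$. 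For continuity of the inverse, suppose $f_\calA(\varphi_n)\to f_\calA(\varphi)$ in $X_\calA$; then $\varphi_n(a)\to\varphi(a)$ for each $a\in\calA$, and for arbitrary $m=\sum r_a a\in C$ the key identity and continuity of $(x_a)\mapsto\prod x_a^{r_a}$ on $\RR_\geq^\calA$ give $\varphi_n(m)\to\varphi(m)$, which is convergence in $\Hom_c(C,\RR_\geq)$. The crux of the proof is the robust form of the workhorse identity in the presence of zero coordinates; once the interplay of the face structure of $C$, Lemma~\ref{SumofElementsinCone}, and the piecewise continuity hypothesis is correctly exploited there, the three verifications proceed uniformly.
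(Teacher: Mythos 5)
Your proof is correct and follows essentially the same route as the paper: establish the multiplicative identity $\varphi(\calA u)=\prod_a\varphi(a)^{u_a}$, use it to land in the binomial locus $X_\calA$ of Proposition~\ref{P:IATV} and to reconstruct $\varphi$ from a point $z\in X_\calA$ via the face $\calF=\{a:z_a>0\}$, and invoke Proposition~\ref{P:Birch} for the final homeomorphism with $C$. You are somewhat more explicit than the paper in two places the paper glosses over --- the verification that the reconstructed $\varphi$ is a monoid homomorphism (via Lemma~\ref{SumofElementsinCone}) and continuous on relative interiors, and the continuity of $f_\calA^{-1}$ via the continuity of $(x_a)\mapsto\prod x_a^{r_a}$ on $\RR_\geq^\calA$ --- but the structure and the key ingredients are the same.
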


\begin{proof}
	Let $\varphi \in \Hom_c(C, \RR_\geq)$, and suppose that an element $m \in C$ has two representations $m = \calA u  = \calA v$ for some $u,v \in \RR_\geq^\calA$. Note that 
	\begin{equation}\label{Eq:HomIsAffine}
	\varphi(m) = \varphi (\calA u)= \prod_{a \in \calA} (\varphi (a))^{u_a}.
	\end{equation}
	Hence $f_\calA(\varphi)$ satisfies the system of equations \eqref{Eq:Binomials}, and therefore is a point of $X_\calA$ by Proposition \ref{P:IATV}.
	
	Now assume $z \in X_\calA$. Then by Equation \eqref{Eq:HomIsAffine}, the function 
	\begin{align*}
	\varphi \ \colon \calA &\longrightarrow \RR_\geq \\
	a &\longmapsto z_a
	\end{align*}
	extends to a monoid homomorphism $C \to \RR_\geq$. By the decomposition \eqref{Eq:decomposition}, there exists a face $\calF$ of $\calA$  such that $z \in X_\calF^\circ$. So $\varphi$ is continuous and does not vanish on $F = \cone(\calF)$. We claim that $\varphi_z$ is zero on $C \setminus F$, hence it lies in $ \Hom_c(C, \RR_\geq)$. Note that, if $ b \in \calA \setminus \calF$, then $z_b = 0$, and so $\varphi(b) =0$. If $m \in C \setminus F$, then in any expression $m = \calA u$ of $m$ for some $u \in \RR_\geq^\calA$, there exists an element $b \in \calA \setminus \calF$ such that $u_b \neq 0$. Then
	\[
	\varphi_z(m) = \left(\varphi_z (b)\right)^{u_b}  \varphi_z \biggl(\ \sum_{a \in \calA \setminus \{b\}} u_a a\biggr) = 0,
	\]
	which proves the claim.
	
	The maps $\varphi\mapsto f_\calA(\varphi)$ and $z\mapsto \varphi_z$ are inverse bijections between $\Hom_c(C, \RR_\geq)$ and $X_\calA$ which are continuous and therefore homeomorphisms.

	For $t \in T_N$,  
	\begin{align*}
	f_\calA (t\cdot \varphi) &= ((t\cdot \varphi) (a) \mid a \in \calA )= (t(a) \varphi(a) \mid a \in \calA )= t \cdot (\varphi(a) \mid a \in \calA) \\
	&= t\cdot f_\calA (\varphi).
	\end{align*}   
	Hence $f_\calA$ is a $T_N$-equivariant homeomorphism between  $\Hom_c(C, \RR_\geq)$ and the irrational affine toric variety $X_\calA$.
	Moreover, by Proposition \ref{P:Birch} it follows that $\Hom_c(C, \RR_\geq)$ is homeomorphic to $C$ under the map $ \varphi \mapsto \sum_{a \in \calA} \varphi(a)a$. 
\end{proof}

\begin{example}\label{Ex:ConeEx}
	Let $a= (-\sqrt{2},1)$ and $b=(1,0)$ two points in $\RR^2$. Consider the cone $C$ generated by $a$ and $b$. A map $\varphi \in \Hom_c(C, \RR_\geq)$ is is determined by its values $\varphi(a)$ and $\varphi(b)$, which may be any two nonnegative real numbers. Hence the map
	\begin{align*}
	\psi \ \colon \Hom_c(C, \RR_\geq) &\longrightarrow C\\
	\varphi \qquad &\longmapsto \ \varphi(a) a + \varphi(b) b
	\end{align*}
	is a homeomorphism. 
	
	Adding a generator $c=(1,1)$ of $C$, for a map $\varphi \in \Hom_c(C, \RR_\geq)$, we have $$\varphi(c) = \varphi(a+(1+\sqrt{2})b)= \varphi(a)\varphi(b)^{1+\sqrt{2}}.$$
	Hence the map $\varphi \mapsto (\varphi(a),\varphi(b), \varphi(c))$ is a homeomorphism between $\Hom_c(C, \RR_\geq)$ and $X_{\{a,b,c\}}$. Moreover, the map 
	\[
	\varphi \longmapsto \varphi(a)a+\varphi(b)b + \varphi(c)c = \varphi(a)a+\varphi(b)b+\varphi(a)\varphi(b)^{1+\sqrt{2}}(a+(1+\sqrt{2})b)
	\]
	is a homeomorphism between $\Hom_c(C, \RR_\geq)$ and $C$, which is different from $\psi$. Figure \ref{F:IATV} shows the cone $C$ and the irrational affine toric variety $X_{\{a,b,c\}}$.
	\begin{figure}[htb]
		\[
		\begin{picture}(187,84)
		\put(0,0){\includegraphics[height=84pt]{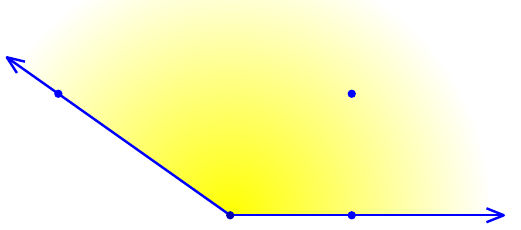}}
		\put(15,40){$a$}    \put(127,10){$b$}    \put(127,40){$c$}   
		\put(38,66){$C=\cone\{a,b,c\}$}
		\end{picture}   
		\qquad
		\begin{picture}(135,90)
		\put(0,0){\includegraphics[height=90pt]{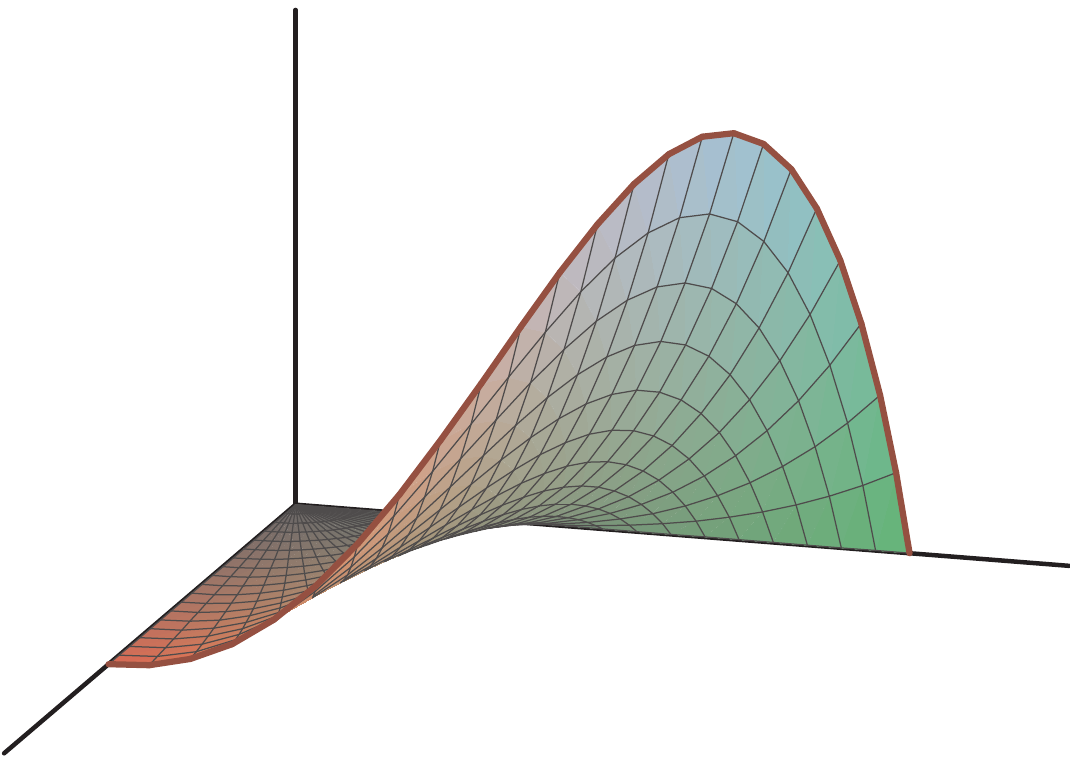}}
		\put(0,9 ){$a$}    \put(127,13){$b$}    \put(40,81){$c$}   
		\put(107,75){$X_{\{a,b,c\}}$}
		\end{picture} 
		\]
		\caption{Cone and irrational affine toric variety.}
		\label{F:IATV}
	\end{figure}\hfill$\square$
\end{example}

We have seen that faces $F$ of the cone $C$ correspond to homomorphisms 
$\varphi \in \Hom_c(C, \RR_\geq)$ that vanish on $C \setminus F$ and are nonzero on $F$. Let $\langle F \rangle$ denote the linear span of $F$. Note that $\Hom_c(\langle F \rangle, \RR_\geq) = \Hom_c(\langle F \rangle , \RR_>)$ as a monoid homomorphism in $\Hom_c(\langle F \rangle, \RR_\geq)$ does not vanish on $\langle F \rangle$. Hence $\Hom_c(\langle F \rangle, \RR_\geq)$ is a single $T_N$-orbit which  may be identified with $T_N / T_{F^\perp}$. 
\begin{notation}
	We write $\varepsilon_F \in \Hom_c(\langle F \rangle, \RR_\geq)$ for the constant homomorphism, that is $\varepsilon_F(m) = 1$ for all $m \in \langle F \rangle$. 
\end{notation}

Note that $\Hom_c(\langle F \rangle , \RR_>)= T_N \cdot \varepsilon_F$. Restricting $\varepsilon_F$ to $F$ and extending it to $C$ by assigning $\varepsilon_F(m) =0$ for all $m \in C \setminus F$ gives a $T_N$-equivariant map $$\Phi_F \ \colon \Hom_c(\langle F \rangle, \RR_>) \longrightarrow \Hom_c(C, \RR_\geq),$$ which sends the constant map $\varepsilon_F$ to the element of $\Hom_c(C, \RR_\geq)$ (still written $\varepsilon_F$) defined by
\[
\varepsilon_F(m)= 
\begin{cases}
1 & \text{if } m \in F\\
0 & \text{if } m \in C\setminus F
\end{cases}.
\]
Let $\calO_F \  := T_N \cdot \varepsilon_F \subset \Hom_c(C, \RR_\geq)$ be the $T_N$-orbit through $\varepsilon_F$. Note that $\calO_F$ is the image of $\Hom_c(\langle F \rangle, \RR_>)$ under the map $\Phi_F$. 

\begin{corollary}\label{C:HomDecomposition}
	Let $F$ be a face of a cone $C$. Then $\calO_F$ consists of monoid homomorphisms in $\Hom_c(C, \RR_\geq)$ that vanish on $C \setminus F$ and are nonzero on $F$. The map 
	$\Phi_F $ 
	is an inclusion and we have the decomposition
	\[
	\Hom_c(C, \RR_\geq) = \coprod_{F \preceq C} \calO_F.
	\]
\end{corollary}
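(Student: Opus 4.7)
The plan is to establish the three claims in order, using the support function $\supp(\varphi) = \{m \in C \mid \varphi(m) > 0\}$, which the previous lemma shows is always a face of $C$.

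First I would identify $\calO_F$ with the set of $\varphi \in \Hom_c(C,\RR_\geq)$ having $\supp(\varphi) = F$. For one direction, take $t \in T_N$ and compute: $(t \cdot \varepsilon_F)(m) = t^m$ if $m \in F$, and $(t \cdot \varepsilon_F)(m) = t^m \cdot 0 = 0$ if $m \in C \setminus F$. Thus every element of $\calO_F$ has support exactly $F$. For the reverse inclusion, suppose $\varphi \in \Hom_c(C,\RR_\geq)$ has $\supp(\varphi) = F$. Then $\varphi$ restricts to a homomorphism $\varphi|_{\langle F \rangle} \in \Hom_c(\langle F \rangle, \RR_>)$. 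Since $T_N$ acts on $\Hom_c(\langle F\rangle, \RR_>) = T_N \cdot \varepsilon_F \simeq T_N / T_{F^\perp}$ transitively, we can write $\varphi|_{\langle F \rangle} = t \cdot \varepsilon_F$ for some $t \in T_N$. Both $\varphi$ and $t \cdot \varepsilon_F$ (viewed in $\Hom_c(C,\RR_\geq)$) vanish on $C \setminus F$, so they agree on all of $C$, giving $\varphi = t \cdot \varepsilon_F \in \calO_F$.

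Next I would verify that $\Phi_F$ is an inclusion, i.e., injective. If $\Phi_F(\varphi_1) = \Phi_F(\varphi_2)$ as elements of $\Hom_c(C,\RR_\geq)$, then $\varphi_1$ and $\varphi_2$ agree on $F$. Every element of $\langle F \rangle$ can be written as $f_1 - f_2$ with $f_i \in F$, and since $\varphi_i \in \Hom_c(\langle F \rangle, \RR_>)$ the multiplicative relation $\varphi(f_1 - f_2) = \varphi(f_1)/\varphi(f_2)$ shows that agreement on $F$ forces agreement on all of $\langle F \rangle$, so $\varphi_1 = \varphi_2$. The image of $\Phi_F$ is then exactly $\calO_F$ by the first step.

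Finally, for the decomposition, given $\varphi \in \Hom_c(C,\RR_\geq)$, set $F := \supp(\varphi)$, which is a face of $C$ by the previously proved lemma. By the first step, $\varphi \in \calO_F$, so $\Hom_c(C,\RR_\geq) = \bigcup_{F \preceq C} \calO_F$. Disjointness is immediate: $\calO_F \cap \calO_{F'}$ would contain some $\varphi$ whose support is simultaneously $F$ and $F'$, forcing $F = F'$. I do not expect a real obstacle here; the only subtle point is the transitivity of the $T_N$-action on $\Hom_c(\langle F \rangle, \RR_>)$ in the first step, which I would justify by noting that this set coincides with $\Hom(\langle F \rangle, \RR_>) \simeq N/F^\perp \simeq T_N/T_{F^\perp}$, a single $T_N$-orbit through $\varepsilon_F$.
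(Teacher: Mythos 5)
Your proof is correct. For the identification of $\calO_F$ and the injectivity of $\Phi_F$, you take essentially the same approach as the paper: a homomorphism nonzero on $F$ extends uniquely from $F$ to $\langle F\rangle$ via the multiplicative relation, and that uniqueness gives both the surjection onto $\calO_F$ and the injectivity of $\Phi_F$. Where you genuinely diverge is in the final decomposition: you obtain it directly from the preceding lemma that $\supp(\varphi)$ is always a face of $C$, assigning each $\varphi$ to the unique piece $\calO_{\supp(\varphi)}$, whereas the paper instead cites the decomposition \eqref{Eq:decomposition} of the affine toric variety $X_\calA$ together with the homeomorphism of Lemma~\ref{L:HomIsAffine}. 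Your route is more self-contained and stays entirely in the $\Hom_c(C,\RR_\geq)$ picture; the paper's makes explicit the parallel with the already-established stratification of $X_\calA$. One small imprecision of notation: you write ``$\varphi$ restricts to $\varphi|_{\langle F\rangle}$,'' but $\langle F\rangle$ is not contained in $C$ in general, so one must restrict to $F$ and then pass to the unique extension on $\langle F\rangle$ (which is exactly what your injectivity computation $\varphi(f_1-f_2)=\varphi(f_1)/\varphi(f_2)$ uses, and what the paper states explicitly). This does not affect the correctness of the argument.
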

\begin{proof}
	By the definition of the map $\Phi_F$, its image consists of monoid homomorphisms that vanish on $C\setminus F$ and are nonzero on $F$. 
	
	Conversely, let $\varphi \in \Hom_c(C, \RR_\geq)$ that vanishes on $C \setminus F$ and is nonzero on $F$. Restricting $\varphi$ to $F$ gives a monoid homomorphism in $\Hom_c(F, \RR_>)$, and hence has a unique extension to $\langle F \rangle $. Hence $\varphi$ lies on the image of $\Phi_F$, i.e. $\varphi \in \calO_F$.  
	
	The map $\Phi_F$ is an injection since an element of $\Hom_c(\langle F \rangle, \RR_\geq) $ is determined uniquely by its restriction to $F$. 
	
	The decomposition of $\Hom_c(C,\RR_\geq)$ into the orbits $\calO_F$ for faces $F$ of $C$ follows from the decomposition \eqref{Eq:decomposition} and Lemma \ref{L:HomIsAffine}.
\end{proof}

Let $F$ be a face of $C$. Then there exists an element $v \in C^\vee$ such that $F = C \cap v^\perp$ and for $u \in C \smallsetminus F$, $\langle u,v \rangle >0$. For $s \in \RR$, we have the element $\gamma_{sv}$ in $T_N$ whose value at $u \in M$ is $\gamma_{sv}(u)= \exp(\langle -su, v\rangle )$. The map from $\RR \to T_N$ defined by $s \mapsto \gamma_{sv}$ is a one-parameter subgroup of $T_N$. 

\begin{lemma}\label{L:limitOfDistinguishedPoints}With these definitions, we have
	$\varepsilon_F=\lim_{s\to \infty} \gamma_{sv} \cdot \varepsilon_C$.
\end{lemma}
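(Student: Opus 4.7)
The plan is to compute $\gamma_{sv}\cdot \varepsilon_C$ explicitly and take the pointwise limit, then invoke the definition of the topology on $\Hom_c(C,\RR_\geq)$. Recall that the topology is defined so that convergence is exactly pointwise convergence on $C$; hence to prove the claimed limit it suffices to show that for every $u \in C$,
\[
\lim_{s\to\infty}\bigl(\gamma_{sv}\cdot\varepsilon_C\bigr)(u) \;=\; \varepsilon_F(u).
\]

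First I would unwind the action $\mu$ on a point $u \in C$. By definition of the $T_N$-action, $(\gamma_{sv}\cdot\varepsilon_C)(u)=\gamma_{sv}(u)\,\varepsilon_C(u)$. Since $\varepsilon_C$ is the constant homomorphism with value $1$ on all of $C$, and since $\gamma_{sv}(u)=\exp(-s\langle u,v\rangle)$ by the definition of the one-parameter subgroup, this simplifies to
\[
(\gamma_{sv}\cdot\varepsilon_C)(u) \;=\; \exp\bigl(-s\langle u,v\rangle\bigr).
\]

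Next I would split into the two cases allowed by the choice of $v$. If $u\in F = C\cap v^\perp$, then $\langle u,v\rangle=0$, so $\exp(-s\langle u,v\rangle)=1$ for every $s$, and in particular the limit as $s\to\infty$ is $1=\varepsilon_F(u)$. If $u\in C\smallsetminus F$, then by hypothesis $\langle u,v\rangle>0$, so $\exp(-s\langle u,v\rangle)\to 0$ as $s\to\infty$, which equals $\varepsilon_F(u)$. Combining the two cases gives pointwise convergence $\gamma_{sv}\cdot\varepsilon_C \to \varepsilon_F$ on $C$.

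Finally, I would note that this is precisely the notion of convergence in $\Hom_c(C,\RR_\geq)$ as defined earlier (the weakest topology making all point-evaluation maps continuous), so $\varepsilon_F=\lim_{s\to\infty}\gamma_{sv}\cdot\varepsilon_C$ in $\Hom_c(C,\RR_\geq)$. No step is a real obstacle here: the argument is a direct computation followed by an appeal to the topology. The only thing that requires even mild care is remembering that $\varepsilon_C$ means the map identically $1$ on $C$ (obtained by pushing the constant map on $\langle C\rangle$ into $\Hom_c(C,\RR_\geq)$ via $\Phi_C$), and that the topology makes pointwise convergence the correct notion.
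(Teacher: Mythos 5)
Your proof is correct and takes essentially the same approach as the paper's: evaluate $\gamma_{sv}\cdot\varepsilon_C$ pointwise, split into $u\in F$ and $u\notin F$, and invoke the pointwise-convergence topology on $\Hom_c(C,\RR_\geq)$. In fact your version is stated more carefully than the paper's, where the two cases of the limit are accidentally transposed (and there is a stray $s\to-\infty$); your case analysis has them the right way around.
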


\begin{proof}
	If $\gamma_{sv} \cdot \varepsilon_C$ has a limit as $s \to \infty$ in $\Hom_c(C, \RR_\geq)$, then its value at $u \in C$ is
	\[
	\lim_{s\to \infty} (\gamma_{sv}  \cdot \varepsilon_C )(u) =\lim_{s\to \infty}  \gamma_{sv}(u) \varepsilon_C(u) =\lim_{s\to \infty} \exp(\langle -su, v\rangle )=
	\begin{cases}
	0 &\text{if } u \in F\\
	1 &\text{if } u \notin F
	\end{cases},
	\] 
	Hence $\lim_{s\to -\infty} \gamma_{sv}  \cdot \varepsilon_C $ is $\varepsilon_F$.
\end{proof}

Lemma~\ref{L:limitOfDistinguishedPoints} implies that $\varepsilon_F\in\overline{\calO_C}$.
As $\calO_F=T_N.\varepsilon_F$, we deduce the following.

\begin{corollary}\label{Cor:orbits}
	If $F\subset E$ are faces of the cone $C$, then $\calO_F\subset\overline{\calO_E}$.
	In particular, $\calO_C$ is dense in $\Hom_c(C,\RR_\geq)$, and
	\[
	\overline{\calO_E}\ =\ \coprod_{F \preceq E} \calO_F\ .
	\]
\end{corollary}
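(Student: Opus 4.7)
The plan is to reduce the first assertion to a one-parameter limit computation that generalizes Lemma~\ref{L:limitOfDistinguishedPoints}, then upgrade from $\varepsilon_F$ to all of $\calO_F$ by $T_N$-equivariance, and finally derive the remaining two claims as consequences of this plus Corollary~\ref{C:HomDecomposition}.

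For the main inclusion $\calO_F\subseteq\overline{\calO_E}$, I would first note that, for each fixed $t\in T_N$, the action map $\varphi\mapsto t\cdot\varphi$ on $\Hom_c(C,\RR_\geq)$ is a homeomorphism, so $\overline{\calO_E}$ is $T_N$-stable; since $\calO_F = T_N\cdot\varepsilon_F$, it suffices to exhibit $\varepsilon_F\in\overline{\calO_E}$. I would next record that $F$ is in fact a face of $E$: writing $F=H_m\cap C$ with $m\in C^\vee$, the inclusion $C^\vee\subseteq E^\vee$ (from the lemma on faces and duals) gives $m\in E^\vee$, and the relations $F\subseteq E$ and $F\subseteq H_m$ force $F = H_m\cap E$. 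Then, mirroring Lemma~\ref{L:limitOfDistinguishedPoints}, I would consider the curve $s\mapsto \gamma_{sm}\cdot\varepsilon_E\in\calO_E$. A pointwise check will do the job: if $u\in F$ then $\langle u,m\rangle=0$ and $\varepsilon_E(u)=1$, so the value is $1$; if $u\in E\setminus F$ then $\langle u,m\rangle>0$ and $\varepsilon_E(u)=1$, so the value is $\exp(-s\langle u,m\rangle)\to 0$ as $s\to\infty$; and if $u\in C\setminus E$ then $\varepsilon_E(u)=0$, so the value is already $0$. The pointwise limit is therefore $\varepsilon_F$, placing it in $\overline{\calO_E}$.

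The two subsequent assertions fall out immediately. Taking $E=C$, the first part yields $\calO_F\subseteq\overline{\calO_C}$ for every face $F\preceq C$, and combining with the orbit decomposition of Corollary~\ref{C:HomDecomposition} gives $\Hom_c(C,\RR_\geq) = \coprod_F\calO_F\subseteq\overline{\calO_C}$, so $\calO_C$ is dense. For the decomposition $\overline{\calO_E}=\coprod_{F\preceq E}\calO_F$, the inclusion ``$\supseteq$'' is just the first assertion applied to each face $F\preceq E$ (using that a face of $E$ is again a face of $C$, so $\calO_F$ is defined). For ``$\subseteq$'', I would take a sequence $\varphi_n\in\calO_E$ converging pointwise to $\varphi$; since each $\varphi_n$ vanishes on $C\setminus E$, so does $\varphi$, forcing $\supp(\varphi)\subseteq E$, and Corollary~\ref{C:HomDecomposition} then identifies $\varphi\in\calO_{\supp(\varphi)}$ with $\supp(\varphi)\preceq E$.

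I do not foresee a real obstacle: the only point requiring care is keeping the face relationships straight so that the element $\varepsilon_F\in \Hom_c(C,\RR_\geq)$ obtained as a pointwise limit in the computation above is literally the same distinguished point used to define $\calO_F$ via $\calO_F = T_N\cdot\varepsilon_F$; this identification is automatic from the construction preceding the statement, but worth noting explicitly before invoking $T_N$-equivariance.
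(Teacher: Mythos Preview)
Your proposal is correct and follows essentially the same strategy as the paper: the paper deduces the corollary from Lemma~\ref{L:limitOfDistinguishedPoints} together with the fact that $\calO_F=T_N\cdot\varepsilon_F$, and you carry out precisely this one-parameter limit plus $T_N$-equivariance argument. Your write-up is in fact more complete than the paper's, since you handle the general case $F\preceq E$ directly (rather than only $E=C$) and you supply the reverse inclusion $\overline{\calO_E}\subseteq\coprod_{F\preceq E}\calO_F$, which the paper leaves implicit.
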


\section{Irrational Toric Varieties Associated to Fans}
We next define the irrational toric variety corresponding to an arbitrary fan $\Sigma$ in $N$. For a cone $\sigma \in \Sigma$, define $V_\sigma := \Hom_c(\sigma^\vee, \RR_\geq)$, the irrational affine toric variety associated to the dual cone of $\sigma$. 
For a face $\tau \preceq \sigma$, the inclusion $\tau \subset \sigma$ induces a map $V_\tau \to V_\sigma$ by restricting a monoid homomorphism $\tau^\vee \to \RR_\geq$ to $\sigma^\vee$, as $\sigma^\vee \subset \tau^\vee$.

\begin{lemma}
	The map $V_\tau \to V_\sigma$ is a $T_N$-equivariant inclusion.
\end{lemma}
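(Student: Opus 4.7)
The plan is to combine the description $\tau^\vee = \sigma^\vee + \RR_\geq(-m)$ from Lemma \ref{L:DualFaceRelation} with the concrete model $V_\sigma \cong X_\calA$ provided by Lemma \ref{L:HomIsAffine}, so that restriction of homomorphisms becomes an explicit coordinate projection of irrational affine toric varieties. Specifically, choose $m\in\sigma^\vee$ with $\tau=H_m\cap\sigma$; then $m\in\tau^\perp$, so both $m$ and $-m$ lie in $\tau^\vee$. Pick a finite generating set $\calA$ of $\sigma^\vee$ containing $m$, and let $\calA':=\calA\cup\{-m\}$, which generates $\tau^\vee$ by Lemma \ref{L:DualFaceRelation}.

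First I would verify well-definedness and continuity of the restriction map. A monoid homomorphism restricts to a monoid homomorphism, so the only issue is that faces of $\sigma^\vee$ need not be faces of $\tau^\vee$, and continuity on their relative interiors is not obviously inherited. I would sidestep this by passing through Lemma \ref{L:HomIsAffine}: the homeomorphisms $f_\calA\colon V_\sigma\to X_\calA$ and $f_{\calA'}\colon V_\tau\to X_{\calA'}$ conjugate the restriction map to the coordinate projection $\pi\colon\RR_\geq^{\calA'}\to\RR_\geq^\calA$ that forgets the $-m$ coordinate. Since $\pi$ is continuous and sends $X_{\calA'}$ into $X_\calA$ (every binomial relation for $\calA$ is a binomial relation for $\calA'$ by padding with $0$), the restriction is a well-defined continuous map $V_\tau\to V_\sigma$.

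Next, I would establish injectivity. For $\varphi\in V_\tau$, the relation $m+(-m)=0$ gives $\varphi(m)\varphi(-m)=\varphi(0)=1$, so $\varphi(m)>0$ and $\varphi(-m)=1/\varphi(m)$. Hence for any $w=u+r(-m)\in\tau^\vee$ with $u\in\sigma^\vee$ and $r\geq 0$, one has $\varphi(w)=\varphi(u)\varphi(m)^{-r}$, which depends only on $\varphi|_{\sigma^\vee}$. Equivalently, in coordinates this is the observation that a point of $X_{\calA'}$ satisfies $z_m z_{-m}=1$, so the $-m$ coordinate is determined by the $m$ coordinate, making $\pi|_{X_{\calA'}}$ injective. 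The $T_N$-equivariance is then immediate from the definition of the action: for $u\in\sigma^\vee$, $(t\cdot\varphi)|_{\sigma^\vee}(u)=t^u\varphi(u)=(t\cdot\varphi|_{\sigma^\vee})(u)$.

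The main obstacle is the continuity step, because the face structures of $\sigma^\vee$ and $\tau^\vee$ do not match up cleanly (a face of $\sigma^\vee$ whose relative interior lies in the relative interior of a larger face of $\tau^\vee$ illustrates the issue). Rather than arguing directly about how relative interiors sit inside one another, the cleanest route is the reduction to a coordinate projection via Lemma \ref{L:HomIsAffine}, after which continuity becomes transparent. If the word \emph{inclusion} is meant in the stronger sense of a topological embedding, I would additionally note that $\pi|_{X_{\calA'}}$ has continuous inverse $z\mapsto(z,1/z_m)$ on the open locus $\{z_m>0\}\subset X_\calA$, identifying $V_\tau$ with the open $T_N$-stable subset $\{\varphi\in V_\sigma\mid\varphi(m)>0\}$ of $V_\sigma$, in exact parallel with Lemma \ref{L:FaceIdentification}.
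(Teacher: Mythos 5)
Your proof is correct and shares the same essential mechanism for injectivity as the paper's: a monoid homomorphism in $V_\tau$ cannot vanish on the linear part of $\tau^\vee$, so it is determined by its restriction to $\sigma^\vee$. The paper invokes Lemma~\ref{L:coneFact} to write an arbitrary $\omega\in\tau^\vee$ as $u-v$ with $u,v\in\sigma^\vee$ and $v\in\tau^\perp$, and concludes $\varphi(\omega)=\varphi(u)\varphi(v)^{-1}$; you instead invoke Lemma~\ref{L:DualFaceRelation} to fix a single generator $-m$ and observe $\varphi(m)\varphi(-m)=1$. These are interchangeable, and the equivariance computation is identical. Where you genuinely diverge — and improve on the paper — is in treating well-definedness and continuity at all: the paper's proof silently assumes that restricting $\varphi\in V_\tau$ to $\sigma^\vee$ lands in $\Hom_c(\sigma^\vee,\RR_\geq)$ and that the resulting map is continuous, whereas you reduce the restriction map via Lemma~\ref{L:HomIsAffine} to the coordinate projection $X_{\calA'}\to X_\calA$ that forgets the $-m$ slot, from which continuity is transparent and the inclusion $\pi(X_{\calA'})\subset X_\calA$ follows by padding binomial relations with a zero. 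Your final remark identifying the image as the open $T_N$-stable locus $\{\varphi\in V_\sigma\mid\varphi(m)>0\}$, paralleling Lemma~\ref{L:FaceIdentification}, is a nice addition that the paper does not state here either. In short: same core idea, but your coordinate route buys you the continuity argument that the paper omits.
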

\begin{proof}
	Let $f: V_\tau \to V_\sigma$ be the induced map. Then for $t \in T_N$ and $\varphi \in V_\tau$, we have $f(t \cdot \varphi) = \restr{(t\cdot \varphi)}{\sigma^\vee}  = t \cdot \restr{\varphi}{\sigma^\vee} = t \cdot f(\varphi)$. Hence the map is $T_N$-equivariant.
	
	Let $\omega \in \tau^\vee$. By Lemma \ref{L:coneFact}, there are $u,v \in \sigma^\vee$ with $v \in \tau^\perp$ such that $\omega = u-v$. Since $\tau^\perp \subset \tau^\vee$ is a linear space, we have $\varphi(v) \neq 0$ and $\varphi(\omega) = \varphi(u)\varphi(v)^{-1}$. Hence $\varphi$ is determined by its restriction to $\sigma^\vee$. Hence $f$ is injective. 
\end{proof}

\begin{example}\label{Ex:Inclusion}
	Consider the cone $\sigma$ generated $(1,\sqrt{2})$ and $(0,1)$ in $\RR^2$. Let $\tau$ be its face generated by 
	$(0,1)$. 
	Then $\sigma^\vee$ is the cone $C$ of Example~\ref{Ex:ConeEx} and 
	$\tau^\vee=\{(x,y)\in\RR^2\mid y\geq 0\}$.
	The points $a=(-\sqrt{2},1)$, $b=(1,0)$, and $c=(1,1)$  lie in both dual cones $\sigma^\vee$ and $\tau^\vee$, and
	$\tau^\vee$ has an additional generator $d:=(-1,0)$.
	Figure~\ref{F:CDATV} displays the cones $\sigma$ and $\tau$, their duals, and the associated 
	\begin{figure}[htb]
		\[
		\begin{array}{lll}
		\begin{picture}(50,60)(-6,0)
		\put(0,0){\includegraphics{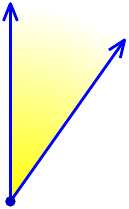}}
		\put(-6,29){$\tau$}  \put(13,43){$\sigma$}
		\end{picture}
		&
		\begin{picture}(155,60)
		\put(0,0){\includegraphics{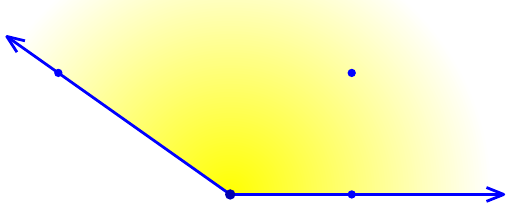}}
		\put(11,31){$a$} \put(50,43){$\sigma^\vee$} \put(93,37){$c$} \put(100,8){$b$}
		\put(38,-8){$\sigma^\perp=0$}
		\end{picture}
		&
		\begin{picture}(95,65)(-2,0)
		\put(0,0){\includegraphics[height=65pt]{AffineTV.pdf}}
		\put(-2,5){$a$}  \put(18,58){$c$} \put(86.5,19){$b$}
		\put(43,6){$X_{\{a,b,c\}}$}
		\end{picture}
		\\
		\begin{picture}(50,85)(-6,0)
		\put(0,0){\includegraphics{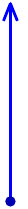}}
		\put(-6,29){$\tau$}
		\end{picture}
		&
		\begin{picture}(155,85)
		\put(0,0){\includegraphics{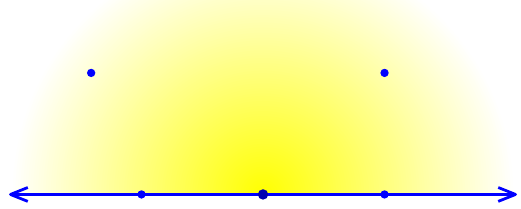}}
		\put(17,32){$a$} \put(75,39){$\tau^\vee$} \put(37,8){$d$} \put(110,8){$b$}
		\put(115,37){$c$}
		\put(-7,4){$\tau^\perp$}
		\end{picture}
		&
		\begin{picture}(93,85)
		\put(0,0){\includegraphics[height=67pt]{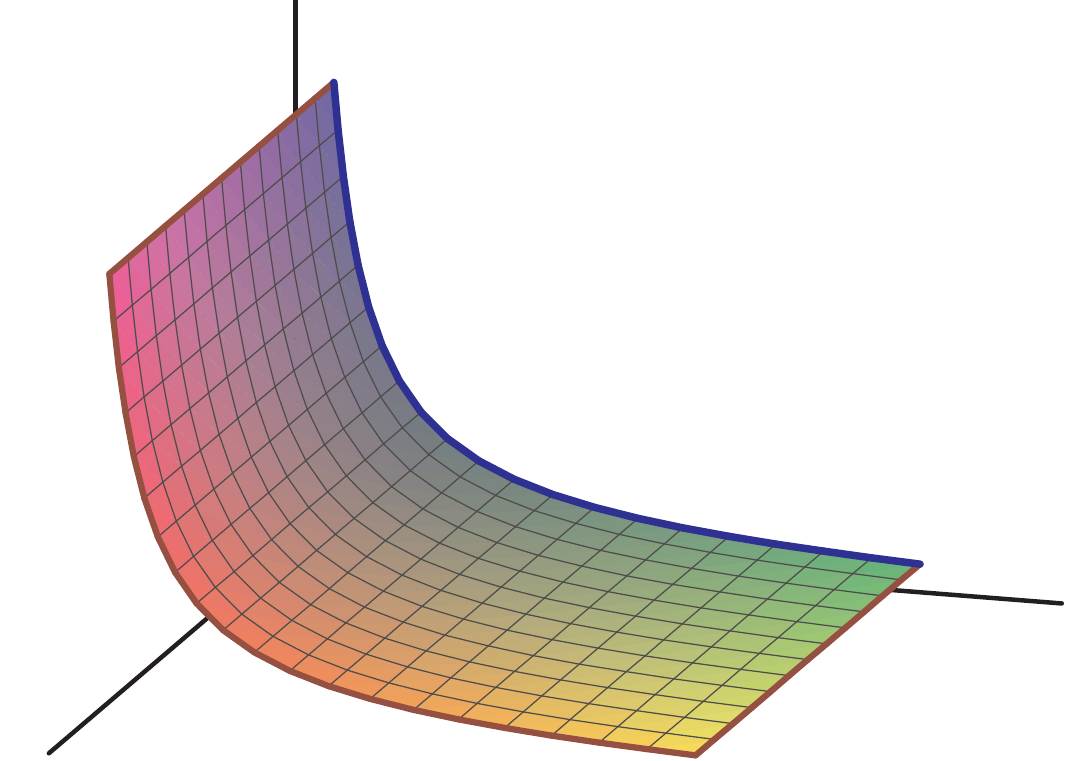}}
		\put(0,4){$a$}  \put(18,63){$d$} \put(87,16.5){$b$}
		\put(55,40){$X_{\{a,b,d\}}$}
		\end{picture}
		
		\end{array}
		\]
		\caption{Cones, their duals, and associated irrational affine toric varieties.}
		\label{F:CDATV}
	\end{figure}
	irrational affine toric varieties $X_{\{a,b,c\}}\simeq V_\sigma$ and  $X_{\{a,b,d\}}\simeq V_\tau$.
	The inclusion $V_\tau\hookrightarrow V_\sigma$ is induced by projecting $X_{\{a,b,d\}}$ to 
	the quadrant $\RR^{\{a,b\}}_\geq$ and then applying the inverse of the projection from $X_{\{a,b,c\}}$.
	The image of $X_{\{a,b,d\}}$  in $X_{\{a,b,c\}}$ only omits the $a$-axis. \hfill$\square$
\end{example}

\begin{definition}
	Let $\Sigma$ be a fan in $N$. The \demph{irrational toric variety} $X_\Sigma$ associated to $\Sigma$ is 
	\[
	Y_\Sigma \ := \bigcup_{\sigma \in \Sigma} V_\sigma,
	\]
	the union of the irrational affine toric varieties $V_\sigma$ for $\sigma \in \Sigma$ glued together along the inclusions $V_\tau \hookrightarrow V_\sigma$ for $\tau \prec \sigma$. 
\end{definition}

\begin{example}\label{P1} 
	Let	$\Sigma \in \RR$ be the fan given in Figure \ref{F:P1}. In Example \ref{Ex:PosLine} we showed that $V_{\sigma_1} \cong \RR_\geq.$ One can similarly show that $V_{\sigma_2}\cong \RR_\geq$.
	
	Note that $0^\vee = \RR$, and for $f \in V_0$,  we have 
	\begin{equation} \label{Eq:Gluing}
	1=f(0) = f(1)f(-1)= \restr{f}{\sigma_1^\vee}(1) \restr{f}{\sigma_2^\vee}(-1).
	\end{equation}
	Two elements $g \in V_{\sigma_1}$ and $h\in V_{\sigma_2}$ are glued together if there exists an element $f \in V_0$ such that $\restr{f}{\sigma_1^\vee}=g$ and $\restr{f}{\sigma_2^\vee}=h$. Since $g$ and $h$ are determined by their values at $1$ and $-1$, respectively, \eqref{Eq:Gluing} implies that they are glued if $g\cdot h =1$. Using Figure \ref{F:P1Fan}, we will explain this gluing process. We place two copies $\{(1,\alpha_t) \mid \alpha_t \in V_{\sigma_1} \} \simeq V_{\sigma_1}$ and $\{(\alpha_s,1) \mid \alpha_s \in V_{\sigma_2} \} \simeq V_{\sigma_2}$  of $\RR_\geq$ on the nonnegative orthant $\RR^2_\geq$. 
	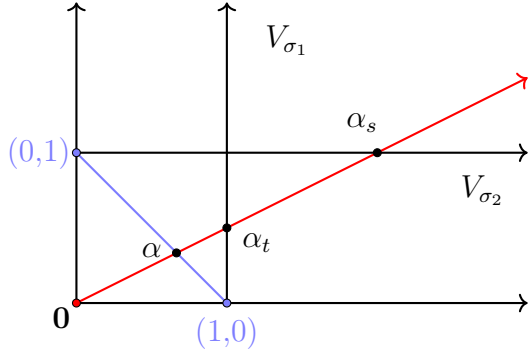
\begin{figure}[h]
		\centering
		\begin{tikzpicture}
		\definecolor{xdxdff}{rgb}{0.49019607843137253,0.49019607843137253,1.}
		
		\draw [->,thick] (0,0) -- (0,4);
		\draw [->,thick] (0,0) -- (6,0);
		\draw [->,thick] (2,0) -- (2,4);
		\draw [->,thick] (0,2) -- (6,2);
		\draw [color=xdxdff,thick] (2,0) -- (0,2);
		\draw [->, color=red,thick] (0,0)--(6,3);

		\draw [fill=xdxdff] (0,2) circle (1.5pt);
		\draw [fill=xdxdff] (2,0) circle (1.5pt);
		\draw [fill=red] (0,0) circle (1.5pt);
		\draw [fill=black] (1.33,0.667) circle (1.5pt);
		\draw [fill=black] (2,1) circle (1.5pt);
		\draw [fill=black] (4,2) circle (1.5pt);

		\draw[color=xdxdff] (2,-0.4) node {(1,0)};
		\draw[color=xdxdff] (-0.5,2) node {(0,1)};
		\draw[color=black] (-0.2,-0.2) node {\textbf{0}};
		\draw[color=black] (1,0.7) node {$\alpha$};
		\draw[color=black] (2.4,0.8) node {$\alpha_t$};
		\draw[color=black] (3.8,2.4) node {$\alpha_s$};

		\draw[color=black] (2.8,3.5) node {$V_{\sigma_1}$};
		\draw[color=black] (5.4,1.5) node {$V_{\sigma_2}$};
		
		\end{tikzpicture}
		\caption{Irrational toric variety corresponding to fan $\Sigma$.}
		\label{F:P1Fan}
	\end{figure}
	
	Let $\alpha$ be the point of intersection of a ray passing from the origin with the line segment between $(1,0)$ and $(0,1)$. This ray cuts the line $x=1$ and $y=1$ at points $(1,\alpha_t)$ and $(\alpha_s,1)$, respectively. Note that we have $\alpha_t \alpha_s=1$. This means we glue these two points together. We identify it by $\alpha$. This procedure identifies $X_\Sigma$ with the blue line segment. \hfill$\square$
\end{example}

For each cone $\sigma \in \Sigma$, let $x_\sigma \in V_\sigma$ be the distinguished point $\varepsilon_{\sigma^\perp}$, where $\sigma^\perp \subset \sigma^\vee$ is its lineality space. We also let $W_\sigma$ be the $T_N$-orbit through $x_\sigma$, so that $W_\sigma = \calO_{\sigma^\perp}$. 

\begin{theorem}\label{Th:ITV_structure}
	Let $\Sigma$ be a fan in $N$. Then the irrational toric variety $X_\Sigma$ is a $T_N$-equivariant cell complex. Each cell is an orbit and corresponds to a unique cone $\sigma \in \Sigma$. The cell corresponding to $\sigma$ is $W_\sigma \simeq N/\langle \sigma \rangle$, and $\tau \subset \sigma$ if and only if $W_\sigma \subseteq \overline{W_\tau}$.
\end{theorem}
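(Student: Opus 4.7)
The plan is to analyze $X_\Sigma$ one chart $V_\sigma$ at a time, using the orbit decomposition of $V_\sigma$ from Corollary~\ref{C:HomDecomposition} and the face correspondence from Theorem~\ref{ConeCorrespondence}, and then verify that these local orbit structures glue consistently. First I would invoke Theorem~\ref{ConeCorrespondence} to identify the faces of $\sigma^\vee$ with the faces of $\sigma$ via $\tau \mapsto \tau^* = \sigma^\vee \cap \tau^\perp$. Combined with Corollary~\ref{C:HomDecomposition}, this yields
\[
V_\sigma \;=\; \coprod_{\tau \preceq \sigma} \calO_{\tau^*}.
\]
Next I would observe that when $\tau \preceq \sigma$, the distinguished point $x_\tau = \varepsilon_{\tau^\perp} \in V_\tau$ restricts under the inclusion $V_\tau \hookrightarrow V_\sigma$ to the homomorphism which is $1$ on $\sigma^\vee \cap \tau^\perp = \tau^*$ and $0$ elsewhere on $\sigma^\vee$, that is, to $\varepsilon_{\tau^*}$. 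Since the inclusion is $T_N$-equivariant, the orbit $W_\tau = T_N\cdot x_\tau$, when viewed inside $V_\sigma$, is exactly $\calO_{\tau^*}$. This shows the local orbit decompositions are compatible with the gluing, so that
\[
X_\Sigma \;=\; \coprod_{\sigma \in \Sigma} W_\sigma,
\]
with each $W_\sigma$ a single $T_N$-orbit corresponding to a unique cone of $\Sigma$.

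For the cell structure, I would compute the stabilizer of $x_\sigma = \varepsilon_{\sigma^\perp}$. A point $t = \gamma_v \in T_N$ fixes $x_\sigma$ precisely when $\exp(-\langle u,v\rangle) = 1$ for every $u \in \sigma^\perp$, which is equivalent to $v \in (\sigma^\perp)^\perp = \langle\sigma\rangle$. Hence $\mathrm{Stab}(x_\sigma) = T_{\langle\sigma\rangle}$ and
\[
W_\sigma \;\simeq\; T_N / T_{\langle\sigma\rangle} \;\simeq\; N/\langle\sigma\rangle,
\]
which is a real vector space of dimension $n - \dim\sigma$ and thus an open cell.

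For the closure relations, the forward direction follows from Corollary~\ref{Cor:orbits} applied inside a single chart. If $\tau \preceq \sigma$, then by Theorem~\ref{ConeCorrespondence} the correspondence $\mathord{-}^*$ reverses inclusions, so $\sigma^* \preceq \tau^*$ in $\sigma^\vee$, and Corollary~\ref{Cor:orbits} gives $\calO_{\sigma^*} \subset \overline{\calO_{\tau^*}}$, i.e., $W_\sigma \subset \overline{W_\tau}$ in $V_\sigma$ and hence in $X_\Sigma$. For the converse, I would first establish that each $V_\sigma$ is open in $X_\Sigma$: for any $\sigma' \in \Sigma$ and $\tau = \sigma \cap \sigma'$, write $\tau = H_m \cap \sigma'$ with $m \in (\sigma')^\vee$; by Lemma~\ref{L:DualFaceRelation} one has $\tau^\vee = (\sigma')^\vee + \RR_\geq(-m)$, so the subset $V_\tau \subset V_{\sigma'}$ is cut out by the open condition $\varphi(m) \neq 0$. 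Given $W_\sigma \subseteq \overline{W_\tau}$, the point $x_\sigma$ lies in the open set $V_\sigma$, so any net in $W_\tau$ converging to $x_\sigma$ eventually meets $V_\sigma$; since orbits are indivisible, $W_\tau \subseteq V_\sigma$, and the decomposition $V_\sigma = \coprod_{\tau' \preceq \sigma} W_{\tau'}$ forces $\tau \preceq \sigma$.

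The main obstacle will be the book-keeping in the gluing step: ensuring that $W_\tau$ is a well-defined subset of $X_\Sigma$ independent of which chart $V_\sigma$ (containing $\tau$ as a face) is used to realize it, and that the $T_N$-action is globally well-defined. This reduces to showing that the inclusion $V_{\tau'} \hookrightarrow V_\tau$, composed with $V_\tau \hookrightarrow V_\sigma$ for $\tau' \preceq \tau \preceq \sigma$, coincides with the direct inclusion $V_{\tau'} \hookrightarrow V_\sigma$, and that it sends distinguished points to the right distinguished points. This is routine since both inclusions are simply restriction of monoid homomorphisms along the chain $\sigma^\vee \subset \tau^\vee \subset (\tau')^\vee$, but it is the step that makes the rest of the argument go through.
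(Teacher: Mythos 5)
Your proof is correct and follows the same overall approach as the paper's (Corollary~\ref{C:HomDecomposition} for the orbit stratification of each $V_\sigma$, Theorem~\ref{ConeCorrespondence} to index the orbits by faces of $\sigma$, Corollary~\ref{Cor:orbits} for the closure relations, then patching via the gluing), but you fill in considerably more than the paper's three-sentence argument. The most substantive addition is your treatment of the \emph{only if} direction of ``$\tau\subset\sigma\iff W_\sigma\subseteq\overline{W_\tau}$'': the paper asserts only the \emph{if} direction explicitly and then says ``these facts also hold for $X_\Sigma$.'' Your route — proving directly that each $V_{\sigma'}\cap V_\sigma=V_{\sigma\cap\sigma'}$ is cut out in $V_{\sigma'}$ by the open condition $\varphi(m)\neq0$, via Lemma~\ref{L:DualFaceRelation}, and then using openness plus $T_N$-invariance to force $W_\tau\subseteq V_\sigma$ — is a clean and self-contained way to close this. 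It is worth noting that the paper does state openness of the $V_\sigma$ as a corollary \emph{after} this theorem, and its proof of that corollary cites this theorem; your independent openness argument is therefore preferable here, since using the paper's corollary inside this proof would be circular. The stabilizer computation $\mathrm{Stab}(x_\sigma)=T_{\langle\sigma\rangle}$ and the check that $x_\tau\mapsto\varepsilon_{\tau^*}$ under $V_\tau\hookrightarrow V_\sigma$ are also details the paper leaves implicit (via the remark preceding Corollary~\ref{C:HomDecomposition} that $\calO_F\simeq T_N/T_{F^\perp}$), and you handle them correctly.
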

\begin{proof}
	Let $\sigma\subset\Sigma$ be a cone. By Corollary \ref{C:HomDecomposition}, the set $V_\sigma$ is a $T_N$-equivariant cell complex whose cells are $T_N$-orbits that corresponds to faces $\tau$ of $\sigma$, where the orbit $W_\tau$ is identified with $N/\langle \tau \rangle$.  By Corollary \ref{Cor:orbits}, the cell $W_\sigma$ is contained in any closure $\overline{W_\tau}$ for a face $\tau$ of $\sigma$. 
	Since $X_\Sigma$ is obtained by gluing the sets along common open subsets, these facts also hold for $X_\Sigma$. 
\end{proof}

\begin{corollary}
	The collection $\{ V_\sigma\mid \sigma\in\Sigma\}$ of irrational affine toric varieties  forms a $T_N$-equivariant open
	cover of $X_\Sigma$ by irrational affine toric varieties.
\end{corollary}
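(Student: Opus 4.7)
The plan is to verify four things: that each $V_\sigma$ is an irrational affine toric variety, that the collection covers $X_\Sigma$, that the $T_N$-action restricts to each $V_\sigma$, and—this being the only nontrivial point—that each $V_\sigma$ is open in $X_\Sigma$. The first three are essentially bookkeeping. By Lemma~\ref{L:HomIsAffine}, choosing any finite set $\calA \subset \sigma^\vee$ with $\sigma^\vee=\cone(\calA)$ gives a $T_N$-equivariant homeomorphism $V_\sigma = \Hom_c(\sigma^\vee,\RR_\geq) \simeq X_\calA$, so $V_\sigma$ is by definition an irrational affine toric variety. The covering property is just the defining equality $X_\Sigma = \bigcup_{\sigma \in \Sigma} V_\sigma$, and the $T_N$-action on each $V_\sigma$ has already been constructed, with the inclusions $V_\tau \hookrightarrow V_\sigma$ equivariant by the lemma just preceding the definition of $X_\Sigma$.

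For openness, I would reduce to the following local claim: \emph{if $\tau \preceq \sigma$ is a face, then the image of the inclusion $V_\tau \hookrightarrow V_\sigma$ is open in $V_\sigma$}. Granting this, fix $\sigma \in \Sigma$ and any other cone $\sigma' \in \Sigma$. By the fan axioms, $\tau := \sigma \cap \sigma'$ is a face of both; in $X_\Sigma$ the gluing identifies the copies of $V_\tau$ inside $V_\sigma$ and $V_{\sigma'}$, so $V_\sigma \cap V_{\sigma'} = V_\tau$. The local claim then says this intersection is open in $V_{\sigma'}$, and since $\sigma'$ was arbitrary, $V_\sigma$ is open in $X_\Sigma$.

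To prove the local claim, I would use Lemma~\ref{L:DualFaceRelation}: since $\tau \preceq \sigma$, there exists $m \in \sigma^\vee$ with $\tau = H_m \cap \sigma$, and then $\tau^\vee = \sigma^\vee + \RR_\geq(-m)$. The inclusion $V_\tau \hookrightarrow V_\sigma$ sends $\psi \colon \tau^\vee \to \RR_\geq$ to $\psi|_{\sigma^\vee}$. Because $m, -m \in \tau^\vee$ and $\psi(m)\psi(-m) = \psi(0) = 1$, we must have $\psi(m) > 0$; conversely, any $\varphi \in V_\sigma$ with $\varphi(m) > 0$ extends uniquely to $\tau^\vee$ by declaring $\varphi(u - km) := \varphi(u)\varphi(m)^{-k}$ for $u \in \sigma^\vee$ and $k \in \RR_\geq$, and this extension is a continuous monoid homomorphism on $\tau^\vee$ since $\sigma^\vee$ is a face of $\tau^\vee$ and $\varphi$ was continuous there. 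Thus the image of $V_\tau$ inside $V_\sigma$ is exactly
\[
\{\varphi \in V_\sigma \mid \varphi(m) > 0\}\,,
\]
which is the preimage of the open subset $\RR_> \subset \RR_\geq$ under the evaluation map $\mathrm{ev}_m \colon V_\sigma \to \RR_\geq$, $\varphi \mapsto \varphi(m)$. By the very definition of the topology on $\Hom_c(\sigma^\vee,\RR_\geq)$, every evaluation map is continuous, so this set is open.

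The one place requiring care—and the step I expect to be the main obstacle—is verifying that the uniquely defined extension $\varphi \mapsto \tilde\varphi$ from $\sigma^\vee$ to $\tau^\vee$ is not merely well-defined as a monoid homomorphism but also lies in $\Hom_c(\tau^\vee,\RR_\geq)$, i.e., is continuous on the relative interior of each face of $\tau^\vee$. Here one must use Theorem~\ref{ConeCorrespondence} relating faces of $\tau^\vee$ to faces of $\tau$: each face $E$ of $\tau^\vee$ is either already a face of $\sigma^\vee$ (on which $\varphi$ is continuous by hypothesis) or has the form $E' + \RR_\geq(-m)$ for some face $E' \preceq \sigma^\vee$ with $\varphi(m) > 0$ constant along $E'$, on which continuity of $\tilde\varphi$ follows from continuity of $\varphi$ on $E'$ together with the formula $\tilde\varphi(u - km) = \varphi(u)\varphi(m)^{-k}$. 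Once this is checked, the identification of $V_\tau$ with an open subset of $V_\sigma$ is complete, and the corollary follows.
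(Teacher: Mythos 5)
Your proof is correct, but it takes a genuinely different route from the paper. The paper's proof (which is quite short) works with the complement: by Theorem~\ref{Th:ITV_structure}, $X_\Sigma$ decomposes into orbits $W_\tau$ indexed by cones $\tau\in\Sigma$, and $V_\sigma=\bigcup_{\tau\preceq\sigma}W_\tau$. The set of cones $\tau$ with $\tau\not\preceq\sigma$ is upward closed under the face relation, so by Corollary~\ref{Cor:orbits} the complement $X_\Sigma\setminus V_\sigma=\bigcup_{\tau\not\preceq\sigma}W_\tau$ equals the corresponding union of orbit \emph{closures} $\bigcup_{\tau\not\preceq\sigma}\overline{W_\tau}$, which is closed. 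Your argument instead establishes openness directly, by showing that for $\tau\preceq\sigma$ the image of $V_\tau\hookrightarrow V_\sigma$ is the preimage of $\RR_>$ under the evaluation map $\mathrm{ev}_m$, where $\tau=H_m\cap\sigma$; this is the irrational analogue of the classical Lemma~\ref{L:FaceIdentification} (the localization $U_\tau\simeq U_\sigma\setminus(t_k=0)$). The paper's route is more economical because it leans on already-proved structural results about orbit closures; yours is more hands-on, does not invoke the orbit-cone correspondence, and gives an explicit local picture of the gluing. The one thing you correctly flagged as delicate --- verifying that the extension $\tilde\varphi$ of $\varphi$ from $\sigma^\vee$ to $\tau^\vee=\sigma^\vee+\RR_\geq(-m)$ is continuous on the relative interior of each face of $\tau^\vee$ --- does need the observation that every face $E$ of $\tau^\vee$ has the form $E=E'+\RR_\geq(-m)$ with $E'=\sigma^\vee\cap\rho^\perp$ a face of $\sigma^\vee$ containing $m$ (for some face $\rho\preceq\tau$), together with the fact that one can always write $p\in\Relint(E)$ as $p=u-km$ with $u\in\Relint(E')$ by taking $k$ sufficiently large; once that is pinned down your argument closes cleanly.
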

\begin{proof}
	As $X_\Sigma$ is the union of the $V_\sigma$, which are $T_N$-equivariant as is the gluing, we need only show that each
	$V_\sigma$ is open in $X_\Sigma$. Hence it is sufficient to show $X_\Sigma\setminus V_\sigma$ is closed in $X_\Sigma$. Note that if we have $\tau \preceq\rho$ and $\tau \not \preceq \sigma$, then $\rho \not \preceq \sigma$. Hence by Theorem~\ref{Th:ITV_structure}, we have
	\[
	X_\Sigma\setminus V_\sigma\ =\ 
	\bigcup_{\tau\not\preceq\sigma}  W_\tau\ =\ 
	\bigcup_{\tau\not\preceq \sigma}  \overline{W_\tau}\ .
	\]
	As $X_\Sigma\setminus V_\sigma$ is a union of closed sets, it is closed as well. 
\end{proof}

By Theorem \ref{Th:ITV_structure}, an orbit $W_\tau$ lies in the closure of an orbit $W_\sigma$ if and only if $\sigma$ is a face of $\tau$. The following corollary is a consequence of these facts and the definition of star. 

\begin{corollary}\label{C:StarITV}
	For any cone $\sigma \in \Sigma$, the closure of the orbit $V_\sigma$ is the toric variety $X_{\text{star}(\sigma)}$. 
\end{corollary}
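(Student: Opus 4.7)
The statement should be interpreted as $\overline{W_\sigma} = X_{\text{star}(\sigma)}$, since $V_\sigma$ is an affine chart rather than an orbit; the relevant orbit is $W_\sigma$, and the classical analogue gives $\overline{O_\sigma}\simeq Y_{\text{star}(\sigma)}$. My plan is to construct an explicit $T_N$-equivariant homeomorphism chart by chart and then verify that the local identifications glue.

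First, I would use Theorem~\ref{Th:ITV_structure} to write $\overline{W_\sigma}$ set-theoretically as $\bigsqcup_{\tau\succeq\sigma} W_\tau$, so its cells are indexed by cones of $\Sigma$ containing $\sigma$ as a face. These cones are in bijection with cones of $\text{star}(\sigma)$ via $\tau\mapsto\overline{\tau}=(\tau+\langle\sigma\rangle)/\langle\sigma\rangle$. The cell in $X_{\text{star}(\sigma)}$ attached to $\overline{\tau}$ is $N(\sigma)/\langle\overline{\tau}\rangle$; since $\sigma\preceq\tau$ gives $\langle\overline{\tau}\rangle=\langle\tau\rangle/\langle\sigma\rangle$, this quotient equals $N/\langle\tau\rangle$, which is precisely $W_\tau$. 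Thus the cell structures agree canonically.

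Second, for each cone $\tau\succeq\sigma$ in $\Sigma$ I would match the affine charts. The crucial algebraic identity is
\[
\overline{\tau}^\vee \ =\ \tau^\vee \cap \sigma^\perp,
\]
because the dual pairing on $N(\sigma)\times\sigma^\perp$ is inherited from the pairing on $N\times M$. Note that $\tau^\vee\cap\sigma^\perp$ is the face $\sigma^*$ of $\tau^\vee$ dual to the face $\sigma$ of $\tau$ under Theorem~\ref{ConeCorrespondence}. Using Corollary~\ref{C:HomDecomposition} applied to $V_\tau=\Hom_c(\tau^\vee,\RR_\geq)$, the orbits lying in $\overline{W_\sigma}\cap V_\tau$ are exactly those $W_\rho$ for $\sigma\preceq\rho\preceq\tau$, i.e.\ those whose supports are faces of $\sigma^*=\overline{\tau}^\vee$. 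I would then define
\[
\iota_\tau \colon V_{\overline{\tau}}\ =\ \Hom_c(\overline{\tau}^\vee,\RR_\geq)\ \longrightarrow\ \overline{W_\sigma}\cap V_\tau
\]
by sending $\psi$ to the extension $\widetilde{\psi}\in\Hom_c(\tau^\vee,\RR_\geq)$ with $\widetilde{\psi}|_{\overline{\tau}^\vee}=\psi$ and $\widetilde{\psi}(u)=0$ for $u\in\tau^\vee\setminus\overline{\tau}^\vee$. This is a monoid homomorphism because $\overline{\tau}^\vee$ is a face of $\tau^\vee$ (so Lemma~\ref{SumofElementsinCone} forces any sum landing outside $\overline{\tau}^\vee$ to have a summand outside it), and it is continuous on the relative interior of each face of $\tau^\vee$. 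The inverse sends a homomorphism in $\overline{W_\sigma}\cap V_\tau$ to its restriction to $\overline{\tau}^\vee$. Both maps are continuous in the topology of pointwise evaluation, and the map is $T_N$-equivariant with the action on the left factoring through $T_N\twoheadrightarrow T_{N(\sigma)}$ since a character $u\in\overline{\tau}^\vee\subset\sigma^\perp$ is trivial on $T_{\langle\sigma\rangle}$.

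Third, I would verify compatibility with the gluing. For $\sigma\preceq\rho\preceq\tau$ in $\Sigma$, the inclusions $V_\rho\hookrightarrow V_\tau$ and $V_{\overline{\rho}}\hookrightarrow V_{\overline{\tau}}$ are both restriction of monoid homomorphisms along the cone inclusions $\tau^\vee\subseteq\rho^\vee$ and $\overline{\tau}^\vee\subseteq\overline{\rho}^\vee$. Since intersecting with $\sigma^\perp$ commutes with these restrictions, the square
\[
\begin{tikzcd}
V_{\overline{\rho}} \arrow[r,"\iota_\rho"] \arrow[d,hook] & \overline{W_\sigma}\cap V_\rho \arrow[d,hook]\\
V_{\overline{\tau}} \arrow[r,"\iota_\tau"] & \overline{W_\sigma}\cap V_\tau
\end{tikzcd}
\]
commutes. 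Consequently the $\iota_\tau$ assemble to a $T_N$-equivariant homeomorphism $X_{\text{star}(\sigma)}\to\overline{W_\sigma}$. The step I expect to require the most care is the verification that extension-by-zero is continuous into $V_\tau$; this reduces to checking that if $\psi_n\to\psi$ pointwise on $\overline{\tau}^\vee$, then $\widetilde{\psi_n}(u)\to\widetilde{\psi}(u)=0$ for each $u\in\tau^\vee\setminus\overline{\tau}^\vee$, which is immediate, and symmetrically that the restriction map is continuous, which is obvious.
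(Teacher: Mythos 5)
Your proof is correct, and it substantially expands on what the paper actually writes: the paper states no proof of this corollary at all, merely observing in the preceding text that ``an orbit $W_\tau$ lies in the closure of an orbit $W_\sigma$ if and only if $\sigma$ is a face of $\tau$'' and that the corollary is ``a consequence of these facts and the definition of star.'' That observation only establishes the set-theoretic cell decomposition $\overline{W_\sigma}=\bigsqcup_{\tau\succeq\sigma}W_\tau$, which is the easy half and does not by itself produce a homeomorphism with $X_{\text{star}(\sigma)}$. You supply exactly the missing content: the identity $\overline{\tau}^\vee=\tau^\vee\cap\sigma^\perp$ (identifying the dual cone in the quotient with the face $\sigma^*$ of $\tau^\vee$), the chart-wise extension-by-zero map $\iota_\tau\colon V_{\overline{\tau}}\to\overline{W_\sigma}\cap V_\tau$ together with the verification via Lemma~\ref{SumofElementsinCone} and Corollary~\ref{C:HomDecomposition} that it is a well-defined $T_N$-equivariant homeomorphism onto its image, and the commutativity of the gluing squares. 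You also rightly flag that the statement's ``orbit $V_\sigma$'' must be read as the orbit $W_\sigma$, since $V_\sigma$ is a chart rather than an orbit. In short, your argument is the fully rigorous version of the argument the paper only gestures at, and the extra work you do (explicit homeomorphisms, continuity of extension by zero, gluing compatibility) is genuinely needed to upgrade the cell decomposition to an identification of toric varieties.
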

\begin{theorem}\label{Th:TVvsITV}
	If $\Sigma\subset N$ is a rational fan, then $X_\Sigma=Y_\Sigma(\RR_\geq)$.
\end{theorem}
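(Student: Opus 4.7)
The plan is to work cone-by-cone: for each rational $\sigma\in\Sigma$, construct a natural $T_N$-equivariant homeomorphism between $V_\sigma$ and the nonnegative real points $U_\sigma(\RR_\geq)$ of the classical affine toric variety, and then check that these local identifications are compatible with the two gluings, so that they assemble into the desired homeomorphism $X_\Sigma\cong Y_\Sigma(\RR_\geq)$. Since $Y_\Sigma$ is built by gluing $U_\sigma=\Spec\CC[S_\sigma]$ along $U_\tau\subset U_\sigma$ for $\tau\preceq\sigma$, and since $X_\Sigma$ is built analogously from the $V_\sigma$ along $V_\tau\hookrightarrow V_\sigma$, the local-plus-compatibility strategy suffices.

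For the local identification, I would describe $U_\sigma(\RR_\geq)$ as the set $\Hom_{\mathrm{mon}}(S_\sigma,\RR_\geq)$ of monoid homomorphisms, by the same argument used earlier to identify $\CC$-points of $U_\sigma$ with $\Hom_{\mathrm{mon}}(S_\sigma,\CC)$. Then restriction gives a canonical $T_N$-equivariant map
\[
\rho_\sigma\ \colon\ V_\sigma=\Hom_c(\sigma^\vee,\RR_\geq)\ \longrightarrow\ \Hom_{\mathrm{mon}}(S_\sigma,\RR_\geq)=U_\sigma(\RR_\geq),\qquad \varphi\mapsto \varphi|_{S_\sigma}.
\]
The bulk of the work is to show $\rho_\sigma$ is a homeomorphism. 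For injectivity, given $\varphi\in V_\sigma$, its support is a face $F$ of $\sigma^\vee$ which, because $\sigma$ is rational, satisfies $F=\langle F\cap S_\sigma\rangle\cap\sigma^\vee$, so $F$ is determined by $\varphi|_{S_\sigma}$; on $F$, the map $u\mapsto-\log\varphi(u)$ is a continuous monoid homomorphism to $\RR$, hence is $\RR$-linear on $\langle F\rangle$, and therefore is determined by its restriction to the lattice $F\cap M_\ZZ$ since the latter spans $\langle F\rangle$ over $\RR$ by rationality.

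For surjectivity, take $\psi\in\Hom_{\mathrm{mon}}(S_\sigma,\RR_\geq)$ and let $S_F:=\{u\in S_\sigma\mid\psi(u)>0\}$. Using Lemma~\ref{SumofElementsinCone} at the level of monoids, $S_F$ is the intersection of $S_\sigma$ with a face $F$ of $\sigma^\vee$. The map $-\log\psi\colon S_F\to\RR$ is a monoid homomorphism that extends uniquely to a group homomorphism $\ZZ S_F=\langle F\rangle\cap M_\ZZ\to\RR$, and thence by $\RR$-linearity to $L\colon\langle F\rangle\to\RR$; define
\[
\tilde\psi(u)=
\begin{cases}\exp(-L(u)) & u\in F,\\ 0 & u\in\sigma^\vee\setminus F.\end{cases}
\]
A short check using Lemma~\ref{SumofElementsinCone} (applied in $\sigma^\vee$, with $F$ as the distinguished face) shows that $\tilde\psi$ is multiplicative across the boundary of $F$, and it is continuous on the relative interior of every face of $\sigma^\vee$ (trivially where it vanishes, and as the composition of a linear map with $\exp$ on faces contained in $F$); hence $\tilde\psi\in V_\sigma$ and $\rho_\sigma(\tilde\psi)=\psi$. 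Continuity of $\rho_\sigma$ and its inverse then comes for free: point-evaluation at $u\in S_\sigma$ is continuous on $V_\sigma$ by the definition of its topology, while point-evaluation at a general $u\in\sigma^\vee$ is, via the formula above, a continuous function of the coordinates $\psi(a)$ for $a$ in a fixed generating set of $S_\sigma$.

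Finally, for gluing compatibility, observe that for $\tau\preceq\sigma$ one has $\sigma^\vee\subset\tau^\vee$ and $S_\sigma\subset S_\tau$, and both gluings $V_\tau\hookrightarrow V_\sigma$ and $U_\tau(\RR_\geq)\hookrightarrow U_\sigma(\RR_\geq)$ are given by restriction of monoid homomorphisms. Since restriction commutes with restriction, the square
\[
\begin{tikzcd}
V_\tau \arrow[r,hook] \arrow[d,"\rho_\tau"'] & V_\sigma \arrow[d,"\rho_\sigma"]\\
U_\tau(\RR_\geq) \arrow[r,hook] & U_\sigma(\RR_\geq)
\end{tikzcd}
\]
commutes, so the $\rho_\sigma$ assemble into a $T_N$-equivariant homeomorphism $X_\Sigma\xrightarrow{\sim}Y_\Sigma(\RR_\geq)$. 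The main obstacle I anticipate is the surjectivity/extension step: one must show that the recipe for $\tilde\psi$ is well-defined across all faces of $\sigma^\vee$ simultaneously and yields a genuine monoid homomorphism; this is where rationality of $\sigma$ (hence of every face of $\sigma^\vee$) is used essentially, in order that each face contain enough lattice points to pin down a linear extension.
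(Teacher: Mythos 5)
Your proof is correct, and the high-level plan (reduce to the affine case, identify $V_\sigma$ with the nonnegative points of $U_\sigma$ cone-by-cone, then check gluing compatibility) matches the paper's. Where you diverge is in how you carry out the local identification. The paper chooses a finite generating set $\calA\subset S_\sigma$ for the cone $\sigma^\vee$ and then observes that the evaluation map $f_\calA$ of Lemma~\ref{L:HomIsAffine} sends both $V_\sigma$ and $\Hom_{\mon}(S_\sigma,\RR_\geq)$ isomorphically onto the same set $X_\calA\subset\RR^\calA_\geq$; the desired equality falls out with no new construction, at the cost of leaving the ``$\Hom_{\mon}(S_\sigma,\RR_\geq)\to X_\calA$ is an isomorphism'' half largely implicit. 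You instead construct the restriction map $\rho_\sigma$ and its explicit inverse, using rationality directly (faces of $\sigma^\vee$ contain spanning sets of lattice points, so $-\log$ of a monoid homomorphism on $S_F$ extends uniquely to an $\RR$-linear map on $\langle F\rangle$). This is more work, but it makes it visible exactly where rationality enters, and it produces the extension explicitly. Two remarks worth flagging for a polished write-up: first, when you invoke Lemma~\ref{SumofElementsinCone} to conclude that $S_F$ comes from a face of $\sigma^\vee$, you should really argue that $\cone(S_F)$ is a face of $\sigma^\vee$ (e.g.\ by choosing a suitable $m\in\sigma$ exposing it), since Lemma~\ref{SumofElementsinCone} by itself only gives a monoid-theoretic face; second, the continuity of $\rho_\sigma^{-1}$ is glossed over, because the support face $F$ jumps as $\psi$ varies. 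The cleanest fix is precisely what the paper does: embed both sides into $\RR^\calA_\geq$ via evaluation at a common generating set $\calA$ and observe that both land on $X_\calA$, so the topology questions evaporate; your formula $\tilde\psi(u)=\prod_{a}\psi(a)^{\lambda_a}$ for a fixed conical representation $u=\sum\lambda_a a$ is the right ingredient, but you should say explicitly that the value is independent of the chosen representation (this is exactly the binomial condition of Proposition~\ref{P:IATV}) and hence continuous in the coordinates $\psi(a)$.
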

\begin{proof}
	Both $X_\Sigma$ and $Y_\Sigma(\RR_\geq)$ are constructed by the same gluing procedure from the sets $V_\sigma=\Hom_c(\sigma^\vee,\RR_\geq)$ and $\Hom_c(S_\sigma, \RR_\geq)$ from the cones $\sigma\in\Sigma$. Hence it is sufficient to show that these two sets are equal for each cone $\sigma$. Let $\sigma\subset M$ be a rational cone.  
	Then $\sigma^\vee$ is generated as a cone by the monoid $S_\sigma$, and 
	let $\calA\subset S_\sigma$ be a generating set for $\sigma^\vee$.
	The map $f_\calA$ of Lemma~\ref{L:HomIsAffine} maps both $V_\sigma$ and $\Hom_{\mon}(S_\sigma,\RR_\geq)$ to $X_\calA$,
	with both maps isomorphisms.
	Thus the restriction map identifies $V_\sigma$ with $\Hom_{\mon}(S_\sigma,\RR_\geq)$, which completes the proof.
\end{proof}

\section{Maps of Fans}
Let $\Sigma_1 \subset N_1$ and $\Sigma_2 \subset N_2$ be two fans and let $X_{\Sigma_1}$ and $X_{\Sigma_2}$ be the irrational toric varieties associated to $\Sigma_1$ and $\Sigma_2$, respectively.
Let $\phi_i \colon T_{N_i} \times X_{\Sigma_i} \to X_{\Sigma_i}$ be the actions of $T_{N_i}$ on $X_{\Sigma_i}$. A map $\psi \colon X_{\Sigma_1} \to X_{\Sigma_2}$ of irrational toric varieties is a continuous map together with a homomorphism $\Psi \colon T_{N_1} \to T_{N_2}$ of topological groups such that the following diagram commutes 
\begin{equation}\label{Eq:IrrationalToricMap}
\begin{tikzcd}
T_{N_1} \times X_{\Sigma_1} \arrow[swap]{d}{\Psi\times \psi}  \arrow{r}{\phi_1}  & T_{N_1} \arrow{d}{\psi} \\
T_{N_2} \times X_{\Sigma_2} \arrow{r}{\phi_2} & T_{N_2}
\end{tikzcd}.
\end{equation}

We next show that the association $\Sigma \mapsto X_\Sigma$ is functorial.
\begin{theorem}\label{Th:mapsOfFans}
	Let $\Sigma_1 \subset N_1$ and $\Sigma_2 \subset N_2$ be fans. If $\Psi \colon \Sigma_1 \to \Sigma_2$ is a map of fans, then there is a continuous map $\psi \colon X_{\Sigma_1} \to X_{\Sigma_2}$ such that the diagram \eqref{Eq:IrrationalToricMap} commutes, where the homomorphism $\Psi \colon T_{N_1} \to T_{N_2}$ is induced by the linear map $\Psi \colon N_1 \to N_2$. 
\end{theorem}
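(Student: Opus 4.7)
The plan is to construct $\psi$ locally on each affine chart $V_\sigma$ for $\sigma \in \Sigma_1$ and then to glue these local maps along the inclusions $V_\tau \hookrightarrow V_\sigma$. Since $\Psi : \Sigma_1 \to \Sigma_2$ is a map of fans, for every $\sigma \in \Sigma_1$ there is some cone $\sigma' \in \Sigma_2$ with $\Psi(\sigma) \subseteq \sigma'$; choose one for each $\sigma$. The transpose linear map $\Psi^* : M_2 \to M_1$ then satisfies $\Psi^*(\sigma'^\vee) \subseteq \sigma^\vee$, so it restricts to a monoid homomorphism of cones, and precomposition gives
\[
\psi_{\sigma,\sigma'} : V_\sigma = \Hom_c(\sigma^\vee, \RR_\geq) \longrightarrow V_{\sigma'} = \Hom_c(\sigma'^\vee, \RR_\geq), \qquad \varphi \longmapsto \varphi \circ \Psi^*.
\]
This lands in $V_{\sigma'}$: if $\mathrm{supp}(\varphi) = \sigma^\vee \cap v^\perp$ for some $v \in \sigma$, then the support of $\varphi \circ \Psi^*$ equals $\sigma'^\vee \cap \Psi(v)^\perp$, which is a face of $\sigma'^\vee$ because $\Psi(v) \in \sigma'$; continuity on the relative interiors of faces is inherited from continuity of $\Psi^*$ and of $\varphi$. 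Continuity of $\psi_{\sigma,\sigma'}$ itself is automatic from the evaluation topology on $\Hom_c(\sigma^\vee, \RR_\geq)$.

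Next I would prove that $\psi_{\sigma,\sigma'}$ is independent of the choice of $\sigma'$. If $\Psi(\sigma)$ is contained in two cones $\sigma', \sigma'' \in \Sigma_2$, then it lies in their common face $\sigma' \cap \sigma''$, and by the same precomposition recipe both $\psi_{\sigma,\sigma'}$ and $\psi_{\sigma,\sigma''}$ factor through $V_{\sigma' \cap \sigma''}$. Under the gluings $V_{\sigma' \cap \sigma''} \hookrightarrow V_{\sigma'}$ and $V_{\sigma' \cap \sigma''} \hookrightarrow V_{\sigma''}$ used to build $X_{\Sigma_2}$, these two maps agree as maps into $X_{\Sigma_2}$, because the inclusions are themselves given by restriction of monoid homomorphisms to smaller dual cones, which is the same operation as precomposition by $\Psi^*$ applied after shrinking the target cone.

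I would then verify compatibility across the gluings of $X_{\Sigma_1}$. For $\tau \prec \sigma$ in $\Sigma_1$, choose $\tau' \in \Sigma_2$ containing $\Psi(\tau)$ inside a chosen $\sigma'$ containing $\Psi(\sigma)$ (such $\tau'$ exists because any face of $\sigma'$ that contains $\Psi(\tau)$ works; use $\sigma'$ itself if necessary, then invoke the previous paragraph). The square
\[
\begin{tikzcd}
V_\tau \arrow{r}{\psi_{\tau,\tau'}} \arrow[hook]{d} & V_{\tau'} \arrow[hook]{d}\\
V_\sigma \arrow{r}{\psi_{\sigma,\sigma'}} & V_{\sigma'}
\end{tikzcd}
\]
commutes because both compositions are $\varphi \mapsto \varphi \circ \Psi^*$ restricted to $\sigma'^\vee$. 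By the definition of $X_{\Sigma_1}$ as a gluing of the $V_\sigma$ along the inclusions $V_\tau \hookrightarrow V_\sigma$, the family $\{\psi_{\sigma,\sigma'}\}$ patches to a continuous map $\psi : X_{\Sigma_1} \to X_{\Sigma_2}$.

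Finally, $T$-equivariance of $\psi$ in the diagram \eqref{Eq:IrrationalToricMap} reduces to a direct computation on the dense orbit $W_0 \subset V_{\{0\}}$. For $t = \gamma_v \in T_{N_1}$ with $v \in N_1$ and $u \in M_2$, one has
\[
\Psi(t)(u) \;=\; \exp(-\langle u, \Psi(v)\rangle) \;=\; \exp(-\langle \Psi^*(u), v\rangle) \;=\; t(\Psi^*(u)),
\]
so on the dense torus $\psi$ coincides with the homomorphism $\Psi : T_{N_1} \to T_{N_2}$ induced by the linear map, and equivariance on $W_0$ together with continuity of both sides of \eqref{Eq:IrrationalToricMap} gives equivariance everywhere. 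The main obstacle I anticipate is the bookkeeping needed to verify independence of the choice of $\sigma'$ together with consistency across the gluings of $X_{\Sigma_2}$; once one observes that the inclusions $V_\tau \hookrightarrow V_\sigma$ and the local maps $\psi_{\sigma,\sigma'}$ are both implemented by restricting or pulling back monoid homomorphisms along inclusions of dual cones, everything collapses to a routine diagram chase.
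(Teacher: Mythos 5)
Your proof follows essentially the same route as the paper: define $\psi$ chartwise on each $V_\sigma$ by $\varphi \mapsto \varphi \circ \Psi^*$, check that this preserves supports and lands in $V_{\sigma'}$, note continuity in the evaluation topology, and verify compatibility with the gluings and equivariance. You are in fact more explicit than the paper about the independence of the choice of target cone $\sigma'$ and about the commuting squares that make the local maps $\psi_{\sigma,\sigma'}$ patch together, which the paper dispatches in a single sentence (``Noting that it is compatible with the gluing completes the proof''). The one place I would tighten things is the last paragraph: deducing equivariance everywhere from equivariance on the dense orbit plus continuity implicitly requires $X_{\Sigma_2}$ to be Hausdorff (so that the set where the two continuous maps into it agree is closed), and the paper has not established this property of irrational toric varieties. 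You can sidestep the issue entirely by verifying the equivariance identity pointwise on each chart, as the paper does with its commuting square: for $\varphi \in V_\sigma$, $t = \gamma_v \in T_{N_1}$, and $u' \in (\sigma')^\vee$,
\[
\psi(t\cdot\varphi)(u') \;=\; t^{\Psi^*(u')}\,\varphi(\Psi^*(u')) \;=\; \Psi(t)^{u'}\,\psi(\varphi)(u'),
\]
using $\langle \Psi^*(u'),v\rangle = \langle u', \Psi(v)\rangle$; this avoids any appeal to separation of $X_{\Sigma_2}$.
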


\begin{proof}
	Let $\Sigma_1 \subset N_1$ and $\Sigma_2 \subset N_2$ be two fans, and $\Psi \colon \Sigma_1 \to \Sigma_2$ be a map of fans. The linear map $\Psi \colon N_1 \to N_2$ induces a homomorphism $\Psi \colon T_{N_1} \to T_{N_2}$ of topological groups. We will construct a map $\varphi \colon X_{\Sigma_1} \to X_{\Sigma_2}$ that makes the diagram \eqref{Eq:IrrationalToricMap} commute, by defining $\varphi$ on each irrational toric variety $V_\sigma$ for a cone $\sigma \in \Sigma$.
	
	Let $\sigma_1$ be a cone in $\Sigma_1$. Since $\Psi \colon \Sigma_1 \to \Sigma_2$ is a map of fans, there exists a cone $\sigma_2$ in $\Sigma_2$ such that $\Psi(\sigma_1) \subset \sigma_2$. Let $\Psi^* \colon M_1 \to M_2$ be the adjoint to $\Psi$, where $M_1$ and $M_2$ are dual vector spaces to $N_1$ and $N_2$, respectively. Then we have $\Psi^*(\sigma_2^\vee) \subset \sigma_1^\vee$. Since these are polyhedral cones and $\Psi^*$ is linear, for any face $F_1$ of $\sigma_1^\vee$, $F_2 := \Psi^*(F_1)$ is a face of $\sigma_2^\vee$. 
	
	For $\varphi \in V_{\sigma_1} = \Hom_c(\sigma_1^\vee, \RR_\geq)$, the composition $\psi(\varphi) := \varphi \circ \Psi^*$ is a monoid homomorphism from $\sigma_2^\vee$ to $\RR_\geq$. Hence the inverse image of the support $\text{supp}(\varphi)$ of $\varphi$ is the support of $\psi(\varphi)$, and $\psi(\varphi)$ is continuous on it support.  Hence $\psi$ maps $V_{\sigma_1}$ to $V_{\sigma_2}$. This map is continuous as the topology defined by point evaluation. It is also equivariant, that is the following diagram commutes
	\[
	\begin{tikzcd}
	T_{N_1} \times V_{\sigma_1} \arrow[swap]{d}{\Psi\times \psi}  \arrow{r}{\varphi_1}  & V_{\sigma_1} \arrow{d}{\psi} \\
	T_{N_2} \times V_{\sigma_2} \arrow{r}{\varphi_2} & V_{\sigma_2}
	\end{tikzcd}.
	\] 
	Noting that it is compatible with the gluing completes the proof. 
\end{proof}
\pagebreak{}

\chapter{PROPERTIES OF IRRATIONAL TORIC VARIETIES} \label{CH:PropertiesITV}

In Section \ref{CH:ITV} we constructed irrational toric varieties from arbitrary fans. These have very pleasing similarities with the classical toric varieties. We now study some of their properties. 
We first study irrational toric varieties as monoids. If we adjoin an absorbing element to $X_\Sigma$, we obtain a commutative topological monoid.
We then show how to recover the fan $\Sigma$ from an irrational toric variety $X_\Sigma$. Then we show that $X_\Sigma$ is a compact topological space if and only if the fan $\Sigma$ is complete. 
We end this section by defining projective irrational toric varieties. Theorem \ref{T:ProjectiveITV} gives a homeomorphism between a polytope and the projective irrational toric variety $X_\Sigma$, where $\Sigma$ is the normal fan to that polytope.

\section{Irrational Toric Varieties as Monoids}
In this section we study irrational toric varieties as topological monoids. 
\begin{definition}
	A \demph{topological monoid} is a monoid S with an operation $\bullet$ such that the monoid operation $\bullet \colon S \times S \to S$ is a continuous map.
\end{definition}
The affine irrational toric varieties $X_\calA$ and $\Hom_c(C, \RR_>)$ are topological monoids whose structures are compatible with the isomorphism of Lemma~\ref{L:HomIsAffine}. These monoids contain a dense torus acting on them with finitely many orbits, and are thus irrational analogs of
linear algebraic monoids~\cite{Putcha,Renner}.

\begin{definition}
	Let $C \subset M$ be a cone. For $x,y \in \Hom_c(C,\RR_\geq)$, we define an operation $\bullet$ 
	\begin{align*}
	x \bullet y \colon C &\longrightarrow \ \RR_\geq \\
	u &\longmapsto x(u)y(u).
	\end{align*}
	Let $\Phi(C)$ be the set of faces of $C$. For faces $F,G \in \Phi(C)$, define $F \bullet G := F \cap G$. 
\end{definition}

\begin{proposition}\label{P:MonoidProp}
	Under the compositions $\bullet$, both $\Hom_c(C,\RR_\geq)$ and $\Phi(C)$ are commutative monoids with the map $x\mapsto
	\text{supp}(x)$ a map of monoids, and $\Hom_c(C,\RR_\geq)$ is a topological monoid.
	The identity of $\Hom_c(C,\RR_\geq)$ is the constant map $\varepsilon_C$, and if the lineality space $L$ of $C$ is the origin, then it has an absorbing element $\varepsilon_0$.
	The identity of $\Phi(C)$ is $C$ itself, and $L$ is its absorbing element.
	
	For any $u \in C$, evaluation $x \mapsto x(u)$ is a map $\Hom_c(C,\RR_\geq) \to \RR_\geq$, which is a map of monoids, and for any linear map  $f \colon M' \to M$ and cone $C' \subset M'$ with $f(C') \subset C$, the pullback map $f^* \colon \Hom_c(C, \RR_\geq) \to \Hom_c(C', \RR_\geq)$ is a map of topological monoids. 
\end{proposition}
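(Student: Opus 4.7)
The plan is to verify each assertion by direct computation using the pointwise product structure, with the topology on $\Hom_c(C,\RR_\geq)$ generated by point evaluations. I would organize the argument into four blocks: monoid axioms for $\Hom_c(C,\RR_\geq)$, monoid axioms for $\Phi(C)$ together with the statement that $\mathrm{supp}$ is a monoid homomorphism, continuity of the operation, and functoriality under evaluation and pullback.

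First, I would show that $\bullet$ is well-defined on $\Hom_c(C,\RR_\geq)$: given $x,y \in \Hom_c(C,\RR_\geq)$, the pointwise product $(x\bullet y)(u)=x(u)y(u)$ is a monoid homomorphism to $\RR_\geq$ because $(x\bullet y)(u+v)=x(u+v)y(u+v)=x(u)x(v)y(u)y(v)=(x\bullet y)(u)(x\bullet y)(v)$, and $(x\bullet y)(0)=1$. I would then observe that $(x\bullet y)(u)>0$ if and only if both $x(u)>0$ and $y(u)>0$, so $\mathrm{supp}(x\bullet y)=\mathrm{supp}(x)\cap\mathrm{supp}(y)$; this yields simultaneously that $x\bullet y$ is continuous on the relative interior of each face (being a product of continuous functions on $\mathrm{supp}(x)\cap\mathrm{supp}(y)$ and identically zero elsewhere, with the vanishing conditions compatible with the face stratification), and that $\mathrm{supp}$ is a monoid homomorphism to $\Phi(C)$ once we verify that $(\Phi(C),\cap)$ is a commutative monoid. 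Commutativity and associativity of $\bullet$ on $\Hom_c(C,\RR_\geq)$ are inherited from multiplication on $\RR_\geq$, and $\varepsilon_C\bullet x=x$ since $\varepsilon_C(u)=1$ for all $u\in C$.

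For $\Phi(C)$ under intersection: I would cite that the intersection of two faces of $C$ is again a face (proved in the lemma on faces in Section~2.2), which makes $\bullet=\cap$ a commutative associative operation on $\Phi(C)$. Every face satisfies $F\cap C=F$, so $C$ is the identity. The lineality space $L=(\sigma^\vee)^\perp$ is the unique minimal face of $C$ (Corollary \ref{C:LinealitySpace}) and is contained in every face, so $F\cap L=L$ for all $F\in\Phi(C)$, showing $L$ is absorbing. When $L=\{0\}$ the minimal face is $\{0\}$, and the corresponding element $\varepsilon_0\in\Hom_c(C,\RR_\geq)$ (with $\varepsilon_0(0)=1$ and $\varepsilon_0(u)=0$ for $u\neq 0$) satisfies $(x\bullet \varepsilon_0)(0)=x(0)\cdot 1=1$ and $(x\bullet \varepsilon_0)(u)=0$ for $u\neq 0$, so $x\bullet\varepsilon_0=\varepsilon_0$, giving the absorbing property.

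For the topological statement, I would use that the topology on $\Hom_c(C,\RR_\geq)$ is the coarsest making all point evaluations continuous, so it suffices to check that for every $u\in C$ the composition $(x,y)\mapsto (x\bullet y)(u)=x(u)y(u)$ is continuous on the product $\Hom_c(C,\RR_\geq)\times\Hom_c(C,\RR_\geq)$; this follows because evaluation at $u$ is continuous on each factor and multiplication $\RR_\geq\times\RR_\geq\to\RR_\geq$ is continuous. Finally, for evaluation $\mathrm{ev}_u\colon x\mapsto x(u)$ I note $\mathrm{ev}_u(\varepsilon_C)=1$ and $\mathrm{ev}_u(x\bullet y)=x(u)y(u)=\mathrm{ev}_u(x)\mathrm{ev}_u(y)$, and for the pullback by a linear map $f\colon M'\to M$ with $f(C')\subset C$, I verify that $f^*(\varepsilon_C)=\varepsilon_{C'}$ and $f^*(x\bullet y)(u')=(x\bullet y)(f(u'))=x(f(u'))y(f(u'))=(f^*x\bullet f^*y)(u')$, with continuity of $f^*$ following again from the point-evaluation topology since $\mathrm{ev}_{u'}\circ f^*=\mathrm{ev}_{f(u')}$. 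The whole proof is a sequence of short verifications; the only mildly subtle point is recording that $\mathrm{supp}(x\bullet y)=\mathrm{supp}(x)\cap\mathrm{supp}(y)$, which simultaneously gives the monoid axiom for $\Phi(C)$, the well-definedness in $\Hom_c(C,\RR_\geq)$, and the homomorphism property of $\mathrm{supp}$.
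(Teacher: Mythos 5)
Your proposal matches the paper's proof essentially step for step: pointwise product gives a monoid homomorphism, $\supp(x\bullet y)=\supp(x)\cap\supp(y)$ is a face so the operation is well-defined and $\supp$ is a monoid map, continuity follows from the point-evaluation topology, and the identity, absorbing element, evaluation, and pullback claims are all direct verifications. Your treatment is in a few places slightly more explicit than the paper's (e.g.\ the absorbing-element computation $x\bullet\varepsilon_0=\varepsilon_0$ and the identity $\mathrm{ev}_{u'}\circ f^*=\mathrm{ev}_{f(u')}$ for continuity of the pullback), but it is the same argument.
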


\begin{proof}
	
	Let $x,y \in \Hom_c(C, \RR_\geq)$. Since $x$ and $y$ are monoid homomorphism, so is $x \bullet y$.
	Also since $\supp(x \bullet y) 	= \supp(x) \cap \supp(y)$ and as an intersection of faces is again a face, $\supp(x \bullet y)$ is a face of $C$. As elements of 
	$\Hom_c(C,\RR_\geq)$ are monoid homomorphisms that are continuous on their support, we conclude that $x \bullet y \in \Hom_c(C,\RR_\geq)$. 
	Also note that $(x \bullet y) (u) = x(u) y(u) = y(u)x(u) = (y \bullet x)(u)$ for all $u \in C$, so this product is commutative. Hence $\Hom_c(C,\RR_\geq)$ is a commutative monoid. As $\bullet$ is defined pointwise evaluation, it is continuous, so $\Hom_c(C,\RR_\geq)$ is a topological monoid. 
	
	Note that for any $x \in \Hom_c(C,\RR_\geq)$ and $u \in C$, $(x \bullet \epsilon_C) (u) = x(u) \epsilon_C(u) = x(u) $. Hence the constant map $\epsilon_C$ is the identity of $\Hom_c(C,\RR_\geq)$. Also note that $(x \bullet \epsilon_0)(u) = \epsilon_0$. 
	Since for any face $F$ of $C$, $F \cap C=F$ and $F \cap L =L$, $C$ is the identity and $L$ is the absorbing element of $\Phi(C)$.
	Also since for any $u \in C$, $(x \bullet y) (u) =  x(u) y(u)$ and $\epsilon_C(u)=1$, the evaluation map  $x \mapsto x(u)$ is a map of monoids. Now let $f \colon M' \to M$ be a linear map and $f(C') \subset C$ for a cone $C' \subset M'$. Then for any $x,y \in \Hom_c(C,\RR_\geq)$ and $u' \in C'$, $f^* (x \bullet y)(u')= (x \bullet y) (f (u'))= x(f(u')) \ y(f(u'))= (f^*(x))(u') \bullet (f^* (y))(u')$. Also $(f^*(\epsilon_C))(u') = \epsilon_C(f(u')) = \epsilon_{C'}$. Hence the pullback map $f^*$ is a map of topological monoids. 
\end{proof}
In general, if we adjoin an absorbing element ${\bf 0}$ to an irrational toric variety $X_\Sigma$, it becomes a
commutative topological monoid such that the inclusion of the irrational affine toric variety $V_\sigma$ is a monoid map,
for each cone $\sigma$ in the fan $\Sigma$.

Let $\calA\subset M$ be a finite subset.
Then $\RR^\calA_\geq$ is a monoid under componentwise multiplication; for $x,y\in\RR^\calA_\geq$ and $a\in\calA$,
$(x\bullet y)_a:= x_a\cdot y_a$.
With this definition, the injective map $f_\calA\colon\Hom_c(\cone(\calA),\RR_\geq)\to\RR^\calA_\geq$ of
Lemma~\ref{L:HomIsAffine} is a monoid homomorphism whose image is $X_\calA$.

Let $\Sigma\subset N$ be a fan.
For a cone $\sigma\in\Sigma$, the irrational affine toric variety $V_\sigma$ is a monoid under pointwise multiplication and
when $\tau\subset\sigma$ is a face, the inclusion $V_\tau\subset V_\sigma$ is a monoid homomorphism. These are both consequences of Proposition \ref{P:MonoidProp}.
We define a product $\bullet$ on $X_\Sigma^+:= X_\Sigma\cup\{{\bf 0}\}$, where $\textbf{0}$ is an isolated point that acts as an absorbing element.
Let $x,y\in X_\Sigma^+$,
\begin{enumerate}
	\item
	If either $x$ or $y$ is ${\bf 0}$, then $x\bullet y={\bf 0}$.
	\item
	If there is a cone $\sigma\in\Sigma$ with $x,y\in V_\sigma$, then we let $x\bullet y$ be their product in $V_\sigma$.
	\item
	If there is no cone $\sigma\in\Sigma$  with $x,y\in V_\sigma$, then $x\bullet y={\bf 0}$.
	(This includes case (1).)
\end{enumerate}

Note that if $\Sigma$ is a fan, the intersection of cones defines a monoid structure on $\Sigma$.
More interesting is the product on $\Sigma^+:=\Sigma\cup\{{\bf 0}\}$ where ${\bf 0}$ is a new point that acts as an
absorbing element, and in which $\sigma\bullet\tau$ is defined to be the smallest cone containing both $\sigma$ and $\tau$
if such a cone exists, and ${\bf 0}$ otherwise.

\begin{theorem}\label{T:Monoids}
	For a fan $\Sigma\subset N$, $X_{\Sigma^+}$ is a commutative topological monoid with the inclusion
	$V_\sigma\hookrightarrow X_{\Sigma^+}$ a map of topological monoids, for every $\sigma\in\Sigma$.
	The map $f \colon X_\Sigma^+ \to \Sigma^+$ that sends an element $x$ to the cone
	$\sigma\in\Sigma$ where $x\in W_\sigma$ or to ${\bf 0}$ when $x={\bf 0}$ is a map of monoids.

\end{theorem}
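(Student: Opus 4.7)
The plan is to verify the commutative monoid axioms for $(X_\Sigma^+,\bullet)$ by reducing to the corresponding statements in each irrational affine chart $V_\sigma$ via Proposition~\ref{P:MonoidProp}, and then to check that both the inclusions $V_\sigma\hookrightarrow X_\Sigma^+$ and the support map $f$ are compatible with these structures.

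First I would show $\bullet$ is well-defined: if $x,y$ both lie in $V_\sigma$ and in $V_\tau$, the products computed in the two charts agree. By the orbit decomposition of Theorem~\ref{Th:ITV_structure}, $V_\sigma=\coprod_{\mu\preceq\sigma}W_\mu$, and a quick check (each common face $\mu$ of $\sigma$ and $\tau$ is a face of $\sigma\cap\tau$, which is itself a face of both by the fan axioms) gives $V_\sigma\cap V_\tau=V_{\sigma\cap\tau}$. The two restriction maps $V_\sigma\hookleftarrow V_{\sigma\cap\tau}\hookrightarrow V_\tau$ are monoid homomorphisms via the pullback statement in Proposition~\ref{P:MonoidProp}, so the two candidate products for $x\bullet y$ coincide.

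For the identity, the constant map $e:=\varepsilon_M\in T_N=V_{\{0\}}$ lies in every $V_\sigma$ (its restriction to $\sigma^\vee$ is $\varepsilon_{\sigma^\vee}$, the identity of $V_\sigma$ by Proposition~\ref{P:MonoidProp}), so $e\bullet x=x$ for every $x$. Commutativity is inherited from each $V_\sigma$ together with the symmetry of the three-case definition. For associativity on $X_\Sigma^+\setminus\{\mathbf{0}\}$, given that $(x\bullet y)\bullet z\neq\mathbf{0}$, one can trace back to a single cone $\rho\in\Sigma$ whose chart $V_\rho$ contains all three elements together with both intermediate products, and then invoke associativity inside $V_\rho$ from Proposition~\ref{P:MonoidProp}; the same cone witnesses $x\bullet(y\bullet z)$ and its equality with $(x\bullet y)\bullet z$, while if one side is $\mathbf{0}$ the symmetric analysis shows the other is too.

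For continuity, on each open product $V_\sigma\times V_\sigma\subset X_\Sigma^+\times X_\Sigma^+$ the operation is continuous by Proposition~\ref{P:MonoidProp}, which already shows that each inclusion $V_\sigma\hookrightarrow X_\Sigma^+$ is a map of topological monoids. The delicate issue is joint continuity at pairs $(x,y)$ with $x\bullet y=\mathbf{0}$: since $\mathbf{0}$ is isolated, one needs an entire neighborhood of $(x,y)$ on which $\bullet$ vanishes. I would attack this by showing that for $x\in W_\sigma$, $y\in W_\tau$ with no cone of $\Sigma$ containing both $\sigma$ and $\tau$ as faces, neighborhoods of $x$ in $V_\sigma$ and of $y$ in $V_\tau$ inherit this incompatibility at the level of orbit supports. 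This is the main technical obstacle.

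Finally, for $f\colon X_\Sigma^+\to\Sigma^+$ to be a monoid homomorphism: if $x\bullet y=\mathbf{0}$ then $f(x)\bullet f(y)=\mathbf{0}$ by the parallel case analyses; and if $x\in W_\sigma$, $y\in W_\tau$ have $x\bullet y\neq\mathbf{0}$, pick $\rho\in\Sigma$ with $x,y\in V_\rho$. The support of $x\bullet y$ in $\rho^\vee$ is the intersection of the supports of $x$ and $y$, namely $\rho^\vee\cap(\sigma+\tau)^\perp$. By the face correspondence of Theorem~\ref{ConeCorrespondence} this is the face of $\rho^\vee$ dual to the smallest face of $\rho$ containing both $\sigma$ and $\tau$, which by the definition of $\bullet$ on $\Sigma^+$ is exactly $\sigma\bullet\tau$. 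Hence $f(x\bullet y)=\sigma\bullet\tau=f(x)\bullet f(y)$, completing the verification.
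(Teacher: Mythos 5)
Your chart-by-chart reduction via Proposition~\ref{P:MonoidProp} matches the paper's strategy, and where you fill in details you are actually more careful than the paper. In particular, your argument that $f$ is a monoid homomorphism — computing $\supp(x\bullet y)=\rho^\vee\cap(\sigma+\tau)^\perp$ and invoking the face correspondence of Theorem~\ref{ConeCorrespondence} — is correct and addresses the general case; the paper's own treatment of the case $x,y\in V_\sigma$ tacitly assumes $x$ and $y$ lie in a \emph{common} orbit $W_\tau$, which need not hold when they merely share a chart. Your observations that $V_\sigma\cap V_\tau=V_{\sigma\cap\tau}$, that the restriction maps are monoid morphisms by Proposition~\ref{P:MonoidProp}, and the deduction of associativity from a single chart containing all relevant points, are all sound.

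The continuity gap you flag is genuine and is not resolved by the paper either: the paper disposes of it by asserting that continuity on each $V_\sigma$ gives continuity on $X_\Sigma^+$, which does not cover pairs $(x,y)$ contained jointly in no chart. In fact joint continuity fails already for the complete fan in $\RR$. Write $V_{\sigma_1}\simeq\RR_\geq$ and $V_{\sigma_2}\simeq\RR_\geq$ for the two rays, glued along the dense orbit by $\alpha\leftrightarrow 1/\alpha$. Take $x_n$ with $V_{\sigma_1}$-coordinate $1/n$, so $x_n\to x_{\sigma_1}$, and $y_n$ with $V_{\sigma_2}$-coordinate $1/n$, so $y_n\to x_{\sigma_2}$. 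In $V_{\sigma_1}$-coordinates the pair is $(1/n,\,n)$, so $x_n\bullet y_n=\varepsilon$ for every $n$; yet $x_{\sigma_1}\bullet x_{\sigma_2}=\mathbf{0}$, an isolated point. Hence $\bullet$ cannot be jointly continuous at $(x_{\sigma_1},x_{\sigma_2})$. So the obstacle you identify is not a step you failed to find; the theorem's ``topological monoid'' claim must be read in the weaker sense you do establish — continuity of $\bullet$ on each $V_\sigma\times V_\sigma$ and continuity of each inclusion $V_\sigma\hookrightarrow X_\Sigma^+$ as a monoid map — or the topology on $X_\Sigma^+$ must be modified.
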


\begin{proof}
	Let $x, y \in X_\Sigma^+$. Then $x \bullet y$ is either $\bf0$, or there is a cone $\sigma\in\Sigma$ with $x,y\in V_\sigma$, and hence $x\bullet y \in V_\sigma$. In either case $ x \bullet y \in X_\Sigma^+$ and the product is commutative. As $\bullet$ is continuous on each $V_\sigma$, it is continuous on $X_\Sigma^+$. Hence $X_\Sigma^+$ is a topological monoid. 
	
	Let $x,y \in X_{\Sigma}^+$. If either $x$ or $y$ is $\textbf{0}$, then $f(x \bullet y) = 0 = f(x) \bullet f(y)$, as either $f(x)=\textbf{0}$ or $f(y)=\textbf{0}$. If there exists a cone $\sigma \in \Sigma$ with $x, y \in V_\sigma$, then there exists a cone $\tau \in \Sigma$ such that $x,y \in W_\tau$. So $x \bullet y \in W_\tau$. Therefore $f(x \bullet y) = \tau $, which is the smallest cone containing both $f(x) = \tau$ and $f(y) = \tau$. Lastly, if there is no cone $\sigma \in \Sigma$ with $x,y \in V_\sigma$, then $x \in W_\tau$ and $y \in W_{\tau'}$ for some cones $\tau,\tau' \in \Sigma$, where there is no cone containing both $\tau$ and $\tau'$. So $f(x\bullet y) = \textbf{0} = \tau \bullet \tau' =f(x) \bullet f(y)$. Hence $f$ is a map of monoids.  
\end{proof}

\section{Recovering the Fan}\label{S:RecoveringFans}
Let $\Sigma \subset N$ be a fan.  In Example \ref{S:LimitsOPSGTV} we studied the limits of one parameter subgroups, and used these limits to recover the fan $\Sigma$ from a toric variety $Y_\Sigma$. Let $\epsilon$ be the distinguished point in the dense orbit of $T_N$ on $X_\Sigma$. In every affine irrational toric variety $V_\sigma$ for a cone $\sigma \in \Sigma$, $\epsilon$ restricts to the constant homomorphism on $\sigma^\vee$. If $L$ is the lineality space of $\Sigma$, then $\epsilon = x_L$. We study limits of $\epsilon$ in $X_\Sigma$ under one parameter subgroups $\gamma_{sv}$ of $T_N$, and show how to recover the fan $\Sigma$  from an irrational toric variety $X_\Sigma$.

\begin{lemma}\label{L:LimitsofOPSGITV}
	Let $\Sigma$ be a fan in $N$ and $v \in N$. Then the limit $\lim_{s\to \infty} \gamma_{sv}\cdot \epsilon$ exists in $X_\Sigma$ if and only if there is a cone $\sigma \in \Sigma$ with $v \in \sigma$. Moreover, if $v \in \Relint(\sigma)$, then $\lim_{s\to \infty} \gamma_{sv}\cdot \epsilon$ is the distinguished point $x_\sigma$.
\end{lemma}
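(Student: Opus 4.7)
The plan is to unpack the action as $(\gamma_{sv}\cdot\epsilon)(u)=\exp(-s\langle u,v\rangle)$ on each dual cone $\sigma^\vee$ (since $\epsilon$ restricts to the constant $1$ homomorphism there), and then use that convergence in each chart $V_\sigma=\Hom_c(\sigma^\vee,\RR_\geq)$ is pointwise convergence on $\sigma^\vee$.

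For the forward direction, suppose $p:=\lim_{s\to\infty}\gamma_{sv}\cdot\epsilon$ exists in $X_\Sigma$. By Theorem~\ref{Th:ITV_structure}, the point $p$ lies in a unique cell $W_\tau$ for some $\tau\in\Sigma$, hence in the open subset $V_\tau\subset X_\Sigma$. Openness of $V_\tau$ lets us interpret the convergence inside $V_\tau$, so for each $u\in\tau^\vee$ the limit $\lim_{s\to\infty}\exp(-s\langle u,v\rangle)$ must exist in $\RR_\geq$. This forces $\langle u,v\rangle\geq 0$ for every $u\in\tau^\vee$, i.e. $v\in(\tau^\vee)^\vee=\tau$.

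For the reverse direction, assume $v\in\sigma$ for some $\sigma\in\Sigma$ and let $\tau\preceq\sigma$ be the unique face with $v\in\Relint(\tau)$. Since $\epsilon$ restricts to $\varepsilon_{\sigma^\vee}$ on $\sigma^\vee$ and $v\in\sigma=(\sigma^\vee)^\vee$, Lemma~\ref{L:limitOfDistinguishedPoints} applied with $C=\sigma^\vee$ yields $\lim_{s\to\infty}\gamma_{sv}\cdot\varepsilon_{\sigma^\vee}=\varepsilon_F$ inside $V_\sigma$, where $F=\sigma^\vee\cap v^\perp$. Theorem~\ref{ConeCorrespondence}, applied to $v\in\Relint(\tau)$, identifies $F$ with $\tau^*=\sigma^\vee\cap\tau^\perp$, and $\varepsilon_{\tau^*}\in V_\sigma$ is precisely the image of $x_\tau\in V_\tau$ under the inclusion $V_\tau\hookrightarrow V_\sigma$. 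Specializing to $v\in\Relint(\sigma)$ gives $\tau=\sigma$ and $\lim_{s\to\infty}\gamma_{sv}\cdot\epsilon=x_\sigma$, completing the moreover clause.

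The main obstacle is purely bookkeeping: one must verify that a limit in $X_\Sigma$ automatically lies in some affine chart $V_\tau$, which is immediate from the open cover by the $V_\tau$, and that the face correspondence of Theorem~\ref{ConeCorrespondence} correctly identifies the support $\tau^*$ of the limit homomorphism with the image of the distinguished point $x_\tau$ inside $V_\sigma$. Everything else is a direct computation with exponentials.
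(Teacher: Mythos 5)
Your proposal is correct and follows the paper's approach: unpack $(\gamma_{sv}\cdot\epsilon)(u)=\exp(-s\langle u,v\rangle)$, and analyze pointwise convergence in the chart $V_\tau$ containing the limit (using the cell decomposition from Theorem~\ref{Th:ITV_structure} for the forward direction and Lemma~\ref{L:limitOfDistinguishedPoints} with $C=\sigma^\vee$ for the reverse). The only difference is cosmetic: you route the ``moreover'' clause through Theorem~\ref{ConeCorrespondence} and the inclusion $V_\tau\hookrightarrow V_\sigma$ to identify the limit as the image of $x_\tau$ for the unique face $\tau$ with $v\in\Relint(\tau)$, whereas the paper reads $x_\sigma$ directly off its displayed case formula for $\lim_{s\to\infty}(\gamma_{sv}\cdot\epsilon)(u)$ when $v\in\Relint(\sigma)$.
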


\begin{proof}
	For $u \in M$ and $s\in \RR$, we have $(\gamma_{sv} \cdot \epsilon)(u) =\gamma_{sv}(u) \varepsilon(u) = \exp(\langle -su, v \rangle)$ since $\epsilon(u)=1$. Hence
	\begin{equation} \label{Eq:LimitEquation}
	\lim_{s\to \infty} (\gamma_{sv}\cdot \epsilon)(u) = \begin{cases}
	0 &\text{if } \langle u,v\rangle  >0 \\
	1 &\text{if } \langle u,v\rangle = 0 \\
	\infty &\text{if } \langle u,v\rangle < 0
	\end{cases}.
	\end{equation}
	If there exist a cone $\sigma \in \Sigma$ with $v\in \sigma$, then for all $u \in \sigma^\vee$, the limit $\lim_{s\to \infty}(\gamma_{sv}\cdot \epsilon)(u)$ is finite by \eqref{Eq:LimitEquation}. Hence by Lemma \ref{L:limitOfDistinguishedPoints}, the family $\gamma_{sv}\cdot \epsilon$ has a limit in $V_\sigma$ as $s \to \infty$.
	
	Conversely, if $\gamma_{sv}\cdot \epsilon$ has a limit in $X_\sigma$ as $s \to \infty$, then there exists a cone $\sigma \in \Sigma$ so that the limit is in $V_\sigma$. Then for all $u \in \sigma^\vee$, the family of real numbers $(\gamma_{sv} \cdot \epsilon)(u)$ has a limit. Hence by \eqref{Eq:LimitEquation} we conclude $v \in \sigma$. 
	
	If $v \in \Relint(\sigma)$,  then the limit  $\lim_{s\to \infty} \gamma_{sv}\cdot \epsilon$ exists and by \eqref{Eq:LimitEquation}
	\begin{equation*} 
	\lim_{s\to \infty} (\gamma_{sv}\cdot \epsilon)(u) = \begin{cases}
	1 &\text{if } u \in M\cap \sigma^\perp \\
	0 &\text{otherwise}
	\end{cases}.
	\end{equation*} Hence the limit is $x_\sigma$.  
\end{proof}

Let $\Sigma \in N$ be a fan. Let $N^\circ \subset N$ be the set of $v \in N$ such that  $\gamma_{sv}\cdot \epsilon$ has a limit in $X_\sigma$ as $s \to \infty$. We define an equivalence relation on $N^\circ$, where $v \sim w$ for $v, w \in N^\circ$ if and only if $\lim_{s \to \infty} \gamma_{sv}\cdot \epsilon = \lim_{s \to \infty} \gamma_{sw}\cdot \epsilon$. By Lemma \ref{L:LimitsofOPSGITV}, the set $N^\circ$ is the support of the fan $\Sigma$ and the equivalence classes are the relative interiors of cones in $\Sigma$. A cone $\sigma \in \Sigma$ is the closure of the set $u \in N^\circ$ with $\lim_{s \to \infty}\gamma_{sv}\cdot \epsilon = x_\sigma$. Since these limits commute with the action of $T_N$, we may replace $\epsilon$ in the definition of $\sim$ by any point $y$ in the dense orbit of $X_\Sigma$. Hence, a cone $\sigma \in \Sigma$  is the closure of the set $u \in N^\circ$ such that for any $y$ in the dense orbit of $X_\Sigma$, $\lim_{s \to \infty} \gamma_{sv}\cdot y \in W_\sigma$. We summarize this discussion.

\begin{corollary}
	The fan $\Sigma \subset N$ may be recovered from the irrational toric variety $X_\Sigma$ using limits under translation by one parameter subgroups $\gamma_{sv}$ of elements $y$ in the dense orbit. 
\end{corollary}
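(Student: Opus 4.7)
The plan is to leverage Lemma \ref{L:LimitsofOPSGITV}, which has already done almost all the hard work: it characterizes when the family $\gamma_{sv}\cdot \varepsilon$ admits a limit in $X_\Sigma$ as $s\to\infty$, and identifies that limit as the distinguished point $x_\sigma$ whenever $v$ lies in the relative interior of a cone $\sigma\in\Sigma$. The corollary then amounts to organizing this information into a genuine reconstruction procedure that only uses the topological space $X_\Sigma$ together with the $T_N$-action.

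First I would define, purely from the data of $X_\Sigma$ and $T_N\curvearrowright X_\Sigma$, the candidate support set
\[
N^\circ \ :=\ \{v\in N \mid \lim_{s\to\infty} \gamma_{sv}\cdot \varepsilon \text{ exists in }X_\Sigma\},
\]
and, on $N^\circ$, the equivalence relation $v\sim w$ iff the two limits agree. Applying Lemma \ref{L:LimitsofOPSGITV} in both directions shows $N^\circ=|\Sigma|$, that the equivalence classes are precisely the sets $\Relint(\sigma)$ for $\sigma\in\Sigma$, and that the map sending a class to its limit point is a bijection onto the set of distinguished points $\{x_\sigma\mid\sigma\in\Sigma\}$. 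Taking closures in $N$ then recovers each cone $\sigma$, and hence the fan $\Sigma$ itself.

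Next I would remove the apparent dependence on the special basepoint $\varepsilon$. Let $y$ be any point in the dense orbit $W_L$, so that $y=t\cdot \varepsilon$ for some $t\in T_N$. By $T_N$-equivariance and because the one-parameter subgroup $\gamma_{sv}$ and $t$ commute (both live in the abelian group $T_N$), we have
\[
\gamma_{sv}\cdot y \ =\ \gamma_{sv}\cdot(t\cdot\varepsilon) \ =\ t\cdot(\gamma_{sv}\cdot\varepsilon).
\]
Since left-translation by the fixed element $t$ is a homeomorphism of $X_\Sigma$, the limit $\lim_{s\to\infty}\gamma_{sv}\cdot y$ exists iff $\lim_{s\to\infty}\gamma_{sv}\cdot\varepsilon$ exists, and when it does it equals $t\cdot x_\sigma$; in particular, it lies in the orbit $W_\sigma$. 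Therefore the cone $\sigma$ is recovered intrinsically as the closure of those $v\in N$ for which $\gamma_{sv}\cdot y$ converges into the orbit $W_\sigma$. The dense orbit itself is characterized topologically (as the unique open orbit), so $y$ can be chosen without reference to $\Sigma$.

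The only real obstacle is making sure the bijection between equivalence classes and cones is clean; this reduces to checking that distinct cones $\sigma\neq\tau$ have distinct distinguished points $x_\sigma\neq x_\tau$, which follows from Theorem \ref{Th:ITV_structure} together with the orbit decomposition $X_\Sigma=\coprod_\sigma W_\sigma$ (each distinguished point lies in its own orbit). Once that is noted, the corollary follows directly from Lemma \ref{L:LimitsofOPSGITV} and the $T_N$-equivariance observation above.
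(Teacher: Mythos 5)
Your proposal is correct and follows essentially the same line of argument as the paper: both define $N^\circ$ and the equivalence relation via limits of $\gamma_{sv}\cdot\varepsilon$, invoke Lemma \ref{L:LimitsofOPSGITV} to identify $N^\circ$ with $|\Sigma|$ and the equivalence classes with the sets $\Relint(\sigma)$, recover cones as closures, and replace $\varepsilon$ by an arbitrary point $y$ in the dense orbit by using the commutativity of the abelian group $T_N$. You spell out a couple of details (that left translation by $t$ is a homeomorphism, and that distinct cones give distinct distinguished points) more explicitly than the paper, but the approach is the same.
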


\section{Compact Irrational Toric Varieties}

In this section we show a similar version of Theorem \ref{T:CompactTV}. We start by reviewing some notations and results from \cite{PSV}.

\begin{definition}\label{D:minimumfaceofboundedness}
	Let $\sigma$ be a cone in $N$, $\{v_i\}$ be a sequence in $\sigma$, and $\tau$ be a face of $\sigma$. The sequence $\{v_i\}$ is called \demph{$\tau$-bounded} if for every linear function $\psi$ vanishing on $\tau$, the sequence $\{\psi(v_i)\}$ is bounded in $\RR$.  The \demph{minimum face of boundness} of $\{v_i\}$ is the smallest face $\tau$ of $\sigma$ such that $\{v_i\}$ is $\tau$-bounded.
\end{definition}

\begin{example}[{\cite[Example 1.2]{PSV}}]
	Let $N = \RR^2$ and $\sigma := \cone\{(-1,-1), (0,-1) \}$. The cone $\sigma$ has two one dimensional faces, $\tau := \cone\{(-1,-1) \}$ and $\rho := \cone \{(0,-1) \}$. Consider the two sequences $\{v_i\}$ and $\{u_i\}$ in $\sigma$, defined for $i\geq 1$ by
	\[
	v_i := \left( -i-\frac{1}{-i}, -i-1 \right) \text{\qquad and \qquad}  u_i := \left(-i +\sqrt{i}, -i\right).
	\]  
	We display the first few terms of the two sequences in Figure \ref{F:MinimumFaceofB}.
	
	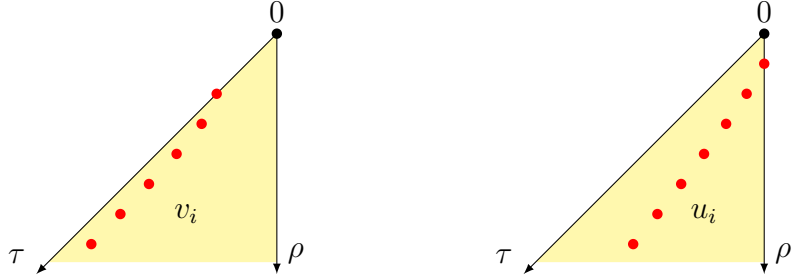
\begin{figure}[!ht]
		\centering
		\begin{tikzpicture}[line cap=round,line join=round,>=latex,scale=.80]
		\fill[line width=0.pt,fill=yellow!40!white] (0,0) -- (0,-3.8)  -- (-3.8,-3.8) -- cycle;
		\filldraw[very thick] (0,0) circle (.06cm) node [above] {$0$};
		\draw[->] (0,0) -- (-4,-4) node[anchor=south east]{$\tau$}; 
		\draw[->] (0,0) -- (0,-4) node[anchor=south west]{$\rho$};

		\filldraw[very thick,color=red] (-1,-1) circle (.06cm) ;
		\filldraw[very thick,color=red] (-1.25,-1.5) circle (.06cm);
		\filldraw[very thick,color=red] (-1.666,-2) circle (.06cm);
		\filldraw[very thick,color=red] (-2.125,-2.5) circle (.06cm);
		\filldraw[very thick,color=red] (-2.6,-3) circle (.06cm);
		\filldraw[very thick,color=red] (-3.08333,-3.5) circle (.06cm);
		\node at (-1.5,-3)  {$v_i$};
		
		\end{tikzpicture} \hspace{2cm}
		\begin{tikzpicture}[line cap=round,line join=round,>=latex,scale=.80]
		\fill[line width=0.pt,fill=yellow!40!white] (0,0) -- (0,-3.8)  -- (-3.8,-3.8) -- cycle;
		\filldraw[very thick] (0,0) circle (.06cm) node [above] {$0$};
		\draw[->] (0,0) -- (-4,-4) node[anchor=south east]{$\tau$}; 
		\draw[->] (0,0) -- (0,-4) node[anchor=south west]{$\rho$}; 
		
		\filldraw[very thick,color=red] (0,-.5) circle (.06cm) ;
		\filldraw[very thick,color=red] (-.292,-1) circle (.06cm);
		\filldraw[very thick,color=red] (-.6334,-1.5) circle (.06cm);
		\filldraw[very thick,color=red] (-1,-2) circle (.06cm);
		\filldraw[very thick,color=red] (-1.38,-2.5) circle (.06cm);
		\filldraw[very thick,color=red] (-1.775,-3) circle (.06cm);
		\filldraw[very thick,color=red] (-2.1775,-3.5) circle (.06cm);
		\node at (-1,-3)  {$u_i$};
		\end{tikzpicture}
		
		\caption{Two sequences in $\sigma$.}
		\label{F:MinimumFaceofB}
	\end{figure}
	Neither sequence is $0$-bounded or $\rho$-bounded and both are $\sigma$-bounded.  Only $\{v_i\}$ is $\tau$-bounded. Note that both sequences have the same asymptotic direction (along $\sigma$),
	\[
	\lim_{i\to \infty} \frac{v_i}{|v_i|} = \lim_{i\to \infty} \frac{u_i}{|u_i|} =(-1,-1).
	\]
\end{example}

The following lemma justifies the Definition \ref{D:minimumfaceofboundedness} showing that there exist a unique minimal face of boundedness. 

\begin{lemma}[{\cite[Lemma 1.3]{PSV}}]\label{L:MinimumFaceofB}
	Let $\{v_i\} \subset \sigma$ be a sequence in a cone $\sigma$. Then there is a face $\tau$ of $\sigma$ and a subsequence $\{u_i\}$ of $\{v_i\}$ such that $\tau$ is the minimum face of boundedness of any subsequence of $\{u_i\}$. 
\end{lemma}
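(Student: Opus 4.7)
The plan is to combine the finiteness of the face lattice with a compactness-separation argument to stabilize the ``boundedness profile'' of $\{v_i\}$, then show that this profile is closed under intersection in the face lattice and so has a unique minimum.

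First I would enumerate the finitely many faces of $\sigma$ as $\tau_1,\ldots,\tau_N$ and iteratively refine: starting with $W^{(0)}:=\{v_i\}$, at step $k$ let $W^{(k)}$ be a $\tau_k$-bounded subsequence of $W^{(k-1)}$ if one exists, and otherwise set $W^{(k)}:=W^{(k-1)}$. Writing $\{u_i\}:=W^{(N)}$, this construction ensures that for each face $\tau$ of $\sigma$, either $\{u_i\}$ itself is $\tau$-bounded or no subsequence of $\{u_i\}$ is. In particular every subsequence of $\{u_i\}$ has the same boundedness profile $\calT:=\{\tau\preceq\sigma : \{u_i\}\text{ is }\tau\text{-bounded}\}$, which is a nonempty (as $\sigma\in\calT$) upper set in the face lattice.

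The core step is to prove that $\calT$ is closed under intersection, so that $\tau:=\min\calT$ is well-defined. Let $\tau_1,\tau_2\in\calT$ and set $L_j:=\langle\tau_j\rangle$. Since faces of polyhedral cones are exposed one has $\sigma\cap L_j=\tau_j$, and hence $\sigma\cap L_1\cap L_2=\tau_1\cap\tau_2\subseteq\langle\tau_1\cap\tau_2\rangle$. Combining $\tau_1$- and $\tau_2$-boundedness, write $u_i=\ell_i+s_i$ with $\ell_i\in L_1\cap L_2$ and $s_i$ bounded, and decompose $\ell_i=f_i+w_i$ with $f_i\in\langle\tau_1\cap\tau_2\rangle$ and $w_i$ in a chosen complement $W$ of $\langle\tau_1\cap\tau_2\rangle$ inside $L_1\cap L_2$. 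Suppose for contradiction that $|w_i|\to\infty$ along some subsequence; pass to a further subsequence with $w_i/|w_i|\to \hat w\neq 0$ in $W$. In the quotient $N/\langle\tau_1\cap\tau_2\rangle$ the image $\bar\sigma$ of $\sigma$ is a polyhedral (hence closed) convex cone meeting the injective image of $W$ only at the origin, so by separation of a closed convex cone from an exterior point there exists $\bar\psi\in\bar\sigma^\vee$ with $\bar\psi(\hat w)<0$. Lifting yields $\psi\in\sigma^\vee$ with $\psi|_{\langle\tau_1\cap\tau_2\rangle}=0$ and $\psi(\hat w)<0$. Then $\psi(f_i)=0$, $\psi(s_i)$ is bounded, and
\[
\psi(w_i)\;=\;|w_i|\cdot\psi\!\left(w_i/|w_i|\right)\;\longrightarrow\;-\infty,
\]
so $\psi(u_i)\to-\infty$, contradicting $\psi(u_i)\ge 0$. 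Hence $|w_i|$ is bounded and $\{u_i\}$ is $(\tau_1\cap\tau_2)$-bounded.

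Setting $\tau:=\min\calT$ finishes the proof: because every subsequence of $\{u_i\}$ has the same boundedness profile $\calT$, each has $\tau$ as its minimum face of boundedness. The main obstacle is the intersection-closure claim; it rests on the exposedness identity $\sigma\cap\langle\tau\rangle=\tau$ and the separation argument used to produce $\psi$ in the quotient. The iterative extraction and the deduction of the minimum are routine finiteness/bookkeeping arguments.
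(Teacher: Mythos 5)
The paper itself does not prove this lemma; it cites \cite{PSV} for it and uses it as a black box, so there is no paper proof to compare against. Judged on its own, your argument is correct and complete in its essentials. The iterative extraction over the finite face lattice does stabilize the boundedness profile (if $W^{(k-1)}$ has no $\tau_k$-bounded subsequence, no later subsequence can acquire one; if $W^{(k)}$ is $\tau_k$-bounded, every further subsequence stays so), and the intersection-closure step is the real content. Your use of the exposedness identity $\sigma\cap\langle\tau\rangle=\tau$ to get $\sigma\cap L_1\cap L_2=\tau_1\cap\tau_2$ is exactly what is needed to see that the image $\bar\sigma$ of $\sigma$ in $N/\langle\tau_1\cap\tau_2\rangle$ meets the (injective) image of the complement $W$ only at the origin, and then the separation producing $\psi\in\sigma^\vee$ vanishing on $\langle\tau_1\cap\tau_2\rangle$ with $\psi(\hat w)<0$ yields the contradiction $\psi(u_i)\to-\infty$ against $\psi(u_i)\geq 0$.

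One step you assert without justification should be filled in: from $\tau_1$- and $\tau_2$-boundedness you write $u_i=\ell_i+s_i$ with $\ell_i\in L_1\cap L_2$ and $s_i$ bounded. This is true but not immediate, since $\langle\tau_1\rangle\cap\langle\tau_2\rangle$ can be strictly larger than $\langle\tau_1\cap\tau_2\rangle$ and one cannot simply intersect the two decompositions; the clean justification is that the natural linear map $N/(L_1\cap L_2)\to N/L_1\oplus N/L_2$ is injective, hence a linear homeomorphism onto its image, so boundedness of the image of $\{u_i\}$ in both $N/L_1$ and $N/L_2$ forces boundedness in $N/(L_1\cap L_2)$, which is the desired decomposition. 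You also implicitly use that $\bar\sigma$ is closed, which you correctly justify via polyhedrality (the image of a polyhedral cone under a linear map is the cone on the images of the generators). With that small gap noted, the proof is sound.
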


\begin{theorem}\label{T:CompactnessITV}
	Let $\Sigma$ be a fan in N. The irrational toric variety $X_\Sigma$ is compact if and only if the fan $\Sigma$ is complete.
\end{theorem}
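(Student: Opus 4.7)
The plan is to follow the strategy of the classical analog, Theorem~\ref{T:CompactTV}, replacing algebraic one-parameter subgroups with the real parametrization $v\mapsto\gamma_v$ of $T_N$.

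For the direction ``$X_\Sigma$ compact $\Rightarrow$ $\Sigma$ complete'', I argue by contrapositive. Suppose $v\in N\setminus|\Sigma|$ and let $\epsilon$ be the distinguished point of the dense orbit. If $X_\Sigma$ were compact, a subsequence of $\{\gamma_{iv}\cdot\epsilon\}_{i\in\NN}$ would converge to a point $p\in V_\sigma$ for some $\sigma\in\Sigma$. Pointwise convergence gives $p(u)=\lim_k\exp(-i_k\langle u,v\rangle)\in\RR_\geq$ for every $u\in\sigma^\vee$, which forces $\langle u,v\rangle\geq 0$ on $\sigma^\vee$ and hence $v\in(\sigma^\vee)^\vee=\sigma$, contradicting $v\notin|\Sigma|$.

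For the converse, I first reduce to the strongly convex case. If $L$ is the lineality space of $\Sigma$, every $\sigma\in\Sigma$ contains $L$, so $\sigma^\vee\subseteq L^\perp$, and this identifies each $V_\sigma$ with the affine piece of $X_{\Sigma/L}$ corresponding to $\sigma/L$; compatibility with the gluing yields $X_\Sigma\cong X_{\Sigma/L}$, where $\Sigma/L$ is a strongly convex complete fan in $N/L$. Assuming $\Sigma$ strongly convex, I proceed by induction on $\dim N$; the base case is trivial. Given a sequence $\{x_i\}\subset X_\Sigma$, the finite orbit decomposition of Theorem~\ref{Th:ITV_structure} lets me pass to a subsequence lying in a single orbit $W_{\sigma_0}$. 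If $\sigma_0\neq\{0\}$, then $\text{star}(\sigma_0)$ is a strongly convex complete fan in the lower-dimensional $N(\sigma_0)$ by Lemma~\ref{L:StarStronglyConvex}, so $X_{\text{star}(\sigma_0)}$ is compact by the inductive hypothesis; since $\overline{W_{\sigma_0}}=X_{\text{star}(\sigma_0)}\subseteq X_\Sigma$ by Corollary~\ref{C:StarITV}, the subsequence has a further convergent subsequence in $X_\Sigma$.

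The remaining case is $\sigma_0=\{0\}$, where $x_i=\gamma_{v_i}$ for some $v_i\in N$. Completeness of $\Sigma$ together with the pigeonhole principle lets me pass to a subsequence with all $v_i$ in a single cone $\sigma\in\Sigma$. Fix a finite generating set $\calA$ of $\sigma^\vee$; under the embedding $f_\calA\colon V_\sigma\hookrightarrow\RR_\geq^\calA$ of Lemma~\ref{L:HomIsAffine}, the coordinates $\gamma_{v_i}(a)=\exp(-\langle a,v_i\rangle)$ all lie in $(0,1]$, since $a\in\sigma^\vee$ and $v_i\in\sigma$ give $\langle a,v_i\rangle\geq 0$. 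Compactness of $[0,1]^\calA$ yields a further convergent subsequence, whose limit $p$ lies in $X_\calA=V_\sigma$ because $X_\calA$ is closed in $\RR_\geq^\calA$ by Proposition~\ref{P:IATV}. The main obstacle is the reduction to the dense-orbit case and setting up the induction so that both the star construction and completeness descend correctly; the technical key is the uniform bound $\gamma_{v_i}(a)\leq 1$ for $v_i\in\sigma$ and $a\in\sigma^\vee$, which confines the image in $\RR_\geq^\calA$ to a compact box and bypasses the finer analysis encoded in the minimum face of boundedness (Lemma~\ref{L:MinimumFaceofB}).
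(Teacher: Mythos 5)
Your proof is correct, and for the forward direction it matches the paper's argument (compactness gives a convergent subsequence of $\gamma_{sv}\cdot\epsilon$, and pointwise convergence on $\sigma^\vee$ forces $v\in\sigma$). For the converse, however, you take a genuinely different route in the crucial dense-orbit step, and the difference is worth noting. After reducing to $\{v_i\}\subset\sigma$ for a single cone $\sigma$, you observe that $a\in\sigma^\vee$ and $v_i\in\sigma$ give $\langle a,v_i\rangle\geq 0$, so $f_\calA(\gamma_{v_i})=(\exp(-\langle a,v_i\rangle)\mid a\in\calA)\in(0,1]^\calA$; extracting a limit in the compact box $[0,1]^\calA$ and invoking the closedness of $X_\calA$ (Proposition~\ref{P:IATV}) does the job. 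The paper instead invokes the \emph{minimum face of boundedness} $\tau$ of $\{v_i\}$ from~\cite{PSV} (Lemma~\ref{L:MinimumFaceofB}), decomposes $v_i=u_i+\overline{v_i}$ with $\overline{v_i}$ bounded, and shows explicitly that $\lim_i y_i=\gamma_v\cdot x_\tau\in W_\tau$. Your argument is shorter and more self-contained — it proves compactness directly without any of the~\cite{PSV} machinery. What the paper's longer argument buys is an explicit description of which orbit the limit lands in and what its representative is; this extra information is reused verbatim in the proof of Theorem~\ref{Th:HausdorffLimits}, which refers back to ``the proofs of Theorem~\ref{T:PSVTheorem3.3} and Theorem~\ref{T:CompactnessITV}'' and quotes the $\tau$-decomposition. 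Two smaller organizational differences: you run a formal induction on $\dim N$ while the paper makes a single reduction to the dense orbit via $\overline{W_\sigma}\cong X_{\text{star}(\sigma)}$ (Corollary~\ref{C:StarITV}) — these are equivalent in substance — and you first reduce to the strongly convex case via $X_\Sigma\cong X_{\Sigma/L}$, a fact that is true and easy to verify but is not established in the paper and is not actually needed for the induction to close.
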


\begin{proof}
	Suppose $X_\Sigma$ is compact. Then for every $v \in N$, the family $\gamma_{sv} \cdot \epsilon$ has a limit in $X_\sigma$ as $s \to \infty.$ By Lemma \ref{L:LimitsofOPSGITV}, there is a cone $\sigma \in \Sigma$ such that $v \in \sigma$. Hence $\Sigma$ is complete. 
	
	Now assume $\Sigma$ is complete and let $\{y_i\}$ be a sequence in $X_\Sigma$. We will show that $\{y_i\}$ has a convergent subsequence. By Theorem \ref{Th:ITV_structure}, since $X_\Sigma$ is a union of finitely many orbits, there exist an orbit $W_\sigma$ whose intersection with $\{y_i\}$ is infinite. Replacing $\{y_i\}$ by a subsequence, we may assume $\{y_i\} \subset W_\sigma$. By Corollary \ref{C:StarITV}$, \overline{W_\sigma} = X_{\text{star}(\sigma)}$. Since $\Sigma$ is complete, then by Lemma \ref{L:StarStronglyConvex}, $\text{star}(\sigma)$ is also complete. Replacing $X_\Sigma$ by $X_{\text{star}(\sigma)}$, we may assume $\{y_i\}$ lies in the dense orbit of $X_\Sigma$. 
	
	The dense orbit of $X_\Sigma$ is parametrized by $N$ under the map $ v \mapsto \gamma_v \cdot \epsilon$. For each $i \in N$, choose a point $v_i \in N$ such that $y_i = \gamma_{v_i}\cdot \epsilon$. This gives a sequence $\{v_i\} \subset N$. Since $\Sigma$ is complete, $N$ is the disjoint union of the relative interiors $\Relint(\sigma)$ of cones $\sigma \in \Sigma$. Hence, there exists a cone $\sigma$ so that $\Relint(\sigma)  \cap \{v_i\}$ is infinite. Replacing $\{v_i\}$ by its intersection with $\Relint(\sigma)$, we may assume $\{v_i\} \subset \Relint(\sigma)$. Let $\tau$ be the minimum face of boundness of $\{v_i\}$. 
	Replace $\{v_i\}$ by a subsequence whose image in $N / \langle \tau \rangle$ is bounded. So there exists a closed bounded set $B \subset \sigma \subset N$ so that $\{v_i\} \subset \tau + B$. Thus there are sequences $\{u_i\} \subset \tau$ and $\{\overline{v_i}\} \subset B$ with $v_i = u_i + \overline{v_i}$ for all $i \in \NN$. Since $B$ is bounded, $\{\overline{v_i}\}$ has an accumulation point in $v \in B$. Passing to a subsequence, we may assume $\lim_{i \to \infty} \overline{v_i} = v$. 
	
	Replacing all sequences by their corresponding subsequences, we claim that 
	\begin{equation*}\label{Eq:CompletenessEq}
	\lim_{i \to \infty} y_i = \gamma_v \cdot x_\tau \in W_\tau,
	\end{equation*} 
	which will complete the proof. 
	
	Consider the sequence $\{y_i\}$ as a subset of $V_\sigma = \Hom_c(\tau^\vee, \RR_\geq)$. For $u \in \tau^\vee$, the proof of Lemma \ref{L:LimitsofOPSGITV} shows that $y_i(u) = \exp(\langle -u, v_i\rangle )$, as $y_i = \gamma_{v_i}\cdot \epsilon.$ Since $v_i = u_i + \overline{v_i}$, 
	\[
	y_i(u) = \exp (\langle -u , v_i\rangle ) = \exp(\langle -u , u_i \rangle) \exp( \langle-u , \overline{v_i} \rangle ). 
	\]
	If $u \in \tau^\perp$, then $\langle u , u_i \rangle = 0$, so that 
	\[
	\lim_{i \to \infty} y_i(u) = \lim_{i \to \infty}\exp (\langle -u , \overline{v_i}\rangle ) = \exp(\langle -u, v\rangle ). 
	\]
	If $u \in \tau^\vee \setminus \tau^\perp$, then $u$ exposes a proper face of $\tau$, and the minimality of $\tau$ implies that $\langle u , v_i \rangle$ has no bounded subsequence. But then $\langle u , u_i \rangle $ has no bounded subsequence. Since $\langle u,u_i \rangle \geq 0$, we conclude that $\lim_{i \to \infty} \langle u,u_i \rangle = \infty.$ Thus
	\[
	\lim_{i \to \infty} y_i(u) = \lim_{i \to \infty} \exp(\langle -u, v_i\rangle )=0.
	\]
	Hence $\lim_{i \to \infty} y_i = \gamma_v \cdot x_\tau \in W_\tau$ as desired. 
\end{proof}

\section{Projective Irrational Toric Varieties}\label{S:ProjectiveITV}
For irrational toric varieties, the analog of projective space is the standard simplex
\[
\simplex^n :=\ \bigl\{ (x_0,x_1,\dotsc,x_n)\in\RR^{n+1}_\geq \mid \sum x_i = 1\bigr\}\,.
\]
The simplex $\simplex^n$ can be identified  with the space of rays in $\RR^{n+1}_\geq$. A point
$x\in\simplex^n$ corresponds to the ray $\RR_\geq\cdot x\subset\RR^{n+1}_\geq$ and a ray $r$ corresponds to its intersection with $\simplex^n$. 
It is convenient to write $\simplex^\calA\subset\RR^\calA_\geq$, where $\calA\subset M$ is a finite set of exponents.

\begin{example}
	The simplex $\simplex^n$ has the structure of an irrational toric variety associated to a fan. Let $[n]= \{0,1,\ldots,n\}$ and $e_0 ,\ldots, e_n$ be the standard basis for $\RR^{[n]} \simeq \RR^{n+1}$. Then the standard simplex $\simplex^{[n]}$ is the convex hull of $e_0, \ldots, e_n$. Define a fan $\Sigma_{[n]}\subset \RR^{[n]}$ with one cone $\sigma_I$ for each proper subset $I \subsetneq [n]$ defined by
	\[
	\sigma_I := \cone\{e_i \mid i \in I\} + \RR \bbI\ ,
	\]
	where $\bbI=e_0+ \cdots + e_n$. Note that $\sigma_\emptyset = \RR \bbI$. We consider the irrational toric variety  $X_{\Sigma_{[n]}}$.
	
	The hyperplane $\{u \in \RR^{[n]} \mid \langle u, \bbI \rangle =0 \}$ is spanned by the differences $e_i -e_j$ for $i,j \in [n]$ and contains all dual cones $\sigma_I^\vee$. Set $V_I = \Hom_c(\sigma_I^\vee, \RR_\geq)$. For $I \subsetneq [n]$, $j \notin I$, and $\varphi \in V_I$, set 
	\[
	\psi_i(\varphi) = \RR_\geq \Bigl(\ \sum_{i \in [n]} \varphi(e_i - e_j) e_i\Bigr) \cap \simplex^{[n]},
	\]
	the intersection of the ray through $\sum_i \varphi(e_i -e_j)e_i$ with the simplex $\simplex^{[n]}$. This injective map does not depend on the choice of $j \notin I$ and defines a map $\psi_I \colon V_I \to \simplex^{[n]}$. These maps $\psi$ are compatible with the gluing in that if $y \in X_{\Sigma_{[n]}}$ lies in two affine patches $V_I$ and $V_J$, then $\psi_I(y) = \psi_J(y)$. Thus these maps induce a homeomorphism $\Psi \colon X_{\Sigma_{[n]}}\xrightarrow{\sim}\simplex^{[n]}$.
	
	The quotient map $\RR^{[n]}_\geq{\setminus}\{0\}\twoheadrightarrow\simplex^{[n]}$ may be understood in terms of
	Theorem~\ref{Th:mapsOfFans}. 
	Let $\Sigma'_{[n]}\subset\RR^{[n]}$ be the fan consisting of the boundary of the nonnegative orthant. Its cones are 
	\[
	\sigma'_I\ :=\ \cone\{ e_i\mid i\in I\}\,,
	\]
	for all proper subsets $I\subsetneq[n]$.
	The irrational toric variety $X_{\Sigma'_{[n]}}$ is $\RR^{n+1}_\geq{\setminus}\{0\}$, as the orbit consisting of the origin in
	$\RR^{n+1}_\geq$ corresponds to the omitted full-dimensional cone.
	As $\sigma'_I\subset \sigma_I$, these inclusions induce a map of fans $\Psi\colon\Sigma'_{[n]}\to\Sigma_{[n]}$, and thus
	a functorial map of toric varieties $X_{\Sigma'_{[n]}}\to X_{\Sigma_{[n]}}$, which is the 
	quotient map  $\RR^{[n]}_\geq{\setminus}\{0\}\twoheadrightarrow\simplex^{[n]}$.\hfill$\square$
\end{example}

Suppose that $\calA \subset M$ lies on an affine hyperplane. Hence, there exists some $v \in N$ and $0 \neq  r \in \RR$ with $\langle a ,v \rangle =r$ for all $a \in \calA$. Thus for $t \in T_N$ and $s\in \RR$, we have
\[
\varphi_\calA(\gamma_sv \cdot t) = (\gamma_{sv}(a)t^a \mid a \in \calA) = \exp(-sr) \varphi_\calA(t) \ ,
\]
as $\gamma_{sv}(a) = \exp(\langle -sa,v \rangle) =\exp(-sr)$ for $a \in \calA$ and $s\in\RR$.  Consequently, the affine irrational toric variety $X_\calA \subset \RR_\geq^\calA$ is a union of rays.
\begin{definition}
	Let $\calA \subset M$ be a finite set that lies on an affine hyperplane. Then the \demph{projective irrational toric variety} $Z_\calA$ is the intersection $X_\calA \cap \simplex^\calA$, equivalently, the quotient $(X_\calA \setminus \{0\}) / \RR_>$ under the scalar multiplication.
\end{definition}

The projective irrational toric variety $Z_\calA$ has an action of $T_N$ with a dense orbit, as the action of $T_N$ on $\RR_\geq^\calA$ gives an action on rays and hence on $\simplex^\calA$, which restricts to an action on $Z_\calA$. 

The restriction of the tautological map $\pi_\calA\colon\RR^\calA_\geq\to\cone(\calA)$ to the simplex $\simplex^\calA$ is
the canonical parametrization of the convex hull of $\calA$,
\[
\simplex^\calA\ni u\ \longmapsto\ \sum_a u_a a\ \in\ \conv(\calA)\,.
\]
By Birch's Theorem (Proposition~\ref{P:Birch}), restricting to the projective toric variety $Z_\calA$ gives a homeomorphism
$\pi_\calA\colon Z_\calA\xrightarrow{\sim}\conv(\calA)$, called the \demph{algebraic moment map}~\cite{vilnius}.
This isomorphism is also essentially proven by Krasauskas~\cite{Krasauskas}.

\begin{theorem}\label{T:ProjectiveITV}
	Suppose that $P \subset M$ is a polytope lying on an affine hyperplane with normal fan $\Sigma$. For any $\calA \subset P$ with $\conv(\calA) = P$, there is an injective map of irrational toric varieties $\Psi_\calA \colon X_\Sigma \to \simplex^\calA$ whose image is the projective irrational toric variety $Z_\calA$. The map composed with the algebraic moment map $\pi_\calA$ is a homeomorphism $X_\Sigma\xrightarrow{\sim}P$.
\end{theorem}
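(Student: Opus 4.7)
The plan is to construct $\Psi_\calA$ explicitly on each open affine piece $V_\sigma$ of $X_\Sigma$, verify the gluings, and then deduce that the composite $\pi_\calA\circ\Psi_\calA$ is a homeomorphism onto $P$ by comparing with Birch's homeomorphism (Proposition \ref{P:Birch}) orbit-by-orbit.

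First I would parametrize the cones of $\Sigma$ as $\sigma = \sigma_\calF$ where $\calF := F \cap \calA$ for a nonempty face $F$ of $P$, noting $\calF$ is nonempty by Lemma \ref{L:PolytopeFace}. Picking any $f_0 \in \calF$ and, for $\varphi \in V_\sigma = \Hom_c(\sigma^\vee,\RR_\geq)$, I would set
\[
\Psi_\calA(\varphi) \ :=\ \frac{\bigl(\varphi(a-f_0) \mid a \in \calA\bigr)}{\sum_{a\in\calA}\varphi(a-f_0)}\,.
\]
This is well defined: the definition of $\sigma_\calF$ forces $a-f_0 \in \sigma^\vee$ for every $a \in \calA$, the denominator is positive since $\varphi(0)=1$, and the identity $\varphi(a - f_0') = \varphi(f_0 - f_0')\,\varphi(a - f_0)$ combined with the fact that $f_0 - f_0' \in \sigma^\perp$ lies in the lineality space of $\sigma^\vee$ shows independence of the choice of $f_0 \in \calF$. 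Compatibility with the restriction $V_\tau \hookrightarrow V_\sigma$ for $\tau \preceq \sigma$ is automatic once one chooses $f_0 \in \calF_\sigma \subseteq \calF_\tau$, so these local maps glue to a continuous $\Psi_\calA \colon X_\Sigma \to \simplex^\calA$.

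Next I would verify $\Psi_\calA(X_\Sigma) \subseteq Z_\calA$ by checking the binomial relations \eqref{Eq:Binomials}: for $u,v\in\RR_\geq^\calA$ with $\calA u = \calA v$, one has $\prod_a \varphi(a-f_0)^{u_a} = \varphi(\calA u - (\sum_a u_a)f_0)$, which reduces the desired relation to $\sum_a u_a = \sum_a v_a$; this holds because $\calA$ lies on an affine hyperplane. To analyze $\pi_\calA\circ\Psi_\calA$, I would use the decompositions $X_\Sigma = \bigsqcup_\sigma W_\sigma$ (Theorem \ref{Th:ITV_structure}) and $P = \bigsqcup_F \Relint(F)$. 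On the orbit $W_\sigma = \calO_{\sigma^\perp}$ corresponding to the face $F$, a direct check shows $\varphi(a-f_0) > 0$ if and only if $a \in \calF$, so $\Psi_\calA(W_\sigma)$ lands in the coordinate subsimplex indexed by $\calF$ and coincides, up to the normalizing scalar, with the evaluation parametrization of $X_\calF^\circ$ from Lemma \ref{L:HomIsAffine}. Post-composing with $\pi_\calA$ then reduces to Birch's homeomorphism applied to $\calF$ (Proposition \ref{P:Birch}), yielding a bijection $W_\sigma \xrightarrow{\sim}\Relint(F)$ for each cone.

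Assembling these cell-by-cell bijections gives a continuous bijection $\pi_\calA\circ\Psi_\calA \colon X_\Sigma \to P$. Since $\Sigma$ is the normal fan of a polytope it is complete, so Theorem \ref{T:CompactnessITV} makes $X_\Sigma$ compact, and as $P$ is Hausdorff the bijection is automatically a homeomorphism. The established homeomorphism $\pi_\calA\colon Z_\calA\xrightarrow{\sim} P$ then forces $\Psi_\calA$ itself to be an injection with image exactly $Z_\calA$. I expect the main obstacle to be the cell-by-cell comparison: $\Psi_\calA$ is defined using the global indexing set $\calA$ and a single scalar normalization, while Birch's homeomorphism on $W_\sigma$ intrinsically uses only $\calF$, so one must carefully reconcile the vanishing of the $a \in \calA\smallsetminus\calF$ coordinates, the choice of base point $f_0$, and the identification $W_\sigma \simeq N/\langle\sigma\rangle$ of Theorem \ref{Th:ITV_structure} with the evaluation map $\varphi \mapsto (\varphi(a-f_0))_{a \in \calF}$.
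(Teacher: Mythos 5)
Your construction of $\Psi_\calA$ (normalized vector of $\varphi(a-f_0)$, independence of $f_0$ via $\varphi(a-f_0')=\varphi(f_0-f_0')\varphi(a-f_0)$, gluing compatibility) is the same map as the paper's $\psi_\calF$, written as a ray-intersection with $\simplex^\calA$. Where you diverge is in the second half. You check that the image lands in $Z_\calA$ by verifying the binomial relations \eqref{Eq:Binomials} directly, using that $\calA u=\calA v$ forces $\sum u_a=\sum v_a$ because $\calA$ lies on an affine hyperplane — a clean calculation that the paper replaces with a density argument (both $\Psi_\calA(X_\Sigma)$ and $Z_\calA$ are closed and contain the same dense $T_N$-orbit, computed via $\gamma_v\cdot\epsilon$). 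More significantly, you build the homeomorphism $X_\Sigma\to P$ cell by cell, identifying $\Psi_\calA(W_\sigma)$ with the dense torus orbit of $Z_\calF$ and invoking Birch face-by-face, then deduce injectivity and image of $\Psi_\calA$ as corollaries of that homeomorphism together with $\pi_\calA\colon Z_\calA\xrightarrow{\sim}P$ and compactness of $X_\Sigma$. The paper instead proves injectivity of $\Psi_\calA$ directly by a short support argument (if $\calF,\calG$ are not faces of each other, $\varphi\in V_\calF\setminus V_\calG$ has support containing $\calF$ and disjoint from $\calG\setminus\calF$) and leaves the final homeomorphism with $P$ essentially implicit. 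Your route is more explicit about the map to $P$, which is what the last sentence of the theorem actually asserts, but it front-loads the bookkeeping you flag at the end: you need the orbit-level identification to be a \emph{bijection}, which requires checking that the stabilizer of a point of $Z_\calF^\circ$ in $T_N$ equals $T_{\langle\sigma_\calF\rangle}$, i.e.\ that $\sigma_\calF^\perp=\RR(\calF-f)$; that is exactly the face–dual correspondence for the normal fan and should be made explicit before you can conclude $W_\sigma\xrightarrow{\sim}\Relint(F)$. With that filled in, the argument is sound, and it trades the paper's short injectivity lemma for a more transparent description of $\pi_\calA\circ\Psi_\calA$.
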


\begin{proof}
	Let $\calA \subset P$ be a polytope such that $\conv(\calA) = P$. Recall that by Definition \ref{D:NormalPolytope}, the normal polytope $\Sigma$ is the collection of cones of the form
	\[
	\sigma_\calF = \{v \in N \mid \langle f,v \rangle \leq \langle a , v\rangle \text{ for all } f \in \calF \text{ and } a \in \calA  \},
	\]
	where $\calF$ is a face of $\calA$. The lineality space of $\Sigma$ is spanned by $v \in N$ such that $\langle a,v \rangle = \langle b , v \rangle$ for any $a,b \in \calA$.
	
	For a subset $\calB \subset \calA$ and any $u \in M$, we write $\calB - u := \{b-u \mid b \in \calB \}$. Duals of cones $\sigma_\calF$ lie in the subspace $L$ of $M$ spanned by the differences $\{b-a \mid a,b \in \calA \},$ equivalently by $\calA-a$ for any $a\in \calA$. For a face $\calF$ of $\calA$, the dual cone of $\sigma_\calF$ is 
	\[
	\sigma_\calF^\vee = \cone(\calA -f) +\RR(\calF -f)
	\]  
	for any $f \in \calF$. Choosing another $f' \in \calF$ translates the points $\calA-f$ along the lineality space $\RR(\calF-f)$. Figure \ref{F:ProjectiveITV} shows an example of $\sigma_\calF^\vee$. The affine irrational toric variety $V_\calF$ corresponding to a face $\calF$ of $\calA$ is $\Hom_c(\sigma^\vee_\calF, \RR_\geq)$.
	\begin{figure}[!ht]
		\centering
		\hspace{1cm}
		\begin{tikzpicture}[line cap=round,line join=round,>=latex,scale=1]
		\fill[,fill=yellow!20!white] (0,4) -- (0,5)  --(7,5)-- (7,0)--(5,0) -- cycle;
		\filldraw[dashed,blue,fill=yellow!80!white] (1.25,3) -- (1,4)  --(2.25,4.5)-- (5.25,4.2)--(6,0.75)--(3.75,1) -- cycle;
		\filldraw[thick, blue] (3.75,1) circle (.06cm);
		\filldraw[thick, blue] (2.5,2) circle (.06cm);
		\filldraw[thick, blue] (1.25,3) circle (.06cm);
		\draw[-,thick,blue] (0,4) -- (5,0);
		
		\filldraw[thick, blue] (1,4) circle (.06cm);
		\filldraw[thick, blue] (2.25,4.5) circle (.06cm);
		\filldraw[thick, blue] (3.75,4.35) circle (.06cm);
		\filldraw[thick, blue] (5.25,4.2) circle (.06cm);
		\filldraw[thick, blue] (6,0.75) circle (.06cm);
		\filldraw[thick, blue] (5.625,2.475) circle (.06cm);
		
		\filldraw[thick, blue] (2,3.5) circle (.06cm);
		\filldraw[thick, blue] (3.5,3.7) circle (.06cm);
		\filldraw[thick, blue] (3.8,2) circle (.06cm);
		\filldraw[thick, blue] (4.8,2.5) circle (.06cm);		
		\draw[<-] (4.8,.35) -- (7.5,0.35) node [right] {$\RR(\calF-f)$};
		
		\node at (3.75,.5)  {$0$}; 
		\node at (0.75,.5)  {$\calF-f$};
		\node at (3.5,3)  {$\calA-f$};
		\node at (6.25,3.5)  {$\sigma_\calF^\vee$};
		
		\draw[->] (1.65,0.5) -- (3.5,.85);
		\draw[->] (1,.85) -- (2.25,1.85);
		\draw[->] (.65,1) -- (1.2,2.75);
		
		\end{tikzpicture}
		\caption{A dual cone $\sigma^\vee_\calF$.}
		\label{F:ProjectiveITV}
	\end{figure}
	
	Let $f \in \calF$ and consider the map 
	\[
	\psi_f \colon V_\calF \  \ni \ \psi \longmapsto  (\varphi(a-f) \mid a \in \calA) \ \in \ \RR_\geq^\calA.
	\]
	Since for $f, f' \in \calF$, $a \in \calA$, and $\varphi \in V_\calF$, we have $\varphi(a-f)= \varphi(f'-f)\varphi(a-f')$, it follows that $\psi_f(\varphi) = \varphi(f'-f)\psi_{f'}(\varphi)$. Hence, the two points $\psi_f(\varphi)$ and $\psi_{f'}(\varphi)$ lie along the same ray in $\RR_\geq^\calA$. So the map $V_\calF \to \simplex^\calA$ defined by
	\begin{equation}\label{M:ProjectiveITV}
	V_\calF \ \ni \ \varphi \longmapsto \left(\RR_\geq \cdot \psi_f(\varphi)\right) \cap \simplex^\calA \ \in \ \simplex^\calA \ ,
	\end{equation}
	is independent of the choice of $f \in \calF$. Write $\psi_\calF$ for the map \eqref{M:ProjectiveITV}, which is a continuous injection from $V_\calF$ into $\simplex^\calA$. 
	
	Suppose that $\calF, \calG$ are both faces of $\calA$ such that $\calF$ is a face of $\calG$. Then $V_\calG \subset  V_\calF$, and for $\varphi \in V_\calG$, we have $\psi_\calF(\varphi) = \psi_\calG(\varphi)$, as both maps are computed using $\psi_f$ for $f \in \calF \subset \calG$. Thus the maps $\psi_\calF$ for $\calF$ a face of $\calA$ are compatible with the gluing of the $V_\calF$ to form $X_\Sigma$, and so they induce a continuous map $\Psi \colon X_\Sigma \to \simplex^\calA$. The map $\Psi_\calA$ is an injection. This is because if $\calF,\calG$ are faces of $\calA$ that are not faces of each other, then the support of $\varphi \in V_\calF \setminus V_\calG$ contains $\calF$ and is disjoint from $\calG \setminus \calF$. Hence $\Psi_\calA$ is injective on the union $V_\calF \cup V_\calG$. 
	
	We claim that $\Psi_\calA(X_\Sigma) = Z_\calA$. Since both are complete, it suffices to show that both contain the same dense subset. Let $t=\gamma_v \in T_N$ with $v \in N$. Since for $u \in M$, $t^u = \exp ( -\langle u,v\rangle)$, we have $\varphi_\calA(t) = (\exp(-\langle a,v \rangle ) \mid a \in \calA)$. This lies on a ray in $\RR_\geq^\calA$ that meets $\simplex^\calA$ in the point 
	\begin{equation} \label{Eq:ProjectiveITV}
	\left(\RR_\geq \cdot (\exp(-\langle a,v \rangle) \mid a \in \calA)\right) \ \cap \ \simplex^\calA  \ \in \ Z_\calA .
	\end{equation}
	
	The corresponding point $\gamma_v\cdot \epsilon$ of $X_\sigma$ lies in $V_\calA = \Hom_c(L,\RR_>)$, where $L= \RR(\calA-b)$ for any $b \in \calA$. Its image $\Psi_\calA(\gamma_v \cdot \epsilon)$ is 
	\[
	\left(\RR_\geq \cdot (\gamma_v \cdot \epsilon(a-b) \mid a \in \calA)\right) \cap \simplex^\calA = \left( \RR_\geq \cdot \exp(-\langle a,v\rangle ) \mid a \in \calA \right) \cap \simplex^\calA \ ,
	\] 
	as $\epsilon (a-b) =1$ and $\gamma_v(a-b) = \exp ( - \langle a,v \rangle ) \exp (\langle b,v \rangle)$, so that the two rays are equal. Comparing this to \eqref{Eq:ProjectiveITV} completes the proof.
\end{proof}
\pagebreak{}

\chapter{HAUSDORFF LIMITS AND THE SECONDARY POLYTOPE} \label{CH:Hausdorff}

We study the space of Hausdorff limits of translates of irrational toric varieties. This work is motivated by \cite{PSV}, which was motivated by \cite{Graciun}. We start by reviewing some background on subdivisions, the secondary fan and the secondary polytope of a finite point configuration $\calA \subset M$. We then recall some notations and results studied in \cite{PSV} about Hausdorff limits of torus translates of $Z_\calA$. In Theorem \ref{Th:HausdorffLimits}, we construct a homeomorphism between the space of Hausdorff limits and $X_{\Sigma(\calA)}$.

\section{Subdivisions and Secondary Polytopes}
Let $\calA \subset M$ be a finite set of points which affinely spans $M$.  

\begin{definition}
	A \demph{subdivision} $\calS$ of $\calA$ is a collection of subsets $\calF$ of $\calA$ satisfying the following properties.
	\begin{enumerate}
		\item If $\calF, \calG \in \calS$, then $\mathcal{H} : =\calF \cap \calG$ is also in $\calS$, and $\conv(\mathcal{H})$ is a face of both $\conv(\calF)$ and  $\conv(\calG)$.
		\item ${\displaystyle \bigcup_{\calF \in \calS} } \conv(\calF) =\conv(\calA)$.
	\end{enumerate}
	The elements of $\calS$ are called \demph{faces} of $\calS$. The set of convex hulls $\{\conv(\calF) \mid \calF \in \calS \}$ form a complex, denoted by $\Pi_\calS$, which covers $\conv(\calA)$.
	
\end{definition}

\begin{definition}
	A \demph{triangulation} of $\calA$ is a subdivision  of $\calS$, in which every face $\conv(\calF)$ of $\Pi_\calS$ is a simplex with vertices $\calF$. 
\end{definition}

\begin{example}
	Consider the planar configuration $\calA$ in Figure \ref{F:PolyhedralSubdivisions}.  $\calS$ and $\calT$ are both subdivisions of $\calA$, and $\calT$ is a triangulation of $\calA$. On the other hand, $\calS'$ is not a subdivision of $\calA$ as $$\conv ( \{ a,d,e\}) \cap \conv(\{b,c,d\}) = \conv(\{d,e\}),$$ which is not a face of $\conv(\{b,c,d\}).$
	\begin{figure}[!ht]
		\centering
		
		\begin{tikzpicture}[line cap=round,line join=round,>=latex,scale=.5]
		\filldraw[very thick] (0,0) circle (.08cm) node[anchor =north ,color=black]{$a$};
		\filldraw[very thick] (0,5) circle (.08cm) node[anchor =south ,color=black]{$d$};
		\filldraw[very thick] (5,0) circle (.08cm) node[anchor =north ,color=black]{$b$};
		\filldraw[very thick] (5,5) circle (.08cm)node[anchor =south,color=black]{$c$};
		\filldraw[very thick] (3.5,1.5) circle (.08cm)node[anchor =south east ,color=black]{$e$};
		\node at (2.5,7)  {$\calA$};
		\end{tikzpicture} \hspace{1cm}
		\begin{tikzpicture}[line cap=round,line join=round,>=latex,scale=.5]
		\filldraw[very thick] (0,0) circle (.08cm) node[anchor =north ,color=black]{$a$};
		\filldraw[very thick] (0,5) circle (.08cm) node[anchor =south ,color=black]{$d$};
		\filldraw[very thick] (5,0) circle (.08cm) node[anchor =north ,color=black]{$b$};
		\filldraw[very thick] (5,5) circle (.08cm)node[anchor =south,color=black]{$c$};
		\filldraw[very thick] (3.5,1.5) circle (.08cm)node[anchor =south east ,color=black]{$e$};
		\draw[-] (0,0) -- (0,5); 
		\draw[-] (0,0) -- (5,0); 
		\draw[-] (0,0) -- (3.5,1.5);
		\draw[-] (3.5,1.5) -- (5,5);
		\draw[-] (3.5,1.5) -- (5,0);
		\draw[-] (5,5) -- (0,5);
		\draw[-] (5,5) -- (5,0);
		\node at (2.5,7)  {$\calS$};
		\end{tikzpicture}\hspace{1cm}
		\begin{tikzpicture}[line cap=round,line join=round,>=latex,scale=.5]
		\filldraw[very thick] (0,0) circle (.08cm) node[anchor =north ,color=black]{$a$};
		\filldraw[very thick] (0,5) circle (.08cm) node[anchor =south ,color=black]{$d$};
		\filldraw[very thick] (5,0) circle (.08cm) node[anchor =north ,color=black]{$b$};
		\filldraw[very thick] (5,5) circle (.08cm)node[anchor =south,color=black]{$c$};
		\filldraw[very thick] (3.5,1.5) circle (.08cm);
		\node at (3,1.57){$e$};
		\draw[-] (0,0) -- (0,5); 
		\draw[-] (0,0) -- (5,0); 
		\draw[-] (3.5,1.5) -- (5,5);
		\draw[-] (3.5,1.5) -- (0,0);
		\draw[-] (0,5) -- (5,0);
		\draw[-] (5,5) -- (0,5);
		\draw[-] (5,5) -- (5,0);
		\node at (2.5,7)  {$\calT$};
		\end{tikzpicture}
		\hspace{1cm}
		\begin{tikzpicture}[line cap=round,line join=round,>=latex,scale=.5]
		\filldraw[very thick] (0,0) circle (.08cm) node[anchor =north ,color=black]{$a$};
		\filldraw[very thick] (0,5) circle (.08cm) node[anchor =south ,color=black]{$d$};
		\filldraw[very thick] (5,0) circle (.08cm) node[anchor =north ,color=black]{$b$};
		\filldraw[very thick] (5,5) circle (.08cm)node[anchor =south,color=black]{$c$};
		\filldraw[very thick] (3.5,1.5) circle (.08cm)node[above ,color=black]{$e$};
		\draw[-] (0,0) -- (0,5); 
		\draw[-] (0,0) -- (5,0); 
		\draw[-] (3.5,1.5) -- (0,0);
		\draw[-] (0,5) -- (5,0);
		\draw[-] (5,5) -- (0,5);
		\draw[-] (5,5) -- (5,0);
		\node at (2.5,7)  {$\calS'$};
		\end{tikzpicture}
		
		\caption{Subdivisions and triangulations.}
		\label{F:PolyhedralSubdivisions}
	\end{figure}
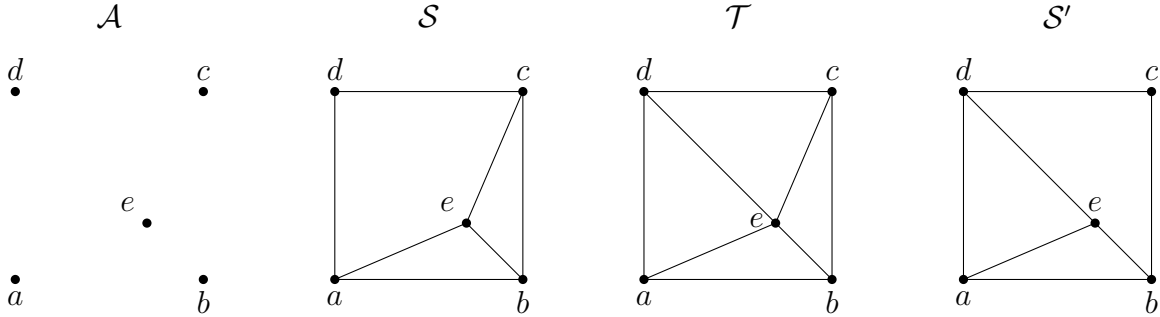
\end{example}

\begin{definition}\label{D:SubdivisionRefined}
	A subdivision $\calS$ of $\calA$ is \demph{refined} by $\calS'$, denoted by $\calS \prec \calS'$, if for every face $\calF'$ of $\calS'$, there is a face $\calF$ of $\calS$ with $\calF' \subseteq \calF$. 
\end{definition}
Definition \ref{D:SubdivisionRefined} makes the set of all subdivisions of $\calA$ into a poset. Triangulations are precisely minimal elements of this poset and its maximal element is the subdivision consisting $\calA$ itself.

Elements $\lambda \in \RR^\calA$ induce subdivisions of $\calA$ in the following way. Let $P_\lambda$ be the convex hull of the graph of $\lambda$, defined by
\[
P_\lambda := \conv  \left( \{ (a,\lambda_a) \mid a \in \calA \}\right).
\]
Its \demph{lower faces} are those having inward-pointing normal vector with last coordinate positive. For a lower face $F$ of $P_\lambda$, let 
\[
\calF(F) := \{ a \in \calA \mid (a,\lambda_a) \in F\}.
\]
Also, let 
\[
S_\lambda := \bigcup_{F} \calF(F),
\]
where $F$ ranges over the lower faces if $P_\lambda$. The union $S_\lambda$ is a subdivision of $\calA$ \cite[Lemma 2.3.11]{Triangulations}.

\begin{definition}
	A subdivision $\calS$ is \demph{regular} if $\calS = S_\lambda$ for some $\lambda \in \RR^\calA$. 
\end{definition}

\begin{example}\label{Ex:RegularSubdivision}
	Let $\calA = \RR^2$ be the vertices of two concentric triangles given in Figure \ref{F:RegularSubdivisionSet}. We will examine a regular and a non-regular subdivision of $\calA$.
	
	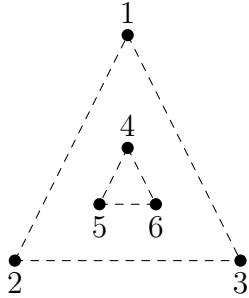
\begin{figure}[!ht]
		\centering
		\begin{tikzpicture}[line cap=round,line join=round,>=latex,scale=.75]
		\filldraw[very thick] (4,0) circle (.08cm) node[anchor =north ,color=black]{$3$};
		\filldraw[very thick] (0,0) circle (.08cm) node[below ,color=black]{$2$};
		\filldraw[very thick] (2,4) circle (.08cm) node[anchor =south ,color=black]{$1$};
		\filldraw[very thick] (2,2) circle (.08cm) node[above ,color=black]{$4$};
		\filldraw[very thick] (1.5,1) circle (.08cm) node[below ,color=black]{$5$};
		\filldraw[very thick] (2.5,1) circle (.08cm) node[below ,color=black]{$6$};
		\draw[-,dashed] (0,0) -- (4,0); 
		\draw[-,dashed] (0,0) -- (2,4); 
		\draw[-,dashed] (2,4) -- (4,0); 
		\draw[-,dashed] (2,2) -- (1.5,1); 
		\draw[-,dashed] (2,2) -- (2.5,1); 
		\draw[-,dashed] (1.5,1) -- (2.5,1); 
		\end{tikzpicture}
		
		\caption{Two concentric triangles.}
		\label{F:RegularSubdivisionSet}
	\end{figure}

	First consider $\calS_1$ in Figure \ref{F:RegularSubdivision}. Let $\lambda= (3,2,1,0,0,0)$. Then $\lambda$ induces $\calS_1$. 
	
	\begin{figure}[!ht]
		\centering
		
		\begin{tikzpicture}[line cap=round,line join=round,>=latex,scale=.75]
		\filldraw[very thick] (4,0) circle (.08cm) node[anchor =north ,color=black]{$3$};
		\filldraw[very thick] (0,0) circle (.08cm) node[below ,color=black]{$2$};
		\filldraw[very thick] (2,4) circle (.08cm) node[anchor =south ,color=black]{$1$};
		\filldraw[very thick] (2,2) circle (.08cm) node[below ,color=black]{$4$};
		\filldraw[very thick] (1.5,1) circle (.08cm) node[anchor=south east ,color=black]{$5$};
		\filldraw[very thick] (2.5,1) circle (.08cm) node[below ,color=black]{$6$};
		\draw[-] (0,0) -- (4,0); 
		\draw[-] (0,0) -- (2,4); 
		\draw[-] (2,4) -- (4,0); 
		\draw[-] (2,2) -- (1.5,1); 
		\draw[-] (2,2) -- (2.5,1); 
		\draw[-] (1.5,1) -- (2.5,1); 
		\draw[-] (2,4) -- (2.5,1); 
		\draw[-] (2,4) -- (1.5,1); 
		\draw[-] (0,0) -- (2.5,1); 
		\draw[-] (0,0) -- (1.5,1); 
		\draw[-] (4,0) -- (2.5,1); 
		\draw[-] (2,4) -- (2,2); 
		\node at (2,-2)  {$\calS_1$};
		\end{tikzpicture}\hspace{3cm}
		\begin{tikzpicture}[line cap=round,line join=round,>=latex,scale=0.75]
		\filldraw[very thick] (4,0) circle (.08cm) node[anchor =north ,color=black]{$3$};
		\filldraw[very thick] (0,0) circle (.08cm) node[below ,color=black]{$2$};
		\filldraw[very thick] (2,4) circle (.08cm) node[anchor =south ,color=black]{$1$};
		\filldraw[very thick] (2,2) circle (.08cm) node[right,color=black]{$4$};
		\filldraw[very thick] (1.5,1) circle (.08cm) node[anchor=south east ,color=black]{$5$};
		\filldraw[very thick] (2.5,1) circle (.08cm) node[below ,color=black]{$6$};
		\draw[-] (0,0) -- (4,0); 
		\draw[-] (0,0) -- (2,4); 
		\draw[-] (2,4) -- (4,0); 
		\draw[-] (2,2) -- (1.5,1); 
		\draw[-] (2,2) -- (2.5,1); 
		\draw[-] (1.5,1) -- (2.5,1); 
		\draw[-] (4,0) -- (2,2); 
		\draw[-] (2,4) -- (1.5,1); 
		\draw[-] (0,0) -- (2.5,1); 
		\draw[-] (0,0) -- (1.5,1); 
		\draw[-] (4,0) -- (2.5,1); 
		\draw[-] (2,4) -- (2,2); 
		\node at (2,-2)  {$\calS_2$};
		\end{tikzpicture}
		
		\caption{Subdivisions of $\calA$.}
		\label{F:RegularSubdivision}
	\end{figure}

	On the other hand, the subdivision $S_2$ in Figure \ref{F:RegularSubdivision} is not regular. To see this, assume $\lambda \in \RR^6$ induces $S_2$. Note that we can assume $\lambda_4=\lambda_5=\lambda_6=0$, as altering $\lambda$ by an affine function of $\calA$ produce the same subdivision of $\calA$. To get edge $\overline{15}$, we must have $\lambda_1 < \lambda_2$. The same happens for edges $\overline{26}$ and $\overline{34}$. Hence we get $\lambda_1 < \lambda_2 < \lambda_3 <\lambda_1$, which is a contradiction. Hence $\calS_2$ cannot be regular. 
\end{example}

For a regular triangulation $\calT$ of $\calA$, let $C(\calT) \subset \RR^\calA$ be the closure of the set all functions $\lambda \in \RR^\calA$ which induces $\calT$. 
The set $C(\calT)$ is a full-dimensional cone in $\RR^\calA$, and the cones $C(\calT)$ for all regular triangulations of $\calA$ together with all faces of these cones form a complete fan $\Sigma(\calA)$, called the \demph{secondary fan} of $\calA$ \cite[Proposition 1.5, Chapter 7]{GKZ}.

Let $\calT$ be a triangulation of $\calA$, where $\conv(\calA)$ is full dimensional. The \demph{characteristic function} of $\calT$ is the function $\varphi_\calT \colon \calA \to \RR$ defined by
\[
\varphi_\calT(w) = \sum \text{Vol}(\calF),
\]
where the summation is over all maximal simplices $\calF$ of $\calT$ for which $w$ is a vertex, and $\text{Vol}(\calF)$ denotes the usual Euclidean volume. 
The \demph{secondary polytope} $\calP(\calA)$ is the convex hull in $\RR^\calA$ of the vectors $\varphi_\calT$ for all the triangulations $\calT$ of $\calA$.

For any regular subdivision $\calS$ of $\calA$, let $F(\calS)$ be the convex hull of the characteristic functions $\varphi_\calT$ for all triangulations $\calT$ refining $\calS$. By \cite[Theorem 2.4, Chapter 7]{GKZ}, the faces of $\calP(\calA)$ are precisely the polytopes $F(\calS)$ for all regular subdivisions $\calS$ of $\calA$. In addition, $F(\calS) \subset F(\calS')$ if and only if $\calS$ refines $\calS'$. The normal cone to $F(\calS)$  coincides with $C(\calS)$.

\section{Hausdorff Limits of Torus Translates}

Given a compact metric space $(X,d)$, the \demph{Hausdorff distance} between closed sets $A$ and $B$ in $X$ is 
\[
d_H(A,B) := \max \left\{\sup_{a \in A} \inf_{b \in B} d(a, b) + \sup_{b \in B} \inf_{a \in A} d(a, b)\right\}.
\]
The set $\text{Closed}(X)$ of closed subsets of $X$ is a compact metric space with the Hausdorff distance \cite[p. 167]{Hausdorff}.

As in Section \ref{S:ProjectiveITV}, suppose that $\calA \subset M$ lies on an affine hyperplane. Then the irrational toric variety $X_\calA \subset \RR_\geq^\calA$ is a union of rays with associated irrational projective toric variety $Z_\calA = X_\calA \cap \simplex^\calA$. An element $\omega \in \RR^\calA_>$ of the positive torus acts on $\RR^\calA_\geq$ by translation,
\[
\omega \cdot x = w \cdot (x_a \mid a \in \calA ) = (\omega_a x_a \mid a \in \calA),
\]
for $x \in \RR_\geq^\calA$. This induces an action of $\RR_>^\calA$ on rays, and hence on $\simplex^\calA$.

For $w \in \RR^\calA$, the translate $\omega \cdot Z_\calA$ is a closed subset of $\simplex^\calA$ defined by binomials given as follows.
\begin{proposition}[{\cite[Proposition A.2]{GSZ}}] 
	A point $z \in \simplex^\calA$ lies in $\omega \cdot Z_\calA$ if and only if
	\[
	\prod_{a \in \calA} z_a^{u_a}\cdot \prod_{a \in \calA} \omega_a^{v_a} = \prod_{a \in \calA} z_a^{v_a}\cdot \prod_{a \in \calA} \omega_a^{u_a},
	\]
	for all $u,v \in \RR^\calA_\geq$ with $\calA u = \calA v$.
\end{proposition}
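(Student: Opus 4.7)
The plan is to reduce the claim to Proposition~\ref{P:IATV}, which characterizes the affine variety $X_\calA$ as the set of $z \in \RR^\calA_\geq$ satisfying every toric binomial $z^u = z^v$ with $\calA u = \calA v$. The projective variety $Z_\calA$ is the slice $X_\calA \cap \simplex^\calA$, and the stated binomial identities will arise by substituting $z_a/\omega_a$ into the defining binomials of $X_\calA$ and then clearing the (strictly positive) denominators $\prod_a \omega_a^{u_a}\cdot\prod_a \omega_a^{v_a}$.

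First I would observe that, because $\calA$ lies on an affine hyperplane, $X_\calA$ is stable under positive real scaling and is therefore a union of rays through the origin in $\RR^\calA_\geq$; this is the observation recalled just before the definition of $Z_\calA$. Coordinatewise multiplication by $\omega \in \RR^\calA_>$ preserves rays, so $\omega \cdot X_\calA$ is again a union of rays, and its intersection with $\simplex^\calA$ equals the set $\omega \cdot Z_\calA$ obtained from the induced action of $\omega$ on $\simplex^\calA$. Consequently, for $z \in \simplex^\calA$, membership $z \in \omega \cdot Z_\calA$ is equivalent to $z \in \omega \cdot X_\calA$, which in turn is equivalent to $\omega^{-1}\cdot z = (z_a/\omega_a \mid a \in \calA) \in X_\calA$, since $\omega$ is invertible in $\RR^\calA_>$ and componentwise division by positive reals preserves $\RR^\calA_\geq$.

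Next I would invoke Proposition~\ref{P:IATV} at the nonnegative point $\omega^{-1}\cdot z$: its membership in $X_\calA$ is equivalent to
\[
\prod_{a \in \calA} (z_a/\omega_a)^{u_a} \ =\ \prod_{a \in \calA} (z_a/\omega_a)^{v_a}
\]
holding for every $u, v \in \RR^\calA_\geq$ with $\calA u = \calA v$. Multiplying both sides by the strictly positive quantity $\prod_a \omega_a^{u_a}\cdot\prod_a \omega_a^{v_a}$ preserves the equality and yields exactly the stated binomial identity, so reversing the chain of equivalences finishes the argument.

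\textbf{Expected obstacles.} The argument is essentially a routine unwinding once Proposition~\ref{P:IATV} is in hand; no step requires serious combinatorics or analysis. The one point warranting care is the behavior on the boundary of $\simplex^\calA$, where $z_a = 0$ for some indices $a$: with the convention $0^0 = 1$ (consistent with the monoid-homomorphism viewpoint used throughout the paper) together with the strict positivity of $\omega$, every factor in both forms of the binomial identity remains well-defined, and the clearing of $\prod_a \omega_a^{u_a + v_a}$ is a legitimate algebraic manipulation. A useful sanity check, which falls out of the computation, is that the stated equation depends only on the ray through $z$ and not on its normalization on $\simplex^\calA$, as it must since $\omega \cdot Z_\calA$ was defined via the projective action.
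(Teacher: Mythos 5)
Your argument is correct. Note that the paper does not actually prove this proposition: it is imported by citation from \cite[Proposition A.2]{GSZ}, so there is no in-text proof to compare against. Your derivation --- using that $\calA$ on an affine hyperplane makes $X_\calA$ a union of rays, so that for $z\in\simplex^\calA$ one has $z\in\omega\cdot Z_\calA$ iff $z\in\omega\cdot X_\calA$ iff $\omega^{-1}\cdot z\in X_\calA$, then invoking Proposition~\ref{P:IATV} and clearing the strictly positive factors $\prod_a\omega_a^{u_a}$ and $\prod_a\omega_a^{v_a}$ --- is a clean and complete proof of exactly what the citation asserts. You also correctly flag the only real subtlety (the boundary $z_a=0$ and the convention $0^0=1$, which the paper adopts in Example~\ref{Ex:PosLine}), and the strict positivity of $\omega$ handles it. Nothing is missing.
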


The association $\omega \mapsto \omega \cdot Z_\calA$ gives a continuous map $\RR^\calA_> \to \text{Closed}(\simplex^\calA)$. Let $\Delta_\calA$ be the closure of that image. This is a compact Hausdorff space equipped with an action of $\RR_>^\calA$, and it consists of all Hausdorff limits of translates of $Z_\calA$. In \cite{PSV}, the main result was a set-theoretic identification of the points in $\Delta_\calA$. We extend their work to construct a homeomorphism between $\Delta_\calA$ and the irrational toric variety associated to the secondary fan of $\calA$.

For a regular subdivision $\calS$ of $\calA$, the union of irrational toric varieties
\begin{equation}\label{Eq:ComplexesITV}
Z(\calS) := \bigcup_{\calF \in \calS} Z_\calF
\end{equation}
is called the \demph{complex of irrational toric varieties} corresponding to $\calS$. This is a complex in that if $\calF, \calF' \in \calS$ with $\emptyset \neq \calG = \calF \cap \calF'$, so that $\calG$ is also a face of $\calS$, then $Z_\calG = Z_\calF \cap Z_{\calF'}$.

An element $\omega \in \RR_>^\calA$ of the positive torus acts on the complex giving the translated complex,
\begin{equation}\label{Eq:TranslatedComplex}
Z(\calS, \omega) := \bigcup_{\calF \in \calS} w \cdot Z_\calF.
\end{equation}
\begin{lemma}[{\cite[Lemma 2.4]{PSV}}] \label{L:PSVLemma2.4}
	For $\omega,\omega' \in \RR^\calA_>$, the translated complexes $Z(\calS, \omega)$ and $Z(\calS,\omega')$ are equal if and only if $\Log(\omega) - \Log(\omega') \in \text{Aff}(\calS)$, where $\Log \colon \RR^\calA_\geq \to \RR^\calA$ is the coordinatewise logarithm map. 
\end{lemma}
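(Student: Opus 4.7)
The plan is to reduce the statement to a face-by-face analysis using the fact that the positive torus $\RR^\calA_>$ preserves the stratification of $\simplex^\calA$ by relative interiors of its coordinate faces. Indeed, for $\omega\in\RR^\calA_>$ and $x\in\simplex^\calA$, the support $\supp(\omega\cdot x)=\{a:\omega_a x_a>0\}$ equals $\supp(x)$ since every $\omega_a>0$. Thus for any subset $\calG\subseteq\calA$ the translate $\omega\cdot Z_\calG$ lies in $\simplex^\calG$, and using the decomposition $Z_\calG=\bigsqcup_{\calF\preceq\calG}Z_\calF^\circ$ from \eqref{Eq:decomposition}, the intersection $\omega\cdot Z_\calG\cap\Relint(\simplex^\calF)$ equals $\omega\cdot Z_\calF^\circ$ when $\calF\preceq\calG$ and is empty otherwise.

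The first main step is to compute the stabilizer in $\RR^\calF_>$ of the projective irrational toric variety $Z_\calF\subset\simplex^\calF$ for each face $\calF$. The algebraic moment map (Theorem~\ref{T:ProjectiveITV}) identifies $Z_\calF$ with $\conv(\calF)$ via the parametrization $t\mapsto\RR_\geq\cdot(t^a\mid a\in\calF)\cap\simplex^\calF$, so an element $\omega\in\RR^\calF_>$ fixes $Z_\calF$ setwise if and only if $x\mapsto(\omega_a x_a)_{a\in\calF}$ agrees on $Z_\calF$ with the action of some torus element $\gamma_v\in T_N$ followed by rescaling. Working coordinatewise, this forces $\omega_a=\lambda\exp(-\langle a,v\rangle)$ for some $\lambda>0$ and $v\in N$, equivalently, $\Log(\omega)|_\calF$ extends to an affine function on $\conv(\calF)$. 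The injectivity of the algebraic moment map then prevents any exotic stabilizer beyond these obvious affine symmetries.

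The second main step is the face-by-face reduction: I claim $Z(\calS,\omega)=Z(\calS,\omega')$ forces $\omega\cdot Z_\calF=\omega'\cdot Z_\calF$ for every face $\calF\in\calS$ (assuming $\calS$ contains all sub-faces of its cells, which is the standard convention). Intersect both complexes with $\Relint(\simplex^\calF)$: by the support observation above together with the decomposition \eqref{Eq:decomposition}, this intersection equals $\omega\cdot Z_\calF^\circ$ on one side and $\omega'\cdot Z_\calF^\circ$ on the other. Hence these dense open subsets are equal, and taking closures in $\simplex^\calF$ yields $\omega\cdot Z_\calF=\omega'\cdot Z_\calF$.

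Combining the two steps finishes the proof in both directions. The condition $\omega\cdot Z_\calF=\omega'\cdot Z_\calF$ is equivalent to $(\omega_a/\omega'_a)_{a\in\calF}$ lying in the stabilizer of $Z_\calF$, which by Step~1 is equivalent to $\Log(\omega)-\Log(\omega')$ restricting to an affine function on each $\calF\in\calS$, i.e., to $\Log(\omega)-\Log(\omega')\in\Aff(\calS)$. The main obstacle is Step~1: ruling out stabilizers that are not of the obvious affine type requires carefully invoking the bijectivity of the moment map parametrization, since a priori a translation by $\omega$ could conceivably permute $Z_\calF$ in a way not realized by a single torus element. Once that rigidity is established the rest is bookkeeping about supports and closures.
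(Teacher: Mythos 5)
The paper does not prove this lemma itself; it is quoted verbatim from [PSV, Lemma 2.4], so there is no in-paper argument to compare against. Judged on its own terms, your proposal has the right overall structure: the face-by-face reduction via intersection with $\Relint(\simplex^\calF)$ is correct (using the decomposition \eqref{Eq:decomposition}, the fact that $\omega\in\RR^\calA_>$ preserves supports, and the convention that $\calS$ is closed under taking faces of its cells), and the final bookkeeping identifying the condition with $\Log(\omega)-\Log(\omega')\in\Aff(\calS)$ is also correct.

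The one place that needs repair is the justification in Step~1 that the setwise stabilizer of $Z_\calF$ in $\RR^\calF_>$ consists exactly of the ``affine'' elements. Appealing to the injectivity of the algebraic moment map is a red herring: the moment map $\pi_\calF\colon Z_\calF\to\conv(\calF)$ is a homeomorphism, but it is not equivariant for the $\RR^\calF_>$ action in any useful way, and injectivity of $\pi_\calF$ does not rule out set-permutations of $Z_\calF$ that fail to come from a torus element. What actually does the work is much simpler: $Z_\calF^\circ$ is precisely the subset of $Z_\calF$ with all coordinates positive, and it is a single orbit of $T_N$ through the distinguished point $\epsilon_\calF$. Since $\omega\in\RR^\calF_>$ has positive coordinates, $\omega\cdot\epsilon_\calF$ again has positive coordinates, so $\omega\cdot\epsilon_\calF\in Z_\calF$ forces $\omega\cdot\epsilon_\calF\in Z_\calF^\circ = T_N\cdot\epsilon_\calF$; comparing the two parametrizations coordinatewise gives $\omega_a = \lambda\exp(-\langle a,v\rangle)$ for some $\lambda>0$ and $v\in N$, i.e.\ $\Log(\omega)|_\calF$ is affine. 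Conversely, an affine $\Log(\omega)|_\calF$ means $\omega$ acts on $\simplex^\calF$ exactly as $\gamma_v$ does, hence stabilizes $Z_\calF$. With this substitution for the moment-map appeal, the argument is complete.
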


The next theorem gives a set-theoretic identification between points of $\Delta_\calA$ and the translated complexes $Z(S,\omega)$ of irrational toric varieties. 

\begin{theorem}[{\cite[Theorem 3.3]{PSV}}] \label{T:PSVTheorem3.3}
	The points of $\Delta_\calA \subset \text{Closed}(\simplex^\calA)$ are exactly the translated complexes $Z(\calS, \omega)$ of irrational toric varieties for $\omega \in \RR^\calA_>$ and $\calS$ a regular subdivision of $\calA$. 
\end{theorem}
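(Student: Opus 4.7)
The plan is to prove the two inclusions separately: first, that every translated complex $Z(\calS,\omega)$ belongs to $\Delta_\calA$, and second, that every Hausdorff limit in $\Delta_\calA$ has the form $Z(\calS,\omega)$ for some regular subdivision $\calS$ and some $\omega \in \RR_>^\calA$.

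For the first inclusion, I would fix a regular subdivision $\calS$ of $\calA$ together with a function $\lambda \in \RR^\calA$ inducing $\calS$, i.e.\ $\calS = \calS_\lambda$ arises from the lower faces of $P_\lambda$. Consider the one-parameter family of translates $\omega_t := \omega \cdot \exp(t\lambda)$ for $t > 0$ and study $\lim_{t \to \infty} \omega_t \cdot Z_\calA$ in the Hausdorff metric. Parametrize $Z_\calA^\circ$ by the moment map / Birch homeomorphism (Proposition \ref{P:Birch}) or equivalently by $T_N$. For a point $z \in Z_\calA$ coming from some direction $v \in N$, the coordinate $a$ of $\omega_t \cdot z$ has log-magnitude $\log(\omega_a) + t\lambda_a - \langle a,v\rangle$ plus a normalization constant, so after projecting to $\simplex^\calA$ only those coordinates $a$ lying on the lower face of $P_\lambda$ selected by the linear functional $v$ survive. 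Allowing $v$ to range over $N$ traces out precisely $\omega \cdot Z_\calF$ for each face $\calF$ of $\calS_\lambda$, so the Hausdorff limit is exactly $Z(\calS,\omega)$ as defined in \eqref{Eq:TranslatedComplex}.

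For the second inclusion, take a sequence $\omega^{(i)}$ with $\omega^{(i)} \cdot Z_\calA$ converging in $\text{Closed}(\simplex^\calA)$ to some $Y$. Set $\lambda^{(i)} = \Log(\omega^{(i)}) \in \RR^\calA$. By Lemma \ref{L:PSVLemma2.4}, the translated complex depends only on $\lambda^{(i)}$ modulo $\mathrm{Aff}(\calA)$, so work in a complement of $\mathrm{Aff}(\calA)$ inside $\RR^\calA$. Pass to a subsequence so that the unit vectors $\lambda^{(i)}/\|\lambda^{(i)}\|$ converge to some $\bar\lambda$ (if $\|\lambda^{(i)}\|$ stays bounded, the limit is a single translate $\omega \cdot Z_\calA$, i.e.\ the trivial subdivision $\{\calA\}$, and we are done). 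The direction $\bar\lambda$ lies in the relative interior of a unique cone $C(\calS)$ of the secondary fan $\Sigma(\calA)$, identifying the combinatorial type of the limiting subdivision as $\calS$. Invoking Lemma \ref{L:MinimumFaceofB} from the minimum-face-of-boundedness setup, pass to a further subsequence that stabilizes the boundedness pattern of $\lambda^{(i)}$ with respect to each face of $\calS$, so that on each face $\calF \in \calS$ the restricted translates converge to $\omega \cdot Z_\calF$ for a well-defined $\omega$. Taking unions yields $Y = Z(\calS,\omega)$.

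The main obstacle is the coordinate-by-coordinate control in the second inclusion: I must show that along any convergent subsequence, each coordinate $a \in \calA$ of points in $\omega^{(i)} \cdot Z_\calA$ either tends to zero or stabilizes to a positive quantity, in exactly the pattern dictated by the faces of $\calS_{\bar\lambda}$. This is the separation-of-scales argument at the combinatorial heart of the theorem; it requires Birch's theorem to parametrize $Z_\calA$ and a careful analysis of which $a\in \calA$ minimize $-\langle a,v\rangle + \lambda_a^{(i)}$ along each direction $v$, using the geometry of lower faces of $P_{\bar\lambda}$. The fact that one can simultaneously achieve every $Z(\calS,\omega)$ as such a limit, and that no other closed sets arise, is what couples the analytic degeneration picture to the combinatorial structure of $\Sigma(\calA)$ and sets up the homeomorphism with $X_{\Sigma(\calA)}$ that will be the true goal of the chapter.
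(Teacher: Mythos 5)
This result is not proved in the dissertation; it is quoted from \cite{PSV}, and the machinery around it (minimum face of boundedness, Lemma~\ref{L:MinimumFaceofB}) is recalled precisely so that the dissertation's new Theorem~\ref{Th:HausdorffLimits} can reuse the proof structure. Measured against that structure, your first inclusion (that every $Z(\calS,\omega)$ arises as a Hausdorff limit via the one-parameter degeneration $\omega\cdot\exp(t\lambda)$) is in the right spirit. The second inclusion, however, has a genuine gap: you identify the limit subdivision $\calS$ as the subdivision whose secondary cone has $\bar\lambda=\lim_i \lambda^{(i)}/\|\lambda^{(i)}\|$ in its relative interior. This is wrong, because lower-order unbounded growth of $\lambda^{(i)}$ can \emph{refine} the subdivision below what the leading direction sees. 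A concrete counterexample: take $\calA=\{0,1,2,3\}\subset\RR$, $v_n:=-\Log(\omega^{(n)})=(n,\sqrt n,0,0)$. The direction converges to $(1,0,0,0)$, which (via lower faces of its lift) induces the coarse subdivision $\{\{0,1\},\{1,2,3\}\}$. Yet choosing $s=e^{-\sqrt n}$ in $\omega^{(n)}\cdot(1,s,s^2,s^3)$ produces, after normalizing, the limit point $(0,\tfrac12,\tfrac12,0)$. This point has $z_2>0$, $z_3=0$, and so lies on no translate of $Z_{\{0,1\}}$ nor on any translate of $Z_{\{1,2,3\}}$ (the $t\to 0$ boundary of the latter forces $z_2=0$ as well). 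Hence the Hausdorff limit is not of the form $Z(\{\{0,1\},\{1,2,3\}\},\omega)$; it is in fact $Z(\{\{0,1\},\{1,2\},\{2,3\}\},\omega)$, the subdivision induced by $v_n$ itself, not by its limiting direction.

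The fix is to reverse the roles you assign to the two tools. Do not determine $\calS$ from $\bar\lambda$; instead, as in the proof of Theorem~\ref{T:CompactnessITV} and the proof sketch given in Theorem~\ref{Th:HausdorffLimits}, first pass to a subsequence lying in the relative interior of a single cone $\sigma$ of the complete secondary fan $\Sigma(\calA)$, then let $\tau\preceq\sigma$ be the minimum face of boundedness of that subsequence (Lemma~\ref{L:MinimumFaceofB}), and take $\calS=S_\tau$ to be the subdivision indexed by $\tau$. One then passes to a further subsequence so that the image of $v_n$ in $\RR^\calA/\langle\tau\rangle$ converges to some $v$, and the limit is $Z(S_\tau,\gamma_v)$. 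In my example the minimum face of boundedness of $\{v_n\}$ is all of $\sigma$ (both growth scales $n$ and $\sqrt n$ are unbounded in the quotient of $\RR^\calA$ by affine functions), giving the full triangulation as it should, whereas the direction alone sees only the face $C(\{\{0,1\},\{1,2,3\}\})$ of $\sigma$. You do invoke Lemma~\ref{L:MinimumFaceofB}, but only to ``stabilize'' within a $\calS$ you have already (incorrectly) fixed; the lemma must be used to \emph{determine} $\calS$.
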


Recall that $ \epsilon \in X_{\Sigma(\calA)}$ is the distinguished point of its dense $\RR_>^\calA$-orbit. 

\begin{theorem}\label{Th:HausdorffLimits}
	For $\omega \in \RR_>^\calA$, the association $\psi: \omega \cdot Z_\calA \mapsto \omega \cdot \epsilon$ is a well defined continuous map from the set of translates of $Z_\calA$ to the dense orbit of $X_{\Sigma(\calA)}$ that extends to a $\RR^\calA_>$-equivariant homeomorphism $\Delta_\calA \xrightarrow{\sim} X_{\Sigma(\calA)}$. Composing it with an algebraic moment map $X_{\Sigma(\calA)} \to \calP(\calA)$ gives a homeomorphism between $\Delta_\calA$ and the secondary polytope $\calP(\calA)$. 
\end{theorem}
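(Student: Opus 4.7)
The plan is to construct $\psi$ stratum by stratum using the decompositions of $\Delta_\calA$ and $X_{\Sigma(\calA)}$ by regular subdivisions, verify it is a bijection by matching these stratifications, and then deduce that it is a homeomorphism by compactness.

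I would first invoke Theorem~\ref{T:PSVTheorem3.3} to describe points of $\Delta_\calA$ as translated complexes $Z(\calS,\omega)$, and Theorem~\ref{Th:ITV_structure} to stratify $X_{\Sigma(\calA)}$ into orbits $W_{C(\calS)}$ indexed by the cones of the secondary fan, i.e.\ by the regular subdivisions $\calS$ of $\calA$. The map is defined by
\[
\psi\bigl(Z(\calS,\omega)\bigr)\ :=\ \omega\cdot x_{C(\calS)}\,,
\]
where $x_{C(\calS)}$ is the distinguished point of $W_{C(\calS)}$. For the trivial subdivision $\calS_0=\{\calA\}$, the cone $C(\calS_0)$ is the lineality space of $\Sigma(\calA)$, whose orbit is dense with distinguished point $\epsilon$, so the definition recovers the prescribed assignment $\omega\cdot Z_\calA\mapsto\omega\cdot\epsilon$ on the dense stratum, and continuity there is immediate since both $\omega\mapsto\omega\cdot Z_\calA$ and $\omega\mapsto\omega\cdot\epsilon$ are continuous group actions.

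Well-definedness hinges on the identity $\text{Aff}(\calS)=\langle C(\calS)\rangle$ between the space appearing in Lemma~\ref{L:PSVLemma2.4} and the linear span of $C(\calS)$ in the secondary fan. This is a standard feature of the GKZ construction: a perturbation $\lambda+sv$ stays in $C(\calS)$ for small $s$ of both signs precisely when $v$ lies in the space of functions that are affine on every face of $\calS$. Granting this identity, Lemma~\ref{L:PSVLemma2.4} says $Z(\calS,\omega)=Z(\calS,\omega')$ iff $\Log\omega-\Log\omega'\in\langle C(\calS)\rangle$, which by Theorem~\ref{Th:ITV_structure} (giving $W_{C(\calS)}\simeq\RR^\calA/\langle C(\calS)\rangle$) is exactly the condition $\omega\cdot x_{C(\calS)}=\omega'\cdot x_{C(\calS)}$. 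Hence $\psi$ is a well-defined bijection on each stratum, and since the strata of $\Delta_\calA$ and $X_{\Sigma(\calA)}$ correspond under $\calS\leftrightarrow C(\calS)$, it is a bijection globally. Equivariance under $\RR^\calA_>$ is immediate from the formula.

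The main obstacle is continuity. Both spaces are compact Hausdorff: $\Delta_\calA$ is closed in $\text{Closed}(\simplex^\calA)$, and $X_{\Sigma(\calA)}$ is compact by Theorem~\ref{T:CompactnessITV} since the secondary fan is complete. Consequently, once continuity is proven in one direction, the other follows from compactness. To prove continuity of $\psi$, suppose $\omega_n\cdot Z_\calA\to Z(\calS,\omega')$ in Hausdorff distance; the goal is $\omega_n\cdot\epsilon\to\omega'\cdot x_{C(\calS)}$ in $X_{\Sigma(\calA)}$. Using the convergence analysis from \cite{PSV} that underlies Theorem~\ref{T:PSVTheorem3.3}, I would decompose $\Log\omega_n=\Log\omega'+u_n+v_n$ where $\{u_n\}$ is bounded modulo $\text{Aff}(\calA)$ and $\{v_n\}$ diverges with asymptotic direction in $\Relint(C(\calS))$. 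Lemma~\ref{L:LimitsofOPSGITV} applied to the divergent part yields $\gamma_{v_n}\cdot\epsilon\to x_{C(\calS)}$, and combining with the bounded factor gives $\omega_n\cdot\epsilon\to\omega'\cdot x_{C(\calS)}$.

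Finally, by construction $\Sigma(\calA)$ is the normal fan of the secondary polytope $\calP(\calA)$, whose vertices $\varphi_\calT$ all satisfy $\sum_a\varphi_\calT(a)=(n{+}1)\,\text{Vol}(\conv\calA)$ and so lie on a common affine hyperplane. Theorem~\ref{T:ProjectiveITV} therefore supplies a homeomorphism $X_{\Sigma(\calA)}\xrightarrow{\sim}\calP(\calA)$ via the algebraic moment map, and composing with $\psi$ yields the desired homeomorphism $\Delta_\calA\xrightarrow{\sim}\calP(\calA)$.
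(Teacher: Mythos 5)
Your overall strategy --- define $\psi$ stratum by stratum via $Z(\calS,\omega)\mapsto\omega\cdot x_{C(\calS)}$, verify it is a bijection by matching stabilizers using Lemma~\ref{L:PSVLemma2.4} against the orbit structure of $X_{\Sigma(\calA)}$, and then upgrade to a homeomorphism by compactness together with the convergence analysis from the proofs of Theorem~\ref{T:PSVTheorem3.3} and Theorem~\ref{T:CompactnessITV} --- is essentially the paper's proof. The final step invoking Theorem~\ref{T:ProjectiveITV} to obtain the moment-map identification with $\calP(\calA)$ is also as in the paper.

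There is, however, one imprecision in your continuity step that would need to be repaired. You decompose $\Log\omega_n = \Log\omega' + u_n + v_n$ with $\{v_n\}$ ``diverging with asymptotic direction in $\Relint(C(\calS))$'' and then invoke Lemma~\ref{L:LimitsofOPSGITV} to conclude $\gamma_{v_n}\cdot\epsilon\to x_{C(\calS)}$. Lemma~\ref{L:LimitsofOPSGITV} is a statement about one-parameter subgroups $\gamma_{sv}$ for a \emph{fixed} $v$, not about general divergent sequences $\{v_n\}$; and the asymptotic direction of a sequence does not determine its limit in the toric variety. The relevant invariant is the \emph{minimum face of boundedness} (Definition~\ref{D:minimumfaceofboundedness} and Lemma~\ref{L:MinimumFaceofB}): the paper's example following Definition~\ref{D:minimumfaceofboundedness} exhibits two sequences in the same cone, both with asymptotic direction $(-1,-1)$ spanning a ray $\tau$, yet one is $\tau$-bounded (limit lands in $W_\tau$) while the other is only $\sigma$-bounded (limit is $x_\sigma$). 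The paper's proof avoids this by tracking the minimum face of boundedness $\tau$ of $\{v_n\}$ directly, splitting $v_n = u_n + \overline{v}_n$ with $\{u_n\}\subset\tau$ divergent and $\{\overline{v}_n\}$ bounded, exactly as in the proof of Theorem~\ref{T:CompactnessITV}, and showing that \emph{both} $\omega_n\cdot\epsilon$ and $\omega_n\cdot Z_\calA$ converge to the point indexed by that same face $\tau$ and the same accumulation point $v$. Replacing ``asymptotic direction in $\Relint(C(\calS))$'' with ``minimum face of boundedness equal to $C(\calS)$,'' and arguing pointwise on $\sigma^\vee$ as in Theorem~\ref{T:CompactnessITV} rather than through Lemma~\ref{L:LimitsofOPSGITV}, closes the gap.
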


\begin{proof}
	Let $\sigma \in \Sigma(\calA)$ be a cone with corresponding regular subdivision $S_\sigma$. The orbit $W_\sigma$ in $X_{\Sigma(\calA)}$ has distinguished point $x_\sigma$. Under the map
	\begin{align*}
	\RR^\calA &\xrightarrow{\ \sim \ }\RR_>^\calA\\
	v \ & \longmapsto  \ \gamma_v,
	\end{align*}
	the orbit $W_\sigma = \RR^\calA_> \cdot x_\sigma$ is identified with $\RR_\calA / \langle \sigma \rangle$. So the stabilizer of $x_\sigma$ is the linear span $\langle \sigma \rangle$ of $\sigma$.
	
	The complex $Z(S_\sigma)= Z(S_\sigma,1)$ is the distinguished point of $\Delta_\calA$ corresponding to $\sigma$. By Lemma \ref{L:PSVLemma2.4}, the stabilizer of $Z(S_\sigma,1)$ in $\RR^\calA$ is also $\langle \sigma \rangle$. Hence, the association $\psi \colon Z(S_\sigma, \gamma_v) \mapsto \gamma_v \cdot x_\sigma$ is a well defined map. This induces a $\RR^\calA_>$-equivariant homeomorphism between the orbits in both spaces, because when both orbits are identified with $\RR^\calA / \langle \sigma \rangle$, the map $\psi$ becomes the identity map. When $S_\sigma$ has a single facet $\calA$, so that $\sigma$ is the minimal cone of $\Sigma(\calA)$, this is the map $\psi$ in the statement of the theorem.  
	
	Write $\psi$ for the bijection given by
	\begin{align*}
	\psi \colon \Delta_\calA &\longrightarrow X_{\Sigma(\calA)} \\
	Z(S_\sigma, \omega) & \longmapsto \omega \cdot x_\sigma,
	\end{align*}
	for $\omega \in \RR^\calA_>$ and $\sigma \in \Sigma(\calA)$. We show that if $\{\omega_n\} \subset \RR^\calA_>$, $\omega \in \RR_>^\calA$, and $\sigma \in \Sigma(\calA)$ are such that
	\begin{equation}\label{Eq:HausdorffLimits}
	\lim_{n \to \infty} \omega_n \cdot Z_\calA = Z(S_\sigma,\omega)
	\end{equation}
	in the space $\Delta_\calA$ of Hausdorff limits, then
	\begin{equation} \label{Eq:HausdorffLimits2}
	\lim_{n \to \infty} \omega_n \cdot \epsilon = \omega \cdot x_\sigma
	\end{equation}
	in the irrational toric variety $X_{\Sigma(\calA)}$, and vice versa. Since $\Delta_\calA$ and $X_{\Sigma(\calA)}$ are both $\RR_>^\calA$-equivariant closures of their dense orbits, this will complete the proof.
	
	Our argument that $\psi$ preserves limits of sequence follows from the proofs of Theorem \ref{T:PSVTheorem3.3} and Theorem \ref{T:CompactnessITV}. These proofs show that the sequences of translates $\omega_n \cdot Z_\calA$ and $\omega_n \cdot \epsilon$ have convergent sequences. In each, we replace $\omega_n \in \RR^\calA_>$ by $v_n \in \RR^\calA$, where $\omega_n = \gamma_{v_n}$, and then replace $\{ v_n \}$ by any subsequence $\{ v_n\} \cap \sigma$, where $\sigma$ is a cone in $\Sigma(\calA)$ that meets $\{ v_n \}$ in an infinite set. Finally, $\tau$ is the minimum face of boundedness of $\{ v_n \}$, so $\{v_n\}$ is bounded in $\RR^\calA /\langle \tau \rangle$, and then $v\in \RR^\calA$ satisfies 
	\[
	v+ \langle \tau \rangle \text{\ is an accumulation point of } \{ v_n + \langle \tau \rangle \} \text{ in } \RR^\calA / \langle \tau \rangle .
	\]
	Restricting to subsequences, we have $\lim_{n \to \infty} \omega_n \cdot \epsilon = v \cdot x_\tau$ and $\lim_{n \to \infty} \omega_n \cdot Z_\calA = Z(S_\tau,v)$.
	
	We assumed the original limits (\eqref{Eq:HausdorffLimits} and \eqref{Eq:HausdorffLimits2}) exist without restricting to subsequences. Thus, for every cone $\sigma$ with $\sigma \cap \{v_n\}$ infinite,  $\tau$ is the minimal face of boundness of $\{v_n\}$, and then $v$ is a point such that 
	\[
	\lim_{n \to \infty} v_n + \langle \tau \rangle = v + \langle \tau \rangle.
	\]
	This completes the proof, as both $\psi$ and $\psi^{-1}$ preserve limits of sequences. 
	
	The last statement is by Theorem \ref{T:ProjectiveITV}, as $\Sigma(\calA)$ is the normal fan to $P(\calA)$. 
\end{proof}
\pagebreak{}

\chapter{SUMMARY AND FUTURE WORK} \label{CH:Conclusion}

\section{Summary}
Inspired by applications in algebraic statistics and geometric modeling, we developed a theory of irrational toric varieties associated to fans in a real vector space $N$ that are not necessarily rational. This work required completely new methods without an  analog in the classical setting. This is due to the fact that irrational toric varieties  have no algebraic structure to exploit. In Theorem \ref{Th:ITV_structure} we showed that the irrational toric variety $X_\Sigma$ associated to a fan $\Sigma \subset N$ is a $T_N$-equivariant cell complex. Each cell is an orbit and corresponds to a unique cone in $\Sigma$. We also showed that the cell structure of $X_\Sigma$ and its poset of containment in closure is dual to that of the fan $\Sigma$. 
We also showed in Theorem \ref{Th:TVvsITV} that when the fan $\Sigma$ is rational, the irrational toric variety $X_\Sigma$ corresponding to $\Sigma$ is the nonnegative part of the complex toric variety $Y_\Sigma$.  

Our construction of irrational toric varieties has many pleasing parallel results with the classical theory. Theorem \ref{Th:mapsOfFans} shows that this construction is functorial, that is, maps of fans corresponds to  equivariant irrational toric morphisms, and we showed how to recover a fan $\Sigma$ from the irrational toric variety in Section \ref{S:RecoveringFans}.  Another similar result is Theorem \ref{T:CompactnessITV}, which shows that an irrational toric variety is compact if and only if the corresponding fan  is complete.
We also studied irrational toric varieties as monoids. By adjoining an absorbing element to $X_\Sigma$, it becomes a commutative topological monoid in which the inclusion of the irrational toric variety $V_\sigma$ is a monoid map for each cone $\sigma \in \Sigma$ (Theorem \ref{T:Monoids}). 
In Section \ref{S:ProjectiveITV}, we constructed projective (that may be embedded in a simplex) irrational toric varieties. In Theorem \ref{T:ProjectiveITV}, we showed that when the fan $\Sigma$ is the normal fan of a polytope, the irrational toric variety $X_\Sigma$ is projective and it is homeomorphic to that polytope. 

When $\calA$ is rational, the space of Hausdorff limits of translates of the irrational toric variety corresponding to $\calA$ is identified with the secondary polytope of the exponent vectors of its parametrizing monomials~\cite{GSZ}.
For irrational $\calA$, all Hausdorff limits were identified in~\cite{PSV} as toric degenerations, and thus were related to the secondary fan of a point configuration.
We extended that work in Theorem \ref{Th:HausdorffLimits}, where we constructed a homeomorphism between the space of Hausdorff limits and the irrational toric variety $X_{\Sigma(\calA)}$ associated to the secondary fan of $\calA$. 

\section{Future Work}

This new theory of irrational toric varieties may enable the generalization of other geometric-combinatorial aspects of classical toric varieties to irrational
polytopes and fans. For example, this will likely be able to show that fiber polytopes \cite{fiber} are naturally moduli spaces of Hausdorff limits of toric subvarieties of toric varieties. 

Irrational toric varieties admit deformations, while classical toric varieties do not. Understanding these deformations is another project. A related and more algebraic topic is to study deformations of the Wachspress varieties of Irving and Schenck \cite{Irving}, which are varieties from approximation theory that behave like polygons in the plane. These spaces of deformations will give a natural analytic or algebraic structure to spaces of polytopes, and in particular a structure on degenerations of polytopes. Understanding what these structures are and their relation to geometric combinatorics would be extremely fascinating.

Tropicalization of toric varieties were studied as skeletons of analytifications in \cite{thuillier}, and they were studied explicitly in \cite{kajiwara}. Then they were applied to the development of relations between tropical and nonarchimedean analytic geometry \cite{payne11,payne9a,rabinoff}. In particular, for a closed subscheme $X$ of a toric variety, the limit of the inverse system of tropicalizations of all toric embeddings of $X$ is homeomorphic to the analytification of $X$ \cite{payne14} . Another line of research is to define tropical irrational toric varieties by using the construction of irrational toric varieties and extend the mentioned results to tropical irrational toric varieties. 
\pagebreak{}

\let\oldbibitem\bibitem
\renewcommand{\bibitem}{\setlength{\itemsep}{0pt}\oldbibitem}
\bibliographystyle{plain}
\phantomsection
\addcontentsline{toc}{chapter}{REFERENCES}
\renewcommand{\bibname}{{\normalsize\rm REFERENCES}}
\bibliography{myReference.bib}

\begin{thebibliography}{10}

\bibitem{payne11}
Matthew Baker, Sam Payne, and Joseph Rabinoff.
\newblock Nonarchimedean geometry, tropicalization, and metrics on curves.
\newblock {\em Algebr. Geom.}, 3(1):63--105, 2016.

\bibitem{fiber}
Louis~J. Billera and Bernd Sturmfels.
\newblock Fiber polytopes.
\newblock {\em Ann. of Math. (2)}, 135(3):527--549, 1992.

\bibitem{Birch}
M.~W. Birch.
\newblock Maximum likelihood in three-way contingency tables.
\newblock {\em J. Roy. Statist. Soc. Ser. B}, 25:220--233, 1963.

\bibitem{Brion}
Michel Brion.
\newblock Points entiers dans les polytopes convexes.
\newblock {\em Ast\'erisque}, (227):Exp.\ No.\ 780, 4, 145--169, 1995.
\newblock S\'eminaire Bourbaki, Vol. 1993/94.

\bibitem{CoxKatz}
David~A. Cox and Sheldon Katz.
\newblock {\em Mirror symmetry and algebraic geometry}, volume~68 of {\em
  Mathematical Surveys and Monographs}.
\newblock American Mathematical Society, Providence, RI, 1999.

\bibitem{CLS}
David~A. Cox, John~B. Little, and Henry~K. Schenck.
\newblock {\em Toric varieties}, volume 124 of {\em Graduate Studies in
  Mathematics}.
\newblock American Mathematical Society, Providence, RI, 2011.

\bibitem{Graciun}
Gheorghe Craciun, Luis~David Garc\'ia-Puente, and Frank Sottile.
\newblock Some geometrical aspects of control points for toric patches.
\newblock In {\em Mathematical methods for curves and surfaces}, volume 5862 of
  {\em Lecture Notes in Comput. Sci.}, pages 111--135. Springer, Berlin, 2010.

\bibitem{Triangulations}
Jes\'us~A. De~Loera, J\"org Rambau, and Francisco Santos.
\newblock {\em Triangulations}, volume~25 of {\em Algorithms and Computation in
  Mathematics}.
\newblock Springer-Verlag, Berlin, 2010.
\newblock Structures for algorithms and applications.

\bibitem{Demazure}
Michel Demazure.
\newblock Sous-groupes alg\'ebriques de rang maximum du groupe de {C}remona.
\newblock {\em Ann. Sci. \'Ecole Norm. Sup. (4)}, 3:507--588, 1970.

\bibitem{Ewald}
G\"unter Ewald.
\newblock {\em Combinatorial convexity and algebraic geometry}, volume 168 of
  {\em Graduate Texts in Mathematics}.
\newblock Springer-Verlag, New York, 1996.

\bibitem{payne14}
Tyler Foster, Philipp Gross, and Sam Payne.
\newblock Limits of tropicalizations.
\newblock {\em Israel J. Math.}, 201(2):835--846, 2014.

\bibitem{Fulton}
William Fulton.
\newblock {\em Introduction to toric varieties}, volume 131 of {\em Annals of
  Mathematics Studies}.
\newblock Princeton University Press, Princeton, NJ, 1993.

\bibitem{GSZ}
Luis~David Garc\'ia-Puente, Frank Sottile, and Chungang Zhu.
\newblock Toric degenerations of {B}\'ezier patches.
\newblock {\em ACM Transactions on Graphics}, 30(5):110, 2011.

\bibitem{GKZ}
I.~M. Gel{'}fand, M.~M. Kapranov, and A.~V. Zelevinsky.
\newblock {\em Discriminants, resultants, and multidimensional determinants}.
\newblock Mathematics: Theory \& Applications. Birkh\"auser Boston, Inc.,
  Boston, MA, 1994.

\bibitem{Goodman}
Leo~A. Goodman.
\newblock On methods for comparing contingency tables.
\newblock {\em J. Roy. Statist. Soc. Ser. A}, 126:94--108, 1963.

\bibitem{Grunbaum}
Branko Gr\"unbaum.
\newblock {\em Convex polytopes}, volume 221 of {\em Graduate Texts in
  Mathematics}.
\newblock Springer-Verlag, New York, second edition, 2003.
\newblock Prepared and with a preface by Volker Kaibel, Victor Klee and
  G\"unter M. Ziegler.

\bibitem{Hausdorff}
Felix Hausdorff.
\newblock {\em Set theory}.
\newblock Second edition. Translated from the German by John R. Aumann et al.
  Chelsea Publishing Co., New York, 1962.

\bibitem{humphreys}
James~E. Humphreys.
\newblock {\em Linear algebraic groups}.
\newblock Springer-Verlag, New York-Heidelberg, 1975.
\newblock Graduate Texts in Mathematics, No. 21.

\bibitem{Irving}
Corey Irving and Hal Schenck.
\newblock Geometry of {W}achspress surfaces.
\newblock {\em Algebra Number Theory}, 8(2):369--396, 2014.

\bibitem{kajiwara}
Takeshi Kajiwara.
\newblock Tropical toric geometry.
\newblock In {\em Toric topology}, volume 460 of {\em Contemp. Math.}, pages
  197--207. Amer. Math. Soc., Providence, RI, 2008.

\bibitem{KKMS}
G.~Kempf, Finn~Faye Knudsen, D.~Mumford, and B.~Saint-Donat.
\newblock {\em Toroidal embeddings. {I}}.
\newblock Lecture Notes in Mathematics, Vol. 339. Springer-Verlag, Berlin-New
  York, 1973.

\bibitem{Krasauskas}
Rimvydas Krasauskas.
\newblock Toric surface patches.
\newblock {\em Adv. Comput. Math.}, 17(1-2):89--113, 2002.

\bibitem{Oda}
Tadao Oda.
\newblock {\em Torus embeddings and applications}, volume~57 of {\em Tata
  Institute of Fundamental Research Lectures on Mathematics and Physics}.
\newblock Tata Institute of Fundamental Research, Bombay; by Springer-Verlag,
  Berlin-New York, 1978.
\newblock Based on joint work with Katsuya Miyake.

\bibitem{MiyakeOda}
Tadao Oda and Katsuya Miyake.
\newblock Almost homogeneous algebraic varieties under algebraic torus action.
\newblock pages 373--381, 1975.

\bibitem{Pachter}
Lior Pachter and Bernd Sturmfels.
\newblock Biology.
\newblock In {\em Algebraic statistics for computational biology}, pages
  125--159. Cambridge Univ. Press, New York, 2005.

\bibitem{payne9a}
Sam Payne.
\newblock Analytification is the limit of all tropicalizations.
\newblock {\em Math. Res. Lett.}, 16(3):543--556, 2009.

\bibitem{PSV}
E.~Postinghel, F.~Sottile, and N.~Villamizar.
\newblock Degenerations of real irrational toric varieties.
\newblock {\em J. Lond. Math. Soc. (2)}, 92(2):223--241, 2015.

\bibitem{Putcha}
Mohan~S. Putcha.
\newblock {\em Linear algebraic monoids}, volume 133 of {\em London
  Mathematical Society Lecture Note Series}.
\newblock Cambridge University Press, Cambridge, 1988.

\bibitem{rabinoff}
Joseph Rabinoff.
\newblock Tropical analytic geometry, {N}ewton polygons, and tropical
  intersections.
\newblock {\em Adv. Math.}, 229(6):3192--3255, 2012.

\bibitem{Renner}
Lex~E. Renner.
\newblock {\em Linear algebraic monoids}, volume 134 of {\em Encyclopaedia of
  Mathematical Sciences}.
\newblock Springer-Verlag, Berlin, 2005.
\newblock Invariant Theory and Algebraic Transformation Groups, V.

\bibitem{vilnius}
Frank Sottile.
\newblock Toric ideals, real toric varieties, and the moment map.
\newblock In {\em Topics in algebraic geometry and geometric modeling}, volume
  334 of {\em Contemp. Math.}, pages 225--240. Amer. Math. Soc., Providence,
  RI, 2003.

\bibitem{Stanley}
Richard~P. Stanley.
\newblock The number of faces of a simplicial convex polytope.
\newblock {\em Adv. in Math.}, 35(3):236--238, 1980.

\bibitem{BerndConvexPolytopes}
Bernd Sturmfels.
\newblock {\em Gr\"obner bases and convex polytopes}, volume~8 of {\em
  University Lecture Series}.
\newblock American Mathematical Society, Providence, RI, 1996.

\bibitem{thuillier}
Amaury Thuillier.
\newblock G\'eom\'etrie toro\"\i dale et g\'eom\'etrie analytique non
  archim\'edienne. {A}pplication au type d'homotopie de certains sch\'emas
  formels.
\newblock {\em Manuscripta Math.}, 123(4):381--451, 2007.

\bibitem{Ziegler}
G\"unter~M. Ziegler.
\newblock {\em Lectures on polytopes}, volume 152 of {\em Graduate Texts in
  Mathematics}.
\newblock Springer-Verlag, New York, 1995.

\end{thebibliography}
\end{document}